      \def\dC{{\mathbb C}}
   \def\dN{{\mathbb N}}   
      \def\dR{{\mathbb R}}
\def\cS{{\EuScript S}}
\def\bm\chi{\mbox{\boldmath$\chi$}}
\def\min{{\rm min\,}}
\def\max{{\rm max\,}}
\def\ker{{\rm ker\,}}
\def\ran{{\rm ran\,}}
\def\dom{{\rm dom\,}}
\def\dim{{\rm dim\,}}
\let\xker=\ker \def\ker{{\xker\,}}
\def\sgn{{\rm sgn\,}}
\def\sgn{{\text{\rm sgn\,}}}
\def\senki{{\lbrack\negthinspace [\bot ]\negthinspace\rbrack}}
\def\senki+{{\lbrack\negthinspace [+] \negthinspace\rbrack}}
\newtheorem{theorem}{Theorem}[section]
\newtheorem{proposition}[theorem]{Proposition}
\newtheorem{corollary}[theorem]{Corollary}
\newtheorem{lemma}[theorem]{Lemma}
\theoremstyle{definition}
\newtheorem{definition}[theorem]{Definition}
\newtheorem{remark}[theorem]{Remark}
\numberwithin{equation}{section}
\begin{document}

\title[Boundary triples for Dirac operators with singular interactions]{Boundary triples and Weyl functions for Dirac operators with singular interactions}

\author[J.~Behrndt]{Jussi~Behrndt}
\author[M.~Holzmann]{Markus Holzmann}
\author[C.~Stelzer]{Christian Stelzer}
\author[G.~Stenzel]{Georg Stenzel}

\address{Institut f\"ur Angewandte Mathematik\\
Technische Universit\"at Graz \\
Steyrergasse 30\\
8010 Graz \\
Austria}
\email{behrndt@tugraz.at, holzmann@math.tugraz.at, christian.stelzer@tugraz.at, gstenzel@math.tugraz.at}

\begin{abstract}
In this article we develop a systematic approach to treat Dirac operators $A_{\eta, \tau, \lambda}$ with singular electrostatic, Lorentz scalar, and anomalous magnetic interactions of strengths $\eta, \tau, \lambda \in \mathbb{R}$, respectively, supported on points in $\mathbb{R}$, curves in $\mathbb{R}^2$, and surfaces in $\mathbb{R}^3$ that is based on boundary triples and their associated Weyl functions. First, we discuss the one-dimensional case which also serves as a motivation for the multidimensional setting. Afterwards, in the two- and three-dimensional situation we construct quasi, generalized, and ordinary boundary triples and their Weyl functions, and provide a detailed characterization of the associated Sobolev spaces, trace theorems, and the mapping properties of integral operators which play an important role in the analysis of $A_{\eta, \tau, \lambda}$. We make a substantial step towards more rough interaction supports $\Sigma$ and consider general compact Lipschitz hypersurfaces. We derive conditions for the interaction strengths such that the operators $A_{\eta, \tau, \lambda}$ are self-adjoint, obtain a Krein-type resolvent formula, and characterize the essential and discrete spectrum. These conditions include purely Lorentz scalar and purely non-critical  anomalous magnetic interactions as well as the confinement case, the latter having an important application in the mathematical description of graphene. Using a certain ordinary boundary triple, we 
also show the self-adjointness of $A_{\eta, \tau, \lambda}$ for arbitrary critical combinations of the interaction strengths under the condition that $\Sigma$ is $C^{\infty}$-smooth and derive its spectral properties. In particular, in the critical case, a loss of Sobolev regularity in the operator domain  and a possible additional point of the essential spectrum are observed. 
\end{abstract}

\maketitle

\section{Introduction} \label{section_A_0}
Dirac operators with singular perturbations supported on points in $\mathbb{R}$, curves in $\mathbb{R}^2$, and
surfaces in $\mathbb{R}^3$ have received a lot of attention recently.  This class of operators appears as an idealized model in the description of relativistic spin $1/2$ particles propagating in strongly localized external potentials. The MIT bag model describing quark confinement is an example of this sort of physical system \cite{ALTR17, Bo68, CJJT74, CJJTW74,  J75}. Dirac operators with singular perturbations  are also important in the description of graphene and other innovative materials \cite{AB08, BFSB17_1, BFSB17_2, NGM04}.  For a rigorous understanding of these physical problems  a detailed analysis of the involved operators and their spectral properties is necessary.

\subsection{The free Dirac equation}
In 1928 Paul Dirac derived in \cite{D28} the free Dirac equation to find a quantum mechanical description of spin 1/2 particles with mass $m \geq 0$ propagating in $\dR^{q}$ with $q \in \{1,2,3\}$ taking relativistic effects into account. 
Choosing units such that the speed of light and Planck's constant are both equal to one yields the relativistic energy-momentum relationship  
 \begin{equation} \label{energy_momentum_rel} 
E^2 =  \sum\limits_{j=1}^q  p_j^2 + m^2.
\end{equation}
To associate it with a linear differential equation, which is of first order in time, Dirac made the ansatz
\begin{equation} \label{LinearisedDiracEq}
\bigg( E -  \sum\limits_{j=1}^{q} \alpha_j p_j - m \alpha_{0} \bigg)\bigg( E +  \sum\limits_{j=1}^{q} \alpha_j p_j + m \alpha_{0} \bigg) = 0.
\end{equation}
The equations~\eqref{energy_momentum_rel}  and~\eqref{LinearisedDiracEq} are only equivalent, if the coefficients $\alpha_j$ satisfy the anti-commutation relations 
\begin{equation} \label{anti_commutation}
\alpha_k \alpha_j + \alpha_j \alpha_k = 2 \delta_{kj} I_N \quad \text{for all} \quad k , j \in \{ 0, 1, \dots , q\},
\end{equation}
and hence one has to choose the coefficients matrix-valued, $\alpha_j  \in \dC^{N \times N}$, where $N=2$ for $q \in \{1,2\}$ and $N=4$ for $q=3$, and 
$I_N$ denotes the $N\times N$-identity matrix.
For $q \in\{1,2\}$ the matrices $\alpha_j$ can be chosen as the Pauli spin matrices
and for $q=3$ as the Dirac matrices, see \eqref{Dirac_matrices1} and \eqref{Dirac_matrices2}.
Applying the usual substitution rules $i \frac{\partial}{\partial t}$ and $- i \frac{\partial}{\partial x_j}$ for $E$ and $p_j$, respectively,  in one of the factors in  (\ref{LinearisedDiracEq}), say in the first one, one obtains the free Dirac equation
\begin{equation*}
i  \frac{\partial}{\partial t} \Psi =  \bigg(- i \sum\limits_{j=1}^{q} \alpha_j \frac{\partial}{\partial x_j} + m \alpha_{0} \bigg) \Psi,
\end{equation*}
which describes the propagation of a relativistic particle with spin $1/2$ such as an electron in $\dR^q$. 

Besides this original motivation another important application of the  Dirac equation was found recently
in \cite{AB08, NGM04} where it was demonstrated that the two-dimensional Dirac equation plays an important role in the mathematical description of graphene. This fact and the importance of graphene in many modern applications led to an increased interest in the Dirac equation in the last 15 years. In particular, depending on the way how the graphene is cut out of the planar sheet Dirac operators with different boundary conditions known as quantum dot boundary conditions are of interest and were studied in \cite{BHM20, BFSB17_1, BFSB17_2, R20}. Since graphene is a hexagonal lattice, it is of importance that the corresponding operators are investigated on rough domains with Lipschitz boundaries.

\subsection{The free Dirac operator}
As in the case of the Schrödinger equation, one defines the free Dirac operator as the right-hand side of the free Dirac equation by
\begin{equation} \label{def_A_0}
\begin{split}
A_0 f &= \big( - i (\alpha \cdot \nabla) + m \alpha_{0} \big) f, \qquad
\text{dom}\, A_0 = H^1(\dR^q ; \dC^N), 
\end{split}
\end{equation}  
where $H^k(\mathbb{R}^q; \mathbb{C}^N)$ denotes the ($L^2$-based) Sobolev space of $k$-times weakly differentiable functions and where we use for $x=(x_1, \dots, x_q) \in \mathbb{R}^q$ the notations 
\begin{equation*}
\alpha \cdot x := \sum_{j=1}^q \alpha_j x_j \quad \text{and} \quad \alpha \cdot \nabla := \sum_{j=1}^q \alpha_j \frac{\partial}{\partial x_j}.
\end{equation*}
With the help of the Fourier transform and \eqref{anti_commutation} it is not difficult to verify that $A_0$ is self-adjoint in $L^2(\dR^q ; \dC^N)$, that $A_0$ has purely absolutely continuous spectrum
\begin{equation}\label{spec}
\sigma(A_0) = \sigma_\textup{ac}(A_0) = (-\infty, -m] \cup [m , \infty),
\end{equation}
and that
\begin{equation} \label{equation_Dirac_Laplace}
(A_0 - z) ( A_0 + z) = \left( - \Delta + m^2 - z^2 \right) I_N,
\end{equation}
where $-\Delta$ is the free Laplace operator defined on $\dom (-\Delta)=H^2(\mathbb{R}^q; \mathbb{C})$; see \cite{T92} or \cite{W03} for details in dimension $q=3$.  This implies
\begin{equation} \label{resolvent_A_0}
(A_0-z)^{-1} = \big( -i (\alpha \cdot \nabla) + m \alpha_0 + z I_N \big) (-\Delta + m^2-z^2)^{-1} I_N
\end{equation}
for $z\in\mathbb (\mathbb{C}\setminus\mathbb R) \cup (-m,m)$.
Using the well known form of the resolvent of $-\Delta$, one finds that $(A_0-z)^{-1}$ is an integral operator in $L^2(\mathbb{R}^q; \mathbb{C}^N)$ with integral kernel $G_{z, q}(x-y)$ given by \eqref{def_G_lambda}. 
\subsection{Coupling of singular potentials}
To model the influence of external fields the free Dirac operator $A_0$ is coupled with potentials. In the following, we consider the physically relevant dimensions $q = 2,3$; motivated by this, similar mathematical expressions are studied also for $q=1$. For a  scalar potential $\Phi_s$ and a static electromagnetic potential  $( \Phi_{e}, A_m)$ taking also anomalous magnetic interactions with coupling constant $\mu$ into account,  the corresponding  Dirac operator is given by
\begin{equation} \label{ExternalFieldsNotDelta}
A_0 +  \Phi_{e} I_N +  \Phi_{s}  \alpha_0  - (\alpha \cdot A_m)  + i \mu (\alpha \cdot \nabla \Phi_{e} ) \alpha_0 + \mu V_q,
\end{equation}
where the term $V_q$ that contributes to the description of the anomalous magnetic moment depends on the space dimension $q$ and is given by $V_2 = (\nabla \times A_m) I_2$ for $q=2$ and $V_3 =  \alpha \cdot ( \nabla \times A_m ) \alpha_0 \alpha_1$ for $q=3$;
cf.  \cite{H90, T92}.

In the case of strongly localized potentials that have large values on a small neighbourhood of a set $\Sigma \subset \dR^q$ with measure zero and small values otherwise, it is often a useful simplification  to  replace the regular fields  by  $\delta$-potentials supported on $\Sigma$. To be more precise, assume in the following that $\Sigma$ is the boundary of a bounded sufficiently smooth domain $\Omega_+ \subset \mathbb{R}^q$, $q \in \{2,3\}$, with normal vector $\nu$ pointing outwards of $\Omega_+$ and set $\Omega_- := \mathbb{R}^q \setminus \overline{\Omega_+}$. Then, each point $x \in \mathbb{R}^q$ in a tubular neighborhood of $\Sigma$ (which are all points $x$ with $\textup{dist}(x, \Sigma) < b$) with a sufficiently small half-width $b>0$ has the form $x = x_\Sigma + t \nu(x_\Sigma)$ with unique $x_\Sigma \in \Sigma$ and $t \in (-b,b)$. Consider now static electromagnetic and Lorentz scalar potentials whose strengths depend only on the distance to $\Sigma$, that are  supported in the tubular neighborhood of $\Sigma$, and where the magnetic field is pointing in the normal direction, i.e.
\begin{equation*}
  \Phi_s(x) = f_s(t), \quad \Phi_e(x) = f_e(t), \quad \text{and }  A_m(x) = \nu(x_\Sigma) f_m(t), \quad x = x_\Sigma + t \nu(x_\Sigma),
\end{equation*}
with smooth scalar-valued functions $f_s, f_e$, and $f_m$ supported in $(-b,b)$. For such potentials 
one verifies $\alpha \cdot \nabla \Phi_{e} = ( \alpha \cdot \nu) f'_e$ and $V_q = 0$, and hence
the expression in~\eqref{ExternalFieldsNotDelta} simplifies to
\begin{equation*}
A_0 + f_e I_N + f_s \alpha_0 - (\alpha \cdot \nu) f_{m} + i \mu ( \alpha \cdot \nu) \alpha_0 f'_e.
\end{equation*}
For suitable choices of $f_e, f_s, f_m$, or $\mu f_e'$ one can replace the term $f_e I_N$ by $\eta I_N \delta_\Sigma$, $f_s \alpha_0$ by $\tau \alpha_0 \delta_\Sigma$,  $(\alpha \cdot \nu) f_{m}$ by $\omega (\alpha \cdot \nu) \delta_\Sigma$, or $i \mu ( \alpha \cdot \nu) \alpha_0 f'_e$ by $i \lambda ( \alpha \cdot \nu) \alpha_0 \big) \delta_{\Sigma}$ with  interaction strengths $\eta, \tau, \omega, \lambda \in \dR$. From a mathematical point of view it makes sense to study Dirac operators with all of the above singular potentials simultaneously, so we arrive at the expression
\begin{equation*} 
A_0 + \big( \eta I_N + \tau \alpha_0 - \omega (\alpha \cdot \nu) + i \lambda ( \alpha \cdot \nu) \alpha_0 \big) \delta_{\Sigma};
\end{equation*}
cf. \cite{AMV14, AMV15, BEHL19, Ben21, CLMT21}. The latter replacement was justified rigorously by approximation procedures in \cite{CLMT21, MP18a, MP16, S89, T20}.
We note that, without loss of generality, one can assume $\omega=0$, since this term can be gauged away by an appropriate unitary transformation, see \cite[Theorem~2.1]{CLMT21} or \cite[Theorems~1.2 and~1.3]{M17}. This finally leads to the 
main object of interest in the present paper: For interaction strengths $\eta, \tau, \lambda \in \dR$ we will consider Dirac operators $A_{\eta, \tau, \lambda}$ 
with singular perturbations formally given by 
\begin{equation}  \label{Diracpertubed}
A_{\eta, \tau, \lambda} = A_0 + \big( \eta I_N + \tau \alpha_0 + i \lambda ( \alpha \cdot \nu) \alpha_0 \big) \delta_{\Sigma};
\end{equation}
here we call $\eta I_N \delta_\Sigma$ the electrostatic, $\tau \alpha_0 \delta_\Sigma$ the Lorentz scalar, and $i \lambda ( \alpha \cdot \nu) \alpha_0 \delta_\Sigma$ the anomalous magnetic $\delta$-shell interaction, respectively.
%

The ``action'' of the $\delta$-potential on functions that are sufficiently smooth in $\Omega_\pm$ such that they admit Dirichlet traces $\gamma_D^{\pm} f_{\pm}$, $f_\pm := f \upharpoonright \Omega_\pm$, on $\Sigma$ in a suitable sense is defined as the anti-linear distribution
\begin{equation*}
\langle \delta_{ \Sigma } f , \varphi \rangle = \frac{1}{2} \int_{\Sigma} \big(\gamma_D^{+} f_{+} + \gamma_D^{-} f_{-}  \big) \overline{\varphi} d \sigma, \quad \varphi \in C_0^\infty(\mathbb{R}^q).
\end{equation*}
A formal integration by parts shows that the distribution $A_{\eta, \tau, \lambda} f$ is regular if and only if $(\alpha \cdot \nabla) f_\pm \in L^2(\Omega_\pm;\dC^N)$ and $f$ satisfies the jump condition
\begin{equation} \label{jump_condition}
  i (\alpha \cdot \nu) (\gamma_D^+ f_+ - \gamma_D^- f_-) + \frac{1}{2} \big( \eta I_N + \tau \alpha_0 + i \lambda ( \alpha \cdot \nu) \alpha_0 \big) (\gamma_D^+ f_+ + \gamma_D^- f_-) = 0;
\end{equation}
cf. \cite{AMV14, BEHL19}. In a similar way, one finds in space dimension $q=1$  with $\Sigma = \{ 0 \}$, $\Omega_+=(0,\infty)$, $\Omega_- = (-\infty, 0)$, and $\nu = -1$ that $A_{\eta, \tau, \lambda} f$ is regular if and only if $f_\pm \in H^1(\Omega_\pm;\dC^2)$ and $f$ fulfills
\begin{equation} \label{jump_condition1D}
  -i \alpha_1 (f(0+) - f(0-)) + \frac{1}{2} \big( \eta I_2 + \tau \alpha_0 - i \lambda \alpha_1 \alpha_0 \big) (f(0+) + f(0-)) = 0.
\end{equation}
Hence, we will define $A_{\eta, \tau, \lambda}$ on the set of functions that satisfy~\eqref{jump_condition} or \eqref{jump_condition1D}. To do this in a mathematically rigorous way, it is necessary to study Sobolev spaces that are suitable for the investigation of Dirac operators and to provide trace theorems on them. It is one goal in this paper to do this also in the case that $\Sigma$ is only a Lipschitz smooth curve in $\mathbb{R}^2$ or surface in $\mathbb{R}^3$.

The non-relativistic counterpart of~\eqref{Diracpertubed}, Schr\"odinger operators with singular $\delta$-potentials, were studied intensively and led to the discovery of several interesting spectral effects \cite{BEL14_JPA, E08, EI01, EK03, EY01}. While the replacement of regular potentials by $\delta$-potentials may simplify the spectral analysis of the Hamiltonian considerably, it leads to technical  difficulties in its mathematically rigorous definition due to the appearance of jump conditions in the operator domain in a similar vein as in~\eqref{jump_condition}. One natural way to introduce and study Schr\"odinger operators with singular interactions is via the corresponding quadratic form \cite{BEKS94,E08,EK15,H89,SV96}, but also extension theoretic approaches and in particular boundary triple techniques were successfully applied to investigate these models \cite{BEHL20,BLL13,BLLR18,MPS16}. Form methods are not suitable to study Dirac operators with singular interactions due to a lack of semi-boundedness. However, it seems natural to use boundary triple methods for this purpose.

\subsection{Boundary triples and Weyl functions}\label{subsub}

Assume that  $S$ is a densely defined closed symmetric operator in a Hilbert space  $\mathcal{H}$. Then an (ordinary) boundary triple $\{ \mathcal{G}, \Gamma_0 , \Gamma_1\}$ consists of a Hilbert space  $\mathcal{G}$ and two linear mappings $\Gamma_0, \Gamma_1 : \dom S^{\ast} \to \mathcal{G}$ such that 
$(\Gamma_0 , \Gamma_1)^\top : \dom S^{\ast} \to \mathcal{G} \times \mathcal{G}$ is surjective and the so-called ``abstract Green's identity'' 
\begin{equation}\label{absgreen} 
  ( S^{\ast} f,g)_{\mathcal H}-(f,S^{\ast} g)_{\mathcal H}=(\Gamma_1 f,\Gamma_0 g)_{\mathcal G}-(\Gamma_0 f,\Gamma_1 g)_{\mathcal G}
\end{equation}
is valid for all $f,g \in \dom S^{\ast}$.  To a boundary triple one associates a self-adjoint reference operator $A_0 := S^* \upharpoonright \ker \Gamma_0$ in $\mathcal H$ and the Weyl function $z\mapsto M(z)$ via 
$M(z)\Gamma_0 f_z=\Gamma_1 f_z$ for $f_z\in\ker(S^\ast -z)$, $z\in\rho(A_0)$.
We note that  the Weyl function is a Nevanlinna function whose values are bounded operators in $\mathcal{G}$.
Extensions of $S$ are now defined as restrictions of $S^\ast$ by
$$A_{\vartheta} = S^{\ast} \upharpoonright \ker( \Gamma_1 - \vartheta \Gamma_0 ),$$ 
where the parameter $\vartheta$  is a linear operator (or relation) in $\mathcal{G}$. Thus, the functions $f\in\dom A_\vartheta$ satisfy the abstract boundary condition $\Gamma_1 f = \vartheta \Gamma_0 f$. 
In the context of ordinary boundary triples it turns out that $A_{\vartheta}$  is self-adjoint in  $\mathcal{H}$  if and only if $\vartheta$  is self-adjoint in $\mathcal{G}$ and, moreover, 
one has an explicit Krein-type formula for the resolvent of $A_\vartheta$
and 
the spectral properties of $A_\vartheta$ are encoded in the parameter $\vartheta$ and the Weyl function $M$. Setting formally $B=-\vartheta^{-1}$ the parametrization of the  extensions of $S$ can also be given in the form 
$$A_{[B]} = S^{\ast} \upharpoonright \ker( \Gamma_0 + B \Gamma_1 ),$$ 
the other aspects 
of the theory translate in a straightforward manner; we refer to the monograph \cite{BHS20} and also Section~\ref{section_boundary_triples} for more details.

In applications to partial differential operators it is natural and 
desirable to identify $\mathcal{G}$ as an $L^2$-space on the boundary or interface
and to choose the boundary maps $\Gamma_0$ and $\Gamma_1$ as appropriate trace maps,
e.g. Dirichlet and Neumann traces in the case of second order elliptic operators,
which also implies that the associated Weyl function is the Dirichlet-to-Neumann map.
However, it turns out that the values of the trace maps on the maximal domain 
of a partial differential operator (which is typically $\dom S^*$) are often 
too singular and the classical Green's identity can not be extended onto the full maximal domain.
One way to overcome these problems is to generalize the notion of ordinary boundary
triples by defining $\Gamma_0$ and $\Gamma_1$  and requiring  \eqref{absgreen} only on a core of $S^*$.
This leads to the notion of quasi boundary triples proposed in \cite{BL07}, which 
is tailormade for extension problems involving partial differential operators. We refer also to
\cite{BL12,BLLR18,BM14} and \cite{DHM20,DHM22,DHMS06,DHMS09,DHMS12,DM95} 
for generalized boundary triples and other extensions of the notion 
of boundary triples and their Weyl functions. While there is no one to one correspondence of self-adjoint parameters $\vartheta$ and extensions $A_\vartheta$ for quasi and generalized boundary triples, there are other criteria that are useful to show the self-adjointness and spectral properties of $A_\vartheta$, see Section~\ref{section_boundary_triples} for details.

In the present situation of Dirac operators the adjoint operator $S^*$ is given as the unperturbed Dirac operator 
$- i (\alpha \cdot \nabla) + m \alpha_{0}$
on $\Omega_\pm$, defined for all $L^2$-functions $f=f_+\oplus f_-$ that satisfy $(\alpha \cdot \nabla)f_\pm \in L^2(\Omega_\pm;\dR^N)$.  The crucial jump condition \eqref{jump_condition} for the definition of the Dirac operator $A_{\eta,\tau,\lambda}$ can be fit in the
abstract approach of quasi boundary triples or generalized boundary triples 
by using the boundary mappings
\begin{equation} \label{def_Gammaint}
  \Gamma_0 f = i (\alpha \cdot \nu) (\gamma_D^+ f_+ -  \gamma_D^- f_-) \quad \text{and} \quad \Gamma_1 f = \frac{1}{2} (\gamma_D^+ f_+ + \gamma_D^- f_-)
\end{equation}
defined on the core $\dom S^* \cap H^s(\dR^q \setminus \Sigma;\dC^N)$ for $s \in [\tfrac{1}{2},1]$. Furthermore, the matrix $B= \eta I_N + \tau \alpha_0 + \lambda i (\alpha \cdot \nu) \alpha_0$ acts as the parameter
in the boundary space $L^2(\Sigma; \mathbb{C}^N)$. Hence, 
the condition \eqref{jump_condition} is satisfied by a function $f\in\dom S^*\cap H^s(\dR^q \setminus \Sigma;\dC^N)$ if and only if 
$f\in\ker (\Gamma_0 + B\Gamma_1)$. This leads to a rigorous definition of the operator 
in \eqref{Diracpertubed} as
\begin{equation}\label{Diracperturbed2}
\begin{split}
 A_{\eta, \tau, \lambda} f &=
               \big(- i (\alpha \cdot \nabla)f_+  + m \alpha_{0} f_+ \big) \oplus \big(
               - i (\alpha \cdot \nabla)f_-  + m \alpha_{0} f_-
                            \big),\\
                            \dom A_{\eta, \tau, \lambda}  & =  \ker (\Gamma_0  + B\Gamma_1 ).
\end{split}
\end{equation}
Depending on the Sobolev regularity of 
the core $\dom S^* \cap H^s(\dR^q \setminus \Sigma;\dC^N)$ the triple $\{ L^2(\Sigma; \mathbb{C}^N), \Gamma_0, \Gamma_1\}$ is a quasi boundary triple for $s\in (\tfrac{1}{2},1]$
and a generalized 
boundary triple in the case $s=\tfrac{1}{2}$. Note that the convenient choice of the parameter $s$ is related to the smoothness of $\Sigma$: while for smooth $\Sigma$ the value $s=1$ is most suitable, the smaller value $s=\frac{1}{2}$ is particularly useful for the case of Lipschitz smooth $\Sigma$, which is considered in this paper.
For completeness we mention that
in the one-dimensional case matters simplify and the one-dimensional analogue of the boundary maps in \eqref{def_Gammaint}, that is, 
\begin{equation} \label{boundary_mappings_intro}
  \Gamma_0 f = -i \alpha_1 (f(0+) -  f(0-)) \quad \text{and} \quad \Gamma_1 f = \frac{1}{2} (f(0+) +  f(0-)),
\end{equation}
leads to an ordinary boundary triple 
$\{ \mathbb C^2, \Gamma_0, \Gamma_1\}$.

\subsection{Dirac operators and extension theory}

Using boundary triples and closely related techniques, several advances were made in the recent years in the study of the operator $A_{\eta, \tau, \lambda}$ in~\eqref{Diracperturbed2}. It turned out for sufficiently smooth and compact interaction supports $\Sigma$ that the operator with interaction strengths $\eta,\tau,\lambda\in\mathbb R$ such that 
\begin{equation} \label{non_critical_intro}
  \left( \frac{\eta^2-\tau^2-\lambda^2}{4}-1 \right)^2 - \lambda^2 \neq 0,
\end{equation}
which is referred to as {\it non-critical case}, has properties as one may expect them from the model. The essential spectrum coincides with $\sigma(A_0)$ in~\eqref{spec}, there are at most finitely many discrete eigenvalues in the gap $(-m,m)$ of $\sigma(A_0)$, and functions in $\dom A_{\eta, \tau, \lambda}$ have $H^1$-smoothness in $\mathbb{R}^q \setminus \Sigma$. However, in the {\it critical case} there is a lack of Sobolev regularity in $\dom A_{\eta, \tau, \lambda}$ and there may be additional points or even intervals in the essential spectrum. Furthermore, it was observed that $A_{\eta, \tau, \lambda}$ decouples into the orthogonal sum of Dirac operators in $L^2(\Omega_\pm; \mathbb{C}^N)$ with certain 
boundary conditions on $\Sigma$ if $\eta^2 - \tau^2 - \lambda^2 = -4$; cf. \cite{AMV15, BHOP20, CLMT21, DES89} and Section~\ref{section_confinement}. This phenomenon is known as confinement and means that the $\delta$-potential is, in this case, impenetrable for the particle. Now, the above mentioned boundary conditions are exactly the quantum dot boundary conditions that appear in the analysis of graphene
and hence, properties of $A_{\eta, \tau, \lambda}$ are of interest in the study of graphene and similar materials.

Historically, $A_{\eta, \tau, \lambda}$ was first studied in the one-dimensional case in \cite{GS87} and then further investigated in \cite{AGHH05, BD94, BMP17, BMP18, CMP13, HT22, H99, PR14}. Using these results and a decomposition to spherical harmonics, $A_{\eta, \tau, \lambda}$ was studied in the three-dimensional setting for general interaction strengths and $\Sigma$ being the sphere in \cite{DES89}. Then, after 25 years  with only little progress in the investigation of $A_{\eta, \tau, \lambda}$ in higher dimensions, an extension theoretic approach that is closely related to boundary triple techniques was used in \cite{AMV14} to study the case of non-critical purely electrostatic interactions 
($\tau=\lambda=0$)
supported on the boundary of a bounded $C^2$-domain in dimension three; with the same approach the authors continued their work in \cite{AMV15, AMV16}  and studied several spectral properties, Lorentz scalar interactions, and the confinement case. 
In \cite{BEHL18} quasi boundary triple techniques were used for a qualitative spectral analysis of $A_{\eta, 0, 0}$ for interactions supported on smooth compact surfaces in $\mathbb{R}^3$ in the non-critical case; see also \cite{BEHL19, BHMP20, HOP18, R22a, R22b} for further spectral and scattering properties of $A_{\eta, \tau, \lambda}$ in dimension three for compact surfaces $\Sigma$ and non-critical interactions with $\lambda=0$. 
The case of critical electrostatic interactions was first handled in dimension three in \cite{BH20, OV16}, where an ordinary boundary triple was used in \cite{BH20} and a closely related idea in \cite{OV16}. In particular, it was shown in \cite{BH20} that $0$ may appear as an additional point in the essential spectrum and that there is a loss of Sobolev regularity in the operator domain in the critical case under additional geometrical assumptions.
In dimension two, the self-adjointness and spectral properties of $A_{\eta, \tau, \lambda}$ for $\lambda=0$ and general $C^\infty$-smooth closed curves $\Sigma$ were studied first in \cite{BHOP20} -- here both the critical and the non-critical case were considered and it was shown that for critical interactions $-m \frac{\tau}{\eta}$ belongs to the essential spectrum. In \cite{Ben21, CLMT21} the magnetic interactions were introduced and the self-adjointness and spectral properties were discussed also in the critical purely anomalous magnetic case ($\eta=\tau=0$). In \cite{Ben21} also the self-adjointness of $A_{\eta, \tau, 0}$ with critical combinations of electrostatic and Lorentz scalar interactions supported on $C^2$-smooth surfaces in $\mathbb{R}^3$ was shown, see also \cite{BP22} for a recent contribution to the study of the essential spectrum in this situation with smooth interaction supports.
The approximation of $A_{\eta, \tau, \lambda}$ by Dirac operators with scaled regular potentials under different assumptions is treated in \cite{BHT22b, CLMT21, MP18a, MP16, S89, T20}; surprisingly, a nontrivial rescaling of the coupling constants appears which was related to Klein's paradox. 
Unbounded interaction supports were investigated in \cite{BHT22a, BHT22b, Ben21, FL22, R21, R22a, R22b}. In this case, spectral transitions were observed in the sense that bands of continuous spectrum abruptly change to a single point in the essential spectrum, when the interaction strengths pass in certain configurations from non-critical to critical ones. Non-smooth interaction supports were treated in \cite{Ben22, FL22, PvdB20}. Eventually, we mention that with similar techniques boundary value problems for Dirac operators appearing in the analysis of graphene were directly studied in \cite{ALTR17, BHM20, BFSB17_1, BFSB17_2, CL20, EH22, H21, LTO18, S95} and similar effects as for $A_{\eta, \tau, \lambda}$ were observed.

\subsection{Content of this paper}

It is our main goal to develop a systematic extension theoretic approach to study the self-adjointness and the spectral properties of Dirac operators with singular interactions supported on a point in $\mathbb{R}$, compact and closed curves in $\mathbb{R}^2$ and surfaces in $\mathbb{R}^3$. In particular, we describe self-adjoint realizations of $A_{\eta,\tau, \lambda}$ also for Lipschitz smooth interaction supports and study $A_{\eta, \tau, \lambda}$ with critical interactions involving all three parameters $\eta, \tau, \lambda$ supported on smooth curves and surfaces $\Sigma$. 

As a starting point, we follow the approach in \cite{PR14} and consider the one-dimen\-sio\-nal case. It will be shown that the boundary maps in \eqref{boundary_mappings_intro}
give rise to an ordinary boundary triple $\{\mathbb C^2,\Gamma_0,\Gamma_1\}$
and the corresponding $\gamma$-field and Weyl function are computed; as 
a consequence we obtain the self-adjointness and spectral properties 
of $A_{\eta, \tau, \lambda}$.

The construction of boundary mappings and the investigation of the 
Dirac operators $A_{\eta, \tau, \lambda}$ in the two- and
three-dimensional setting is more involved and requires a careful analysis of the 
underlying function spaces, trace maps, various integral operators, and their mapping properties. 
Indeed, it is one goal in this article to investigate these objects 
also in the case that the interaction support is only a compact and closed Lipschitz smooth curve in $\mathbb{R}^2$ or surface in $\mathbb{R}^3$; cf. Sections~\ref{subsec_SobolevSpaces}--\ref{section_QBT}. 
More precisely, Sobolev spaces associated with the Dirac operator and 
the mapping properties of the Dirichlet trace in this context are necessary in our analysis \cite{BH20, BHOP20, BFSB17_1, OV16}.
In particular, it will turn out that for $s\in[0,1]$ functions $f \in H^s(\Omega_\pm; \mathbb{C}^N)$ with $(\alpha \cdot \nabla) f \in L^2(\Omega_\pm; \mathbb{C}^N)$ in the distributional sense admit Dirichlet traces in suitable Sobolev spaces on $\Sigma$ -- this is essential for the rigorous interpretation of the jump condition \eqref{jump_condition} and the definition of the boundary maps in \eqref{def_Gammaint}
and, thus, leads to a quasi or generalized boundary triple 
$\{ L^2(\Sigma; \mathbb{C}^N), \Gamma_0, \Gamma_1\}$
as already indicated 
in Section~\ref{subsub}, see Sections~\ref{subsec_SobolevSpaces} and \ref{section_QBT} for more details. In the further investigations,
a potential operator $\Phi_z$ and a strongly singular boundary integral operator $\mathcal{C}_z$ associated with the fundamental solution of the unperturbed Dirac equation play an important role, see also \cite{AMV14, AMV15, BH20, BHM20, BHOP20, Ben22, OV16}, where various properties of these objects were studied before in different situations. The map $\mathcal{C}_z$ is closely related to the Cauchy transform on $\Sigma$ in dimension two and the Riesz transform on $\Sigma$ in dimension three; it will
be identified as the Weyl function corresponding to $\{ L^2(\Sigma; \mathbb{C}^N), \Gamma_0, \Gamma_1\}$ 
and hence it will play a key role in the analysis 
of $A_{\eta, \tau, \lambda}$. More detailed properties of $\mathcal{C}_z$ that are needed for the study of $A_{\eta, \tau, \lambda}$ when $\Sigma$ is Lipschitz smooth are given in Section~\ref{section_Lipschitz} and when $\Sigma$ is $C^\infty$-smooth in Section~\ref{critical-section}.

Using the quasi boundary triples and generalized boundary triples from Section~\ref{section_QBT} 
we study the self-adjointness and the spectrum of $A_{\eta, \tau, \lambda}$ for non-critical interaction strengths in the case that $\Sigma$ is a Lipschitz smooth closed and compact curve in $\mathbb{R}^2$ or surface in $\mathbb{R}^3$. 
When we fix the smoothness index $s=\tfrac{1}{2}$ then
$\{ L^2(\Sigma; \mathbb{C}^N), \Gamma_0, \Gamma_1\}$ in~\eqref{def_Gammaint} becomes a generalized 
boundary triple and $A_{\eta, \tau, \lambda}$ can be rigorously defined as in \eqref{Diracperturbed2} 
acting as the unperturbed Dirac operator in $\Omega_\pm$, while its domain consists of those functions $f$ that have $H^{1/2}$-smoothness in $\Omega_\pm$, such that $(\alpha \cdot \nabla) f_\pm \in L^2(\Omega_\pm; \mathbb{C}^N)$ in the distributional sense, and that satisfy~\eqref{jump_condition}. We remark that one can not expect a better Sobolev regularity in the domain of $A_{\eta, \tau, \lambda}$, as, e.g., $\dom A_{\eta, \tau, \lambda} \subset H^1(\mathbb{R}^q \setminus \Sigma; \mathbb{C}^N)$, as it is known for special geometries that this does not hold; cf. \cite[Theorem~1.2~(ii)]{LTO18}.
In Section~\ref{section_Lipschitz} we derive  conditions on $\eta, \tau, \lambda$ under those we can describe self-adjoint realizations of $A_{\eta, \tau, \lambda}$, see Theorem~\ref{selfadjoint_theorem} for details. 
While we can not show the self-adjointness for all non-critical interaction strengths, the above mentioned conditions generalize known ones from \cite{Ben22} and include important cases as purely Lorentz scalar and non-critical purely anomalous magnetic interactions and the non-critical confinement case. For at least $C^1$-smooth $\Sigma$ our conditions reduce to all non-critical interaction strengths in~\eqref{non_critical_intro}. Moreover, we show under the above mentioned conditions that the essential spectrum of $A_{\eta, \tau, \lambda}$ is
\begin{equation*}
  \sigma_\textup{ess}(A_{\eta, \tau, \lambda})=(-\infty, -m] \cup[m, \infty)
\end{equation*}
and that there are at most finitely many discrete eigenvalues in  $(-m,m)$.
Eventually, we study in Section~\ref{section_confinement} the confinement case with non-critical and critical interaction strengths in more detail. In particular, we describe self-adjoint realizations and study the spectral properties for all combinations of the coupling constants in the confinement case for arbitrary bounded Lipschitz smooth curves in $\mathbb{R}^2$ and surfaces in $\mathbb{R}^3$. Since graphene is a hexagonal lattice, the associated differential operators have to be studied on non-smooth domains and thus, our results also contribute to the study of graphene and similar materials.

In order to handle critical interaction strengths as well, we construct in Section~\ref{section_OBT} in a similar way as in \cite{BH20, BHOP20, CLMT21} an ordinary boundary triple under the more general assumption of a Lipschitz smooth interaction support. The description of $A_{\eta, \tau, \lambda}$ via this ordinary boundary triple is not as natural as via the above mentioned quasi or generalized boundary triple and the corresponding parameter is an unbounded operator involving also the boundary integral operator $\mathcal{C}_z$ for $z=0$. To show the self-adjointness and study the spectral properties of $A_{\eta, \tau, \lambda}$ also for critical interaction strengths even a more detailed analysis of $\mathcal{C}_z$ is necessary. 
For this we have to assume additionally that $\Sigma$ is $C^\infty$-smooth. In this case we show in Theorem~\ref{theorem_self_adjoint_critical} for all interaction strengths the self-adjointness of $A_{\eta, \tau, \lambda}$. In particular, it turns out that in the critical case there is a loss of Sobolev regularity in the domain of $A_{\eta, \tau, \lambda}$: while in the non-critical case it is contained in $H^1(\mathbb{R}^q \setminus \Sigma; \mathbb{C}^N)$, which is for smooth $\Sigma$ better than for Lipschitz smooth interaction supports as discussed above, one has $\dom A_{\eta, \tau, \lambda} \not\subset H^s(\mathbb{R}^q \setminus \Sigma; \mathbb{C}^N)$ for any $s>0$ in the critical case. 
While the spectral properties of $A_{\eta, \tau, \lambda}$ are known in the non-critical case from the considerations for  less smooth $\Sigma$, we study them in the critical case in Theorem~\ref{theorem_essential_spectrum}. There we prove that also for critical interaction strengths involving all three parameters $\eta, \tau, \lambda$ the point $-m \frac{\tau}{\eta}$ may belong to the essential spectrum of $A_{\eta, \tau, \lambda}$. While the latter is always true in dimension two, an additional geometric assumption is required in dimension three in the present paper
(we also refer to \cite{BP22} for a very recent related result on additional intervals in the essential spectrum 
for the case $\lambda=0$).
This is another step forward in the analysis of $A_{\eta, \tau, \lambda}$ for general critical interaction strengths, which were only analyzed before when $\lambda=0$ or when $\eta=\tau=0$ and $\lambda^2=4$.

\subsection{Structure of the paper}

In Section  \ref{section_boundary_triples} we briefly recall the definition
of boundary triples and their Weyl functions from abstract extension theory 
and provide some related results that will be needed in the further course. 
As a warm up, we treat one-dimensional Dirac operators in Section \ref{Sec_OneDimDirac}.  In Section \ref{section_3d} we construct quasi and ordinary boundary triples and characterize the necessary function spaces and integral operators.  Then, in Section \ref{section_Lipschitz}, we study Dirac operators with constant electrostatic, Lorentz scalar, and anomalous magnetic $\delta$-interactions on Lipschitz hypersurfaces in the case of non-critical interaction strengths and discuss in Section \ref{section_confinement} the confinement case. Finally, in Section  \ref{critical-section}, we study critical interaction strengths in the case of smooth curves and surfaces as interaction supports.   \vspace{3mm}

 \subsection*{Notations:}  Unless stated otherwise, we will always assume that $\Omega_{+} \subseteq \dR^q$, $q \in \{2,3\}$, is a bounded  Lipschitz domain 
and $\Omega_{-} = \dR^q \setminus \overline{\Omega_{+}}$ is the corresponding exterior domain with boundary $\Sigma = \partial \Omega_{-} = \partial \Omega_{+}$.  The unit normal vector field on $\Sigma$ pointing outwards of $\Omega_+$ is denoted by $\nu$.

Let $q \in \{1, 2, 3 \}$.
For $s \in \dR$ the spaces $H^s(\dR^q; \dC^N)$, $H^s(\Omega_{\pm}; \dC^N)$, and $H^s(\Sigma; \dC^N)$ are the standard ($L^2$-based) Sobolev spaces of $\dC^N$-valued functions defined on $\dR^q$, $\Omega_{\pm}$, and $\Sigma$, respectively. Furthermore, we define the space $H^s_0(\Omega_{\pm}; \dC^N)$ as the closure of the space of infinitely times differentiable functions with compact support $C_0^{\infty}(\Omega_{\pm}; \dC^N)$ in $H^s(\Omega_{\pm}; \dC^N)$. We use the notation $C_0^\infty(\overline{\Omega_\pm}; \mathbb{C}^N) = \{ f \upharpoonright \Omega_\pm: f \in C_0^\infty(\mathbb{R}^q; \mathbb{C}^N) \}$. We denote the restrictions of functions $f: \mathbb{R}^q \rightarrow \mathbb{C}^N$ onto $\Omega_\pm$ by $f_\pm$; in this sense we write $H^s(\mathbb{R}^q \setminus \Sigma; \dC^N) = H^s(\Omega_+; \dC^N) \oplus H^s(\Omega_-; \dC^N)$ and identify $f \in H^s(\mathbb{R}^q \setminus \Sigma; \dC^N)$ with $f=f_+ \oplus f_-$, where $f_\pm \in H^s(\Omega_\pm; \dC^N)$. For the Dirichlet trace operator defined on $H^1(\mathbb{R}^q; \mathbb{C}^N)$ we write $\gamma_D: H^1(\mathbb{R}^q; \mathbb{C}^N) \rightarrow H^{1/2}(\Sigma; \mathbb{C}^N)$, while the trace map defined on suitable subspaces of $H^s(\Omega_\pm; \mathbb{C}^N)$, $s\in [0,1]$, 
is denoted by $\gamma_D^\pm$; cf. Lemma~\ref{lemma_trace_theorem} and Corollary~\ref{corollary_trace_extension}.


Let $\mathcal{H}$ and $\mathcal{G}$ be two Hilbert spaces. The domain, kernel, and range of a linear operator $T$ from $\mathcal{G}$ to $\mathcal{H}$ are denoted by $\text{dom}\, T$, $\text{ker}\, T$, and $\text{ran}\, T$, respectively. The resolvent set, the spectrum, the essential spectrum, the discrete spectrum, and the point spectrum of a self-adjoint operator $T$ are denoted by $\rho(T)$, $\sigma(T)$, $\sigma_{\text{ess}}(T)$, $\sigma_{\text{disc}}(T)$, and $\sigma_\textup{p}(T)$.  Furthermore, the restriction  to a subspace $\mathcal{U} \subset \text{dom} \, T$ is denoted by $T \upharpoonright \mathcal{U}$.  If we use for the anti-dual space of $\mathcal{H}$ the symbol $\mathcal{H}'$, then we write $\langle \cdot, \cdot \rangle_{\mathcal{H} \times \mathcal{H}'}$ for the sesquilinear duality product. Moreover, we define the anti-dual operator $A': \mathcal{G}' \rightarrow \mathcal{H}'$ of a bounded linear map $A: \mathcal{H} \rightarrow \mathcal{G}$ by the relation $\langle A \varphi, \psi \rangle_{\mathcal{G} \times \mathcal{G}'} = \langle \varphi, A' \psi \rangle_{\mathcal{H} \times \mathcal{H}'}$ for all $\varphi \in \mathcal{H}$ and $\psi \in \mathcal{G}'$.

For $q \in \{1,2\}$ the matrices $\alpha_j$ in \eqref{anti_commutation} will be chosen as the $2\times 2$ Pauli spin matrices
\begin{equation} \label{Dirac_matrices1}
\alpha_1 = \sigma_1 = \left( \begin{array}{cc}
0 & 1\\                                              
1 & 0 \\                                            
\end{array}\right), \quad \alpha_2 = \sigma_2 = \left( \begin{array}{cc}
0 & -i\\                                              
i & 0 \\                                            
\end{array}\right), \quad \alpha_0 = \sigma_3 = \left( \begin{array}{cc}
1 & 0\\                                              
0 & -1 \\                                            
\end{array}\right),
\end{equation}
and for  $q = 3$ as the $4\times 4$ Dirac matrices 
\begin{equation} \label{Dirac_matrices2}
\alpha_j = \left( \begin{array}{cc}
0 & \sigma_j \\                                              
\sigma_j & 0 \\                                            
\end{array}\right), \quad j \in \{1,2,3\}, \quad  \alpha_0 = \left( \begin{array}{cc}
I_2 & 0 \\                                              
0 & -I_2 \\                                            
\end{array}\right).
\end{equation}
Finally, we provide the integral kernel of the resolvent of the free Dirac operator $(A_0 - z)^{-1}$, which can be computed with the help of \eqref{resolvent_A_0} and the known form of the resolvent of the Laplace operator. 
For $q \in\{1,2,3\}$  the integral kernel $G_{z,q}$ is explicitly given by
\begin{equation} \label{def_G_lambda}
\begin{split}
G_{z, 1}(x) &=  \frac{i}{2}  e^{i k(z) |x| } \left( \begin{array}{cc}
\zeta(z) & \sgn (x)\\                                              
\sgn (x) & \zeta(z)^{-1} \\                                            
\end{array}\right),\\
G_{z, 2}(x) &= \frac{k(z)}{2 \pi} K_1 \big( - i k(z) | x |\big)  \frac{(\alpha \cdot x)}{ | x | }  + \frac{1}{2 \pi} K_0 \big(- i k(z) | x |\big) \big( z I_2 + m \alpha_3\big), \\
G_{z,3}(x) &= \left( z I_4 + m \alpha_0 + \left( 1 - i k(z) | x |\right) \frac{i (\alpha \cdot x)}{  | x |^2}  \right) \frac{1}{4 \pi | x |} e^{i k(z) |x|},
\end{split}
\end{equation}
respectively;
cf. \cite{AGHH05, BHOP20, T92, W03}. Here, we write $K_j$ for the modified Bessel functions of the second kind, set
\begin{equation} \label{def_k_zeta}
k(z) = \sqrt{z^2 - m^2} \quad \text{and } \zeta(z) = \frac{z + m }{ k(z)} = \frac{z + m }{ \sqrt{z^2 - m^2}},
\end{equation}
and choose $\sqrt{w}$ for $w \in \mathbb{C} \setminus [0, \infty)$ such that $\textup{Im} \sqrt{w} > 0$.

\subsection*{Acknowledgement.}
The authors are indebted to the referee for a careful reading of our manuscript and helpful suggestions to improve the text.
Moreover, we thank Dale Frymark and Mat\v{e}j Tu\v{s}ek for fruitful discussions and Konstantin Pankrashkin for informing us about the
manuscript \cite{BP22}.
Jussi Behrndt, Markus Holzmann, and Christian Stelzer gratefully acknowledge financial support by the Austrian Science Fund (FWF): P33568-N. This publication is based upon work from COST Action CA 18232 MAT-DYN-NET, supported by COST (European Cooperation in Science and Technology), www.cost.eu.

\section{Ordinary, generalized, and quasi boundary triples} \label{section_boundary_triples}

In this section we briefly recall basic definitions of ordinary, generalized, and quasi boundary triples, and some related techniques 
in extension and spectral theory of symmetric and self-adjoint operators in Hilbert spaces. 
The concepts will be presented such that they can be  applied directly to 
Dirac operators with singular interactions in the next sections. 
We refer the reader to \cite{BHS20,BL07,BL12,BGP08,DM91,DM95,GG91} for more details on boundary triple techniques. 

Throughout this section $\mathcal{H}$ denotes a complex Hilbert space with inner product~$(\cdot, \cdot)_\mathcal{H}$
and~$S$ is a densely defined closed symmetric operator with adjoint~$S^*$.

\begin{definition}\label{qbtdef}
  Let $T$ be a linear operator in $\mathcal{H}$ such that $\overline{T} = S^*$. A triple 
  $\{ \mathcal{G}, \Gamma_0, \Gamma_1 \}$ consisting of
  a Hilbert space $\mathcal{G}$
  and linear mappings $\Gamma_0, \Gamma_1: \dom T \rightarrow \mathcal{G}$
  is called a {\em quasi boundary triple} for $S^*$ if the following holds:
\begin{itemize}
 \item [{\rm (i)}] For all $f,g\in\dom T$ the abstract Green's identity
 \begin{equation*}
  (Tf,g)_{\mathcal H}-(f,Tg)_{\mathcal H}=(\Gamma_1 f,\Gamma_0 g)_{\mathcal G}-(\Gamma_0 f,\Gamma_1 g)_{\mathcal G}
 \end{equation*}
 is true.
 \item [{\rm (ii)}] The range of $\Gamma=(\Gamma_0,\Gamma_1)$ is dense in $\mathcal G\times\mathcal G$.
 \item [{\rm (iii)}] The restriction $A_0:=T\upharpoonright\ker\Gamma_0$ is a self-adjoint operator in $\mathcal H$.
\end{itemize}
If {\rm (i)} and {\rm (iii)} hold, and the mapping $\Gamma_0:\dom T\rightarrow\mathcal G$ is surjective, then  
$\{ \mathcal{G}, \Gamma_0, \Gamma_1 \}$ is called {\em generalized boundary triple}; if
{\rm (i)} and {\rm (iii)} hold, and the mapping
$\Gamma=(\Gamma_0,\Gamma_1):\dom T\rightarrow \mathcal G\times\mathcal G$
is surjective, then  
  $\{ \mathcal{G}, \Gamma_0, \Gamma_1 \}$ is called {\em ordinary boundary triple}.
\end{definition}

We remark that the above (non-standard) definition of generalized and ordinary boundary triples is equivalent to the 
usual one given in, e.g., \cite{BHS20, BGP08,DM91,DM95,GG91},
see~\cite[Corollaries~3.2 and~3.7]{BL07}. 
In particular, if $\{ \mathcal{G}, \Gamma_0, \Gamma_1 \}$ is an ordinary boundary triple,
then $T=S^*$. Moreover, each ordinary boundary triple is a generalized and quasi boundary triple, and each generalized boundary triple is a quasi boundary triple; cf. \cite[Corollary~3.7]{BL07}.
Note that a quasi boundary triple, generalized boundary triple, or ordinary boundary triple for $S^*$ exists if and only if 
the defect numbers $\dim\ker(S^*\pm i)$ coincide, i.e. if and only if $S$ admits self-adjoint extensions in $\mathcal H$.
Moreover, the operator $T$ in Definition~\ref{qbtdef} is in general 
not unique.

Next, we recall the definition of the $\gamma$-field and the Weyl function associated with the quasi 
boundary triple $\{ \mathcal{G}, \Gamma_0, \Gamma_1 \}$.
These mappings will allow us to describe spectral properties of self-adjoint extensions of $S$.
Let $A_0 = T \upharpoonright \ker \Gamma_0$. Then the direct sum decomposition
\begin{equation} \label{decomposition}
  \dom T = \dom A_0 \dot{+} \ker(T - z)=\ker\Gamma_0\dot{+} \ker(T - z),\qquad z\in\rho(A_0),
\end{equation}
holds.
The definition of the $\gamma$-field and Weyl function for quasi boundary triples
is in accordance with the one for ordinary and generalized boundary triples in \cite{DM91,DM95}.

\begin{definition} 
 Assume that $T$ is a linear operator in $\mathcal{H}$ satisfying $\overline{T} = S^*$ and let 
 $\{ \mathcal{G}, \Gamma_0, \Gamma_1 \}$ be a quasi boundary triple for $S^*$.
 Then the corresponding $\gamma$-field $\gamma$ and Weyl function $M$ are defined by
 \begin{equation*}
  \rho(A_0) \ni z\mapsto\gamma(z)=\bigl(\Gamma_0\upharpoonright\ker(T-z)\bigr)^{-1}
 \end{equation*}
and
 \begin{equation*}
  \rho(A_0) \ni z\mapsto M(z)=\Gamma_1 \bigl(\Gamma_0\upharpoonright\ker(T-z)\bigr)^{-1},
 \end{equation*}
respectively.
\end{definition}

From \eqref{decomposition} we get that the $\gamma$-field is well defined and that
$\ran \gamma(z) = \ker(T-z)$ holds for all $z \in \rho(A_0)$.
Moreover, $\dom \gamma(z) = \ran \Gamma_0$ is dense in $\mathcal{G}$ 
by Definition~\ref{qbtdef}.
With the help of the abstract Green's identity in Definition~\ref{qbtdef}~(i)
one verifies that
\begin{equation}\label{equation_gamma_star}
 \gamma(z)^*=\Gamma_1 (A_0-\overline{z})^{-1}, \qquad z \in \rho(A_0);
\end{equation}
this is a bounded and everywhere defined operator from $\mathcal H$ to $\mathcal G$. Therefore,
$\gamma(z)$ is a (in general not everywhere defined) bounded operator; cf. 
\cite[Proposition~2.6]{BL07} or \cite[Proposition~6.13]{BL12}. If $\{ \mathcal{G}, \Gamma_0, \Gamma_1 \}$
is a generalized or ordinary boundary triple, then $\gamma(z)$ is automatically bounded and everywhere defined.

Next, we state some useful properties of the Weyl function $M$ corresponding to the quasi boundary triple 
$\{\mathcal G,\Gamma_0,\Gamma_1\}$; see, e.g,~\cite[Proposition~2.6]{BL07} for proofs of these statements.
For any $z \in \rho(A_0)$ the operator $M(z)$ is densely defined in
$\mathcal G$ with $\dom M(z)=\ran\Gamma_0$ and
$\ran M(z)\subset \ran\Gamma_1$. Next, one has $\ran \Gamma_0 \subset \dom M(z)^*$ and
for all $z,\mu\in\rho(A_0)$ and $\varphi\in\ran\Gamma_0$ the relation
\begin{equation}\label{equation_diff_m}
 M(z)\varphi-M(\mu)^*\varphi=(z-\overline\mu)\gamma(\mu)^*\gamma(z)\varphi
\end{equation}
holds.
Therefore, we see that $M(z)\subset M(\overline{z})^*$ for any $z\in\rho(A_0)$ and hence
$M(z)$ is a closable, but in general unbounded linear operator in 
$\mathcal G$. If $\{ \mathcal{G}, \Gamma_0, \Gamma_1 \}$
is a generalized or ordinary boundary triple, then
$M(z)$ is bounded and everywhere defined.

In the main part of the paper we are going to use ordinary boundary triples, generalized boundary triples, quasi boundary triples, 
and their Weyl functions to define and study self-adjoint extensions of the underlying symmetry $S$.
Let again~$T$ be a linear operator in $\mathcal{H}$ such that $\overline{T} = S^*$,
let $\{ \mathcal{G}, \Gamma_0, \Gamma_1 \}$ be a quasi boundary triple for $S^*$, and
let $\vartheta$ be a linear operator (or relation) in $\mathcal{G}$. Then we define the extension $A_\vartheta$ of $S$ by
\begin{equation}\label{equation_def_extension}
 A_\vartheta=T\upharpoonright \ker(\Gamma_1 - \vartheta \Gamma_0),
\end{equation}
i.e. $f\in\dom T$ belongs to $\dom A_\vartheta$ if and only if $f$ satisfies 
$\Gamma_1 f = \vartheta \Gamma_0 f$. 
If $\vartheta$ is a symmetric operator in $\mathcal{G}$, then Green's identity implies 
\begin{equation}\label{abab}
 (A_\vartheta f,g)_{\mathcal H}-(f,A_\vartheta g)_{\mathcal H}=
 (\vartheta \Gamma_0 f, \Gamma_0 g)_{\mathcal G} - (\Gamma_0 f, \vartheta \Gamma_0 g)_{\mathcal G}=0
\end{equation}
for all $f, g \in \dom A_\vartheta$ and hence the extension $A_\vartheta$ is symmetric in $\mathcal H$. 


Of course, one is mostly interested in the self-adjointness of $A_\vartheta$. If $\{ \mathcal{G}, \Gamma_0, \Gamma_1 \}$ is an ordinary boundary triple, then $A_\vartheta$ is self-adjoint in $\mathcal H$ if and only if
$\vartheta$ is self-adjoint in $\mathcal G$; cf. Theorem~\ref{theorem_Krein_abstract} below.
However, if $\{ \mathcal{G}, \Gamma_0, \Gamma_1 \}$ is a generalized or a quasi boundary triple, then the self-adjointness of $\vartheta$ does, in general, not imply the self-adjointness of $A_\vartheta$, or vice versa. However, the following theorem, where we also state an abstract version of the Birman-Schwinger principle in item~(i) and a 
Krein-type resolvent formula for canonical extensions $A_\vartheta$ in assertion~(iii), will allow us to give conditions for the self-adjointness of $A_\vartheta$; 
for the proof we refer to \cite[Theorems~2.1.3 and~2.6.5]{BHS20}, \cite[Theorem~2.8]{BL07}, and \cite[Theorem~6.16]{BL12}.

\begin{theorem} \label{theorem_Krein_abstract}
Let $T$ be a linear operator in $\mathcal H$ satisfying $\overline{T} = S^*$, let $\{\mathcal G,\Gamma_0,\Gamma_1\}$
be a quasi boundary triple for $S^*$ with $A_0=T\upharpoonright\ker\Gamma_0$, and denote the associated $\gamma$-field and Weyl function 
by $\gamma$ and $M$, respectively.
Let $A_\vartheta$ be the extension of $S$ associated with an operator (or relation) $\vartheta$ in $\mathcal{G}$
as in \eqref{equation_def_extension}.
Then the following holds for all $z\in\rho(A_0)$:
\begin{itemize}
 \item [\rm (i)] $z \in \sigma_{\mathrm{p}}(A_\vartheta)$ if and only if 
 $0 \in \sigma_{\mathrm{p}}(\vartheta - M(z))$. Moreover, 
 \begin{equation*}
   \ker(A_\vartheta-z)=\bigl\{\gamma(z)\varphi:\varphi\in\ker(\vartheta - M(z))\bigr\}.
 \end{equation*}
 \item [{\rm (ii)}] If $z \notin \sigma_{\mathrm{p}}(A_\vartheta)$, then 
 $g\in\ran(A_\vartheta - z)$ if and only if $\gamma(\overline{z})^*g\in\ran(\vartheta - M(z))$.
 \item [{\rm (iii)}] If $z \notin \sigma_{\mathrm{p}}(A_\vartheta)$, then 
 \begin{equation*}
  (A_\vartheta - z)^{-1}g=(A_0-z)^{-1}g
      + \gamma(z)\bigl(\vartheta - M(z)\bigr)^{-1}\gamma(\overline{z})^*g
 \end{equation*}
 holds for all $g\in\ran(A_\vartheta-z)$.
\end{itemize}
If $\{\mathcal G,\Gamma_0,\Gamma_1\}$ is an ordinary boundary triple for $S^*$, then $A_\vartheta$ is self-adjoint in $\mathcal{H}$ if and only if $\vartheta$ is a self-adjoint operator (or relation) in $\mathcal{G}$ and in this case the following holds for all $z\in\rho(A_0)$:
\begin{itemize}
\item [\rm (iv)] $z \in \sigma(A_\vartheta)$ if and only if 
 $0 \in \sigma(\vartheta - M(z))$. 
 \item [\rm (v)] $z \in \sigma_{\mathrm{ess}}(A_\vartheta)$ if and only if 
 $0 \in \sigma_{\mathrm{ess}}(\vartheta - M(z))$. 
\end{itemize}
\end{theorem}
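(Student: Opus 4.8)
The plan is to prove Theorem~\ref{theorem_Krein_abstract} by first establishing items (i)--(iii), which hold for arbitrary quasi boundary triples, and then using the additional structure of ordinary boundary triples to deduce the self-adjointness criterion and items (iv)--(v). Since the excerpt explicitly defers the proof to \cite[Theorems~2.1.3 and~2.6.5]{BHS20}, \cite[Theorem~2.8]{BL07}, and \cite[Theorem~6.16]{BL12}, the cleanest route is to reduce everything to the abstract Green's identity and the defining relations of the $\gamma$-field and Weyl function, exactly as in those references.

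First I would prove (i). If $f = \gamma(z)\varphi \in \ker(T-z)$ with $\Gamma_0 f = \varphi$, then $f \in \dom A_\vartheta$ means $\Gamma_1 f = \vartheta\Gamma_0 f$, i.e. $M(z)\varphi = \vartheta\varphi$, so $\varphi \in \ker(\vartheta - M(z))$; conversely every $\varphi$ in that kernel produces an eigenfunction $\gamma(z)\varphi$ of $A_\vartheta$ at $z$. The decomposition \eqref{decomposition} shows that any eigenfunction of $A_\vartheta$ at $z$ lies in $\ker(T-z)$ (since $z \in \rho(A_0)$ and $A_0 \subset A_\vartheta$ is impossible to have a shared eigenvalue in the resolvent set), which gives the displayed description of $\ker(A_\vartheta - z)$ and the equivalence $z\in\sigma_{\mathrm p}(A_\vartheta) \Leftrightarrow 0\in\sigma_{\mathrm p}(\vartheta - M(z))$. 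For (ii) and (iii), the key computation is the ansatz $f = (A_0-z)^{-1}g + \gamma(z)\varphi$: such an $f$ solves $(T-z)f = g$ automatically, and using $\Gamma_0(A_0-z)^{-1}g = 0$, $\Gamma_0\gamma(z)\varphi = \varphi$, $\Gamma_1\gamma(z)\varphi = M(z)\varphi$, together with the identity $\Gamma_1(A_0-z)^{-1} = \gamma(\overline z)^*$ from \eqref{equation_gamma_star}, the boundary condition $\Gamma_1 f = \vartheta\Gamma_0 f$ becomes $\gamma(\overline z)^* g + M(z)\varphi = \vartheta\varphi$, i.e. $(\vartheta - M(z))\varphi = \gamma(\overline z)^* g$. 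Hence $g\in\ran(A_\vartheta - z)$ iff $\gamma(\overline z)^*g \in \ran(\vartheta - M(z))$, and when $z\notin\sigma_{\mathrm p}(A_\vartheta)$ the operator $\vartheta - M(z)$ is injective so $\varphi$ is uniquely recovered, yielding the Krein-type resolvent formula.

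For the ordinary-boundary-triple part, I would invoke the classical correspondence: when $\{\mathcal G,\Gamma_0,\Gamma_1\}$ is an ordinary boundary triple one has $T = S^*$, the map $\Gamma = (\Gamma_0,\Gamma_1)^\top$ is surjective onto $\mathcal G\times\mathcal G$, $\gamma(z)$ and $M(z)$ are bounded and everywhere defined, and $M(z)$ has boundedly invertible imaginary part. The equivalence ``$A_\vartheta$ self-adjoint $\Leftrightarrow$ $\vartheta$ self-adjoint'' is then standard (\cite[Theorem~2.1.3]{BHS20}): symmetry of $\vartheta$ gives symmetry of $A_\vartheta$ by \eqref{abab}, and surjectivity of $\Gamma$ together with boundedness of $M(\pm i)$ upgrades this to a genuine bijective correspondence of self-adjoint parameters and self-adjoint extensions via the Birman--Schwinger-type analysis of $\vartheta - M(z)$. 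Once $A_\vartheta = A_\vartheta^*$, items (iv) and (v) follow from (i)--(iii): the resolvent formula in (iii) shows that $(A_\vartheta - z)^{-1}$ exists as a bounded everywhere-defined operator precisely when $(\vartheta - M(z))^{-1}$ does (here boundedness and everywhere-definedness of $\gamma(z)$, $\gamma(\overline z)^*$ are essential), giving (iv); and since $\gamma(z)$, $\gamma(\overline z)^*$ are bounded with $\gamma(z)$ having trivial kernel and dense range, the difference of resolvents $(A_\vartheta - z)^{-1} - (A_0 - z)^{-1} = \gamma(z)(\vartheta - M(z))^{-1}\gamma(\overline z)^*$ together with the analogous factorization of $(\vartheta - M(z))^{-1}$ shows that $0$ is in the essential spectrum of $\vartheta - M(z)$ iff $z$ is in the essential spectrum of $A_\vartheta$, which is (v); one standard way to make this rigorous is to note that $\vartheta - M(z)$ and the sandwiched operator $\gamma(z)(\vartheta - M(z))^{-1}\gamma(\overline z)^*$ have the same spectrum off $\{0\}$ up to the invertible factors, invoking \cite[Theorem~2.6.5]{BHS20}.

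I expect the main obstacle to be the self-adjointness equivalence for ordinary boundary triples and the attendant bookkeeping about domains of $\vartheta$ (operator versus relation): one must check that $\dom A_\vartheta$ is exactly characterized by $\Gamma_1 f = \vartheta\Gamma_0 f$ with $\Gamma_0 f \in \dom\vartheta$, that $A_\vartheta^* = A_{\vartheta^*}$, and that surjectivity of $\Gamma$ is what prevents the pathologies (non-self-adjoint $A_\vartheta$ for self-adjoint $\vartheta$) that do occur for genuine quasi or generalized boundary triples. All of this is done in the cited references, so in the paper I would simply carry out the ansatz computations for (i)--(iii) in a few lines and refer to \cite[Theorems~2.1.3 and~2.6.5]{BHS20}, \cite[Theorem~2.8]{BL07}, \cite[Theorem~6.16]{BL12} for the self-adjointness statement and (iv)--(v).
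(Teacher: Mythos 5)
Your proposal is correct and matches the paper's treatment: the paper gives no self-contained proof but refers to \cite[Theorems~2.1.3 and~2.6.5]{BHS20}, \cite[Theorem~2.8]{BL07}, and \cite[Theorem~6.16]{BL12}, which contain exactly the standard arguments you sketch (the decomposition \eqref{decomposition} and the ansatz $f=(A_0-z)^{-1}g+\gamma(z)\varphi$ for (i)--(iii), and the classical ordinary-boundary-triple correspondence for the self-adjointness statement and (iv)--(v)). Your sketched computations are sound, so there is nothing to add.
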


Assertion (ii) of the previous theorem shows  how the self-adjointness of an extension $A_\vartheta$
can be proven if $\{ \mathcal{G}, \Gamma_0, \Gamma_1 \}$ is a generalized or a quasi boundary triple. If $\vartheta$ is symmetric in $\mathcal G$, then $A_\vartheta$ is symmetric in $\mathcal H$ by \eqref{abab}, 
and hence $A_\vartheta$ is self-adjoint if, in addition, $\ran (A_\vartheta \mp i) = \mathcal{H}$. According to Theorem~\ref{theorem_Krein_abstract}~(ii)
the latter is the case, if $\ran \gamma(\mp i)^* \subset \ran(\vartheta - M(\pm i))$.

In Section~\ref{section_Lipschitz} we are going to use  a quasi boundary triple to study properties of Dirac operators with singular $\delta$-shell interactions. However, as indicated in~\eqref{Diracperturbed2} we will not introduce $A_{\eta, \tau, \lambda}$ in the form~\eqref{equation_def_extension}, but as
\begin{equation} \label{equation_def_A_B}
  A_{[B]}=T\upharpoonright \ker(\Gamma_0 + B \Gamma_1),
\end{equation}
where $\{ \mathcal{G}, \Gamma_0, \Gamma_1 \}$ is a quasi boundary triple for $\overline{T}=S^*$ and $B$ is a linear operator in $\mathcal{G}$. Formally, the definition in~\eqref{equation_def_A_B} corresponds to~\eqref{equation_def_extension} with $\vartheta = -B^{-1}$. 
With a similar calculation as in~\eqref{abab} one sees that $A_{[B]}$ is always symmetric in $\mathcal{H}$, if $B$ is symmetric in $\mathcal{G}$.
The following theorem is the counterpart of Theorem~\ref{theorem_Krein_abstract} for $A_{[B]}$; a proof follows, e.g., from \cite[Theorem~3.7 and Corollary~3.9]{BL12} and \cite[Corollary~2.6.3]{BHS20}. We remark that, in a similar way as above, item~(ii) of the following theorem allows us to show the self-adjointness of an extension $A_{[B]}$ defined by~\eqref{equation_def_A_B}.

\begin{theorem} \label{theorem_Krein_abstract_A_B}
Let $T$ be a linear operator in $\mathcal H$ satisfying $\overline{T} = S^*$, let $\{\mathcal G,\Gamma_0,\Gamma_1\}$
be a quasi boundary triple for $S^*$ with $A_0=T\upharpoonright\ker\Gamma_0$, and denote the associated $\gamma$-field and Weyl function 
by $\gamma$ and $M$, respectively.
Let $A_{[B]}$ be the extension of $S$ associated with an operator $B$ in $\mathcal{G}$
as in \eqref{equation_def_A_B}.
Then the following holds for all $z\in\rho(A_0)$:
\begin{itemize}
 \item [\rm (i)] $z \in \sigma_{\mathrm{p}}(A_{[B]})$ if and only if 
 $0 \in \sigma_{\mathrm{p}}(I + B M(z))$. Moreover, 
 \begin{equation*}
   \ker(A_{[B]}-z)=\bigl\{\gamma(z)\varphi:\varphi\in\ker(I + B M(z))\bigr\}.
 \end{equation*}
 \item [{\rm (ii)}] If $z \notin \sigma_{\mathrm{p}}(A_{[B]})$, then 
 $g\in\ran(A_{[B]} - z)$ if and only if $B \gamma(\overline{z})^*g\in\ran(I + B M(z))$.
 \item [{\rm (iii)}] If $z \notin \sigma_{\mathrm{p}}(A_{[B]})$, then 
 \begin{equation*}
  (A_{[B]} - z)^{-1}g=(A_0-z)^{-1}g
      - \gamma(z)\bigl(I + B M(z)\bigr)^{-1} B \gamma(\overline{z})^*g
 \end{equation*}
 holds for all $g\in\ran(A_{[B]}-z)$.
\end{itemize}
\end{theorem}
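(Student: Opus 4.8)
The plan is to derive Theorem~\ref{theorem_Krein_abstract_A_B} from Theorem~\ref{theorem_Krein_abstract} via the formal substitution $\vartheta=-B^{-1}$, taking care that $B$ need not be injective so that $\vartheta$ is genuinely a linear \emph{relation} rather than an operator. Concretely, given the quasi boundary triple $\{\mathcal G,\Gamma_0,\Gamma_1\}$ and a linear operator $B$ in $\mathcal G$, I would introduce the linear relation
\[
  \vartheta:=\bigl\{(\psi,\varphi)\in\mathcal G\times\mathcal G : -B\varphi=\psi\bigr\}
          =\bigl\{(-B\varphi,\varphi):\varphi\in\dom B\bigr\},
\]
and then check that $A_{[B]}=A_\vartheta$ as operators: indeed $f\in\ker(\Gamma_0+B\Gamma_1)$ means $\Gamma_1 f\in\dom B$ and $\Gamma_0 f=-B\Gamma_1 f$, which is exactly the condition $(\Gamma_0 f,\Gamma_1 f)\in\vartheta$, i.e. $f\in\ker(\Gamma_1-\vartheta\Gamma_0)$ in the relation sense used in \eqref{equation_def_extension}. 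With this identification, assertions (i)--(iii) will follow by translating the corresponding statements of Theorem~\ref{theorem_Krein_abstract} through the algebraic identity relating $\vartheta-M(z)$ and $I+BM(z)$.

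The key algebraic step is the following: for $\varphi\in\ran\Gamma_0=\dom M(z)$ one has, as relations on $\mathcal G$,
\[
  \vartheta-M(z)=\bigl\{(\psi-M(z)\varphi,\varphi):(\psi,\varphi)\in\vartheta\bigr\}
              =\bigl\{(-(I+BM(z))\varphi,\varphi):\varphi\in\ran\Gamma_0,\ M(z)\varphi\in\dom B\bigr\},
\]
so that, upon composing with $-B$ on the left (which is harmless for the kernel and range computations since $B$ only multiplies the ``output'' component), $\ker(\vartheta-M(z))=\ker(I+BM(z))$ and $g\in\ran(\vartheta-M(z))$ precisely when $Bg\in\ran(I+BM(z))$, modulo the bookkeeping that $\gamma(z)$ has range $\ker(T-z)$ and $\gamma(\overline z)^\ast=\Gamma_1(A_0-z)^{-1}$ maps into $\ran\Gamma_0$. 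For (i): $z\in\sigma_{\mathrm p}(A_\vartheta)$ iff $0\in\sigma_{\mathrm p}(\vartheta-M(z))$, and by the displayed identity this is $0\in\sigma_{\mathrm p}(I+BM(z))$; the eigenspace description $\ker(A_{[B]}-z)=\{\gamma(z)\varphi:\varphi\in\ker(I+BM(z))\}$ is the literal transcription of the one in Theorem~\ref{theorem_Krein_abstract}~(i). For (ii) and (iii) one applies Theorem~\ref{theorem_Krein_abstract}~(ii),(iii) and replaces ``$\gamma(\overline z)^\ast g\in\ran(\vartheta-M(z))$'' by ``$B\gamma(\overline z)^\ast g\in\ran(I+BM(z))$'' and ``$\gamma(z)(\vartheta-M(z))^{-1}\gamma(\overline z)^\ast g$'' by ``$-\gamma(z)(I+BM(z))^{-1}B\gamma(\overline z)^\ast g$'', the sign coming from $\vartheta=-B^{-1}$. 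Alternatively, and perhaps more transparently, one can give a direct proof: an $f\in\dom T$ lies in $\dom A_{[B]}$ with $(A_{[B]}-z)f=g$ iff, writing $f=(A_0-z)^{-1}g+\gamma(z)\varphi$ from the decomposition \eqref{decomposition}, the boundary condition $\Gamma_0 f+B\Gamma_1 f=0$ holds; computing $\Gamma_0 f=\varphi$ (since $\Gamma_0(A_0-z)^{-1}g=0$ and $\Gamma_0\gamma(z)\varphi=\varphi$) and $\Gamma_1 f=\Gamma_1(A_0-z)^{-1}g+M(z)\varphi=\gamma(\overline z)^\ast g+M(z)\varphi$ using \eqref{equation_gamma_star}, this becomes $(I+BM(z))\varphi=-B\gamma(\overline z)^\ast g$, from which (i), (ii), (iii) are immediate.

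The main obstacle, and the only point requiring genuine care, is the relation-theoretic bookkeeping when $B$ is not injective (so $\vartheta=-B^{-1}$ has a nontrivial multivalued part) and when $B$ is unbounded (so that $\ran M(z)\cap\dom B$ must be handled, and one must not tacitly assume $\gamma(\overline z)^\ast g\in\dom B$). I would address this by either (a) invoking the cited results \cite[Theorem~3.7 and Corollary~3.9]{BL12} and \cite[Corollary~2.6.3]{BHS20} directly, which are stated in precisely this $A_{[B]}$ form, or (b) if a self-contained argument is preferred, running the direct computation of the previous paragraph, which never needs $B^{-1}$ and only uses that $B$ is defined on $\ran\Gamma_1\supset\ran M(z)$ and on $\ran\gamma(\overline z)^\ast$; the standing assumption in the applications that $B$ is a bounded (indeed matrix-multiplication) operator on $\mathcal G=L^2(\Sigma;\mathbb C^N)$ makes these domain issues vacuous there. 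Either way the proof is short; I would state it as: ``This follows from Theorem~\ref{theorem_Krein_abstract} applied to the relation $\vartheta=-B^{-1}$ together with the identity $\vartheta-M(z)=-(I+BM(z))B^{-1}$ on $\ran\Gamma_0$; see also \cite[Theorem~3.7 and Corollary~3.9]{BL12} and \cite[Corollary~2.6.3]{BHS20}.''
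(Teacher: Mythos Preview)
Your proposal is correct and matches the paper's approach: the paper does not give a proof of this theorem at all but simply refers to \cite[Theorem~3.7 and Corollary~3.9]{BL12} and \cite[Corollary~2.6.3]{BHS20}, exactly the references you suggest invoking. Your additional direct argument via the decomposition $f=(A_0-z)^{-1}g+\gamma(z)\varphi$ and the computation $\Gamma_0 f=\varphi$, $\Gamma_1 f=\gamma(\overline z)^\ast g+M(z)\varphi$ is a correct self-contained alternative, and your remarks on the relation-theoretic bookkeeping for non-injective or unbounded $B$ are accurate but, as you note, unnecessary in the applications of this paper where $B$ is a bounded matrix multiplication operator.
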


\section{One-dimensional Dirac operators with $\delta$-point interactions} \label{Sec_OneDimDirac}
In this section, we investigate one-dimensional Dirac operators with electrostatic, Lorentz scalar, and anomalous magnetic $\delta$-interactions supported on $\Sigma = \{0 \}$. The following results are well known, see for instance \cite{AGHH05, CMP13, GS87,HT22, H99, PR14}, but they are presented here for the sake of completeness. In particular, the used methods  and the obtained results  will serve as a motivation for the analysis of two- and three-dimensional Dirac operators in the following sections.  


Let $\Omega_+ = (0, \infty)$ and $\Omega_- := (-\infty, 0)$. Then, $\Sigma = \{ 0 \} = \partial \Omega_\pm$. Note that, in contrast to the higher dimensional setting described in the introduction, $\Omega_+$ is not bounded; however, the effects seen here are similar, as $\Sigma$ is compact. If we set $\nu = -1$ for the normal vector $\nu$ that is pointing outwards of $\Omega_+$, we obtain with $i \sigma_1 \sigma_3 = \sigma_2$ that the one-dimensional realization of~(\ref{Diracpertubed}) is
\begin{equation} \label{OneDimFormalExpression}
A_{\eta, \tau, \lambda} = A_0 + ( \eta I_2 + \tau\sigma_3  - \lambda \sigma_2 )\delta_{\Sigma},
\end{equation} 
where $A_0$ is the self-adjoint free Dirac operator in~\eqref{def_A_0}.

In what follows we will make use of the orthogonal decomposition $L^2(\dR;\dC^2) = L^2((0 , \infty); \dC^2) \oplus L^2((-\infty,0);\dC^2)$ and use for $f \in L^2(\mathbb{R}; \mathbb{C}^2)$ the notation $f_\pm = f \upharpoonright \Omega_\pm$.
Following the usual construction of self-adjoint realizations of the expression in~\eqref{OneDimFormalExpression} as in \cite{AGHH05}, we define first the symmetric operator
\begin{equation*}
\begin{split}
S f &:= \left( - i  \sigma_1 \frac{\mathrm{d}}{\mathrm{d} x} f_{+} + m \sigma_3 f_{+} \right) \oplus \left( - i   \sigma_1 \frac{\mathrm{d}}{\mathrm{d} x} f_{-} + m \sigma_3 f_{-} \right), \\
\dom S &:=  H^{1}_0 \left((0 , \infty);\dC^{2} \right) \oplus  H^{1}_0 ((- \infty, 0); \dC^2).
\end{split} 
\end{equation*}
It can be shown that the adjoint operator $S^{\ast}$ acts in the same way as $S$, but has the larger domain 
\begin{equation*}
\dom S^* =  H^{1} ((0,\infty);\dC^{2} ) \oplus  H^{1} ( (-\infty, 0) ;\dC^{2} ). 
\end{equation*}
In the next step, we want to introduce the expression in~\eqref{OneDimFormalExpression} as a self-adjoint extension $A_{\eta, \tau, \lambda}$ of $S$, i.e. by restricting $S^{\ast}$  to a suitable operator domain, which is characterized by imposing the coupling conditions~\eqref{jump_condition1D} on $\Sigma = \{ 0 \}$. Motivated by~\eqref{boundary_mappings_intro} we introduce the boundary mappings $\Gamma_0 , \Gamma_1 : \dom S^* \rightarrow \dC^2$ by 
\begin{equation} \label{def_Gamma_1D}
\Gamma_0 f = -i \sigma_1 \big(f(0+) - f(0-) \big)   \quad \text{ and } \quad \Gamma_1 f = \frac{1}{2} \big( f(0+) + f(0-) \big),
\end{equation}
and the matrix $P_{\eta, \tau, \lambda} = \eta I_2 + \tau \sigma_3 - \lambda \sigma_2$. With these notations we see that~\eqref{jump_condition1D} is equivalent to
$$\Gamma_0 f +  P_{\eta, \tau, \lambda} \Gamma_1 f = 0,\qquad f \in \dom S^*.$$
In the following proposition we investigate the maps $\Gamma_0 , \Gamma_1$. In order to formulate the result, recall that $k(z)$ and $\zeta(z)$, $z \in \rho(A_0) = \mathbb{C} \setminus ( (-\infty, -m] \cup [m, \infty))$, are defined in (\ref{def_k_zeta}).

\begin{proposition} \label{OneDimOBT}
	The triple  $\{ \dC^2 , \Gamma_0 , \Gamma_1 \}$ is an ordinary boundary triple for $S^{\ast}$. The value of the associated $\gamma$-field is given for $z \in \rho(A_0)$ by
	\begin{equation*}
\left[ \gamma(z) \left( \begin{array}{c}
\xi_1\\                                              
\xi_2 \\                                            
\end{array}\right) \right] (x)  = \frac{i}{2}  e^{i k(z) |x| } \left( \begin{array}{cc}
\zeta(z) & \sgn (x)\\                                              
\sgn (x) & \zeta(z)^{-1} \\                                            
\end{array}\right) \left( \begin{array}{c}
\xi_1\\                                              
\xi_2 \\                                            
\end{array}\right), \quad x \in \mathbb{R},
\end{equation*}
and the Weyl function is 
\begin{equation*}
  \rho(A_0) \ni z \mapsto M(z) :=  \frac{i}{2} \left( \begin{array}{cc}
\zeta(z) & 0 \\                                              
0 & \zeta(z)^{-1} \\                                            
\end{array}\right).
\end{equation*}
\end{proposition}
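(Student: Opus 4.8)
The plan is to verify the three axioms of an ordinary boundary triple directly from the structure of $S^*$ and then compute the $\gamma$-field and Weyl function by solving the eigenvalue equation for $S^*$ explicitly. First I would establish the abstract Green's identity: for $f,g \in \dom S^* = H^1((0,\infty);\dC^2) \oplus H^1((-\infty,0);\dC^2)$, one integrates by parts on each half-line, picking up boundary terms only at $0$ (the terms at $\pm\infty$ vanish since $H^1$-functions on a half-line decay). Using $\sigma_1^* = \sigma_1$ and $\sigma_3^* = \sigma_3$, the bulk terms cancel and one is left with
\begin{equation*}
  (S^*f,g)_{L^2} - (f,S^*g)_{L^2} = -i\bigl[(\sigma_1 f(0+),g(0+))_{\dC^2} - (\sigma_1 f(0-),g(0-))_{\dC^2}\bigr].
\end{equation*}
A short algebraic manipulation, writing $f(0\pm)$ and $g(0\pm)$ in terms of their sums and differences and using $\sigma_1^2 = I_2$, rewrites the right-hand side as $(\Gamma_1 f,\Gamma_0 g)_{\dC^2} - (\Gamma_0 f,\Gamma_1 g)_{\dC^2}$ with $\Gamma_0,\Gamma_1$ as in \eqref{def_Gamma_1D}. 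Next, surjectivity of $(\Gamma_0,\Gamma_1)^\top : \dom S^* \to \dC^2\times\dC^2$ is easy: given any target $(\xi,\zeta)\in\dC^2\times\dC^2$, the four boundary values $f(0+),f(0-)$ are uniquely determined (solve the linear system $-i\sigma_1(f(0+)-f(0-)) = \xi$, $\tfrac12(f(0+)+f(0-)) = \zeta$, which has a $2\times 2$ invertible coefficient structure), and one picks any $H^1$-functions on the half-lines with those one-sided boundary values, e.g. with a cutoff. This gives axiom (iii) of Definition~\ref{qbtdef} in its surjective (ordinary) form.

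For the self-adjointness of $A_0 = S^*\upharpoonright\ker\Gamma_0$: the condition $\Gamma_0 f = 0$ means $f(0+) = f(0-)$, i.e. $f$ is continuous at $0$, so $\ker\Gamma_0 = H^1(\dR;\dC^2)$, and $S^*\upharpoonright H^1(\dR;\dC^2)$ is exactly the free Dirac operator $A_0$ from \eqref{def_A_0} in dimension $q=1$, which is self-adjoint with $\rho(A_0) = \dC\setminus((-\infty,-m]\cup[m,\infty))$ as recalled after \eqref{spec}. Thus $\{\dC^2,\Gamma_0,\Gamma_1\}$ is an ordinary boundary triple.

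To compute the $\gamma$-field, I would solve $(S^*-z)f_z = 0$ on each half-line: this is the first-order ODE $-i\sigma_1 f_z' + m\sigma_3 f_z = z f_z$, whose $L^2$-solutions are governed by $k(z) = \sqrt{z^2-m^2}$ (with $\IM\sqrt{\cdot}>0$, guaranteeing decay as $|x|\to\infty$). A direct computation shows the $L^2((0,\infty);\dC^2)$-solution space is one-dimensional, spanned by $e^{ik(z)x}v_+$ for an explicit eigenvector $v_+$, and similarly $e^{-ik(z)x}v_-$ on $(-\infty,0)$; matching to the claimed kernel, one checks that $f_z(x) = \tfrac{i}{2}e^{ik(z)|x|}\bigl(\begin{smallmatrix}\zeta(z) & \sgn(x)\\ \sgn(x) & \zeta(z)^{-1}\end{smallmatrix}\bigr)\xi$ indeed solves the equation on both half-lines (using $\zeta(z)k(z) = z+m$ and $\zeta(z)^{-1}k(z) = z - m$, which follows from \eqref{def_k_zeta} since $\zeta(z) = (z+m)/k(z)$ and $k(z)^2 = z^2-m^2 = (z+m)(z-m)$). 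Then $\gamma(z)\xi := f_z$ is characterized by $\Gamma_0 f_z = \xi$; one verifies $\Gamma_0 f_z = -i\sigma_1(f_z(0+) - f_z(0-)) = -i\sigma_1 \cdot \tfrac{i}{2}\cdot 2\bigl(\begin{smallmatrix}0&1\\1&0\end{smallmatrix}\bigr)\xi = \sigma_1^2\xi = \xi$, confirming the normalization. Finally, $M(z)\xi = \Gamma_1 f_z = \tfrac12(f_z(0+) + f_z(0-)) = \tfrac{i}{2}\bigl(\begin{smallmatrix}\zeta(z)&0\\0&\zeta(z)^{-1}\end{smallmatrix}\bigr)\xi$, since the off-diagonal $\sgn(x)$-entries cancel in the sum. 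I expect the only mildly delicate point to be bookkeeping the branch of $\sqrt{z^2-m^2}$ so that the constructed $f_z$ genuinely lies in $L^2$ for all $z\in\rho(A_0)$ (in particular on the spectral gap $(-m,m)$, where $k(z)$ is purely imaginary with positive imaginary part); everything else is routine linear algebra and integration by parts.
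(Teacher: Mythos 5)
Your proposal is correct and follows essentially the same route as the paper: integration by parts for Green's identity, an explicit construction for the surjectivity of $(\Gamma_0,\Gamma_1)$, identification of $S^*\upharpoonright\ker\Gamma_0$ with the free Dirac operator, and computation of $\gamma(z)$ and $M(z)$ from the explicit exponentially decaying solutions of $(S^*-z)f=0$ on each half-line. The only cosmetic difference is that the paper exhibits a concrete function realizing any prescribed boundary data, whereas you argue via solvability of the $4\times4$ linear system plus an extension; both are fine.
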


Note that the application of $\gamma(z)$ can be viewed as the matrix vector product of $G_{z,1}(x)$ in~\eqref{def_G_lambda} and the vector $(\xi_1,\xi_2)$, while the matrix representing $M(z)$ consists of the columns resulting from applying $\Gamma_1$ to the columns of $G_{z,1}(x)$.  Similar operators will play a role in the higher dimensional considerations in the next sections.

\begin{proof}[Proof of Proposition~\ref{OneDimOBT}]
	Integration by parts and a straightforward computation show that the abstract Green's identity in Definition \ref{qbtdef}~(i) is valid. To show that $(\Gamma_0, \Gamma_1): \dom S^* \rightarrow \mathbb{C}^4$ is surjective, consider for $(c_1,c_2, c_3, c_4) \in \dC^4$ the function
	\begin{equation*}
	f(x) = \frac{i}{2}  \begin{pmatrix}
	c_2 \\                                              
	c_1 \\                                            
	\end{pmatrix} \sgn (x)\, e^{-|x|} + \begin{pmatrix}
	c_3 \\                                              
	c_4 \\                                            
	\end{pmatrix} e^{-|x|},\quad x\in\dR.
	\end{equation*}
	Then $f \in \dom S^*$, $\Gamma_0 f = (c_1, c_2)$ and $\Gamma_1 f = (c_3,c_4)$ and hence,  $(\Gamma_0, \Gamma_1)$ is surjective. Finally, to show that Definition~\ref{qbtdef}~(iii) holds, notice that the restriction $A_0 = S^{\ast} \upharpoonright \ker \Gamma_0$ is the self-adjoint free Dirac operator defined in~\eqref{def_A_0}. Hence, it follows that the triple $\{ \mathbb{C}^2, \Gamma_0, \Gamma_1 \}$ is an ordinary boundary triple for $S^{\ast}$.
	
	It remains to show the claimed formulas for the $\gamma$-field and the Weyl function. Consider the functions
\begin{equation*}
f_1(x) = \frac{i}{2}  \left( \begin{array}{c}
\zeta(z) \\                                              
\sgn (x) \\                                            
\end{array}\right) e^{i k(z) |x|} \quad \text{and} \quad f_2(x) = \frac{i}{2}  \left( \begin{array}{c}
\sgn (x) \\                                              
\zeta(z)^{-1} \\                                            
\end{array}\right) e^{i k(z) |x|}.
\end{equation*}
These functions  form a basis of  $\text{ker}(S^{\ast} - z)$,  $z \in \rho(A_0)$, and $\Gamma_0 f_1 = (1,0)$ and $\Gamma_0 f_2 = (0,1)$. Hence, 
\begin{equation*}
\left[ \gamma(z) \left( \begin{array}{c}
\xi_1\\                                              
\xi_2 \\                                            
\end{array}\right) \right] (x)  = \xi_1 f_1 (x) + \xi_2 f_2(x) = \frac{i}{2}  e^{i k(z) |x| } \left( \begin{array}{cc}
\zeta(z) & \sgn (x)\\                                              
\sgn (x) & \zeta(z)^{-1} \\                                            
\end{array}\right) \left( \begin{array}{c}
\xi_1\\                                              
\xi_2 \\                                            
\end{array}\right),
\end{equation*}
which is the claimed expression for $\gamma(z)$. Moreover, we have
\begin{equation*}
M(z) \left( \begin{array}{c}
\xi_1\\                                              
\xi_2 \\                                            
\end{array}\right) = \Gamma_1 \gamma(z) \left( \begin{array}{c}
\xi_1\\                                              
\xi_2 \\                                            
\end{array}\right) = \frac{i}{2} \left( \begin{array}{cc}
\zeta(z) & 0 \\                                              
0 & \zeta(z)^{-1} \\                                            
\end{array}\right) \left( \begin{array}{c}
\xi_1\\                                              
\xi_2 \\                                            
\end{array}\right),
\end{equation*}
which yields the claimed formula for $M(z)$. This finishes the proof.
\end{proof}

Using the ordinary boundary triple from Proposition \ref{OneDimOBT} and the matrix $P_{\eta, \tau, \lambda} = \eta I_2 + \tau \sigma_3 - \lambda \sigma_2$, we can define now the operator 
\begin{equation} \label{def_A_eta_1D}
A_{\eta, \tau, \lambda} =  S^{\ast} \upharpoonright \text{ker}(\Gamma_0+ P_{\eta, \tau, \lambda} \Gamma_1),
\end{equation}
which is interpreted as the realization of the formal expression (\ref{OneDimFormalExpression}) in $L^2(\mathbb{R}; \mathbb{C}^2)$, cf. \eqref{Diracperturbed2}. In the following theorem, we show the self-adjointness of $A_{\eta, \tau, \lambda}$ and study its spectral properties.

\begin{theorem}
  For any $\eta,\tau,\lambda \in \dR$ the operator $A_{\eta,\tau,\lambda}$ defined in ~\eqref{def_A_eta_1D} is self-adjoint in $L^2(\mathbb{R}; \mathbb{C}^2)$ and for all $z \in \rho(A_0) \cap \rho(A_{\eta,\tau,\lambda})$ and  $f \in L^2(\dR;\dC^2)$ one has
	\begin{equation*}
	\begin{split}
	(&A_{\eta, \tau , \lambda} - z)^{-1}f =  (A_{0} - z)^{-1}f \\ 
	&~~~- \left( g_{z,1} , \overline{f} \right)_{L^2 (\dR;\dC^2)} \left( \begin{array}{c}
	\zeta(z) \\                                              
	\sgn(\cdot) \\                                            
	\end{array}\right) e^{i k(z) |\cdot|} - \left( g_{z,2}  , \overline{f} \right)_{L^2 (\dR;\dC^2)} \left( \begin{array}{c}
	\sgn(\cdot)  \\                                              
	\zeta(z)^{-1} \\                                            
	\end{array}\right) e^{i k(z) |\cdot|},
	\end{split}
	\end{equation*}
	where $g_{z,1}, g_{z,2}$ are given by
	\begin{equation*}
	g_{z,j}(x) = \frac{i}{2} \Big( (I_2+P_{\eta, \tau, \lambda} M(z))^{-1} P_{\eta, \tau, \lambda} G_{z,1}(-x) \Big)^{\top} e_j, \quad x \in \mathbb{R},~j \in \{1,2\},
	\end{equation*}
	with $e_1 = (1,0)$ and $e_2 = (0,1)$.
	Moreover, the following holds:
	\begin{itemize}
		\item[(i)] $\sigma_{\mathrm{ess}}(A_{\eta,\tau,\lambda}) =\sigma_{\mathrm{ess}}(A_0) =  \left(-\infty,-m\right]\cup \left[m,\infty\right)$.
		\item[(ii)] Set $d = \eta^2-\tau^2-\lambda^2$ and assume  $m > 0$. 
		\begin{itemize}
		  \item[(a)] If $d=4$, then $\sigma_\textup{disc}(A_{\eta, \tau, \lambda})= \{- \frac{m \tau}{\eta} \}$.
		  \item[(b)] If $d \neq 4$, then exactly those points 
		\begin{equation} \label{equation_z_pm}
		z_{\pm} = m \frac{-\eta \tau \pm \left| \frac{d}{4}-1\right| \sqrt{ \lambda^2 + \left( \frac{d}{4}+1\right)^2}}{\eta^2 + \left( \frac{d}{4} - 1 \right)^2}
		\end{equation}
		that obey $(d-4) ( m \tau + \eta z_{\pm} ) > 0$ belong to $\sigma_\textup{disc}(A_{\eta, \tau, \lambda})$.
		\end{itemize}  
	\end{itemize}
\end{theorem}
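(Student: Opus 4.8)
The plan is to apply the abstract machinery of Theorem~\ref{theorem_Krein_abstract} to the ordinary boundary triple $\{\dC^2,\Gamma_0,\Gamma_1\}$ from Proposition~\ref{OneDimOBT}. Since the operator is written in the form~\eqref{def_A_eta_1D} via $\ker(\Gamma_0 + P_{\eta,\tau,\lambda}\Gamma_1)$, it is most convenient to pass to the equivalent parametrization $A_\vartheta = S^\ast\upharpoonright\ker(\Gamma_1-\vartheta\Gamma_0)$ with $\vartheta = -P_{\eta,\tau,\lambda}^{-1}$ when $P_{\eta,\tau,\lambda}$ is invertible, or simply invoke Theorem~\ref{theorem_Krein_abstract_A_B} directly with $B = P_{\eta,\tau,\lambda}$. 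First I would observe that $P_{\eta,\tau,\lambda} = \eta I_2 + \tau\sigma_3 - \lambda\sigma_2$ is a Hermitian $2\times 2$ matrix (as $\eta,\tau,\lambda$ are real and $\sigma_2,\sigma_3$ are Hermitian), hence self-adjoint in $\dC^2$; then self-adjointness of $A_{\eta,\tau,\lambda}$ follows immediately from Theorem~\ref{theorem_Krein_abstract} (with the obvious modification for the $A_{[B]}$ form, or from Theorem~\ref{theorem_Krein_abstract_A_B} together with the fact that an ordinary boundary triple gives a bijective correspondence). The resolvent formula is then read off from Theorem~\ref{theorem_Krein_abstract_A_B}~(iii): one has $(A_{\eta,\tau,\lambda}-z)^{-1} = (A_0-z)^{-1} - \gamma(z)(I_2 + P_{\eta,\tau,\lambda}M(z))^{-1}P_{\eta,\tau,\lambda}\gamma(\bar z)^\ast$, and it only remains to insert the explicit $\gamma(z)$ and $M(z)$ from Proposition~\ref{OneDimOBT}, compute $\gamma(\bar z)^\ast$ by duality (pairing against $G_{\bar z,1}$, which produces the $(g_{z,1},\overline f)$ and $(g_{z,2},\overline f)$ scalar products with the stated kernels involving $G_{z,1}(-x)$), and regroup the two rank-one contributions in the form displayed in the theorem. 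This is a finite-dimensional bookkeeping computation using $\overline{G_{\bar z,1}(x)}^\top$-type identities and causes no conceptual difficulty.

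For assertion~(i), the essential spectrum, I would argue that the difference of resolvents $(A_{\eta,\tau,\lambda}-z)^{-1} - (A_0-z)^{-1}$ is a finite-rank (rank at most $2$) operator by the Krein formula just derived, hence compact; therefore $\sigma_{\mathrm{ess}}(A_{\eta,\tau,\lambda}) = \sigma_{\mathrm{ess}}(A_0)$, and $\sigma_{\mathrm{ess}}(A_0) = (-\infty,-m]\cup[m,\infty)$ by~\eqref{spec}. Since $A_{\eta,\tau,\lambda}$ is self-adjoint this also gives $\sigma_{\mathrm{disc}}(A_{\eta,\tau,\lambda}) = \sigma(A_{\eta,\tau,\lambda})\cap(-m,m)$, so (ii) reduces to locating eigenvalues in the gap $(-m,m)$.

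For assertion~(ii) I would use Theorem~\ref{theorem_Krein_abstract}~(i) in the $A_{[B]}$ form (Theorem~\ref{theorem_Krein_abstract_A_B}~(i)): a point $z\in(-m,m)\subset\rho(A_0)$ is an eigenvalue of $A_{\eta,\tau,\lambda}$ if and only if $0\in\sigma_{\mathrm p}(I_2 + P_{\eta,\tau,\lambda}M(z))$, i.e.\ $\det(I_2 + P_{\eta,\tau,\lambda}M(z)) = 0$. With $M(z) = \tfrac{i}{2}\,\mathrm{diag}(\zeta(z),\zeta(z)^{-1})$ and $\zeta(z) = (z+m)/\sqrt{z^2-m^2}$ — note that for $z\in(-m,m)$ one has $\sqrt{z^2-m^2} = i\sqrt{m^2-z^2}$, so $\zeta(z)$ is purely imaginary and $\tfrac{i}{2}\zeta(z)$ is real — the matrix $I_2 + P_{\eta,\tau,\lambda}M(z)$ is an explicit $2\times 2$ matrix whose entries are rational in $z$ and in $\sqrt{m^2-z^2}$. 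Writing out $\det(I_2 + P_{\eta,\tau,\lambda}M(z)) = 0$ and simplifying (using $\det P_{\eta,\tau,\lambda} = \eta^2-\tau^2-\lambda^2 = d$ and $\tr$-type identities, and the relation $(\tfrac{i}{2}\zeta(z))(\tfrac{i}{2}\zeta(z)^{-1}) = -\tfrac14$) yields a scalar equation. In the case $d = 4$ this equation degenerates: one factor drops out and the condition collapses to $m\tau + \eta z = 0$, i.e.\ $z = -m\tau/\eta$, which lies in $(-m,m)$ and is the unique eigenvalue, giving~(a). In the case $d\neq 4$ the determinant condition becomes a genuine quadratic in $z$ after clearing the square roots; I would solve it to obtain the two candidate roots $z_\pm$ in~\eqref{equation_z_pm}, and then check the sign/consistency condition coming from the branch of the square root — namely that $z_\pm$ is actually a pole of $(I_2 + P_{\eta,\tau,\lambda}M(z))^{-1}$ in $(-m,m)$ (squaring can introduce a spurious root), which translates into the inequality $(d-4)(m\tau + \eta z_\pm) > 0$ stated in~(b). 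I expect the main obstacle to be exactly this last point: carefully tracking which of the two algebraic solutions $z_\pm$ genuinely solves the original (non-squared) determinant equation with the correct sign of $\sqrt{m^2-z^2}$, and showing the surviving roots lie in the gap $(-m,m)$ — this is a sign-chasing argument rather than a deep one, but it is where the condition $(d-4)(m\tau+\eta z_\pm)>0$ comes from and where all the care is needed.
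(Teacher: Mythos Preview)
Your proposal is correct and follows essentially the same route as the paper's proof: self-adjointness via Theorem~\ref{theorem_Krein_abstract_A_B} applied to the ordinary boundary triple of Proposition~\ref{OneDimOBT} with the Hermitian matrix $P_{\eta,\tau,\lambda}$, the resolvent formula from item~(iii) of that theorem, and the Birman--Schwinger condition $\det(I_2+P_{\eta,\tau,\lambda}M(z))=0$ for the discrete spectrum, reduced to the scalar equation $\tfrac{d}{4}-1=\tfrac{m\tau+z\eta}{\sqrt{m^2-z^2}}$ and then squared to the quadratic in $z$. The only cosmetic difference is in~(i): you argue via the finite-rank resolvent difference, while the paper invokes the finite defect indices $(2,2)$ of $S$ directly; both are standard and equivalent here.
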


\begin{proof}
	First, since the matrix $P_{\eta, \tau, \lambda} = \eta I_2 + \tau \sigma_3 - \lambda \sigma_2$ is self-adjoint, Theorem~\ref{theorem_Krein_abstract_A_B} implies that the operator $A_{\eta, \tau, \lambda}$ defined by~\eqref{def_A_eta_1D} is self-adjoint in $L^2(\mathbb{R}; \mathbb{C}^2)$. Moreover, by Theorem \ref{theorem_Krein_abstract_A_B} (iii) we have for $z \in \rho(A_{\eta, \tau, \lambda}) \cap \rho(A_0)$ and $f \in L^2(\mathbb{R}; \mathbb{C}^2)$
	\begin{equation*}
	(A_{\eta, \tau, \lambda} - z)^{-1}f = (A_{0} - z)^{-1}f-\gamma(z) (I_2+  P_{\eta, \tau, \lambda} M(z) )^{-1} P_{\eta, \tau, \lambda} \gamma(\overline{z})^{\ast}f.
	\end{equation*}
	After a simple calculation using the expressions for $\gamma(z)$ and $M(z)$ in Proposition~\ref{OneDimOBT} one obtains the claimed resolvent formula. Statement~(i) follows from the fact that both $A_{\eta, \tau, \lambda}$ and $A_0$ are self-adjoint extensions of the operator $S$, which has the finite defect indices $(2,2)$. 
	
	It remains to show the claims about the discrete spectrum. For $m=0$ we have   $\sigma_{\rm ess}(A_{\eta, \tau , \lambda}) = \mathbb{R}$ and therefore, $\sigma_{\rm disc}(A_{\eta, \tau, \lambda}) = \emptyset$. Hence, we assume $m>0$ in the following. Note that~(i) and Theorem~\ref{theorem_Krein_abstract_A_B}~(i) imply that $z \in \sigma_{\text{disc}}(A_{\eta, \tau, \lambda})$ if and only if $0 \in \sigma( I_2 +  P_{\eta, \tau, \lambda} M(z))$. The latter is true if and only if $\det (I_2 +  P_{\eta, \tau, \lambda} M(z)) = 0$, which is equivalent to
	\begin{equation} \label{Eq_spec_Onedim}
	\frac{d}{4} - 1 = \frac{m \tau + z \eta}{\sqrt{m^2-z^2}}.
	\end{equation}
	Assume that $d = 4$. Then $\eta \neq 0$ and \eqref{Eq_spec_Onedim} implies that $\sigma_\textup{disc}(A_{\eta, \tau, \lambda})=\{ - \frac{m \tau }{\eta}\}$, which is the claim in item~(ii)~(a). In the case $d \neq 4$, observe that \eqref{Eq_spec_Onedim} can only be true if 
	\begin{equation*}
	0 < \sqrt{m^2-z^2} = \left( \frac{d}{4} - 1 \right)^{-1} ( m \tau + \eta z), 
	\end{equation*}
	which is the stated restriction for discrete eigenvalues. Squaring both sides in~\eqref{Eq_spec_Onedim} and rearranging the terms yield
	\begin{equation} \label{quadratic_eq_disc}
	\left( \eta^2 + \left( \frac{d}{4} -1 \right)^2 \right) z^2 + 2 m \tau \eta z + m^2 \left( \tau^2 - \left( \frac{d}{4} -1 \right)^2 \right) = 0.
	\end{equation}
	Observe that due to
	\begin{equation} \label{equation_discriminant}
	\eta^2 - \tau^2 + \left( \frac{d}{4} - 1\right)^2 = \lambda^2 + \left( \frac{d}{4} + 1\right)^2
	\end{equation}
	the discriminant of the quadratic equation \eqref{quadratic_eq_disc} is always non-negative and therefore the two real solutions of~\eqref{quadratic_eq_disc} are given by $z_\pm$ in item~(ii)~(b). Thus, all claims are shown.
\end{proof}

\begin{remark}
	For certain interaction strengths, the discrete spectrum can be specified more explicitly. In the following, we always assume that $d = \eta^2 - \tau^2 - \lambda^2 \neq 4$.
	\begin{enumerate}
		\item[(i)] If $\lambda = 0$, then the matrix $I_2 + P_{\eta, \tau, 0} M(z)$ is a diagonal matrix and hence, $\det(I_2 + P_{\eta, \tau, 0} M(z))=0$ if and only if one of its entries is zero. The resulting equations yield the solutions
		\begin{equation*}
		z_1 =  m \frac{4- (\eta + \tau)^2}{4+(\eta + \tau)^2} \quad \text{and} \quad z_2 =  m \frac{(\eta - \tau)^2-4}{ (\eta - \tau)^2+4}
		\end{equation*}
		and $z_1 \in \sigma_{\rm disc}(A_{\eta, \tau, \lambda})$,
		if $\eta + \tau < 0$, and $z_2 \in  \sigma_{\rm disc}(A_{\eta, \tau, \lambda})$ if $\eta - \tau > 0$.
		\item[(ii)] If $|\eta| = |\tau|$, then $A_{\eta, \tau, \lambda}$ has at most one discrete eigenvalue and the expression in~\eqref{equation_z_pm} simplifies to
		\begin{equation*}
		z = \begin{cases} m \frac{(\lambda^2+4)^2-16 \eta^2}{(\lambda^2+4)^2+16 \eta^2},& \text{ if } \eta = \tau < 0,\\ m \frac{16 \eta^2-(\lambda^2+4)^2}{ 16 \eta^2+(\lambda^2+4)^2},& \text{ if } \eta = -\tau > 0. \end{cases}
		\end{equation*}
		\item[(iii)] If $\eta = 0$, then the condition $0 < (d-4) ( m \tau + \eta z_{\pm} )= -m \tau (\tau^2+\lambda^2+4)$ is only fulfilled if		
		$\tau < 0$. In this case, the expression in~\eqref{equation_z_pm} simplifies to
		\begin{equation*}
		z_{\pm} = \mp \frac{4 m}{d-4} \sqrt{\lambda^2 + \left( \frac{d}{4} + 1\right)^2}.
		\end{equation*}
		\item[(iv)] If  $\tau = 0$ and $\eta \neq 0$, then the condition $0 < (d-4) ( m \tau + \eta z_{\pm} )= \eta z_{\pm} (d-4)$ is  fulfilled for exactly one of the solutions $z_\pm$ in~\eqref{equation_z_pm}, which can be simplified with~\eqref{equation_discriminant} to 
		\begin{equation*}
		z = \sgn(\eta) \frac{m}{4} \frac{d-4}{\sqrt{ \eta^2 + \left( \frac{d}{4} - 1 \right)^2}}.
		\end{equation*}
	\end{enumerate}
\end{remark}

\section{Boundary triples for two- and three-dimensional Dirac operators with singular interactions} \label{section_3d}

In this section we use similar boundary mappings as in Section~\ref{Sec_OneDimDirac} to construct boundary triples for Dirac operators with $\delta$-shell interactions in $\mathbb{R}^2$ and $\mathbb{R}^3$. However, by translating these natural boundary mappings in ~\eqref{def_Gamma_1D} directly to the higher dimensional setting 
one obtains a generalized or quasi boundary triple instead of an ordinary boundary triple; cf. Section~\ref{section_QBT}. In Section~\ref{section_OBT} we introduce an ordinary boundary triple that can be used to study Dirac operators with singular interactions, but with non-obvious and unbounded parameters. Before we can define the boundary triples, some preliminaries related to function spaces and trace theorems are needed. For smooth surfaces similar boundary triples and Sobolev spaces were used in \cite{BEHL18, BEHL19, BH20, HOP18} and \cite{BHOP20, BFSB17_1, OV16}, respectively, see also \cite{BHSS21} for the Lipschitz case. 

Let $q \in \{2,3\}$ be the space dimension and set $N := 2^{[(q+1)/2]}$, where $[\cdot]$ is the Gauss bracket. Consequently, we have $N=2$ for $q=2$ and $N=4$ for $q=3$. Let $\alpha_0, \dots, \alpha_q$ be the $q+1$ anti-commuting $\mathbb{C}^{N\times N}$-valued Dirac matrices introduced in \eqref{Dirac_matrices1}--\eqref{Dirac_matrices2}.
Throughout this section, let $\Omega_+ \subset \mathbb{R}^q$, $q \in \{2,3\}$, be a bounded Lipschitz domain, set $\Omega_- := \mathbb{R}^q \setminus \overline{\Omega_+}$ and $\Sigma := \partial \Omega_+ = \partial \Omega_-$. We denote by $\nu$ the unit normal vector field at $\Sigma$ that is pointing outwards of $\Omega_+$. In the following we will often denote the restriction of a function $f$ defined on $\mathbb{R}^q$ onto $\Omega_\pm$ by $f_\pm$ and we will view $f$ as a two component vector 
$f=f_+\oplus f_-$.

\subsection{Sobolev spaces for Dirac operators and related trace theorems}
\label{subsec_SobolevSpaces}

Define for $s \in [0,1]$ the space 
\begin{equation*}
  H^s_\alpha(\Omega_\pm; \mathbb{C}^N) := \bigl\{ f \in H^s(\Omega_\pm; \mathbb{C}^N): (\alpha \cdot \nabla) f \in L^2(\Omega_\pm; \mathbb{C}^N) \bigr\},
\end{equation*}
where the derivatives are understood in the distributional sense and $H^s(\Omega_\pm; \mathbb{C}^N)$ is the standard $L^2$-based Sobolev space of order $s$ of $\mathbb{C}^N$-valued functions,
and endow it with the norm 
\begin{equation*}
  \| f \|_{H^s_\alpha(\Omega_\pm; \mathbb{C}^N)}^2 := \| f \|_{H^s(\Omega_\pm; \mathbb{C}^N)}^2 + \| (\alpha \cdot \nabla) f \|_{L^2(\Omega_\pm; \mathbb{C}^N)}^2.
\end{equation*}
One can show with standard techniques that $H^s_\alpha(\Omega_\pm; \mathbb{C}^N)$ is a Hilbert space and that $C_0^\infty(\overline{\Omega_\pm}; \mathbb{C}^N)$ is dense in $H^s_\alpha(\Omega_\pm; \mathbb{C}^N)$; cf. \cite[Lemma~2.1]{BFSB17_1}, \cite[Lemma~3.2]{BH20}, or \cite[Proposition~2.12]{OV16} for similar arguments. Moreover, with the help of the Fourier transform one finds that $H^s_\alpha(\mathbb{R}^q; \mathbb{C}^N) = H^1(\mathbb{R}^q; \mathbb{C}^N)$ for any $s \in [0,1]$. We shall use the notation
\begin{equation*}
  H^s_\alpha(\mathbb{R}^q \setminus \Sigma; \mathbb{C}^N) = H^s_\alpha(\Omega_+; \mathbb{C}^N) \oplus H^s_\alpha(\Omega_-; \mathbb{C}^N).
\end{equation*}
Using \cite[Chapter~1, Theorem~14.3]{LM72} it is not difficult to see that the interpolation property
\begin{equation} \label{interpolation_H_alpha_Omega}
  \big[ H^1_\alpha(\Omega_\pm; \mathbb{C}^N), H^0_\alpha(\Omega_\pm; \mathbb{C}^N) \big]_{1-s} = H^s_\alpha(\Omega_\pm; \mathbb{C}^N)
\end{equation}
holds for any $s \in [0,1]$.
In the following lemma we state a trace theorem for $H^s_\alpha(\Omega_\pm; \mathbb{C}^N)$ for $s \geq \frac{1}{2}$.

\begin{lemma} \label{lemma_trace_theorem}
  For $s \in [\frac{1}{2}, 1]$ the map $C_0^\infty(\overline{\Omega_\pm}; \mathbb{C}^N) \ni f \mapsto f|_{\Sigma}$ admits a unique continuous extension $\gamma_D^\pm: H^s_\alpha(\Omega_\pm; \mathbb{C}^N) \rightarrow H^{s-1/2}(\Sigma; \mathbb{C}^N)$.
\end{lemma}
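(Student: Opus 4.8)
The plan is to establish the trace bound first at the endpoints $s=1$ and $s=\tfrac12$, and then to obtain the intermediate range by interpolation using \eqref{interpolation_H_alpha_Omega}. Since $C_0^\infty(\overline{\Omega_\pm};\mathbb{C}^N)$ is dense in $H^s_\alpha(\Omega_\pm;\mathbb{C}^N)$, it suffices to prove the estimate
\begin{equation*}
  \| f|_\Sigma \|_{H^{s-1/2}(\Sigma;\mathbb{C}^N)} \leq C \| f \|_{H^s_\alpha(\Omega_\pm;\mathbb{C}^N)}, \qquad f \in C_0^\infty(\overline{\Omega_\pm};\mathbb{C}^N),
\end{equation*}
and then define $\gamma_D^\pm$ as the unique continuous extension; uniqueness is automatic from density.

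For $s=1$ the estimate is just the classical Sobolev trace theorem on the Lipschitz domain $\Omega_\pm$ (for $\Omega_-$ one first localizes near $\Sigma$, which is compact, so the unboundedness of $\Omega_-$ causes no difficulty), together with the trivial bound $\|f\|_{H^1_\alpha}\geq\|f\|_{H^1}$ — in fact $\|f\|_{H^1_\alpha}$ and $\|f\|_{H^1}$ are equivalent on $H^1$, since $\|(\alpha\cdot\nabla)f\|_{L^2}\le C\|f\|_{H^1}$ using the boundedness of the $\alpha_j$. So the content is really at $s=\tfrac12$: one must show that a function in $H^{1/2}(\Omega_\pm;\mathbb{C}^N)$ whose Dirac gradient $(\alpha\cdot\nabla)f$ is merely in $L^2$ nevertheless has a trace in $L^2(\Sigma;\mathbb{C}^N)$. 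The mechanism is a first-order (partial) integration-by-parts / Green-type identity: for $f,g\in C_0^\infty(\overline{\Omega_\pm};\mathbb{C}^N)$,
\begin{equation*}
  \big( (\alpha\cdot\nabla) f, g \big)_{L^2(\Omega_\pm;\mathbb{C}^N)} + \big( f, (\alpha\cdot\nabla) g \big)_{L^2(\Omega_\pm;\mathbb{C}^N)} = \pm \big( (\alpha\cdot\nu)\, f|_\Sigma,\, g|_\Sigma \big)_{L^2(\Sigma;\mathbb{C}^N)},
\end{equation*}
the sign depending on the orientation of $\nu$ relative to $\Omega_\pm$. Since $(\alpha\cdot\nu)$ is unitary on $\mathbb{C}^N$ pointwise (by the anticommutation relations \eqref{anti_commutation}, $(\alpha\cdot\nu)^2 = |\nu|^2 I_N = I_N$ and $(\alpha\cdot\nu)$ is self-adjoint), bounding $\|f|_\Sigma\|_{L^2(\Sigma)}$ reduces to estimating the right-hand side. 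Choosing $g$ to be (an extension of) a suitable surface function and optimizing gives
\begin{equation*}
  \| f|_\Sigma \|_{L^2(\Sigma;\mathbb{C}^N)} = \| (\alpha\cdot\nu) f|_\Sigma \|_{L^2(\Sigma;\mathbb{C}^N)} = \sup_{0\ne\psi\in H^{1/2}(\Sigma;\mathbb{C}^N)} \frac{|((\alpha\cdot\nu)f|_\Sigma,\psi)_{L^2(\Sigma)}|}{\|\psi\|_{L^2(\Sigma)}},
\end{equation*}
and one controls the numerator by choosing $g=E\psi$ a bounded extension operator $E:H^{1/2}(\Sigma;\mathbb{C}^N)\to H^1(\Omega_\pm;\mathbb{C}^N)$ (available on Lipschitz domains), so that $\|g|_\Sigma-\psi\|=0$, $\|g\|_{H^1}\le C\|\psi\|_{H^{1/2}}$ and $\|g\|_{L^2}\le C\|\psi\|_{H^{1/2}}$; then the Green identity gives the numerator $\le \big(\|(\alpha\cdot\nabla)f\|_{L^2}\|g\|_{L^2} + \|f\|_{L^2}\|(\alpha\cdot\nabla)g\|_{L^2}\big) \le C\|f\|_{H^0_\alpha}\,\|\psi\|_{H^{1/2}}$. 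This is not quite the $L^2$-over-$L^2$ duality pairing one wants — the test norm on $\psi$ is $H^{1/2}$, not $L^2$ — so one instead pairs against $\psi\in H^{1/2}$ and reads off that $(\alpha\cdot\nu)f|_\Sigma$, hence $f|_\Sigma$, lies in the dual of a space containing $H^{1/2}$, i.e. in $H^{-1/2}(\Sigma;\mathbb{C}^N)$ with the bound $\|f|_\Sigma\|_{H^{-1/2}}\le C\|f\|_{H^0_\alpha}$. Combined with the interior $H^{1/2}$ regularity, a standard refinement of this argument (using that $f\in H^{1/2}(\Omega_\pm)$ already has a trace in $H^0(\Sigma)=L^2(\Sigma)$ by the classical trace theorem at the borderline, or by interpolating the $H^{-1/2}$ bound just obtained against the $H^{1/2}$ bound at $s=1$) upgrades this to $\|f|_\Sigma\|_{L^2(\Sigma)}\le C\|f\|_{H^{1/2}_\alpha(\Omega_\pm)}$.

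With the endpoint estimates $\gamma_D^\pm: H^1_\alpha(\Omega_\pm;\mathbb{C}^N)\to H^{1/2}(\Sigma;\mathbb{C}^N)$ and $\gamma_D^\pm: H^{1/2}_\alpha(\Omega_\pm;\mathbb{C}^N)\to L^2(\Sigma;\mathbb{C}^N)=H^0(\Sigma;\mathbb{C}^N)$ in hand, the general case follows by complex interpolation: by \eqref{interpolation_H_alpha_Omega} the domain space interpolates as $[H^1_\alpha,H^{1/2}_\alpha]_{\theta}=H^{1-\theta/2}_\alpha(\Omega_\pm;\mathbb{C}^N)$ (rescaling the index in \eqref{interpolation_H_alpha_Omega}, or applying it with the pair written in the order there), while the target interpolates as $[H^{1/2}(\Sigma),H^0(\Sigma)]_\theta=H^{(1-\theta)/2}(\Sigma;\mathbb{C}^N)$; matching $s=1-\theta/2$ gives target exponent $s-\tfrac12$, which is exactly the claim for all $s\in[\tfrac12,1]$. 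The main obstacle is the endpoint $s=\tfrac12$: one is asserting an $L^2(\Sigma)$ trace under strictly weaker hypotheses than $H^1(\Omega_\pm)$, and making the Green-identity/duality argument above rigorous on a merely Lipschitz $\Sigma$ — in particular justifying the integration-by-parts formula for all $f\in H^{1/2}_\alpha$ by density from $C_0^\infty(\overline{\Omega_\pm};\mathbb{C}^N)$, and correctly identifying the right function space for the test functions $g$ — is the delicate part; the $s=1$ case and the interpolation step are routine.
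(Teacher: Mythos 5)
Your treatment of $s=1$ and of the intermediate range is fine (the paper actually dispenses with interpolation for $s\in(\tfrac12,1]$ altogether: $H^s_\alpha(\Omega_\pm;\mathbb{C}^N)\hookrightarrow H^s(\Omega_\pm;\mathbb{C}^N)$ and the classical trace theorem already covers every $s>\tfrac12$). The genuine content is the endpoint $s=\tfrac12$, and there your argument has a gap that neither of your two proposed fixes closes.

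First, the fallback ``$f\in H^{1/2}(\Omega_\pm)$ already has a trace in $L^2(\Sigma)$ by the classical trace theorem at the borderline'' is false: the Sobolev trace theorem fails exactly at $s=\tfrac12$, and functions in $H^{1/2}(\Omega_\pm)$ need not have boundary values in any reasonable sense. This failure is precisely why the lemma is nontrivial. Second, the interpolation fallback does not go through on a Lipschitz surface. Your Green-identity/duality computation correctly yields $\|(\alpha\cdot\nu)\,f|_\Sigma\|_{H^{-1/2}(\Sigma)}\le C\|f\|_{H^0_\alpha(\Omega_\pm)}$, but this is a bound on $(\alpha\cdot\nu)f|_\Sigma$, not on $f|_\Sigma$: since $\nu$ is merely $L^\infty$, multiplication by $\alpha\cdot\nu$ is unitary on $L^2(\Sigma)$ but is \emph{not} bounded on $H^{\pm 1/2}(\Sigma)$ — this is exactly why the paper introduces the twisted spaces $H^t_\alpha(\Sigma)$ in \eqref{def_H_U}, and why Corollary~\ref{corollary_trace_extension} places the $H^0_\alpha$-trace in $H^{-1/2}_\alpha(\Sigma;\mathbb{C}^N)$ rather than $H^{-1/2}(\Sigma;\mathbb{C}^N)$. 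Consequently your two endpoints land in spaces from different scales ($H^{1/2}(\Sigma)$ at $s=1$ versus $H^{-1/2}_\alpha(\Sigma)$ at $s=0$), and $[H^{1/2}(\Sigma),H^{-1/2}_\alpha(\Sigma)]_{1/2}$ is not $L^2(\Sigma)$ (there is no duality pairing identifying these two spaces as duals of each other with pivot $L^2(\Sigma)$), so the midpoint does not deliver the claimed $L^2(\Sigma)$ trace. Note also that the paper's own interpolation in Corollary~\ref{corollary_trace_extension} runs between $s=0$ and $s=\tfrac12$ and therefore \emph{uses} the $s=\tfrac12$ case of this lemma as an endpoint; it cannot be used to produce it.

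The missing ingredient is a hard-analysis input: the paper observes that $(\alpha\cdot\nabla)^2=\Delta I_N$, so $H^{1/2}_\alpha(\Omega_\pm;\mathbb{C}^N)$ embeds continuously into $H^{1/2,-1}_\Delta(\Omega_\pm;\mathbb{C}^N)=\{f\in H^{1/2}:\Delta f\in H^{-1}\}$, and then invokes the known result (\cite[Lemma~3.1]{GM11}) that this space admits a continuous $L^2(\Sigma)$-trace on Lipschitz domains — a statement resting on layer-potential/harmonic-analysis estimates that a soft duality-plus-interpolation argument cannot reproduce.
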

\begin{proof}
  For $s \in (\frac{1}{2}, 1]$ the claim follows from the classical trace theorem \cite[Theorem~3.38]{M00}, as $H^s_\alpha(\Omega_\pm; \mathbb{C}^N)$ is continuously embedded in $H^s(\Omega_\pm; \mathbb{C}^N)$. It remains to verify the claim for $s=\frac{1}{2}$. Consider for $s_1,s_2 \in \mathbb{R}$ the Hilbert space
  \begin{equation} \label{def_H_delta}
    H^{s_1,s_2}_\Delta(\Omega_\pm; \mathbb{C}^N) := \bigl\{ f \in H^{s_1}(\Omega_\pm; \mathbb{C}^N): \Delta f \in H^{s_2}(\Omega_\pm; \mathbb{C}^N) \bigr\}
  \end{equation}
  endowed with the norm 
  $$\| f \|^2_{H^{s_1,s_2}_\Delta(\Omega_\pm; \mathbb{C}^N)}:=\| f \|_{H^{s_1}(\Omega_\pm; \mathbb{C}^N)}^2 + \| \Delta f \|_{H^{s_2}(\Omega_\pm; \mathbb{C}^N)}^2.$$ 
  It follows from \cite[Lemma~3.1]{GM11} that there exists a continuous trace map from $H^{1/2,-1}_\Delta(\Omega_\pm; \mathbb{C}^N)$ to $L^2(\Sigma; \mathbb{C}^N)$.
  Since \eqref{anti_commutation} implies $(\alpha \cdot \nabla)^2 = \Delta I_N$ in the distributional sense, $H^{1/2}_\alpha(\Omega_\pm; \mathbb{C}^N)$ is continuously embedded in $H^{1/2,-1}_\Delta(\Omega_\pm; \mathbb{C}^N)$. This yields the claim also for $s=\frac{1}{2}$.
\end{proof}

Using Lemma~\ref{lemma_trace_theorem}, that $\pm \nu$ is the unit normal vector field pointing outwards of $\Omega_\pm$, and the fact that $C_0^\infty(\overline{\Omega_\pm}; \mathbb{C}^N)$ is dense in $H^s_\alpha(\Omega_\pm; \mathbb{C}^N)$ one can show for all $f, g \in H^s_\alpha(\Omega_\pm; \mathbb{C}^N)$, $s \in[ \frac{1}{2}, 1]$, the following integration by parts formula:
\begin{equation} \label{int_by_parts}
  \int_{\Omega_\pm} i (\alpha \cdot \nabla) f \cdot \overline{g} \,\textup{d}x = \pm \int_{\Sigma} i (\alpha \cdot \nu) \gamma_D^\pm f \cdot \overline{\gamma_D^\pm g} \,\textup{d}\sigma + \int_{\Omega_\pm} f \cdot \overline{i (\alpha \cdot \nabla) g} \,\textup{d}x.
\end{equation}

In the construction of boundary triples for Dirac operators with singular interactions some families of integral operators related to the fundamental solution $G_{z,q}$, $q=2,3$, given in~\eqref{def_G_lambda} are required. 
We introduce for $z \in\mathbb{C}\setminus ((-\infty, -m] \cup [m, \infty))$ the potential operator $\Phi_z: L^2(\Sigma; \mathbb{C}^N) \rightarrow L^2(\mathbb{R}^q; \mathbb{C}^N)$ by
\begin{equation} \label{def_Phi_lambda}
  \Phi_z \varphi(x) := \int_\Sigma G_{z,q}(x-y) \varphi(y) \textup{d}\sigma(y), 
  \quad \varphi \in L^2(\Sigma; \mathbb{C}^N),~x \in \mathbb{R}^q \setminus \Sigma,
\end{equation}
and the strongly singular boundary integral operator $\mathcal{C}_z: L^2(\Sigma; \mathbb{C}^N) \rightarrow L^2(\Sigma; \mathbb{C}^N)$ acting as
\begin{equation} \label{def_C_lambda}
  \mathcal{C}_z \varphi(x) := \lim_{\varepsilon \searrow 0} \int_{\Sigma \setminus B(x, \varepsilon)} 
  G_{z,q}(x-y) \varphi(y) \textup{d}\sigma(y), \quad
  \varphi \in L^2(\Sigma; \mathbb{C}^N),~x \in \Sigma,
\end{equation}
where $B(x,\varepsilon)$ is the ball of radius $\varepsilon$ centered at $x$.
Both operators $\Phi_z$ and $\mathcal{C}_z$ are well defined and bounded, see \cite[Lemma~2.1 and~Lemma~3.3]{AMV14} for $q=3$ and $z=0$; in the other cases this can be shown with the same arguments. Moreover, for $z \in (-m,m)$ the operator $\mathcal{C}_z$ is self-adjoint in $L^2(\Sigma; \mathbb{C}^N)$. The operator $\mathcal{C}_z$ also satisfies the formula
\begin{equation}\label{C_inv}
-4\big(\mathcal{C}_z (\alpha \cdot \nu)\big)^2 = I_N, \qquad z \in \mathbb{C} \setminus \big( (-\infty, -m] \cup [m, \infty) \big). 
\end{equation}
This result can be found in \cite[Lemma~2.2]{AMV15} for $z \in (- m, m)$ in the three-dimensional setting and can be shown in the same way in the other cases. Formula \eqref{C_inv} shows that $\mathcal{C}_z$ is bijective and that it has the bounded inverse 
\begin{equation} \label{formula_C_inv}
  (\mathcal{C}_z)^{-1} = -4(\alpha \cdot \nu) \mathcal{C}_z (\alpha \cdot \nu), \qquad z \in \mathbb{C} \setminus \big( (-\infty, -m] \cup [m, \infty) \big).
\end{equation}

In the next lemma we improve the mapping properties of $\Phi_z$.

\begin{lemma} \label{lemma_Phi_mapping_properties}
For any $z \in \rho(A_0)$ the operator $\Phi_z$ gives rise to a bounded map 
\begin{equation*}
  \Phi_z: L^2(\Sigma; \mathbb{C}^N) \rightarrow H^{1/2}_\alpha(\mathbb{R}^q \setminus \Sigma; \mathbb{C}^N).
\end{equation*}
\end{lemma}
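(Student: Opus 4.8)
The plan is to exploit the factorization $(\alpha\cdot\nabla)^2=\Delta I_N$ together with the known mapping properties of the single layer potential for the Helmholtz operator $-\Delta+(m^2-z^2)$, and then to lift these to a statement about $\Phi_z$ by applying the first-order operator $A_0+z$. First I would recall from \eqref{resolvent_A_0} that $G_{z,q}(x-y)=\big(-i(\alpha\cdot\nabla_x)+m\alpha_0+zI_N\big)\big(E_{z,q}(x-y)I_N\big)$, where $E_{z,q}$ denotes the integral kernel of $(-\Delta+m^2-z^2)^{-1}$ on $\mathbb{R}^q$. Consequently, writing $S_z\varphi(x):=\int_\Sigma E_{z,q}(x-y)\varphi(y)\,\textup{d}\sigma(y)$ for the classical single layer potential (acting componentwise on $\mathbb{C}^N$-valued densities), we have the operator identity $\Phi_z=(-i(\alpha\cdot\nabla)+m\alpha_0+zI_N)S_z$ on $\mathbb{R}^q\setminus\Sigma$.

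The second step is to use the standard regularity of the single layer potential on a bounded Lipschitz domain: $S_z$ maps $L^2(\Sigma;\mathbb{C}^N)$ boundedly into $H^{3/2}(\Omega_+;\mathbb{C}^N)\oplus H^{3/2}_{\mathrm{loc}}(\overline{\Omega_-};\mathbb{C}^N)$, and moreover $(-\Delta+m^2-z^2)S_z\varphi=0$ in $\Omega_\pm$ since $\Sigma$ has measure zero, so in particular $\Delta S_z\varphi\in L^2(\Omega_\pm;\mathbb{C}^N)$ with $\|\Delta S_z\varphi\|_{L^2(\Omega_\pm)}\leq C\|S_z\varphi\|_{L^2(\Omega_\pm)}\leq C'\|\varphi\|_{L^2(\Sigma)}$. (On the exterior domain one additionally uses that $E_{z,q}$ decays exponentially for $z\in\rho(A_0)$, i.e. $\mathrm{Im}\,k(z)>0$, so that $S_z\varphi\in H^{3/2}(\Omega_-;\mathbb{C}^N)$ globally.) Applying the first-order operator $-i(\alpha\cdot\nabla)+m\alpha_0+zI_N$ then produces a function $\Phi_z\varphi$ whose restriction to each $\Omega_\pm$ lies in $H^{1/2}(\Omega_\pm;\mathbb{C}^N)$, with the corresponding norm bound. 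To see that $(\alpha\cdot\nabla)\Phi_z\varphi\in L^2(\Omega_\pm;\mathbb{C}^N)$, I would compute $(\alpha\cdot\nabla)\Phi_z\varphi=i\big((\alpha\cdot\nabla)^2-im\alpha_0(\alpha\cdot\nabla)-iz(\alpha\cdot\nabla)\big)S_z\varphi=i\Delta S_z\varphi+\big(m\alpha_0+zI_N\big)(\alpha\cdot\nabla)S_z\varphi$ in the distributional sense on $\Omega_\pm$; the first term is in $L^2$ by the previous paragraph and the second is in $L^2$ because $(\alpha\cdot\nabla)S_z\varphi\in H^{1/2}(\Omega_\pm;\mathbb{C}^N)\subset L^2(\Omega_\pm;\mathbb{C}^N)$. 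Combining the two estimates gives $\|\Phi_z\varphi\|_{H^{1/2}_\alpha(\Omega_\pm;\mathbb{C}^N)}\leq C\|\varphi\|_{L^2(\Sigma;\mathbb{C}^N)}$, which is the assertion.

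An alternative, perhaps cleaner, route I would consider is to avoid the exterior decay issue by first establishing the bound on $\Phi_0$ (the massless case with $z=0$, where the required mapping property of the Newtonian/Helmholtz single layer potential on Lipschitz domains is classical, cf. \cite{AMV14}) and then treating the difference $\Phi_z-\Phi_0$: its integral kernel $G_{z,q}-G_{0,q}$ is continuous across $\Sigma$ (the singularity of $G_{z,q}$ at the diagonal is $z$-independent up to lower order), so $\Phi_z-\Phi_0$ maps $L^2(\Sigma;\mathbb{C}^N)$ into $H^1_\alpha(\mathbb{R}^q;\mathbb{C}^N)=H^1(\mathbb{R}^q;\mathbb{C}^N)$ (using $H^s_\alpha(\mathbb{R}^q;\mathbb{C}^N)=H^1(\mathbb{R}^q;\mathbb{C}^N)$), hence certainly into $H^{1/2}_\alpha(\mathbb{R}^q\setminus\Sigma;\mathbb{C}^N)$. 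The main obstacle in either approach is the first step: carefully justifying the $H^{3/2}$-regularity (equivalently, the $L^2\to H^{1/2}$ boundedness of the \emph{normal derivative traces} and the interior elliptic regularity) of the single layer potential on a general bounded Lipschitz domain, and handling the behaviour at infinity in $\Omega_-$; once that is in place the rest is a bookkeeping computation with the commutation relations \eqref{anti_commutation}. The interpolation identity \eqref{interpolation_H_alpha_Omega} may be invoked to streamline the argument, interpolating between the (elementary) $L^2(\Sigma)\to H^0_\alpha$ bound and an $H^{1/2}(\Sigma)\to H^1_\alpha$ bound.
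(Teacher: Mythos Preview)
Your proposal is correct and follows essentially the same strategy as the paper: both express $\Phi_z$ as $(-i(\alpha\cdot\nabla)+m\alpha_0+zI_N)$ applied to the Helmholtz single layer potential and then invoke the $L^2(\Sigma)\to H^{3/2}$-type mapping property of the latter together with $(\alpha\cdot\nabla)^2=\Delta$. The paper packages this slightly more cleanly by defining the single layer potential via duality as $\textup{SL}(\mu):=(\gamma_D(-\Delta-\bar\mu)^{-1})^*$ and citing \cite[Equation~(2.127)]{GM08} for the bound $\textup{SL}(\mu):L^2(\Sigma)\to H^{3/2,0}_\Delta(\mathbb{R}^q\setminus\Sigma)$, which absorbs both the $H^{3/2}$-regularity and the $\Delta f\in L^2$ information at once and sidesteps the exterior-decay bookkeeping you flagged.
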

\begin{proof}
  Let $\gamma_D: H^1(\mathbb{R}^q; \mathbb{C}^N) \rightarrow H^{1/2}(\Sigma; \mathbb{C}^N)$ be the Dirichlet trace operator. First, Fubini's theorem implies that
  \begin{equation} \label{equation_Phi_z_adjoint}
    ( \Phi_z \varphi, f )_{L^2(\mathbb{R}^q; \mathbb{C}^N)} = \big( \varphi, \gamma_D (A_0 - \overline{z})^{-1} f \big)_{L^2(\Sigma; \mathbb{C}^N)} 
  \end{equation}
  holds for all $\varphi \in L^2(\Sigma; \mathbb{C}^N)$ and $f \in L^2(\mathbb{R}^q; \mathbb{C}^N)$.
  In the following we denote by $-\Delta$ the free Laplace operator defined on $H^2(\mathbb{R}^q)$. Then, it is not difficult to see for $\mu \in \mathbb{C} \setminus [0,\infty)$ that $\gamma_D (-\Delta - \overline{\mu})^{-1}: L^2(\mathbb{R}^q) \rightarrow L^2(\Sigma)$ is bounded. Hence, we can define the single layer potential 
  \begin{equation*}
    \textup{SL}(\mu):= \big( \gamma_D (-\Delta-\overline{\mu})^{-1} \big)^*: L^2(\Sigma) \rightarrow L^2(\mathbb{R}^q).
  \end{equation*}
  It is known that the single layer potential gives rise to a bounded operator $\textup{SL}(\mu): L^2(\Sigma) \rightarrow H^{3/2,0}_\Delta(\mathbb{R}^q \setminus \Sigma)$, where $H^{3/2,0}_\Delta(\mathbb{R}^q \setminus \Sigma)$ is defined as in~\eqref{def_H_delta}; cf. \cite[Equation~(2.127)]{GM08}. Using~\eqref{equation_Dirac_Laplace}, we find with~\eqref{int_by_parts} for $\varphi \in L^2(\Sigma; \mathbb{C}^N)$ and any test function $f \in C_0^\infty(\Omega_\pm; \mathbb{C}^N) \subset C_0^\infty(\mathbb{R}^q; \mathbb{C}^N)$ that
  \begin{equation*}
    \begin{split}
      \big( (-i (\alpha \cdot \nabla) &+ m \alpha_0 + z I_N) (\textup{SL}(z^2-m^2) \varphi)_\pm, f \big)_{L^2(\Omega_\pm; \mathbb{C}^N)} \\
      &= \big( \textup{SL}(z^2-m^2) \varphi, (A_0 + \overline{z}) f \big)_{L^2(\mathbb{R}^q; \mathbb{C}^N)} \\
      &= \big( \varphi, \gamma_D (A_0 - \overline{z})^{-1} (A_0 + \overline{z})^{-1} (A_0 + \overline{z}) f \big)_{L^2(\Sigma; \mathbb{C}^N)} \\
      &= \big( \varphi, \gamma_D (A_0 - \overline{z})^{-1} f \big)_{L^2(\Sigma; \mathbb{C}^N)} 
      = \big( (\Phi_z \varphi)_\pm, f \big)_{L^2(\Omega_\pm; \mathbb{C}^N)}, \\
    \end{split}
  \end{equation*}
  where~\eqref{equation_Phi_z_adjoint} was used in the last step. Since this holds for all $f \in C_0^\infty(\Omega_\pm; \mathbb{C}^N)$, we conclude  that
  \begin{equation}
    \Phi_z \varphi = (-i \alpha \cdot \nabla + m \alpha_0 + z I_N) \textup{SL}(z^2-m^2) \varphi \quad \text{in } \mathbb{R}^q \setminus \Sigma
  \end{equation}
  holds. Therefore, the mapping properties of  $\textup{SL}(z^2-m^2)$ mentioned above and $(\alpha \cdot \nabla)^2=\Delta$ imply the claim of this lemma.
\end{proof}

We note that for $\varphi \in L^2(\Sigma; \mathbb{C}^N)$ the trace of $\Phi_z \varphi$, which is well defined by Lemma~\ref{lemma_trace_theorem} and Lemma~\ref{lemma_Phi_mapping_properties}, is given by
\begin{equation} \label{trace_Phi_lambda}
  \gamma_D^\pm (\Phi_z \varphi)_\pm = \mp \frac{i}{2} (\alpha \cdot \nu) \varphi + \mathcal{C}_z \varphi;
\end{equation}
this can be shown in the same way as in \cite[Lemma~3.3]{AMV14} or \cite[Proposition~3.4]{BHOP20}.

Finally, in order to further characterize the mapping properties of $\Phi_z$ and $\mathcal{C}_z$ in the next section, we need another type of Sobolev spaces. Let $\mathbb{U}: \Sigma \rightarrow \mathbb{C}^{N \times N}$ be a measurable function such that $\mathbb{U}(x)$ is a unitary matrix for any $x \in \Sigma$. Introduce for $t \in [0,\frac{1}{2}]$ the space 
\begin{equation} \label{def_H_U}
  H^t_\mathbb{U}(\Sigma; \mathbb{C}^N) := \big\{ \varphi \in L^2(\Sigma; \mathbb{C}^N): \mathbb{U} \varphi \in H^{t}(\Sigma; \mathbb{C}^N) \big\},
\end{equation}
where $H^t(\Sigma; \mathbb{C}^N)$ denotes the standard Sobolev space on $\Sigma$ of $\mathbb{C}^N$-valued functions. Moreover, we endow $H^t_\mathbb{U}(\Sigma; \mathbb{C}^N)$ with the natural scalar product
\begin{equation*}
(\varphi,\psi)_{H^t_\mathbb{U}(\Sigma; \mathbb{C}^N)} := (\mathbb{U} \varphi,\mathbb{U} \psi)_{H^t(\Sigma; \mathbb{C}^N)}, \quad  \varphi,\psi \in H^t_\mathbb{U}(\Sigma; \mathbb{C}^N).
\end{equation*} 
Note that $H^0_\mathbb{U}(\Sigma; \mathbb{C}^N) = L^2(\Sigma; \mathbb{C}^N)$.
By definition, for any $t \in [0,\frac{1}{2}]$ the multiplication by $\mathbb{U}$ is a unitary map from $H^t_\mathbb{U}(\Sigma; \mathbb{C}^N)$ to $H^t(\Sigma; \mathbb{C}^N)$, and the multiplication operator associated with the measurable function
$\mathbb{U}^*: \Sigma \rightarrow \mathbb{C}^{N \times N}$, $x\mapsto \mathbb{U}(x)^*$,
is a unitary map from $H^t(\Sigma; \mathbb{C}^N)$ to $H^t_\mathbb{U}(\Sigma; \mathbb{C}^N)$.

For $t \in [-\frac{1}{2},0]$ we also make use of the anti-dual spaces
\begin{equation} \label{def_H_U_dual}
  H^t_\mathbb{U}(\Sigma; \mathbb{C}^N) := \big( H^{-t}_{\mathbb{U}}(\Sigma; \mathbb{C}^N) \big)'.
\end{equation}
Since for any $t \in [0, \frac{1}{2}]$ the map $\mathbb{U}^*: H^t(\Sigma; \mathbb{C}^N) \rightarrow H^t_{\mathbb{U}}(\Sigma; \mathbb{C}^N)$ is unitary  with $(\mathbb{U}^*)^{-1}=\mathbb{U}$, its anti-dual map, that is denoted by $\mathbb{U}$ as well, also provides a unitary map from $H^{-t}_{\mathbb{U}}(\Sigma; \mathbb{C}^N)$ to $H^{-t}(\Sigma; \mathbb{C}^N)$; its action is, by definition, constituted by
\begin{equation*}
  \big\langle \mathbb{U} \varphi_1, \psi_1 \big\rangle_{H^{-t}(\Sigma; \mathbb{C}^N) \times H^t(\Sigma; \mathbb{C}^N)}
  = \big\langle  \varphi_1, \mathbb{U}^* \psi_1 \big\rangle_{H^{-t}_\mathbb{U}(\Sigma; \mathbb{C}^N) \times H^t_\mathbb{U}(\Sigma; \mathbb{C}^N)}
\end{equation*}
for $\varphi_1 \in H^{-t}_\mathbb{U}(\Sigma; \mathbb{C}^N)$ and $\psi_1 \in H^t(\Sigma; \mathbb{C}^N)$. In a similar way one finds that $\mathbb{U}^*: H^{-t}(\Sigma; \mathbb{C}^N) \rightarrow H^{-t}_{\mathbb{U}}(\Sigma; \mathbb{C}^N)$ is unitary and the action is given by
\begin{equation*}
  \big\langle \mathbb{U}^* \varphi_2,  \psi_2 \big\rangle_{H^{-t}_\mathbb{U}(\Sigma; \mathbb{C}^N) \times H^t_\mathbb{U}(\Sigma; \mathbb{C}^N)}
  = \big\langle  \varphi_2, \mathbb{U} \psi_2 \big\rangle_{H^{-t}(\Sigma; \mathbb{C}^N) \times H^t(\Sigma; \mathbb{C}^N)}
\end{equation*}
for $\varphi_2 \in H^{-t}(\Sigma; \mathbb{C}^N)$ and $\psi_2 \in H^t_\mathbb{U}(\Sigma; \mathbb{C}^N)$.
Again, by using \cite[Chapter~1, Theorem~14.3]{LM72} it is not difficult to verify the interpolation property
\begin{equation} \label{interpolation_H_alpha_Sigma1}
  \big[ H^{1/2}_{\mathbb{U}}(\Sigma; \mathbb{C}^N), H^0_{\mathbb{U}}(\Sigma; \mathbb{C}^N) \big]_{1-t} = H^{t/2}_{\mathbb{U}}(\Sigma; \mathbb{C}^N), \qquad t \in [0,1],
\end{equation}
and with duality one also gets
\begin{equation} \label{interpolation_H_alpha_Sigma2}
  \big[ H^{0}_{\mathbb{U}}(\Sigma; \mathbb{C}^N), H^{-1/2}_{\mathbb{U}}(\Sigma; \mathbb{C}^N) \big]_{1-t} = H^{(t-1)/2}_{\mathbb{U}}(\Sigma; \mathbb{C}^N), \qquad t \in [0,1].
\end{equation}
Mostly, the above construction is applied for $\mathbb{U} = \alpha \cdot \nu$, and then we will write
\begin{equation*}
  H_\alpha^t(\Sigma; \mathbb{C}^N) := H_{\alpha \cdot \nu}^t(\Sigma; \mathbb{C}^N).
\end{equation*}
Since $\mathbb{U} = \alpha \cdot \nu$ is self-adjoint, we get for any $t \in [-\frac{1}{2}, \frac{1}{2}]$ that the maps
\begin{equation} \label{mapping_properties_alpha}
  \alpha \cdot \nu: H^t_\alpha(\Sigma; \mathbb{C}^N) \rightarrow H^t(\Sigma;\mathbb{C}^N) \quad \text{and} \quad 
  \alpha \cdot \nu: H^t(\Sigma; \mathbb{C}^N) \rightarrow H^t_\alpha(\Sigma;\mathbb{C}^N)
\end{equation}
are unitary.
If $\Sigma$ is $C^{1,t+\varepsilon}$-smooth for some $\varepsilon>0$, then $H^t_\alpha(\Sigma; \mathbb{C}^N) = H^t(\Sigma; \mathbb{C}^N)$, cf. \cite[Lemma~A.2]{BHM20}.

\subsection{Quasi boundary triples and generalized boundary triples for Dirac operators with singular interactions} \label{section_QBT}

In this subsection we follow ideas from Section~\ref{Sec_OneDimDirac} and introduce a family of quasi boundary triples for Dirac operators; similar constructions 
can also be found in \cite{BEHL18, BH20}.

We introduce for $s \in [0, 1]$ the operators $T^{(s)}$ in $L^2(\mathbb{R}^q; \mathbb{C}^N)$ by
\begin{equation} \label{def_T} 
  \begin{split}
    T^{(s)} f &:= (-i (\alpha \cdot \nabla) + m \alpha_0) f_+ \oplus (-i (\alpha \cdot \nabla) + m \alpha_0) f_-, \\
    \dom T^{(s)} &:= H^s_\alpha(\Omega_+; \mathbb{C}^N) \oplus H^s_\alpha(\Omega_-; \mathbb{C}^N) = H^s_\alpha(\mathbb{R}^q \setminus \Sigma; \mathbb{C}^N),
  \end{split}
\end{equation}
and $S := T^{(s)} \upharpoonright H^1_0(\mathbb{R}^q \setminus \Sigma; \mathbb{C}^N)$, which is given more explicitly by
\begin{equation*} 
  S f = (-i (\alpha \cdot \nabla) + m \alpha_0) f, \qquad \dom S = H^1_0(\mathbb{R}^q \setminus \Sigma; \mathbb{C}^N). 
\end{equation*}
The operator $S$ is densely defined, closed, and symmetric. Using standard arguments and distributional derivatives one verifies that
\begin{equation} \label{equation_adjoint}
  S^* = T^{(0)} \qquad \text{and} \qquad (T^{(0)})^* = S;
\end{equation}
cf. \cite[Proposition~3.1]{BH20} for a similar argument.
For $s \in [\frac{1}{2}, 1]$ we introduce the mappings $\Gamma_0^{(s)}, \Gamma_1^{(s)}: \dom T^{(s)} \rightarrow L^2(\Sigma; \mathbb{C}^N)$ by
\begin{equation} \label{def_Gamma}
  \Gamma_0^{(s)} f := i (\alpha \cdot \nu) (\gamma_D^+ f_+ -  \gamma_D^- f_-) \quad \text{and} \quad \Gamma_1^{(s)} f := \frac{1}{2} (\gamma_D^+ f_+ + \gamma_D^- f_-).
\end{equation}
Observe that $\Gamma_0^{(s)}$ and $\Gamma_1^{(s)}$ are both well defined due to Lemma~\ref{lemma_trace_theorem}.

In the following theorem we show that the mappings $\Gamma_0^{(s)}$ and $\Gamma_1^{(s)}$ in \eqref{def_Gamma} give rise to a quasi boundary triple for $S^*$ and we compute the associated $\gamma$-field and Weyl function. Recall that $A_0$ is the free Dirac operator defined in~\eqref{def_A_0}, that $\Phi_z$ and $\mathcal{C}_z$ are the mappings introduced in~\eqref{def_Phi_lambda} and~\eqref{def_C_lambda}, respectively, and that $H^s_\alpha(\Sigma; \mathbb{C}^N)$ is the space defined in~\eqref{def_H_U} for $\mathbb{U} = \alpha \cdot \nu$.

\begin{theorem} \label{theorem_QBT}
  Let $s \in [\frac{1}{2},1]$. Then the following holds:   
  \begin{itemize}
    \item[\rm (i)] The triple $\big\{ L^2(\Sigma; \mathbb{C}^N), \Gamma_0^{(s)}, \Gamma_1^{(s)} \big\}$ is a quasi boundary triple for $S^*=\overline{T^{(s)}}$ with $T^{(s)} \upharpoonright \ker \Gamma_0^{(s)} = A_0$, and one has
  \begin{equation} \label{ran_Gamma}
    \ran \Gamma_0^{(s)} = H^{s-1/2}_\alpha(\Sigma; \mathbb{C}^N).
  \end{equation}
  In particular, $\big\{ L^2(\Sigma; \mathbb{C}^N), \Gamma_0^{(1/2)}, \Gamma_1^{(1/2)} \big\}$ is a generalized boundary triple.
    
    \item[(ii)] For $z \in \rho(A_0)= \mathbb{C} \setminus ( (-\infty, -m] \cup [m, \infty) )$ the values $\gamma^{(s)}(z)$ of the $\gamma$-field 
       are given by 
    \begin{equation*} 
      \begin{split}
        \gamma^{(s)}(z) = \Phi_z \upharpoonright H^{s-1/2}_\alpha(\Sigma; \mathbb{C}^N).
      \end{split}
    \end{equation*}
    Each $\gamma^{(s)}(z)$ is a densely defined bounded operator 
    from the Hilbert space $L^2(\Sigma; \mathbb{C}^N)$ to $L^2(\mathbb{R}^q; \mathbb{C}^N)$ and 
    an everywhere defined  bounded operator from $H^{s-1/2}_\alpha(\Sigma; \mathbb{C}^N)$ to 
    $H^s_\alpha(\mathbb{R}^q \setminus \Sigma; \mathbb{C}^N)$. In particular, $\gamma^{(1/2)}(z)$ is a bounded and everywhere defined operator from $L^2(\Sigma; \mathbb{C}^N)$ to $L^2(\mathbb{R}^q; \mathbb{C}^N)$. 
    Moreover, 
    \begin{equation*}
      \gamma^{(s)}(z)^*: L^2(\mathbb{R}^q; \mathbb{C}^N) \rightarrow L^2(\Sigma; \mathbb{C}^N)
    \end{equation*}
    is compact.
   
    \item[\rm (iii)] For $z \in \rho(A_0)= \mathbb{C} \setminus ( (-\infty, -m] \cup [m, \infty) )$ the values $M^{(s)}(z)$ of the Weyl function
    are given by
    \begin{equation*}
      \begin{split}
        M^{(s)}(z)=\mathcal{C}_z \upharpoonright H^{s-1/2}_\alpha(\Sigma; \mathbb{C}^N).
      \end{split}
    \end{equation*}
    Each $M^{(s)}(z)$ is a densely defined bounded operator in $L^2(\Sigma; \mathbb{C}^N)$ and a  bounded everywhere defined operator from $H^{s-1/2}_\alpha(\Sigma; \mathbb{C}^N)$ to $H^{s-1/2}(\Sigma; \mathbb{C}^N)$. In particular, $M^{(1/2)}(z)$ is a bounded and everywhere defined operator in $L^2(\Sigma; \mathbb{C}^N)$.
  \end{itemize}
\end{theorem}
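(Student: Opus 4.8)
The plan is to verify directly the axioms of a quasi boundary triple from Definition~\ref{qbtdef}, to identify $\ker\Gamma_0^{(s)}$ with $\dom A_0$, and then to read off the $\gamma$-field and the Weyl function from the potential operator $\Phi_z$ and the singular integral operator $\mathcal{C}_z$ via the trace formula \eqref{trace_Phi_lambda}. First, since $C_0^\infty(\overline{\Omega_\pm};\mathbb{C}^N)$ is dense in $H^t_\alpha(\Omega_\pm;\mathbb{C}^N)$ for every $t\in[0,1]$ and is contained in $\dom T^{(s)}$, the operator $T^{(s)}$ is a core for $T^{(0)}$, whence $\overline{T^{(s)}}=T^{(0)}=S^*$ by \eqref{equation_adjoint}. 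The abstract Green identity in Definition~\ref{qbtdef}~(i) follows by applying the integration by parts formula \eqref{int_by_parts} separately on $\Omega_+$ and $\Omega_-$ (whose outward normals are $\pm\nu$) and adding the two identities; the $m\alpha_0$-contributions cancel and the boundary terms combine, using $(\alpha\cdot\nu)^*=\alpha\cdot\nu$, into $(\Gamma_1^{(s)}f,\Gamma_0^{(s)}g)_{L^2(\Sigma;\mathbb{C}^N)}-(\Gamma_0^{(s)}f,\Gamma_1^{(s)}g)_{L^2(\Sigma;\mathbb{C}^N)}$. For Definition~\ref{qbtdef}~(iii) one notes that, since $\alpha\cdot\nu$ is invertible, $\ker\Gamma_0^{(s)}=\{f\in H^s_\alpha(\mathbb{R}^q\setminus\Sigma;\mathbb{C}^N):\gamma_D^+f_+=\gamma_D^-f_-\}$; for such $f$ the distribution $(\alpha\cdot\nabla)f$ on $\mathbb{R}^q$ has no singular part on $\Sigma$ (this again comes out of \eqref{int_by_parts}), so $(\alpha\cdot\nabla)f\in L^2(\mathbb{R}^q;\mathbb{C}^N)$ and hence $f\in H^0_\alpha(\mathbb{R}^q;\mathbb{C}^N)=H^1(\mathbb{R}^q;\mathbb{C}^N)$; conversely every $f\in H^1(\mathbb{R}^q;\mathbb{C}^N)$ lies in $\ker\Gamma_0^{(s)}$, so $T^{(s)}\upharpoonright\ker\Gamma_0^{(s)}=A_0$, which is self-adjoint.

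Next I would compute the $\gamma$-field and the Weyl function. Lemma~\ref{lemma_trace_theorem} together with the unitarity of $\alpha\cdot\nu$ from \eqref{mapping_properties_alpha} gives $\ran\Gamma_0^{(s)}\subseteq H^{s-1/2}_\alpha(\Sigma;\mathbb{C}^N)$ and $\ran\Gamma_1^{(s)}\subseteq H^{s-1/2}(\Sigma;\mathbb{C}^N)$. For the reverse inclusion fix $z\in\rho(A_0)$ and $\varphi\in H^{s-1/2}_\alpha(\Sigma;\mathbb{C}^N)$. Since $G_{z,q}$ is the fundamental solution of the free Dirac operator, $\Phi_z\varphi$ satisfies $(T^{(0)}-z)\Phi_z\varphi=0$ on $\mathbb{R}^q\setminus\Sigma$, and from \eqref{trace_Phi_lambda} and $(\alpha\cdot\nu)^2=I_N$ one computes $\Gamma_0^{(s)}\Phi_z\varphi=i(\alpha\cdot\nu)\big(-i(\alpha\cdot\nu)\varphi\big)=\varphi$ and $\Gamma_1^{(s)}\Phi_z\varphi=\mathcal{C}_z\varphi$. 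Assuming the mapping property $\Phi_z\varphi\in\dom T^{(s)}=H^s_\alpha(\mathbb{R}^q\setminus\Sigma;\mathbb{C}^N)$ discussed below, this shows $\varphi\in\ran\Gamma_0^{(s)}$, hence \eqref{ran_Gamma}, and, by the decomposition \eqref{decomposition} and the definition of the $\gamma$-field, $\gamma^{(s)}(z)=\Phi_z\upharpoonright H^{s-1/2}_\alpha(\Sigma;\mathbb{C}^N)$ and $M^{(s)}(z)=\Gamma_1^{(s)}\gamma^{(s)}(z)=\mathcal{C}_z\upharpoonright H^{s-1/2}_\alpha(\Sigma;\mathbb{C}^N)$. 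Boundedness of $\gamma^{(s)}(z)$ as a densely defined (everywhere defined when $s=\tfrac12$) operator into $L^2(\mathbb{R}^q;\mathbb{C}^N)$ and of $M^{(s)}(z)$ in $L^2(\Sigma;\mathbb{C}^N)$ is inherited from the $L^2$-boundedness of $\Phi_z$ and $\mathcal{C}_z$. Compactness of $\gamma^{(s)}(z)^*$ follows from \eqref{equation_gamma_star}: since $\Gamma_1^{(s)}$ coincides with the two-sided Dirichlet trace $\gamma_D$ on $\dom A_0=H^1(\mathbb{R}^q;\mathbb{C}^N)$, we have $\gamma^{(s)}(z)^*=\gamma_D(A_0-\overline z)^{-1}$, which is compact because $(A_0-\overline z)^{-1}\colon L^2(\mathbb{R}^q;\mathbb{C}^N)\to H^1(\mathbb{R}^q;\mathbb{C}^N)$ is bounded and $\gamma_D\colon H^1(\mathbb{R}^q;\mathbb{C}^N)\to L^2(\Sigma;\mathbb{C}^N)$ is compact by the compact embedding $H^{1/2}(\Sigma;\mathbb{C}^N)\hookrightarrow L^2(\Sigma;\mathbb{C}^N)$. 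Finally, for $s=\tfrac12$ Lemma~\ref{lemma_Phi_mapping_properties} shows $\Phi_z\colon L^2(\Sigma;\mathbb{C}^N)\to H^{1/2}_\alpha(\mathbb{R}^q\setminus\Sigma;\mathbb{C}^N)$, so $\Gamma_0^{(1/2)}$ is surjective onto $H^0_\alpha(\Sigma;\mathbb{C}^N)=L^2(\Sigma;\mathbb{C}^N)$ and $\{L^2(\Sigma;\mathbb{C}^N),\Gamma_0^{(1/2)},\Gamma_1^{(1/2)}\}$ is a generalized boundary triple; for $s\in(\tfrac12,1]$ the density of $\ran(\Gamma_0^{(s)},\Gamma_1^{(s)})$ follows by combining the surjectivity of $\Gamma_0^{(s)}$ onto $H^{s-1/2}_\alpha(\Sigma;\mathbb{C}^N)$ with $\ker\Gamma_0^{(s)}=H^1(\mathbb{R}^q;\mathbb{C}^N)$ and the density of $\ran(\Gamma_1^{(s)}\upharpoonright\ker\Gamma_0^{(s)})=\ran\gamma_D=H^{1/2}(\Sigma;\mathbb{C}^N)$ in $L^2(\Sigma;\mathbb{C}^N)$, which together with the items already checked makes the triple a quasi boundary triple.

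The step I expect to be the main obstacle is the sharp mapping property used above without proof: that for every $s\in[\tfrac12,1]$ the operator $\Phi_z$ maps $H^{s-1/2}_\alpha(\Sigma;\mathbb{C}^N)$ boundedly into $H^s_\alpha(\mathbb{R}^q\setminus\Sigma;\mathbb{C}^N)$ (equivalently, $\mathcal{C}_z$ maps $H^{s-1/2}_\alpha(\Sigma;\mathbb{C}^N)$ boundedly into $H^{s-1/2}(\Sigma;\mathbb{C}^N)$). Lemma~\ref{lemma_Phi_mapping_properties} gives only the endpoint $s=\tfrac12$; the difficulty is the endpoint $s=1$, for a general bounded Lipschitz domain, where $H^{1/2}_\alpha(\Sigma;\mathbb{C}^N)$ and $H^{1/2}(\Sigma;\mathbb{C}^N)$ need not coincide. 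I would attack it along the lines of the proof of Lemma~\ref{lemma_Phi_mapping_properties}, writing $\Phi_z=(-i(\alpha\cdot\nabla)+m\alpha_0+zI_N)\textup{SL}(z^2-m^2)$ and invoking the regularity theory of the single layer potential on Lipschitz hypersurfaces (together with the useful observation that once $\Phi_z\varphi\in H^1(\mathbb{R}^q\setminus\Sigma;\mathbb{C}^N)$ it automatically lies in $H^1_\alpha$, since $(\alpha\cdot\nabla)\Phi_z\varphi=i(zI_N-m\alpha_0)\Phi_z\varphi$ is then in $L^2$ on $\Omega_\pm$), and handling $\mathcal{C}_z$ by factoring $\mathcal{C}_z=\big(\mathcal{C}_z(\alpha\cdot\nu)\big)(\alpha\cdot\nu)$ and combining $H^{1/2}(\Sigma;\mathbb{C}^N)$-boundedness of the rotated Cauchy/Riesz-type operator $\mathcal{C}_z(\alpha\cdot\nu)$ with the unitarity of $\alpha\cdot\nu\colon H^{1/2}_\alpha(\Sigma;\mathbb{C}^N)\to H^{1/2}(\Sigma;\mathbb{C}^N)$ from \eqref{mapping_properties_alpha}; the intermediate values $s\in(\tfrac12,1)$ then follow by interpolation using \eqref{interpolation_H_alpha_Omega}, \eqref{interpolation_H_alpha_Sigma1}, and \eqref{interpolation_H_alpha_Sigma2}. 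Everything else — the Green identity, the identification of $A_0$, and the formulas for $\gamma^{(s)}$ and $M^{(s)}$ — is essentially bookkeeping with \eqref{int_by_parts}, \eqref{trace_Phi_lambda}, and the algebraic identities $(\alpha\cdot\nu)^2=I_N$ and \eqref{formula_C_inv}.
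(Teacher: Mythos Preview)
Your overall strategy matches the paper's, and most of the individual steps (Green's identity via \eqref{int_by_parts}, the identification $T^{(s)}\upharpoonright\ker\Gamma_0^{(s)}=A_0$, the formulas $\gamma^{(s)}(z)=\Phi_z$ and $M^{(s)}(z)=\mathcal{C}_z$ via \eqref{trace_Phi_lambda}, and compactness of $\gamma^{(s)}(z)^*=\gamma_D(A_0-\overline z)^{-1}$) are handled correctly.

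The real difference lies exactly where you flag the main obstacle, the $s=1$ endpoint mapping property $\Phi_z:H^{1/2}_\alpha(\Sigma;\mathbb{C}^N)\to H^1_\alpha(\mathbb{R}^q\setminus\Sigma;\mathbb{C}^N)$, and the paper simply \emph{avoids} it. Rather than proving this directly via single layer regularity or $H^{1/2}$-boundedness of the Cauchy/Riesz transform (both nontrivial on Lipschitz boundaries), the paper first establishes $\ran\Gamma_0^{(1)}=H^{1/2}_\alpha(\Sigma;\mathbb{C}^N)$ by an elementary extension argument: for $\varphi\in H^{1/2}_\alpha(\Sigma;\mathbb{C}^N)$ one has $(\alpha\cdot\nu)\varphi\in H^{1/2}(\Sigma;\mathbb{C}^N)$, so choose $f_\pm\in H^1(\Omega_\pm;\mathbb{C}^N)$ with $\gamma_D^\pm f_\pm=\mp\tfrac{i}{2}(\alpha\cdot\nu)\varphi$; then $f\in\dom T^{(1)}$ and $\Gamma_0^{(1)}f=\varphi$ (even $\Gamma_1^{(1)}f=0$). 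Once $\ran\Gamma_0^{(1)}=H^{1/2}_\alpha(\Sigma;\mathbb{C}^N)$ is known, $\gamma^{(1)}(z)$ automatically maps $H^{1/2}_\alpha(\Sigma;\mathbb{C}^N)$ into $\ker(T^{(1)}-z)\subset H^1(\mathbb{R}^q\setminus\Sigma;\mathbb{C}^N)$, and its boundedness between these spaces comes for free from the closed graph theorem; the intermediate $s$ then follow by interpolation as you say. This route is considerably lighter than the Lipschitz harmonic analysis you propose. The same extension trick also yields $\ran(\Gamma_0^{(s)}\upharpoonright\ker\Gamma_1^{(s)})=H^{1/2}_\alpha(\Sigma;\mathbb{C}^N)$ and $\ran(\Gamma_1^{(s)}\upharpoonright\ker\Gamma_0^{(s)})=H^{1/2}(\Sigma;\mathbb{C}^N)$, from which density of $\ran(\Gamma_0^{(s)},\Gamma_1^{(s)})$ follows; note that your density argument as written is not quite sufficient, since density of $\ran\Gamma_0^{(s)}$ together with density of $\ran(\Gamma_1^{(s)}\upharpoonright\ker\Gamma_0^{(s)})$ does not by itself force density of the joint range.
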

\begin{proof}
  (i) Let $s \in [\frac{1}{2}, 1]$ be fixed. First, we show that $\big\{ L^2(\Sigma; \mathbb{C}^N), \Gamma_0^{(s)}, \Gamma_1^{(s)} \big\}$ is a quasi boundary triple. For this we note that $\overline{T^{(s)}}=T^{(0)}=S^*$ holds, as the set $C_0^\infty(\overline{\Omega_+}; \mathbb{C}^N) \oplus C_0^\infty(\overline{\Omega_-}; \mathbb{C}^N) \subset \dom T^{(s)}$ is dense in $H^0_\alpha(\mathbb{R}^q \setminus \Sigma; \mathbb{C}^N)$, while the norm in $H^0_\alpha(\mathbb{R}^q\setminus \Sigma; \mathbb{C}^N)$ and the graph norm induced by $T^{(0)}$ are equivalent; cf. the discussion at the beginning of Section~\ref{subsec_SobolevSpaces}. With~\eqref{equation_adjoint} this yields $\overline{T^{(s)}}=T^{(0)}=S^*$.

  Let us verify the abstract Green's identity in Definition~\ref{qbtdef}~(i). For this consider \[f = f_+ \oplus f_-,\,g = g_+ \oplus g_- \in \dom T^{(s)} = H^s_\alpha(\Omega_+; \mathbb{C}^N) \oplus H^s_\alpha(\Omega_-; \mathbb{C}^N).\]
  Then integration by parts~\eqref{int_by_parts} applied in $\Omega_\pm$ yields
  \begin{equation*}
    \begin{split}
      \big( (-i (\alpha \cdot \nabla) + m \alpha_0) f_\pm, g_\pm \big)_{L^2(\Omega_\pm; \mathbb{C}^N)}
     - \big( f_\pm, (-i (\alpha \cdot \nabla) + m \alpha_0) g_\pm \big)_{L^2(\Omega_\pm; \mathbb{C}^N)}~~& \\
     = \pm \big( -i (\alpha \cdot \nu) \gamma_D^\pm f_\pm, \gamma_D^\pm g_\pm \big)_{L^2\Sigma; \mathbb{C}^N)}&.
    \end{split}
  \end{equation*}
  By adding these two formulas for $\Omega_+$ and $\Omega_-$ and using
  \begin{equation*}
    \begin{split}
      \big( \Gamma_1^{(s)} f&, \Gamma_0^{(s)} g \big)_{L^2(\Sigma; \mathbb{C}^N)} - \big( \Gamma_0^{(s)} f, \Gamma_1^{(s)} g \big)_{L^2(\Sigma; \mathbb{C}^N)} \\
      &=\left( \frac{1}{2} (\gamma_D^+ f_+ + \gamma_D^- f_-), i (\alpha \cdot \nu ) (\gamma_D^+ g_+ - \gamma_D^- g_-) \right)_{L^2(\Sigma; \mathbb{C}^N)} \\
      &\qquad - \left( i (\alpha \cdot \nu ) (\gamma_D^+ f_+ - \gamma_D^- f_-), \frac{1}{2} (\gamma_D^+ g_+ + \gamma_D^- g_-) \right)_{L^2(\Sigma; \mathbb{C}^N)} \\
      &= \big( -i (\alpha \cdot \nu) \gamma_D^+ f_+, \gamma_D^+ g_+ \big)_{L^2(\Sigma; \mathbb{C}^N)} - \big( -i (\alpha \cdot \nu) \gamma_D^- f_-, \gamma_D^- g_- \big)_{L^2(\Sigma; \mathbb{C}^N)},
    \end{split}
  \end{equation*}
  one arrives at the abstract Green's identity.

  Next, we verify  
  \begin{equation}\label{123}
  A_0=T^{(s)} \upharpoonright \ker \Gamma_0^{(s)}.
  \end{equation}
  In fact, the inclusion $A_0 \subset T^{(s)} \upharpoonright \ker \Gamma_0^{(s)}$ in \eqref{123} is clear.
  On the other hand, the abstract Green's identity shown above implies that $T^{(s)} \upharpoonright \ker \Gamma_0^{(s)}$ is symmetric. As the free Dirac operator $A_0$ is self-adjoint we obtain \eqref{123} and hence 
  $T^{(s)} \upharpoonright \ker \Gamma_0^{(s)}$ is self-adjoint.
  
  It remains to prove that $\ran(\Gamma_0^{(s)}, \Gamma_1^{(s)})$ is dense in $L^2(\Sigma; \mathbb{C}^N) \times L^2(\Sigma; \mathbb{C}^N)$. For this, we prove 
  \begin{equation} \label{ran_Gamma_0}
    \ran(\Gamma_0^{(s)} \upharpoonright \ker \Gamma_1^{(s)}) = H^{1/2}_\alpha(\Sigma; \mathbb{C}^N)
  \end{equation}
  and 
  \begin{equation} \label{ran_Gamma_1}
    \ran(\Gamma_1^{(s)} \upharpoonright \ker \Gamma_0^{(s)}) = H^{1/2}(\Sigma; \mathbb{C}^N).
  \end{equation}
  To establish the inclusion ``$\subset$'' in~\eqref{ran_Gamma_0} we note that any $f \in \ker \Gamma_1^{(s)}$ satisfies $\gamma_D^+ f_+=- \gamma_D^- f_-$, which yields  that $f_+\oplus(-f_-) \in \ker \Gamma_0^{(s)} = \dom A_0 = H^1(\mathbb{R}^q; \mathbb{C}^N)$ and thus, $f \in H^1(\Omega_+; \mathbb{C}^N) \oplus H^1(\Omega_-; \mathbb{C}^N)$. Therefore, the claimed inclusion follows from the definition of $\Gamma_0^{(s)}$. For the converse inclusion ``$\supset$'' let $\varphi \in H^{1/2}_\alpha(\Sigma; \mathbb{C}^N)$. Choose $f_\pm \in H^1(\Omega_\pm; \mathbb{C}^N)$ such that $\gamma_D^\pm f_\pm = \mp \frac{i}{2} (\alpha \cdot \nu) \varphi$. Then $f \in \ker \Gamma_1^{(s)}$ and $\Gamma_0^{(s)} f = \varphi$, which shows the second inclusion in~\eqref{ran_Gamma_0}.
  To verify~\eqref{ran_Gamma_1} note that the inclusion ``$\subset$'' follows from $\ker \Gamma_0^{(s)} = H^1(\mathbb{R}^q; \mathbb{C}^N)$ and the definition of $\Gamma_1^{(s)}$. For the converse inclusion ``$\supset$''  let $\varphi \in H^{1/2}(\Sigma; \mathbb{C}^N)$. Choose $f \in H^1(\mathbb{R}^q; \mathbb{C}^N)$ such that $\gamma_D f = \varphi$. Then $f \in \ker \Gamma_0^{(s)}$ and $\Gamma_1^{(s)} f = \varphi$, which shows the other inclusion in~\eqref{ran_Gamma_1}.

  Hence, we have shown that $\big\{ L^2(\Sigma; \mathbb{C}^N), \Gamma_0^{(s)}, \Gamma_1^{(s)} \big\}$ is indeed a quasi boundary triple for all $s \in [\frac{1}{2},1]$. Thus, except for formula~\eqref{ran_Gamma} assertion~(i) has been shown. Equation~\eqref{ran_Gamma} will be proved together with items~(ii) and~(iii).

  Proof of (ii) and (iii): We verify that $\gamma^{(s)}(z)^*$ is compact for all $s$ and all $z \in \rho(A_0)$. For this purpose, recall that formula~\eqref{equation_gamma_star} implies $\gamma^{(s)}(z)^*=\Gamma_1^{(s)} (A_0-\overline{z})^{-1}$. Since $(A_0-\overline{z})^{-1}: L^2(\mathbb{R}^q; \mathbb{C}^N) \rightarrow H^1(\mathbb{R}^q; \mathbb{C}^N)$ is bounded, we see that $\gamma^{(s)}(z)^*$ is actually independent of $s$ and furthermore, that $\gamma^{(s)}(z)^*: L^2(\mathbb{R}^q; \mathbb{C}^N) \rightarrow H^{1/2}(\Sigma; \mathbb{C}^N)$ is also bounded. Since $H^{1/2}(\Sigma; \mathbb{C}^N)$ is compactly embedded in $L^2(\Sigma; \mathbb{C}^N)$, the claimed compactness of $\gamma^{(s)}(z)^*$ follows.
  
  The remaining assertions will be proved in three separate steps.

  \emph{Step 1.} Let $s = \frac{1}{2}$. 
   Consider for $\varphi \in L^2(\Sigma; \mathbb{C}^N)$ the function $f_z := \Phi_z \varphi$. Then  by Lemma~\ref{lemma_Phi_mapping_properties} we have $f_z \in H^{1/2}_\alpha(\mathbb{R}^q \setminus \Sigma; \mathbb{C}^N) = \dom T^{(1/2)}$ and by~\eqref{trace_Phi_lambda} we get $\Gamma_0^{(1/2)} f_z = \varphi$. Therefore, $\ran \Gamma_0^{(1/2)}=L^2(\Sigma; \mathbb{C}^N)$, which is~\eqref{ran_Gamma} for $s=\frac{1}{2}$. Moreover, as $G_{z,q}$ in~\eqref{def_G_lambda} is a fundamental solution for the Dirac equation the definition of $\Phi_z$ shows that
  \begin{equation*}
    \big( T^{(1/2)}-z\big) f_z = 0\quad \text{in } \mathbb{R}^q \setminus \Sigma.
  \end{equation*}
  Hence, $\gamma^{(1/2)}(z)=\Phi_z$. Finally, using the definition of $\Gamma_1^{(1/2)}$ and~\eqref{trace_Phi_lambda} it follows that $M^{(1/2)}(z)=\mathcal{C}_z$ and thus, $M^{(1/2)}(z)$ is bounded in $L^2(\Sigma; \mathbb{C}^N)$. This shows all claims for $s=\frac{1}{2}$.
  
  \emph{Step 2.} Let $s = 1$. First note that $\dom T^{(1)} = H^1(\mathbb{R}^q \setminus \Sigma; \mathbb{C}^N)$, the definition of $\Gamma_0^{(1)}$, and~\eqref{ran_Gamma_0} imply  $\ran \Gamma_0^{(1)}=H^{1/2}_\alpha(\Sigma; \mathbb{C}^N)$. As $\{ L^2(\Sigma; \mathbb{C}^N), \Gamma_0^{(1)}, \Gamma_1^{(1)}\}$ is a restriction of the triple for $s=\frac{1}{2}$ we deduce from the already shown results that 
  \begin{equation*}
    \gamma^{(1)}(z)=\gamma^{(1/2)}(z)\upharpoonright \ran \Gamma_0^{(1)} = \Phi_z \upharpoonright H^{1/2}_\alpha(\Sigma; \mathbb{C}^N)
  \end{equation*}
  and 
  \begin{equation*}
    M^{(1)}(z)=M^{(1/2)}(z)\upharpoonright \ran \Gamma_0^{(1)} = \mathcal{C}_z\upharpoonright H^{1/2}_\alpha(\Sigma; \mathbb{C}^N).
  \end{equation*}
  Using the closed graph theorem and the fact that $H^{1/2}_\alpha(\Sigma; \mathbb{C}^N)$ and $H^1(\mathbb{R}^q \setminus \Sigma; \mathbb{C}^N)$ are continuously embedded in $L^2(\Sigma; \mathbb{C}^N)$ and $L^2(\mathbb{R}^q; \mathbb{C}^N)$, respectively, one gets that 
  \begin{equation*}
    \gamma^{(1)}(z): \ran \Gamma_0^{(1)}=H^{1/2}_\alpha(\Sigma; \mathbb{C}^N) \rightarrow \dom T^{(1)} = H^1(\mathbb{R}^q \setminus \Sigma; \mathbb{C}^N)
  \end{equation*}
  is bounded as well. The mapping properties of the trace map yield that also
  \begin{equation*}
    M^{(1)}(z): \ran \Gamma_0^{(1)}=H^{1/2}_\alpha(\Sigma; \mathbb{C}^N) \rightarrow \ran \Gamma_1^{(1)} = H^{1/2}(\Sigma; \mathbb{C}^N)
  \end{equation*}
  is bounded. Hence, all claimed statements for $s=1$ have been shown.
  
  \emph{Step 3.} Let $s \in (\frac{1}{2},1)$. First we note that an interpolation argument, which is applicable due to \eqref{interpolation_H_alpha_Omega} and~\eqref{interpolation_H_alpha_Sigma1}, shows that  
  \[\Phi_z: H^{s-1/2}_\alpha(\Sigma; \mathbb{C}^N) \rightarrow H^s_\alpha(\mathbb{R}^q \setminus \Sigma; \mathbb{C}^N) = \dom T^{(s)}\] 
  is bounded. Together with~\eqref{trace_Phi_lambda} this implies that $\ran \Gamma_0^{(s)} = H^{s-1/2}_\alpha(\Sigma; \mathbb{C}^N)$, i.e.~\eqref{ran_Gamma} holds for $s \in(\frac{1}{2},1)$. Hence, we have $\gamma^{(s)}(z)=\Phi_z\upharpoonright H^{s-1/2}_\alpha(\Sigma; \mathbb{C}^N)$ and the trace theorem shows that 
  \[M^{(s)}(z)=\Gamma_1^{(s)} \gamma^{(s)}(z): H^{s-1/2}_\alpha(\Sigma; \mathbb{C}^N) \rightarrow H^{s-1/2}(\Sigma; \mathbb{C}^N)\]
   is bounded. Thus, all claims are proved.
\end{proof}

As a consequence of Theorem~\ref{theorem_QBT} we can construct extensions of $\Phi_z$ and $\mathcal{C}_z$ and show their mapping properties. The following result is a generalization of \cite[Proposition~4.4]{BH20} and \cite[Theorem~2.2 and Corollary~2.3]{OV16}. Recall that the multiplication of $\varphi \in H^{-1/2}(\Sigma; \mathbb{C}^N)$ by $\alpha \cdot \nu$ is understood in the sense of~\eqref{mapping_properties_alpha}.

\begin{proposition} \label{proposition_extensions_Phi_C}
  Let $z \in \rho(A_0)= \mathbb{C} \setminus ( (-\infty, -m] \cup[m,\infty) )$. Then the following holds:
  \begin{itemize}
    \item[(i)] The map $\Phi_z$ admits a unique continuous extension 
    $$\widetilde{\Phi}_z: H^{-1/2}(\Sigma; \mathbb{C}^N) \rightarrow H^0_\alpha(\mathbb{R}^q \setminus \Sigma; \mathbb{C}^N)$$ 
    and $\ran \widetilde{\Phi}_z = \ker (T^{(0)} - z)$ holds.
    \item[(ii)] The map $\mathcal{C}_z$ admits a unique continuous extension 
    $$\widetilde{\mathcal{C}}_z: H^{-1/2}(\Sigma; \mathbb{C}^N) \rightarrow H^{-1/2}_\alpha(\Sigma; \mathbb{C}^N)$$ 
    and 
    \begin{equation} \label{duality_C_z}
      \begin{split}
        \big\langle \widetilde{\mathcal{C}}_z \varphi, \psi \big\rangle_{H^{-1/2}_\alpha(\Sigma; \mathbb{C}^N) \times H^{1/2}_\alpha(\Sigma; \mathbb{C}^N)}
         = \big\langle  \varphi, \mathcal{C}_{\overline{z}} \psi \big\rangle_{H^{-1/2}(\Sigma; \mathbb{C}^N) \times H^{1/2}(\Sigma; \mathbb{C}^N)}
      \end{split}
    \end{equation}
    holds for all $\varphi \in H^{-1/2}(\Sigma; \mathbb{C}^N)$ and $\psi \in H^{1/2}_\alpha(\Sigma; \mathbb{C}^N)$.
    \item[(iii)] The map $L^2(\Sigma; \mathbb{C}^N) \ni \varphi \mapsto \gamma_D^\pm (\Phi_z \varphi)_\pm$ admits a unique continuous extension to a map from $H^{-1/2}(\Sigma; \mathbb{C}^N)$ to $H^{-1/2}_\alpha(\Sigma; \mathbb{C}^N)$ and
    $$\gamma_D^\pm (\widetilde{\Phi}_z \varphi)_\pm = \mp \frac{i}{2} (\alpha \cdot \nu) \varphi + \widetilde{\mathcal{C}}_z \varphi$$
    holds for all $\varphi\in H^{-1/2}(\Sigma; \mathbb{C}^N)$.
    \item[(iv)] For all $z_1,z_2 \in \rho(A_0)$ the operator $\widetilde{\mathcal{C}}_{z_1} - \widetilde{\mathcal{C}}_{z_2}: H^{-1/2}(\Sigma; \mathbb{C}^N) \rightarrow H^{1/2}(\Sigma; \mathbb{C}^N)$ is bounded.
  \end{itemize}
\end{proposition}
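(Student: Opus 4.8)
Here is how I would approach Proposition~\ref{proposition_extensions_Phi_C}. The strategy is to read off all four items from Theorem~\ref{theorem_QBT} by duality, using the identifications $\Phi_z=\gamma^{(1/2)}(z)$ and $\mathcal C_z=M^{(1/2)}(z)$ together with the facts recorded there: $\gamma^{(1/2)}(z)^{*}=\Gamma_1^{(1/2)}(A_0-\overline z)^{-1}$ is bounded from $L^2(\mathbb R^q;\mathbb C^N)$ into $H^{1/2}(\Sigma;\mathbb C^N)$; $M^{(1)}(\overline z)=\mathcal C_{\overline z}\upharpoonright H^{1/2}_\alpha(\Sigma;\mathbb C^N)$ is bounded into $H^{1/2}(\Sigma;\mathbb C^N)$; and $M^{(1/2)}(z)$ is bounded and everywhere defined in $L^2(\Sigma;\mathbb C^N)$, whence $M^{(1/2)}(z)=M^{(1/2)}(\overline z)^{*}$, i.e.\ $\mathcal C_z^{*}=\mathcal C_{\overline z}$ (consistent with the self-adjointness of $\mathcal C_z$ for real $z$).

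For (i), I would define $\widetilde\Phi_z$ as the anti-dual of $\gamma^{(1/2)}(z)^{*}\colon L^2(\mathbb R^q;\mathbb C^N)\to H^{1/2}(\Sigma;\mathbb C^N)$, a bounded map $H^{-1/2}(\Sigma;\mathbb C^N)\to L^2(\mathbb R^q;\mathbb C^N)$; under the Gelfand-triple identification $L^2(\Sigma;\mathbb C^N)\hookrightarrow H^{-1/2}(\Sigma;\mathbb C^N)$ it restricts on $L^2(\Sigma;\mathbb C^N)$ to $(\gamma^{(1/2)}(z)^{*})^{*}=\gamma^{(1/2)}(z)=\Phi_z$ by \eqref{equation_Phi_z_adjoint}, and density of $L^2(\Sigma;\mathbb C^N)$ in $H^{-1/2}(\Sigma;\mathbb C^N)$ gives uniqueness. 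To upgrade the target to $H^0_\alpha(\mathbb R^q\setminus\Sigma;\mathbb C^N)$, recall that for $\varphi\in L^2(\Sigma;\mathbb C^N)$ one has $(T^{(1/2)}-z)\Phi_z\varphi=0$ in $\mathbb R^q\setminus\Sigma$ (fundamental-solution property, as in Step~1 of the proof of Theorem~\ref{theorem_QBT}), hence $(\alpha\cdot\nabla)\Phi_z\varphi=i(m\alpha_0-z)\Phi_z\varphi$; approximating $\varphi\in H^{-1/2}(\Sigma;\mathbb C^N)$ by $L^2$-functions and passing to the limit in $\mathcal D'(\Omega_\pm)$ shows $(\alpha\cdot\nabla)\widetilde\Phi_z\varphi=i(m\alpha_0-z)\widetilde\Phi_z\varphi\in L^2(\Omega_\pm;\mathbb C^N)$, which yields both boundedness into $H^0_\alpha(\mathbb R^q\setminus\Sigma;\mathbb C^N)$ and the inclusion $\ran\widetilde\Phi_z\subseteq\ker(T^{(0)}-z)$. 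For (ii), I would analogously define $\widetilde{\mathcal C}_z$ as the anti-dual of $M^{(1)}(\overline z)\colon H^{1/2}_\alpha(\Sigma;\mathbb C^N)\to H^{1/2}(\Sigma;\mathbb C^N)$; by \eqref{def_H_U_dual} this is a bounded map $H^{-1/2}(\Sigma;\mathbb C^N)\to H^{-1/2}_\alpha(\Sigma;\mathbb C^N)$, \eqref{duality_C_z} is exactly its defining relation, on $L^2(\Sigma;\mathbb C^N)$ it coincides with $\mathcal C_z$ because $\mathcal C_{\overline z}^{*}=\mathcal C_z$, and uniqueness again follows by density.

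For (iii) I would observe that for $\varphi\in L^2(\Sigma;\mathbb C^N)$ formula \eqref{trace_Phi_lambda} reads $\gamma_D^\pm(\Phi_z\varphi)_\pm=\mp\tfrac i2(\alpha\cdot\nu)\varphi+\mathcal C_z\varphi$; by (ii) and the unitarity of $\alpha\cdot\nu$ in \eqref{mapping_properties_alpha} the right-hand side extends to a bounded map $H^{-1/2}(\Sigma;\mathbb C^N)\to H^{-1/2}_\alpha(\Sigma;\mathbb C^N)$, and on the dense set $L^2(\Sigma;\mathbb C^N)$ it agrees with $\varphi\mapsto\gamma_D^\pm(\widetilde\Phi_z\varphi)_\pm$, where the trace of $\widetilde\Phi_z\varphi\in H^0_\alpha(\Omega_\pm;\mathbb C^N)$ is the continuous trace on $H^0_\alpha(\Omega_\pm;\mathbb C^N)$ (Corollary~\ref{corollary_trace_extension}); this gives (iii). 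With (iii) available I can close the range assertion of (i): given $f\in\ker(T^{(0)}-z)$, set $\varphi:=i(\alpha\cdot\nu)(\gamma_D^+f_+-\gamma_D^-f_-)\in H^{-1/2}(\Sigma;\mathbb C^N)$, so that $g:=f-\widetilde\Phi_z\varphi\in\ker(T^{(0)}-z)$ satisfies $\gamma_D^+g_+=\gamma_D^-g_-$; the $H^0_\alpha$-version of the integration-by-parts formula \eqref{int_by_parts} then shows the glued function $g$ lies in $H^0_\alpha(\mathbb R^q;\mathbb C^N)=H^1(\mathbb R^q;\mathbb C^N)=\dom A_0$ with $(A_0-z)g=0$, and $z\in\rho(A_0)$ forces $g=0$, i.e.\ $f=\widetilde\Phi_z\varphi\in\ran\widetilde\Phi_z$.

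Finally, for (iv)---the only step with real content beyond bookkeeping---I would apply the Weyl-function identity \eqref{equation_diff_m} to the generalized boundary triple with $s=\tfrac12$ and $\mu=\overline{z_2}$, using $M^{(1/2)}(\overline{z_2})^{*}=M^{(1/2)}(z_2)$ and $\gamma^{(1/2)}(\overline{z_2})^{*}=\Gamma_1^{(1/2)}(A_0-z_2)^{-1}$, to obtain on $L^2(\Sigma;\mathbb C^N)$ the representation $\mathcal C_{z_1}-\mathcal C_{z_2}=(z_1-z_2)\,\gamma^{(1/2)}(\overline{z_2})^{*}\gamma^{(1/2)}(z_1)$. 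On the right, $\gamma^{(1/2)}(z_1)=\Phi_{z_1}$ extends by (i) to $\widetilde\Phi_{z_1}\colon H^{-1/2}(\Sigma;\mathbb C^N)\to H^0_\alpha(\mathbb R^q\setminus\Sigma;\mathbb C^N)\subset L^2(\mathbb R^q;\mathbb C^N)$, while $\gamma^{(1/2)}(\overline{z_2})^{*}$ maps $L^2(\mathbb R^q;\mathbb C^N)$ boundedly into $H^{1/2}(\Sigma;\mathbb C^N)$; hence $(z_1-z_2)\,\gamma^{(1/2)}(\overline{z_2})^{*}\widetilde\Phi_{z_1}\colon H^{-1/2}(\Sigma;\mathbb C^N)\to H^{1/2}(\Sigma;\mathbb C^N)$ is bounded, and since it coincides with $\widetilde{\mathcal C}_{z_1}-\widetilde{\mathcal C}_{z_2}$ on the dense set $L^2(\Sigma;\mathbb C^N)$ and both are continuous into $H^{-1/2}_\alpha(\Sigma;\mathbb C^N)\supset H^{1/2}(\Sigma;\mathbb C^N)$, they agree on all of $H^{-1/2}(\Sigma;\mathbb C^N)$, which proves (iv). The one genuinely technical ingredient is the regularity-$0$ trace and the corresponding gluing used in (i) and (iii); I expect that to be the main obstacle, and I would handle it via Corollary~\ref{corollary_trace_extension} and a dual form of \eqref{int_by_parts}, while everything else is bookkeeping with the anti-dual pairings set up in Section~\ref{subsec_SobolevSpaces}.
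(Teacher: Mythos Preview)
Your arguments for (ii), (iii), and (iv) are essentially the paper's proof: define $\widetilde{\mathcal C}_z$ as the anti-dual of $\mathcal C_{\overline z}\colon H^{1/2}_\alpha\to H^{1/2}$, extend \eqref{trace_Phi_lambda} by continuity of the right-hand side, and upgrade \eqref{equation_diff_m} via the extended $\widetilde\Phi_{z_1}$. The definition of $\widetilde\Phi_z$ in (i) as the anti-dual of $\gamma^{(s)}(z)^*=\gamma_D(A_0-\overline z)^{-1}$ is also the same.

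The genuine gap is your argument for the equality $\ran\widetilde\Phi_z=\ker(T^{(0)}-z)$, and the related remark in (iii). You invoke Corollary~\ref{corollary_trace_extension} (the trace on $H^0_\alpha$) and an $H^0_\alpha$-version of \eqref{int_by_parts} to glue $g=f-\widetilde\Phi_z\varphi$ into $\dom A_0$; but in the paper Corollary~\ref{corollary_trace_extension} and the extended integration-by-parts formula \eqref{int_by_parts_extension} are \emph{consequences} of Proposition~\ref{proposition_extensions_Phi_C}, so this is circular. The same circularity appears in your (iii) when you appeal to Corollary~\ref{corollary_trace_extension} to interpret $\gamma_D^\pm(\widetilde\Phi_z\varphi)_\pm$ for general $\varphi$; the paper's formulation of (iii) only asserts that the \emph{composite} map $\varphi\mapsto\gamma_D^\pm(\Phi_z\varphi)_\pm$ extends, and it is simply defined to be the right-hand side.

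The paper's route for the range equality avoids traces on $H^0_\alpha$ entirely: since $\gamma^{(1)}(z)^*=\gamma_D(A_0-\overline z)^{-1}$ is surjective onto $H^{1/2}(\Sigma;\mathbb C^N)$ (surjectivity of the trace on $H^1$), its range is closed, so by the closed range theorem $\ran\widetilde\Phi_z=(\ker\gamma^{(1)}(z)^*)^{\perp}=(\ran(S-\overline z))^{\perp}=\ker(T^{(0)}-z)$. This single line replaces your gluing argument and breaks the circularity; the upgrade of the target to $H^0_\alpha(\mathbb R^q\setminus\Sigma;\mathbb C^N)$ then follows because the $L^2$- and $H^0_\alpha$-norms are equivalent on $\ker(T^{(0)}-z)$.
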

\begin{proof}
  (i) Since the map $\gamma^{(1)}(z)^* = \Gamma^{(1)}_1 (A_0 - \overline{z})^{-1} = \gamma_D (A_0-\overline{z})^{-1}$, cf.~\eqref{equation_gamma_star}, is bounded from $L^2(\mathbb{R}^q; \mathbb{C}^N)$ to $H^{1/2}(\Sigma; \mathbb{C}^N)$ with $\ker \gamma^{(1)}(z)^* = \ran(S-\overline{z})$, we can define its anti-dual
  \begin{equation*}
    \widetilde{\Phi}_z := (\gamma^{(1)}(z)^*)': H^{-1/2}(\Sigma; \mathbb{C}^N) \rightarrow L^2(\mathbb{R}^q; \mathbb{C}^N).
  \end{equation*}
  As $\ran \gamma^{(1)}(z)^* = H^{1/2}(\Sigma; \mathbb{C}^N)$ is closed, by the closed range theorem the same is true for $\ran \widetilde{\Phi}_z$  and this together with \eqref{equation_adjoint} implies that
  \begin{equation*}
    \ran \widetilde{\Phi}_z = \big( \ker \gamma^{(1)}(z)^* \big)^\bot = \big( \ran(S-\overline{z}) \big)^\bot = \ker (T^{(0)} - z).
  \end{equation*}
  Since the norms in $H^0_\alpha(\mathbb{R}^q \setminus \Sigma; \mathbb{C}^N)$ and $L^2(\mathbb{R}^q; \mathbb{C}^N)$ are equivalent on $\ker (T^{(0)} - z)$, the operator $\widetilde{\Phi}_z$ has the claimed mapping properties. Furthermore, we have for $\varphi \in L^2(\Sigma; \mathbb{C}^N)$ and $f \in L^2(\mathbb{R}^q; \mathbb{C}^N)$ that
  \begin{equation*} 
    \begin{split}
      \big( \widetilde{\Phi}_z \varphi, f \big)_{L^2(\mathbb{R}^q; \mathbb{C}^N)}
        &= \langle \varphi, \gamma^{(1)}(z)^* f \rangle_{H^{-1/2}(\Sigma; \mathbb{C}^N) \times H^{1/2}(\Sigma; \mathbb{C}^N)} \\
        &= ( \varphi, \Phi_z^* f )_{L^2(\Sigma; \mathbb{C}^N)} 
        =( \Phi_z \varphi, f )_{L^2(\mathbb{R}^q; \mathbb{C}^N)},
    \end{split}
  \end{equation*}
  which shows that $\widetilde{\Phi}_z$ is indeed an extension of $\Phi_z$. Eventually, as $L^2(\Sigma; \mathbb{C}^N)$ is dense in $H^{-1/2}(\Sigma; \mathbb{C}^N)$, there can only be one continuous extension $\widetilde{\Phi}_z$ of $\Phi_z$ defined on $H^{-1/2}(\Sigma; \mathbb{C}^N)$.
  
  (ii) Using the result from Theorem~\ref{theorem_QBT}~(iii) for $s=1$ we can define the anti-dual map
  \begin{equation*}
    \widetilde{\mathcal{C}}_z := (\mathcal{C}_{\overline{z}})': H^{-1/2}(\Sigma; \mathbb{C}^N) \rightarrow H^{-1/2}_\alpha(\Sigma; \mathbb{C}^N).
  \end{equation*}
  Then, by definition the relation~\eqref{duality_C_z} holds. It remains to show that $\widetilde{\mathcal{C}}_z$ is an extension of $\mathcal{C}_z$. For that we note that \eqref{equation_diff_m} and Theorem~\ref{theorem_QBT}~(iii) imply $\mathcal{C}_{\overline{z}}^* = \mathcal{C}_z$.  
  Using this and~\eqref{duality_C_z} we get for $\varphi \in L^2(\Sigma; \mathbb{C}^N)$ and $\psi \in H^{1/2}_\alpha(\Sigma; \mathbb{C}^N)$
  \begin{equation*} 
    \begin{split}
      \big\langle \widetilde{\mathcal{C}}_z \varphi, \psi \big\rangle_{H^{-1/2}_\alpha(\Sigma; \mathbb{C}^N) \times H^{1/2}_\alpha(\Sigma; \mathbb{C}^N)}
      &= \big\langle  \varphi, \mathcal{C}_{\overline{z}} \psi \big\rangle_{H^{-1/2}(\Sigma; \mathbb{C}^N) \times H^{1/2}(\Sigma; \mathbb{C}^N)} \\
       &= \big(\varphi, \mathcal{C}_{\overline{z}} \psi \big)_{L^{2}(\Sigma; \mathbb{C}^N)} \\
       & = \big( \mathcal{C}_z \varphi, \psi \big)_{L^{2}(\Sigma; \mathbb{C}^N)} \\
       &=  \big\langle \mathcal{C}_z \varphi, \psi \big\rangle_{H^{-1/2}_\alpha(\Sigma; \mathbb{C}^N) \times H^{1/2}_\alpha(\Sigma; \mathbb{C}^N)},
    \end{split}
  \end{equation*}
  which shows that $\widetilde{\mathcal{C}}_z \varphi = \mathcal{C}_z  \varphi$. This finishes the proof of item~(ii).

  (iii) By~\eqref{trace_Phi_lambda} we know that
  \begin{equation*}
    \gamma_D^\pm (\Phi_z \varphi )_\pm = \mp \frac{i}{2} (\alpha \cdot \nu) \varphi + \mathcal{C}_z \varphi
  \end{equation*}
  holds for all $\varphi \in L^2(\Sigma; \mathbb{C}^N)$. Since $L^2(\Sigma; \mathbb{C}^N)$ is dense in $H^{-1/2}(\Sigma; \mathbb{C}^N)$, item~(ii) and~\eqref{mapping_properties_alpha} show that the right hand side can be extended to a continuous operator $\varphi \ni H^{-1/2}(\Sigma; \mathbb{C}^N) \mapsto \gamma_D^\pm(\widetilde{\Phi}_z \varphi)_\pm \in H^{-1/2}_\alpha(\Sigma; \mathbb{C}^N)$. This shows the claim of assertion~(iii).
  
  (iv) Using~\eqref{equation_diff_m} for the quasi boundary triple $\{ L^2(\Sigma; \mathbb{C}^N), \Gamma_0^{(1/2)}, \Gamma_1^{(1/2)} \}$ we see 
  \begin{equation*}
    \mathcal{C}_{z_1} - \mathcal{C}_{z_2} = (z_1-z_2) \Phi_{\overline{z_2}}^* \Phi_{z_1}.
  \end{equation*}
  By continuity and assertions~(i) and~(ii) this can be extended to 
  \begin{equation*}
    \widetilde{\mathcal{C}}_{z_1} - \widetilde{\mathcal{C}}_{z_2} = (z_1-z_2) \Phi_{\overline{z_2}}^* \widetilde{\Phi}_{z_1}.
  \end{equation*}
  Taking the mapping properties of $\widetilde{\Phi}_{z_1}$ and $\Phi_{\overline{z_2}}^* = \gamma^{(1)}(\overline{z_2})^*$, as they are discussed in the proof of item~(i), into account one finds the claimed mapping properties of $\widetilde{\mathcal{C}}_{z_1} - \widetilde{\mathcal{C}}_{z_2}$.
\end{proof}

Using Theorem~\ref{theorem_QBT}, Proposition~\ref{proposition_extensions_Phi_C}, \eqref{def_T}, and a simple interpolation argument, which is applicable due to \eqref{interpolation_H_alpha_Omega}, \eqref{interpolation_H_alpha_Sigma1}, and \eqref{interpolation_H_alpha_Sigma2}, one obtains further mapping properties of $\Phi_z$ and $\mathcal{C}_z$. The next result is a generalization of \cite[Proposition~3.6 and Proposition~3.8]{BHM20}.

\begin{corollary} \label{corollary_extensions_Phi_C}
  Let $z \in \rho(A_0)=\mathbb{C} \setminus ( (-\infty, -m] \cup[m,\infty) )$ and let $s \in (0,\frac{1}{2})$. Then the following holds:
  \begin{itemize}
    \item[(i)] The map $\Phi_z$ admits a unique continuous extension 
    $$\widetilde{\Phi}^{(s)}_z: H^{s-1/2}(\Sigma; \mathbb{C}^N) \rightarrow H^{s}_\alpha(\mathbb{R}^q \setminus \Sigma; \mathbb{C}^N)$$
    and $\ran \widetilde{\Phi}^{(s)}_z= \ker (T^{(s)} - z)$ holds. 
    \item[(ii)] The map $\mathcal{C}_z$ admits a unique continuous extension 
    $$\widetilde{\mathcal{C}}^{(s)}_z: H^{s-1/2}(\Sigma; \mathbb{C}^N) \rightarrow H^{s-1/2}_\alpha(\Sigma; \mathbb{C}^N)$$ and 
    \begin{equation*}
      \begin{split}
        \big\langle \widetilde{\mathcal{C}}^{(s)}_z \varphi, \psi \big\rangle_{H^{s-1/2}_\alpha(\Sigma; \mathbb{C}^N) \times H^{-s+1/2}_\alpha(\Sigma; \mathbb{C}^N)}
         = \big\langle  \varphi, \mathcal{C}_{\overline{z}} \psi \big\rangle_{H^{s-1/2}(\Sigma; \mathbb{C}^N) \times H^{-s+1/2}(\Sigma; \mathbb{C}^N)}
      \end{split}
    \end{equation*}
    holds for all $\varphi \in H^{s-1/2}(\Sigma; \mathbb{C}^N)$ and $\psi \in H^{-s+1/2}_\alpha(\Sigma; \mathbb{C}^N)$.
  \end{itemize}
\end{corollary}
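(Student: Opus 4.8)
The plan is to deduce both assertions by complex interpolation between the already established endpoint cases $s=\tfrac12$ (Theorem~\ref{theorem_QBT}, or equivalently Lemma~\ref{lemma_Phi_mapping_properties}) and $s=0$ (Proposition~\ref{proposition_extensions_Phi_C}), and then to identify the range in~(i) by invoking the trace theory for the spaces $H^s_\alpha(\Omega_\pm;\mathbb{C}^N)$ below the Sobolev threshold $s=\tfrac12$.

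For the mapping property in~(i) I would argue as follows. By Theorem~\ref{theorem_QBT}~(ii) with $s=\tfrac12$ the operator $\Phi_z$ is bounded from $H^0(\Sigma;\mathbb{C}^N)=L^2(\Sigma;\mathbb{C}^N)$ to $H^{1/2}_\alpha(\mathbb{R}^q\setminus\Sigma;\mathbb{C}^N)$, and by Proposition~\ref{proposition_extensions_Phi_C}~(i) the map $\widetilde{\Phi}_z\colon H^{-1/2}(\Sigma;\mathbb{C}^N)\to H^0_\alpha(\mathbb{R}^q\setminus\Sigma;\mathbb{C}^N)$ is a bounded extension of it; as the two operators coincide on the dense subspace $L^2(\Sigma;\mathbb{C}^N)$ they form a compatible pair, so complex interpolation produces, for every $\theta\in(0,1)$, a bounded operator $\big[H^0(\Sigma;\mathbb{C}^N),H^{-1/2}(\Sigma;\mathbb{C}^N)\big]_\theta\to\big[H^{1/2}_\alpha(\mathbb{R}^q\setminus\Sigma;\mathbb{C}^N),H^0_\alpha(\mathbb{R}^q\setminus\Sigma;\mathbb{C}^N)\big]_\theta$. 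Using the interpolation identities \eqref{interpolation_H_alpha_Omega}, \eqref{interpolation_H_alpha_Sigma1}, \eqref{interpolation_H_alpha_Sigma2} (the last with $\mathbb{U}=I_N$ on the domain side) together with the reiteration theorem, the domain space becomes $H^{-\theta/2}(\Sigma;\mathbb{C}^N)$ and the target space becomes $H^{(1-\theta)/2}_\alpha(\mathbb{R}^q\setminus\Sigma;\mathbb{C}^N)$; choosing $\theta=1-2s$, which exhausts $(0,1)$ as $s$ ranges over $(0,\tfrac12)$, gives the bounded map $\widetilde{\Phi}^{(s)}_z\colon H^{s-1/2}(\Sigma;\mathbb{C}^N)\to H^s_\alpha(\mathbb{R}^q\setminus\Sigma;\mathbb{C}^N)$, and uniqueness of the extension follows from the density of $L^2(\Sigma;\mathbb{C}^N)$ in $H^{s-1/2}(\Sigma;\mathbb{C}^N)$. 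Assertion~(ii) is obtained in the same way, now interpolating $\mathcal{C}_z\colon H^0(\Sigma;\mathbb{C}^N)\to H^0_\alpha(\Sigma;\mathbb{C}^N)$ from Theorem~\ref{theorem_QBT}~(iii) with $\widetilde{\mathcal{C}}_z\colon H^{-1/2}(\Sigma;\mathbb{C}^N)\to H^{-1/2}_\alpha(\Sigma;\mathbb{C}^N)$ from Proposition~\ref{proposition_extensions_Phi_C}~(ii), applying \eqref{interpolation_H_alpha_Sigma2} once with $\mathbb{U}=I_N$ (domain) and once with $\mathbb{U}=\alpha\cdot\nu$ (target). For the duality identity in~(ii) I would first check it for $\varphi\in L^2(\Sigma;\mathbb{C}^N)$, where $\widetilde{\mathcal{C}}^{(s)}_z\varphi=\mathcal{C}_z\varphi$, where $\mathcal{C}_z=(\mathcal{C}_{\overline{z}})^*$ by \eqref{equation_diff_m} and Theorem~\ref{theorem_QBT}~(iii), where $\mathcal{C}_{\overline{z}}$ is bounded from $H^{1/2-s}_\alpha(\Sigma;\mathbb{C}^N)$ to $H^{1/2-s}(\Sigma;\mathbb{C}^N)$ by Theorem~\ref{theorem_QBT}~(iii) applied with index $1-s$, and where the pairing therefore collapses to an $L^2$-inner product; the general case follows by continuity and density (alternatively, as in the proof of Proposition~\ref{proposition_extensions_Phi_C}~(ii), one may simply define $\widetilde{\mathcal{C}}^{(s)}_z$ as the anti-dual of $\mathcal{C}_{\overline{z}}\upharpoonright H^{1/2-s}_\alpha(\Sigma;\mathbb{C}^N)$, which makes the duality formula the definition).

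The range identity in~(i) needs more care. The inclusion $\ran\widetilde{\Phi}^{(s)}_z\subset\ker(T^{(s)}-z)$ is immediate, since $\widetilde{\Phi}^{(s)}_z$ is a restriction of $\widetilde{\Phi}_z$ taking values in $\dom T^{(s)}$, so that $\ran\widetilde{\Phi}^{(s)}_z\subset\ker(T^{(0)}-z)\cap\dom T^{(s)}=\ker(T^{(s)}-z)$ by Proposition~\ref{proposition_extensions_Phi_C}~(i) and \eqref{def_T}. For the converse I would first record that $\widetilde{\Phi}_z$ is injective: since $\ran\gamma^{(1)}(z)^*=H^{1/2}(\Sigma;\mathbb{C}^N)$ (see the proof of Proposition~\ref{proposition_extensions_Phi_C}~(i)) and $\widetilde{\Phi}_z=(\gamma^{(1)}(z)^*)'$, its kernel is the annihilator of a dense subspace, hence trivial. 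Thus every $f\in\ker(T^{(s)}-z)$ has the form $f=\widetilde{\Phi}_z\varphi$ with a uniquely determined $\varphi\in H^{-1/2}(\Sigma;\mathbb{C}^N)$, and it remains to show $\varphi\in H^{s-1/2}(\Sigma;\mathbb{C}^N)$. Here I would use the trace theorem at low regularity (Corollary~\ref{corollary_trace_extension}): since $f\in H^s_\alpha(\Omega_\pm;\mathbb{C}^N)$ with $s\in(0,\tfrac12)$, the traces $\gamma_D^\pm f_\pm$ exist in $H^{s-1/2}_\alpha(\Sigma;\mathbb{C}^N)$, and they coincide with the extended traces of Proposition~\ref{proposition_extensions_Phi_C}~(iii) (both continuously extend $\varphi\mapsto\gamma_D^\pm(\Phi_z\varphi)_\pm$ and agree on the dense set $L^2(\Sigma;\mathbb{C}^N)$). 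Subtracting the two identities in Proposition~\ref{proposition_extensions_Phi_C}~(iii) then yields $(\alpha\cdot\nu)\varphi=-i(\gamma_D^+ f_+-\gamma_D^- f_-)\in H^{s-1/2}_\alpha(\Sigma;\mathbb{C}^N)$, and applying the unitary map $\alpha\cdot\nu\colon H^{s-1/2}_\alpha(\Sigma;\mathbb{C}^N)\to H^{s-1/2}(\Sigma;\mathbb{C}^N)$ from \eqref{mapping_properties_alpha} gives $\varphi\in H^{s-1/2}(\Sigma;\mathbb{C}^N)$, which establishes $\ran\widetilde{\Phi}^{(s)}_z=\ker(T^{(s)}-z)$.

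I expect the main obstacle to be precisely this surjectivity part of the range identity in~(i): the mapping properties in~(i) and~(ii) and the duality formula are routine interpolation and density arguments, but the inclusion $\ker(T^{(s)}-z)\subset\ran\widetilde{\Phi}^{(s)}_z$ relies on having a trace theorem for $H^s_\alpha(\Omega_\pm;\mathbb{C}^N)$ below the Sobolev threshold $s=\tfrac12$ and on the consistency of those traces with the extended trace formula of Proposition~\ref{proposition_extensions_Phi_C}~(iii).
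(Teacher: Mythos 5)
Your argument is correct and is essentially the proof the paper intends: the corollary is stated with only the remark that it follows from Theorem~\ref{theorem_QBT}, Proposition~\ref{proposition_extensions_Phi_C}, \eqref{def_T}, and interpolation via \eqref{interpolation_H_alpha_Omega}, \eqref{interpolation_H_alpha_Sigma1}, \eqref{interpolation_H_alpha_Sigma2}, and your interpolation-plus-duality argument for the mapping properties together with the trace-based regularity lifting for the range identity fills in exactly those omitted details; in particular, invoking Corollary~\ref{corollary_trace_extension} is not circular even though it is stated afterwards, since its proof relies only on Proposition~\ref{proposition_extensions_Phi_C} and interpolation. The one blemish is a harmless sign slip: subtracting the two trace formulas of Proposition~\ref{proposition_extensions_Phi_C}~(iii) gives $(\alpha\cdot\nu)\varphi = i(\gamma_D^+ f_+ - \gamma_D^- f_-)$ rather than $-i(\gamma_D^+ f_+ - \gamma_D^- f_-)$, which does not affect the regularity conclusion.
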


We note that~\eqref{formula_C_inv} can be extended to the extensions $\widetilde{\mathcal{C}}_z^{(s)}$: For any $s \in [0,\frac{1}{2}]$ the map $\widetilde{\mathcal{C}}^{(s)}_z: H^{s-1/2}(\Sigma; \mathbb{C}^N) \rightarrow H^{s-1/2}_\alpha(\Sigma; \mathbb{C}^N)$ is bijective and 
\begin{equation} \label{formula_C_inv_extension}
  (\widetilde{\mathcal{C}}^{(s)}_z)^{-1} = -4(\alpha \cdot \nu) \widetilde{\mathcal{C}}^{(s)}_z (\alpha \cdot \nu), \qquad z \in \mathbb{C} \setminus \big( (-\infty, -m] \cup [m, \infty) \big),
\end{equation}
holds, where the multiplication by $\alpha \cdot \nu$ is understood in the sense of~\eqref{mapping_properties_alpha}.

Eventually, we note that the result in Proposition~\ref{proposition_extensions_Phi_C} allows one to extend the Dirichlet trace operator to $H^s_\alpha(\Omega_\pm; \mathbb{C}^N)$ for $s \in [0,\frac{1}{2})$.

\begin{corollary} \label{corollary_trace_extension}
   Let $s \in [0,\frac{1}{2}]$. Then the Dirichlet trace map admits a unique continuous extension $\gamma_D^\pm: H^s_\alpha(\Omega_\pm; \mathbb{C}^N) \rightarrow H^{s-1/2}_\alpha(\Sigma; \mathbb{C}^N)$.
\end{corollary}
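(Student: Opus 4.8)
The plan is to establish the extension of $\gamma_D^\pm$ to $H^s_\alpha(\Omega_\pm; \mathbb{C}^N)$ for $s \in [0, \tfrac12]$ by a duality/interpolation argument anchored in the mapping properties of $\widetilde\Phi_z$ and $\widetilde{\mathcal C}_z$ obtained in Proposition~\ref{proposition_extensions_Phi_C}, together with the integration by parts formula~\eqref{int_by_parts}. Since Lemma~\ref{lemma_trace_theorem} already furnishes the trace on $H^s_\alpha(\Omega_\pm; \mathbb{C}^N)$ for $s = \tfrac12$, and since $C_0^\infty(\overline{\Omega_\pm}; \mathbb{C}^N)$ is dense in $H^s_\alpha(\Omega_\pm; \mathbb{C}^N)$, it suffices to produce, for $s = 0$, a bounded estimate
\begin{equation*}
  \| \gamma_D^\pm f \|_{H^{-1/2}_\alpha(\Sigma; \mathbb{C}^N)} \leq C \| f \|_{H^0_\alpha(\Omega_\pm; \mathbb{C}^N)}, \qquad f \in C_0^\infty(\overline{\Omega_\pm}; \mathbb{C}^N),
\end{equation*}
and then interpolate between $s = 0$ and $s = \tfrac12$ using~\eqref{interpolation_H_alpha_Omega} on the domain side and~\eqref{interpolation_H_alpha_Sigma2} on the boundary side (noting that $H^{s-1/2}_\alpha(\Sigma;\mathbb{C}^N)$ for $s\in[0,\tfrac12]$ interpolates correctly between $H^{-1/2}_\alpha$ and $H^0_\alpha = L^2$). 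Uniqueness of the extension is then immediate from density.

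The core estimate at $s = 0$ I would obtain by testing against a smooth boundary datum: fix $\psi \in H^{1/2}_\alpha(\Sigma; \mathbb{C}^N)$ and pick, via the trace theorem for $s=1$ applied to $\widetilde{\Phi}_z$ together with~\eqref{trace_Phi_lambda}, a representative; more directly, for $f \in C_0^\infty(\overline{\Omega_\pm}; \mathbb{C}^N)$ and $\psi \in H^{1/2}(\Sigma;\mathbb{C}^N)$ choose $g \in H^1(\Omega_\pm; \mathbb{C}^N)$ with $\gamma_D^\pm g = \psi$ and $(\alpha\cdot\nabla) g \in L^2(\Omega_\pm;\mathbb{C}^N)$ controlled by $\|\psi\|_{H^{1/2}}$ --- e.g.\ $g = (\alpha\cdot\nu)^{-1}$ applied appropriately, or simply a bounded lifting into $H^1$. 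Then~\eqref{int_by_parts} gives
\begin{equation*}
  \pm \int_\Sigma i (\alpha\cdot\nu)\gamma_D^\pm f \cdot \overline{\psi}\, \textup{d}\sigma
  = \int_{\Omega_\pm} i(\alpha\cdot\nabla)f\cdot\overline{g}\,\textup{d}x - \int_{\Omega_\pm} f \cdot \overline{i(\alpha\cdot\nabla)g}\,\textup{d}x,
\end{equation*}
whose right-hand side is bounded by $\|f\|_{H^0_\alpha(\Omega_\pm;\mathbb{C}^N)}\,\|g\|_{H^0_\alpha(\Omega_\pm;\mathbb{C}^N)} \leq C\,\|f\|_{H^0_\alpha(\Omega_\pm;\mathbb{C}^N)}\,\|\psi\|_{H^{1/2}(\Sigma;\mathbb{C}^N)}$. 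Since $\alpha\cdot\nu$ maps $H^{1/2}(\Sigma;\mathbb{C}^N)$ unitarily onto $H^{1/2}_\alpha(\Sigma;\mathbb{C}^N)$ by~\eqref{mapping_properties_alpha}, and since $H^{1/2}_\alpha(\Sigma;\mathbb{C}^N)$ is dense in $L^2(\Sigma;\mathbb{C}^N)$ with $(H^{1/2}_\alpha)' = H^{-1/2}_\alpha$ by~\eqref{def_H_U_dual}, taking the supremum over $\|\psi\|_{H^{1/2}} \leq 1$ yields exactly the desired bound for $\gamma_D^\pm f$ in $H^{-1/2}_\alpha(\Sigma;\mathbb{C}^N)$; the continuous extension then follows from density of $C_0^\infty(\overline{\Omega_\pm}; \mathbb{C}^N)$ in $H^0_\alpha(\Omega_\pm; \mathbb{C}^N)$.

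Alternatively --- and this is perhaps the cleanest route that uses the machinery already built --- one can read off the corollary directly from Proposition~\ref{proposition_extensions_Phi_C}(iii) and Corollary~\ref{corollary_extensions_Phi_C}(i): for $s \in (0, \tfrac12)$ the map $\widetilde\Phi^{(s)}_z: H^{s-1/2}(\Sigma;\mathbb{C}^N) \to H^s_\alpha(\mathbb{R}^q\setminus\Sigma;\mathbb{C}^N)$ is a bounded bijection onto $\ker(T^{(s)}-z)$, and the decomposition~\eqref{decomposition} writes an arbitrary $f \in H^s_\alpha(\Omega_\pm;\mathbb{C}^N)$ (viewed as the restriction of an element of $\dom T^{(s)}$ after extending by zero) as a sum of a piece in $\dom A_0 = H^1(\mathbb{R}^q;\mathbb{C}^N)$, on which $\gamma_D^\pm$ is classical, and a piece $\widetilde\Phi^{(s)}_z\varphi$, on which the trace is given by the bounded formula $\mp\tfrac{i}{2}(\alpha\cdot\nu)\varphi + \widetilde{\mathcal{C}}^{(s)}_z\varphi \in H^{s-1/2}_\alpha(\Sigma;\mathbb{C}^N)$.

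The main obstacle is bookkeeping rather than conceptual: one must be careful that the target space $H^{s-1/2}_\alpha(\Sigma;\mathbb{C}^N)$ is the \emph{twisted} Sobolev space, not $H^{s-1/2}(\Sigma;\mathbb{C}^N)$ (these coincide only under the extra smoothness of $\Sigma$ recorded after~\eqref{mapping_properties_alpha}), and that the interpolation scale~\eqref{interpolation_H_alpha_Sigma2} is stated precisely for these twisted spaces, so the endpoints $s=0$ and $s=\tfrac12$ genuinely match. A second point requiring care is that Lemma~\ref{lemma_trace_theorem} at $s = \tfrac12$ lands in $H^0(\Sigma;\mathbb{C}^N) = L^2(\Sigma;\mathbb{C}^N) = H^0_\alpha(\Sigma;\mathbb{C}^N)$, so the $s = \tfrac12$ endpoint of the claimed family is consistent with what is already known; one should simply remark that the extension produced here agrees with $\gamma_D^\pm$ from Lemma~\ref{lemma_trace_theorem} on the dense set $C_0^\infty(\overline{\Omega_\pm};\mathbb{C}^N)$ and hence everywhere by continuity, so no ambiguity in notation arises.
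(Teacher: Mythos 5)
Your primary argument (the duality estimate at $s=0$ followed by interpolation) is correct but genuinely different from the paper's proof, while your ``alternative'' route is essentially what the paper does. The paper proves the $s=0$ case by extending $f$ by zero to $\mathbb{R}^q\setminus\Sigma$, decomposing $\iota_1 f = f_0 + \widetilde{\Phi}_0\varphi$ along $\dom T^{(0)}=\dom A_0\,\dot{+}\,\ker T^{(0)}$ (assuming $m>0$ so that $0\in\rho(A_0)$), defining the trace by the explicit formula $\gamma_D f_0 - \tfrac{i}{2}(\alpha\cdot\nu)\varphi + \widetilde{\mathcal{C}}_0\varphi$, and getting continuity from the closed graph theorem applied to $f\mapsto(f_0,\varphi)$; the general $s$ then follows by the same interpolation you invoke. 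Your duality route is more elementary: it needs only \eqref{int_by_parts}, a bounded $H^1$-lifting of the trace, and the unitarity of $\alpha\cdot\nu:H^{1/2}(\Sigma;\mathbb{C}^N)\to H^{1/2}_\alpha(\Sigma;\mathbb{C}^N)$ from \eqref{mapping_properties_alpha}, it yields an explicit a priori bound on $C_0^\infty(\overline{\Omega_\pm};\mathbb{C}^N)$, and it avoids both the closed graph theorem and the (harmless but slightly awkward) normalization $m>0$. What the paper's construction buys in exchange is the concrete representation \eqref{def_trace} of the extended trace in terms of $\widetilde{\Phi}_0$ and $\widetilde{\mathcal{C}}_0$, which is reused later (e.g.\ in the proof of Theorem~\ref{theorem_OBT}, where functions are systematically written as $f_\infty+\widetilde{\Phi}_0\xi$ and their traces read off from Proposition~\ref{proposition_extensions_Phi_C}~(iii)). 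Two small remarks: your parenthetical ``$g=(\alpha\cdot\nu)^{-1}$ applied appropriately'' does not parse as a lifting, but the standard bounded right inverse of the trace (the operator $E_\pm$ used later in the paper) is all you need; and in your second route you should still justify that the algebraic decomposition $f\mapsto(f_0,\varphi)$ is continuous --- this is exactly the closed-graph step the paper supplies --- whereas your duality route sidesteps the issue entirely.
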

\begin{proof}
  We prove the assertion for $\Omega_+$ and $s=0$; the general statement for $\Omega_+$ follows from this, Lemma~\ref{lemma_trace_theorem}, and an interpolation argument (see \eqref{interpolation_H_alpha_Omega} and \eqref{interpolation_H_alpha_Sigma2}). For $\Omega_-$ the result can be shown in the same way.
  
  Consider the continuous embedding operator
  \begin{equation*}
    \iota_1: H^0_\alpha(\Omega_+; \mathbb{C}^N) \rightarrow H^0_\alpha(\mathbb{R}^q \setminus \Sigma; \mathbb{C}^N), \quad \iota_1 f = f \oplus 0,
  \end{equation*}
  let $T^{(0)}$ be the operator defined in~\eqref{def_T} for $s=0$ and assume here that $m>0$ (this is no restriction; the statement does not depend on $m$). Since $A_0 \subset T^{(0)}$ and $0 \in \rho(A_0)$, the direct sum decomposition
  \begin{equation*}
    H^0_\alpha(\mathbb{R}^q \setminus \Sigma; \mathbb{C}^N) = \dom T^{(0)} = \dom A_0 \dot{+} \ker T^{(0)}
  \end{equation*}
  holds and we see with Proposition~\ref{proposition_extensions_Phi_C}~(i) that for any $f \in H^0_\alpha(\Omega_+; \mathbb{C}^N)$ there exist unique elements $f_0 \in \dom A_0 = H^1(\mathbb{R}^q; \mathbb{C}^N) $ and $\varphi \in H^{-1/2}(\Sigma; \mathbb{C}^N)$ such that $\iota_1 f = f_0 + \widetilde{\Phi}_0 \varphi$. Hence, in view of Proposition~\ref{proposition_extensions_Phi_C}~(iii) the trace of $f$ can be defined as
  \begin{equation} \label{def_trace}
    \gamma_D^+ f := \gamma_D f_0 - \frac{i}{2} (\alpha \cdot \nu) \varphi + \widetilde{\mathcal{C}}_0 \varphi \in H^{-1/2}_\alpha(\Sigma; \mathbb{C}^N).
  \end{equation}
  Moreover, as the map
  \begin{equation*}
    \iota_2: H^0_\alpha(\Omega_+; \mathbb{C}^N) \rightarrow H^1(\mathbb{R}^q; \mathbb{C}^N) \times H^{-1/2}(\Sigma; \mathbb{C}^N), \quad \iota_2 f = \iota_2(f_0 + \widetilde{\Phi}_0 \varphi)_+ = (f_0, \varphi),
  \end{equation*}
  is closed, it is also continuous. Therefore, the continuity of $\iota_2$ and $\widetilde{\mathcal{C}}_0$ imply the continuity of the trace map in~\eqref{def_trace}.
\end{proof}

Finally, we mention that Corollary~\ref{corollary_trace_extension} allows us to extend the statement in~\eqref{int_by_parts} for $f \in H^1(\Omega_\pm; \mathbb{C}^N)$ and $g  \in H^0_\alpha(\Omega_\pm; \mathbb{C}^N)$ to
\begin{equation} \label{int_by_parts_extension}
  \begin{split}
    (i (\alpha \cdot \nabla) f, g)_{L^2(\Omega_\pm; \mathbb{C}^N)} 
    &= (f, i (\alpha \cdot \nabla) g)_{L^2(\Omega_\pm; \mathbb{C}^N)}\\
    &\quad\pm \langle i (\alpha \cdot \nu) \gamma_D^\pm f, \gamma_D^\pm g\rangle_{H^{1/2}_\alpha(\Sigma; \mathbb{C}^N) \times H^{-1/2}_\alpha(\Sigma; \mathbb{C}^N)}.
  \end{split}
\end{equation}

\subsection{An ordinary boundary triple for Dirac operators with singular interactions} \label{section_OBT}

In this subsection we introduce an ordinary boundary triple that is suitable for the investigation of Dirac operators with singular potentials in the so-called critical case; cf. Section~\ref{critical-section}. For that, the operator $A_\infty$ defined by
\begin{equation} \label{def_A_infty}
  \begin{split}
    A_\infty f &= (-i (\alpha \cdot \nabla) + m \alpha_0) f_+ \oplus (-i (\alpha \cdot \nabla) + m \alpha_0) f_-, \\
    \dom A_\infty &= \big\{ f \in H^1(\mathbb{R}^q \setminus \Sigma; \mathbb{C}^N): \gamma_D^+ f_+ = - \gamma_D^- f_-\big\},
  \end{split}
\end{equation}
will play an important role. The basic properties of $A_\infty$ are stated in the following lemma:

\begin{lemma} \label{lemma_A_infty}
  The operator $A_\infty$ is self-adjoint in $L^2(\mathbb{R}^q; \mathbb{C}^N)$ and its spectrum is given by $\sigma(A_\infty) = (-\infty,-m] \cup [m,\infty)$.
\end{lemma}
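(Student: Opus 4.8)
The plan is to identify $A_\infty$ as a unitarily equivalent copy of the free Dirac operator $A_0$ from \eqref{def_A_0} via a trivial ``sign flip'' on the exterior component, so that the assertion follows immediately from the self-adjointness of $A_0$ and the spectral identity \eqref{spec}.

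Concretely, I would introduce the operator $U$ on $L^2(\mathbb{R}^q; \mathbb{C}^N) = L^2(\Omega_+; \mathbb{C}^N) \oplus L^2(\Omega_-; \mathbb{C}^N)$ defined by $U(f_+ \oplus f_-) := f_+ \oplus (-f_-)$; clearly $U$ is unitary and $U = U^{-1}$. The first step is to check that $U$ maps $\dom A_\infty$ bijectively onto $\dom A_0 = H^1(\mathbb{R}^q; \mathbb{C}^N)$. For this one uses the standard fact that a function $g = g_+ \oplus g_- \in H^1(\mathbb{R}^q \setminus \Sigma; \mathbb{C}^N)$ belongs to $H^1(\mathbb{R}^q; \mathbb{C}^N)$ if and only if its one-sided Dirichlet traces coincide, i.e. $\gamma_D^+ g_+ = \gamma_D^- g_-$. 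Applying this to $g = U(f_+\oplus f_-) = f_+ \oplus (-f_-)$ shows that $U(f_+\oplus f_-) \in H^1(\mathbb{R}^q; \mathbb{C}^N)$ precisely when $\gamma_D^+ f_+ = - \gamma_D^- f_-$, which is exactly the boundary condition defining $\dom A_\infty$ in \eqref{def_A_infty}.

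The second step is the intertwining identity $U A_\infty U^{-1} = A_0$. Since the differential expression $-i(\alpha \cdot \nabla) + m \alpha_0$ acts separately on the two components and multiplication by the constant $-1$ commutes with it, one has for every $f = f_+ \oplus f_- \in \dom A_\infty$
\begin{equation*}
  A_0 U f = \bigl( (-i(\alpha \cdot \nabla) + m \alpha_0) f_+ \bigr) \oplus \bigl( -(-i(\alpha \cdot \nabla) + m \alpha_0) f_- \bigr) = U A_\infty f,
\end{equation*}
hence $A_0 U = U A_\infty$ and therefore $A_\infty = U^{-1} A_0 U$. As $A_0$ is self-adjoint in $L^2(\mathbb{R}^q; \mathbb{C}^N)$ with $\sigma(A_0) = (-\infty,-m]\cup[m,\infty)$ by \eqref{spec}, it follows that $A_\infty$ is self-adjoint with the same spectrum, which proves the lemma. (Alternatively, one could note that $A_\infty = T^{(1)}\upharpoonright\ker\Gamma_1^{(1)}$ in the notation of Theorem~\ref{theorem_QBT}, but the quasi boundary triple alone does not yield self-adjointness directly, so the unitary equivalence above is the more efficient route.)

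There is essentially no serious obstacle here; the only point requiring (standard) care is the characterization of $H^1(\mathbb{R}^q;\mathbb{C}^N)$ inside $H^1(\mathbb{R}^q\setminus\Sigma;\mathbb{C}^N)$ in terms of agreement of the one-sided Dirichlet traces on the Lipschitz hypersurface $\Sigma$, together with the compatibility of the trace maps $\gamma_D^\pm$ from Lemma~\ref{lemma_trace_theorem} with the classical Dirichlet trace $\gamma_D$ on $H^1(\mathbb{R}^q;\mathbb{C}^N)$.
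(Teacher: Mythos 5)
Your proof is correct and follows exactly the paper's own argument: both introduce the sign-flip unitary $U(f_+\oplus f_-) = f_+\oplus(-f_-)$ and deduce $A_\infty = U A_0 U$. You simply spell out the details (the trace-matching characterization of $H^1(\mathbb{R}^q;\mathbb{C}^N)$ inside $H^1(\mathbb{R}^q\setminus\Sigma;\mathbb{C}^N)$) that the paper leaves implicit in its phrase ``it is not difficult to see.''
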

\begin{proof}
  Consider in $L^2(\mathbb{R}^q; \mathbb{C}^N)$ the operator $U$ given by
  \begin{equation*}
    U f = f_+ \oplus (-f_-), \qquad f = f_+ \oplus f_- \in L^2(\mathbb{R}^q; \mathbb{C}^N).
  \end{equation*}
  Then it is not difficult to see that $U$ is unitary and self-adjoint and $A_\infty = U A_0 U$. Hence, $A_\infty$ is self-adjoint and $\sigma(A_\infty)=\sigma(A_0) = (-\infty, -m] \cup[m, \infty)$.
\end{proof}

Now, we are prepared to introduce the desired ordinary boundary triple. Let $V: \Sigma \rightarrow \mathbb{C}^{N \times N}$ be a measurable function such that $V(x)$ is a unitary matrix for any $x \in \Sigma$ and let $H_{(\alpha \cdot \nu) V^*}^t(\Sigma; \mathbb{C}^N)$, $t \in [-\frac{1}{2}, \frac{1}{2}]$, be the space defined in~\eqref{def_H_U} and~\eqref{def_H_U_dual} for $\mathbb{U} =  (\alpha \cdot \nu) V^*$. We remark that depending on the type of the singular interaction different choices of $V$ are convenient and hence, we keep it in a general form here. In particular, when one is interested in combinations of electrostatic and Lorentz scalar interactions, then $V=I_N$ is often used, while the choice of $V$ in~\eqref{def_V} below is convenient when also anomalous magnetic interactions are considered.

By construction of $H_{(\alpha \cdot \nu) V^*}^t(\Sigma; \mathbb{C}^N)$ the map $V: H_{\alpha}^t(\Sigma; \mathbb{C}^N) \rightarrow H_{(\alpha \cdot \nu) V^*}^t(\Sigma; \mathbb{C}^N)$ is bijective.
Next, let $\Lambda$ be a self-adjoint operator in $L^2(\Sigma; \mathbb{C}^N)$ which can be viewed for any $s \in [-\frac{1}{2},0]$ as an isomorphism from $H^{s+1/2}_{(\alpha \cdot \nu) V^*}(\Sigma; \mathbb{C}^N)$ to $H^s_{(\alpha \cdot \nu) V^*}(\Sigma; \mathbb{C}^N)$.  Note that this choice  of $\Lambda$ implies that
\begin{equation} \label{equation_Lambda}
  \langle f, g \rangle_{H^{-1/2}_{(\alpha \cdot \nu)V^*}(\Sigma; \mathbb{C}^N) \times H^{1/2}_{(\alpha \cdot \nu)V^*}(\Sigma; \mathbb{C}^N)} = (\Lambda^{-1} f, \Lambda g)_{L^2(\Sigma; \mathbb{C}^N)}
\end{equation}
holds for all $f \in H^{-1/2}_{(\alpha \cdot \nu)V^*}(\Sigma; \mathbb{C}^N)$ and $g \in H^{1/2}_{(\alpha \cdot \nu)V^*}(\Sigma; \mathbb{C}^N)$.

In the statement of the next theorem, the Dirichlet trace operators $\gamma_D^\pm$ are understood in the sense of Corollary~\ref{corollary_trace_extension}. Recall that $T^{(0)}$ is defined in~\eqref{def_T} for $s=0$ and that $\widetilde{\Phi}_z$ and $\widetilde{\mathcal{C}}_z$ are the extensions of $\Phi_z$ and $\mathcal{C}_z$ from Proposition~\ref{proposition_extensions_Phi_C}, respectively. For simplicity, we assume that $m > 0$ in the theorem below; the case $m=0$ is commented in Remark~\ref{remark_m_0}.

\begin{theorem} \label{theorem_OBT}
  Let $m>0$. Define the maps $\Upsilon_0, \Upsilon_1: H^0_\alpha(\mathbb{R}^q \setminus \Sigma; \mathbb{C}^N) \rightarrow L^2(\Sigma; \mathbb{C}^N)$ by
  \begin{equation} \label{def_Upsilon}
    \begin{split}
      \Upsilon_0 f &:= \frac{1}{2} \Lambda^{-1} V (\gamma_D^+ f_+ + \gamma_D^- f_-),\\
      \Upsilon_1 f &:= -\Lambda \big( i V (\alpha \cdot \nu) (\gamma_D^+ f_+ - \gamma_D^- f_-) + 2 V (\alpha \cdot \nu) \widetilde{\mathcal{C}}_0 (\alpha \cdot \nu) (\gamma_D^+ f_+ + \gamma_D^- f_-) \big).
    \end{split}
  \end{equation}
  Then $\{ L^2(\Sigma; \mathbb{C}^N), \Upsilon_0, \Upsilon_1 \}$ is an ordinary boundary triple for $T^{(0)}$ with $T^{(0)} \upharpoonright \ker \Upsilon_0 = A_\infty$. Moreover, the $\gamma$-field $\beta$ and the Weyl function $\mathcal M$ associated with this triple defined on $\rho(A_\infty)=\mathbb{C} \setminus ((-\infty,-m] \cup [m,\infty))$ are given by
  \begin{equation*}
    \rho(A_\infty) \ni z \mapsto \beta(z) = -4 \widetilde{\Phi}_z (\alpha \cdot \nu) \widetilde{\mathcal{C}}_z (\alpha \cdot \nu) V^* \Lambda  
  \end{equation*}
  and
  \begin{equation*}
    \rho(A_\infty) \ni z \mapsto \mathcal{M}(z) = 4 \Lambda V (\alpha \cdot \nu) ( \widetilde{\mathcal{C}}_z - \widetilde{\mathcal{C}}_0 ) (\alpha \cdot \nu) V^* \Lambda.
  \end{equation*}
\end{theorem}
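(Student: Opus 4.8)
The plan is to verify the three axioms of an ordinary boundary triple for $T^{(0)}=S^*$ (recall $T^{(0)}=S^*$ by \eqref{equation_adjoint}), to identify $T^{(0)}\!\upharpoonright\!\ker\Upsilon_0$ with $A_\infty$, and finally to read off $\beta$ and $\mathcal M$; throughout, the standing assumption $m>0$ enters through $0\in\rho(A_0)\cap\rho(A_\infty)$. The workhorse is the decomposition $\dom T^{(0)}=\dom A_0\,\dot{+}\,\ker T^{(0)}$ with $\ker T^{(0)}=\ran\widetilde\Phi_0$: writing $f=f_0+\widetilde\Phi_0\varphi$ with $f_0=A_0^{-1}T^{(0)}f\in H^1(\mathbb{R}^q;\mathbb{C}^N)$ bounded in $f$ and $\varphi\in H^{-1/2}(\Sigma;\mathbb{C}^N)$, one evaluates the two trace combinations appearing in \eqref{def_Upsilon} on each summand using $\gamma_D^\pm (f_0)_\pm=\gamma_D f_0$ and Proposition~\ref{proposition_extensions_Phi_C}(iii). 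The decisive point is a \emph{regularizing cancellation}: since $-4\big(\widetilde{\mathcal C}_0(\alpha\cdot\nu)\big)^2=I_N$ in the form \eqref{formula_C_inv_extension}, the bracket defining $\Upsilon_1 f$ equals $4V(\alpha\cdot\nu)\widetilde{\mathcal C}_0(\alpha\cdot\nu)\gamma_D f_0$ --- the singular part contributes nothing and only the $H^{1/2}$-regular datum $\gamma_D f_0$ survives. Consequently $\Upsilon_0,\Upsilon_1\colon H^0_\alpha(\mathbb{R}^q\setminus\Sigma;\mathbb{C}^N)\to L^2(\Sigma;\mathbb{C}^N)$ are well defined and bounded (using the unitarities of multiplication by $V$ and by $\alpha\cdot\nu$ between the scales $H^t_{\mathbb U}$ of Section~\ref{subsec_SobolevSpaces}), and moreover $\Upsilon_1\!\upharpoonright\!\ker T^{(0)}=0$, while $\Upsilon_0 h=0$ for every $h\in\dom A_\infty$ and, conversely, $\Upsilon_0 f=0$ forces $\gamma_D^+f_+=-\gamma_D^-f_-$.

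To prove the abstract Green's identity I decompose $f,g\in\dom T^{(0)}$ along $\dom T^{(0)}=\dom A_\infty\,\dot{+}\,\ker T^{(0)}$ (legitimate since $A_\infty$ is self-adjoint with $0\in\rho(A_\infty)$ by Lemma~\ref{lemma_A_infty}) and expand both sides bilinearly. The $(\dom A_\infty,\dom A_\infty)$-block vanishes because $A_\infty$ is symmetric and $\Upsilon_0$ annihilates $\dom A_\infty$; the $(\ker T^{(0)},\ker T^{(0)})$-block vanishes because $T^{(0)}$ and $\Upsilon_1$ both kill $\ker T^{(0)}$; the remaining mixed contributions reduce (one of them after taking a complex conjugate) to the single identity
\[ (A_\infty h,\widetilde\Phi_0\psi)_{L^2(\mathbb{R}^q)}=(\Upsilon_1 h,\Upsilon_0\widetilde\Phi_0\psi)_{L^2(\Sigma)},\qquad h\in\dom A_\infty,\ \psi\in H^{-1/2}(\Sigma;\mathbb{C}^N). \]
Its left-hand side is computed from the integration by parts formula \eqref{int_by_parts_extension} --- the interior term dropping because $\widetilde\Phi_0\psi\in\ker T^{(0)}$ --- together with Proposition~\ref{proposition_extensions_Phi_C}(iii), producing a duality pairing of $i(\alpha\cdot\nu)\gamma_D^+h_+\in H^{1/2}_\alpha(\Sigma;\mathbb{C}^N)$ with $\widetilde{\mathcal C}_0\psi\in H^{-1/2}_\alpha(\Sigma;\mathbb{C}^N)$; the right-hand side is rewritten by means of \eqref{equation_Lambda} and the above unitarities, and the two are matched using the duality relation \eqref{duality_C_z} for $\widetilde{\mathcal C}_0$ together with the self-adjointness of $\mathcal C_0$ in $L^2(\Sigma;\mathbb{C}^N)$ (valid since $0\in(-m,m)$). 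I expect this mixed-term computation --- bookkeeping the pairings between $H^{\pm1/2}$, $H^{\pm1/2}_\alpha$ and $H^{\pm1/2}_{(\alpha\cdot\nu)V^*}$ --- to be the main technical obstacle of the proof.

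It remains to identify $T^{(0)}\!\upharpoonright\!\ker\Upsilon_0$ and to establish the surjectivity of $\Gamma=(\Upsilon_0,\Upsilon_1)$. If $f\in\ker\Upsilon_0$, i.e.\ $\gamma_D^+f_+=-\gamma_D^-f_-$, then applying \eqref{int_by_parts_extension} in $\Omega_\pm$ to an arbitrary $\phi\in C_0^\infty(\mathbb{R}^q;\mathbb{C}^N)$ and the components of $g:=f_+\oplus(-f_-)$ and summing, the interface terms cancel, so $(\alpha\cdot\nabla)g$ carries no interface part and hence $g\in H^0_\alpha(\mathbb{R}^q;\mathbb{C}^N)=H^1(\mathbb{R}^q;\mathbb{C}^N)$, which gives $f\in\dom A_\infty$; the converse inclusion being obvious, $T^{(0)}\!\upharpoonright\!\ker\Upsilon_0=A_\infty$, which is self-adjoint by Lemma~\ref{lemma_A_infty}. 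For surjectivity, given $(a,b)\in L^2(\Sigma;\mathbb{C}^N)^2$ I choose $h\in\dom A_\infty$ with $\gamma_D^+h_+=\tfrac i2(\alpha\cdot\nu)V^*\Lambda^{-1}b\in H^{1/2}(\Sigma;\mathbb{C}^N)$, set $\varphi:=(\widetilde{\mathcal C}_0)^{-1}V^*\Lambda a\in H^{-1/2}(\Sigma;\mathbb{C}^N)$ and $f:=h+\widetilde\Phi_0\varphi\in\dom T^{(0)}$; using $\Upsilon_1\widetilde\Phi_0\varphi=0$, the trace formulas of Proposition~\ref{proposition_extensions_Phi_C}(iii), and $VV^*=I_N$, $(\alpha\cdot\nu)^2=I_N$, one checks $\Upsilon_0 f=a$ and $\Upsilon_1 f=b$.

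For the $\gamma$-field and Weyl function, recall $\ker(T^{(0)}-z)=\ran\widetilde\Phi_z$ with $\widetilde\Phi_z\colon H^{-1/2}(\Sigma;\mathbb{C}^N)\to\ker(T^{(0)}-z)$ a bijection (Proposition~\ref{proposition_extensions_Phi_C}(i)). From Proposition~\ref{proposition_extensions_Phi_C}(iii) one gets $\Upsilon_0\widetilde\Phi_z\varphi=\Lambda^{-1}V\widetilde{\mathcal C}_z\varphi$, so $\Upsilon_0\!\upharpoonright\!\ker(T^{(0)}-z)$, read in the coordinate $\varphi$, is the composition of the isomorphisms $\widetilde{\mathcal C}_z$, $V$, $\Lambda^{-1}$ between $H^{-1/2}(\Sigma;\mathbb{C}^N)$, $H^{-1/2}_\alpha(\Sigma;\mathbb{C}^N)$, $H^{-1/2}_{(\alpha\cdot\nu)V^*}(\Sigma;\mathbb{C}^N)$ and $L^2(\Sigma;\mathbb{C}^N)$; inverting and using \eqref{formula_C_inv_extension} with $s=0$ yields $\beta(z)=\widetilde\Phi_z(\widetilde{\mathcal C}_z)^{-1}V^*\Lambda=-4\widetilde\Phi_z(\alpha\cdot\nu)\widetilde{\mathcal C}_z(\alpha\cdot\nu)V^*\Lambda$. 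Then $\mathcal M(z)=\Upsilon_1\beta(z)$ follows by inserting $\varphi=(\widetilde{\mathcal C}_z)^{-1}V^*\Lambda\psi$ (so that $\widetilde{\mathcal C}_z\varphi=V^*\Lambda\psi$ and $\gamma_D^+(\widetilde\Phi_z\varphi)_+-\gamma_D^-(\widetilde\Phi_z\varphi)_-=-i(\alpha\cdot\nu)\varphi$) into \eqref{def_Upsilon} and once more invoking \eqref{formula_C_inv_extension}, which produces $\mathcal M(z)=4\Lambda V(\alpha\cdot\nu)(\widetilde{\mathcal C}_z-\widetilde{\mathcal C}_0)(\alpha\cdot\nu)V^*\Lambda$; the boundedness of $\mathcal M(z)$ on $L^2(\Sigma;\mathbb{C}^N)$ is precisely where the smoothing property $\widetilde{\mathcal C}_z-\widetilde{\mathcal C}_0\colon H^{-1/2}(\Sigma;\mathbb{C}^N)\to H^{1/2}(\Sigma;\mathbb{C}^N)$ of Proposition~\ref{proposition_extensions_Phi_C}(iv) is used.
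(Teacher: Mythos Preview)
Your proposal is correct and follows essentially the same route as the paper: the $A_\infty$-based decomposition of $\dom T^{(0)}$, the cancellation \eqref{formula_C_inv_extension} that makes $\Upsilon_1$ land in $L^2$, integration by parts via \eqref{int_by_parts_extension} for Green's identity, an explicit preimage for surjectivity, and the computation of $\beta$ and $\mathcal M$ via $\Upsilon_0\widetilde\Phi_z=\Lambda^{-1}V\widetilde{\mathcal C}_z$. The only cosmetic differences are that the paper decomposes along $\dom A_\infty$ from the outset (obtaining $\Upsilon_0 f=\Lambda^{-1}V\widetilde{\mathcal C}_0\xi$ and $\Upsilon_1 f=-\Lambda iV(\alpha\cdot\nu)(\gamma_D^+f_{\infty,+}-\gamma_D^-f_{\infty,-})$ directly), whereas you first use the $A_0$-decomposition to extract the regularizing cancellation; and for $T^{(0)}\!\upharpoonright\!\ker\Upsilon_0=A_\infty$ the paper simply invokes the obvious inclusion together with Green's identity and the self-adjointness of $A_\infty$, which is a line shorter than your direct $H^0_\alpha(\mathbb{R}^q)=H^1(\mathbb{R}^q)$ argument.
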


We remark that it is not obvious that all products in the definition of $\Upsilon_1$ are well defined. This will become more clear in the alternative representation of $\Upsilon_1$ in~\eqref{equation_Ups1} below. Moreover, we remark that the Weyl function $\mathcal{M}(z)$ is well defined, as $\widetilde{\mathcal{C}}_z - \widetilde{\mathcal{C}}_0: H^{-1/2}(\Sigma; \mathbb{C}^N) \rightarrow H^{1/2}(\Sigma; \mathbb{C}^N)$ is bounded by Proposition~\ref{proposition_extensions_Phi_C}~(iv).

\begin{proof}[Proof of Theorem~\ref{theorem_OBT}]
  First, let us prove the  abstract Green's identity in Definition~\ref{qbtdef}~(i). For that, we verify alternative representations of $\Upsilon_0$ and $\Upsilon_1$. Since $A_\infty$ is self-adjoint, $A_\infty \subset T^{(0)}$, and $0 \in \rho(A_\infty)$, the direct sum decomposition 
  \begin{equation*}
    \dom T^{(0)} = \dom A_\infty \dot{+} \ker T^{(0)}
  \end{equation*}
  holds. Taking this and Proposition~\ref{proposition_extensions_Phi_C} into account, there exist for each $f \in \dom T^{(0)}$ unique elements $f_\infty \in \dom A_\infty \subset H^1(\mathbb{R}^q \setminus \Sigma; \mathbb{C}^N)$ and $\xi \in H^{-1/2}(\Sigma; \mathbb{C}^N)$ such that $f = f_\infty + \widetilde{\Phi}_0 \xi$. Using Proposition~\ref{proposition_extensions_Phi_C}~(iii) and that $\gamma_D^+ f_{\infty,+} = - \gamma_D^- f_{\infty,-}$ holds for $f_\infty \in \dom A_\infty$ we see that
  \begin{equation} \label{equation_Ups0}
    \Upsilon_0 f = \Lambda^{-1} V \widetilde{\mathcal{C}}_0 \xi.
  \end{equation}  
  Similarly, employing Proposition~\ref{proposition_extensions_Phi_C} and $-4((\alpha \cdot \nu) \widetilde{\mathcal{C}}_0 )^2=I_N$, which holds by~\eqref{formula_C_inv_extension}, we get that
  \begin{equation} \label{equation_Ups1}
    \begin{split}
      \Upsilon_1 f &= -\Lambda \big( i V (\alpha \cdot \nu) (\gamma_D^+ (f_\infty+\widetilde{\Phi}_0 \xi)_+ - \gamma_D^- (f_\infty+\widetilde{\Phi}_0 \xi)_-) \\
      &\qquad \qquad + 2 V (\alpha \cdot \nu) \widetilde{\mathcal{C}}_0 (\alpha \cdot \nu) (\gamma_D^+ (f_\infty+\widetilde{\Phi}_0 \xi)_+ + \gamma_D^- (f_\infty+\widetilde{\Phi}_0 \xi)_-) \big) \\
      &= -\Lambda \big( i V (\alpha \cdot \nu) (\gamma_D^+ f_{\infty,+} - \gamma_D^- f_{\infty,-} - i (\alpha \cdot \nu) \xi)     + 4 V ((\alpha \cdot \nu) \widetilde{\mathcal{C}}_0 )^2 \xi) \big) \\
      &= -\Lambda i V (\alpha \cdot \nu) (\gamma_D^+ f_{\infty,+} - \gamma_D^- f_{\infty,-}).
    \end{split}
  \end{equation}
  Now we are prepared to prove the abstract Green's identity. Fix $f = f_\infty + \widetilde{\Phi}_0 \xi, g = g_\infty + \widetilde{\Phi}_0 \zeta \in \dom T^{(0)}$ with $f_\infty, g_\infty \in \dom A_\infty$ and $\xi, \zeta \in H^{-1/2}(\Sigma; \mathbb{C}^N)$. Using that $\widetilde{\Phi}_0 \xi, \widetilde{\Phi}_0 \zeta \in \ker T^{(0)}$, see Proposition~\ref{proposition_extensions_Phi_C}~(i), and that $A_\infty$ is self-adjoint we get 
  \begin{equation*}
    \begin{split}
      (T^{(0)} f&, g)_{L^2(\mathbb{R}^q; \mathbb{C}^N)} - (f, T^{(0)}  g)_{L^2(\mathbb{R}^q; \mathbb{C}^N)} \\
      &= (A_\infty f_\infty, \widetilde{\Phi}_0 \zeta)_{L^2(\mathbb{R}^q; \mathbb{C}^N)} - (\widetilde{\Phi}_0 \xi, A_\infty  g_\infty)_{L^2(\mathbb{R}^q; \mathbb{C}^N)} \\
      &= \left\langle -i (\alpha \cdot \nu) \gamma_D^+ f_{\infty,+}, -\frac{i}{2} (\alpha \cdot \nu) \zeta + \widetilde{\mathcal{C}}_0 \zeta \right\rangle_{H^{1/2}_\alpha(\Sigma; \mathbb{C}^N) \times H^{-1/2}_\alpha(\Sigma; \mathbb{C}^N)} \\
      &\qquad \qquad + \left\langle i (\alpha \cdot \nu) \gamma_D^- f_{\infty,-}, \frac{i}{2} (\alpha \cdot \nu) \zeta + \widetilde{\mathcal{C}}_0 \zeta \right\rangle_{H^{1/2}_\alpha(\Sigma; \mathbb{C}^N) \times H^{-1/2}_\alpha(\Sigma; \mathbb{C}^N)} \\
      &\qquad \qquad - \left\langle -\frac{i}{2} (\alpha \cdot \nu) \xi + \widetilde{\mathcal{C}}_0 \xi, -i (\alpha \cdot \nu) \gamma_D^+ g_{\infty,+} \right\rangle_{H^{-1/2}_\alpha(\Sigma; \mathbb{C}^N) \times H^{1/2}_\alpha(\Sigma; \mathbb{C}^N)} \\
      &\qquad \qquad - \left\langle \frac{i}{2} (\alpha \cdot \nu) \xi + \widetilde{\mathcal{C}}_0 \xi, i (\alpha \cdot \nu) \gamma_D^- g_{\infty,-} \right\rangle_{H^{-1/2}_\alpha(\Sigma; \mathbb{C}^N) \times H^{1/2}_\alpha(\Sigma; \mathbb{C}^N)},
    \end{split}
  \end{equation*}
  where~\eqref{int_by_parts_extension} in $\Omega_+$ and $\Omega_-$ and Proposition~\ref{proposition_extensions_Phi_C}~(iii) were applied  in the last step.  
  Since $f_\infty, g_\infty \in \dom A_\infty$ satisfy $\gamma_D^+ f_{\infty,+} = -\gamma_D^- f_{\infty,-}$ and $\gamma_D^+ g_{\infty,+} = -\gamma_D^- g_{\infty,-}$, we find now with~\eqref{equation_Ups0},~\eqref{equation_Ups1}, and~\eqref{equation_Lambda} that
  \begin{equation} \label{Green_OBT_Dirac}
    \begin{split}
      (T^{(0)} f&, g)_{L^2(\mathbb{R}^q; \mathbb{C}^N)} - (f, T^{(0)}  g)_{L^2(\mathbb{R}^q; \mathbb{C}^N)} \\
      &= \big\langle -i (\alpha \cdot \nu) (\gamma_D^+ f_{\infty,+} - \gamma_D^- f_{\infty,-}), \widetilde{\mathcal{C}}_0 \zeta \big\rangle_{H^{1/2}_\alpha(\Sigma; \mathbb{C}^N) \times H^{-1/2}_\alpha(\Sigma; \mathbb{C}^N)} \\
      &\qquad \qquad - \big\langle \widetilde{\mathcal{C}}_0 \xi, -i (\alpha \cdot \nu) (\gamma_D^+g_{\infty,+} - \gamma_D^- g_{\infty,-}) \big\rangle_{H^{-1/2}_\alpha(\Sigma; \mathbb{C}^N) \times H^{1/2}_\alpha(\Sigma; \mathbb{C}^N)} \\
      &= \big\langle -i V (\alpha \cdot \nu) (\gamma_D^+ f_{\infty,+} - \gamma_D^- f_{\infty,-}), V \widetilde{\mathcal{C}}_0 \zeta \big\rangle_{H^{1/2}_{(\alpha\cdot \nu)V^*}(\Sigma; \mathbb{C}^N) \times H^{-1/2}_{(\alpha\cdot \nu)V^*}(\Sigma; \mathbb{C}^N)} \\
      &\quad - \big\langle V\widetilde{\mathcal{C}}_0 \xi, -i V(\alpha \cdot \nu) (\gamma_D^+g_{\infty,+} - \gamma_D^- g_{\infty,-}) \big\rangle_{H^{-1/2}_{(\alpha\cdot \nu)V^*}(\Sigma; \mathbb{C}^N) \times H^{1/2}_{(\alpha\cdot \nu)V^*}(\Sigma; \mathbb{C}^N)} \\
      &= (\Upsilon_1 f, \Upsilon_0 g)_{L^2(\Sigma; \mathbb{C}^N)} - (\Upsilon_0 f, \Upsilon_1 g)_{L^2(\Sigma; \mathbb{C}^N)},
    \end{split}
  \end{equation}
  which is the abstract Green's identity.
  
  Next, it follows from Green's identity~\eqref{Green_OBT_Dirac} that $T^{(0)} \upharpoonright \ker \Upsilon_0$ is symmetric. Moreover, one directly sees that $A_\infty \subset T^{(0)} \upharpoonright \ker \Upsilon_0$. Since $A_\infty$ is self-adjoint by Lemma~\ref{lemma_A_infty}, we find $A_\infty = T^{(0)} \upharpoonright \ker \Upsilon_0$.
  
  Next, we show that $(\Upsilon_0, \Upsilon_1): \dom T^{(0)} \rightarrow L^2(\Sigma; \mathbb{C}^N) \times L^2(\Sigma; \mathbb{C}^N)$ is surjective. Denote by $E_\pm: H^{1/2}(\Sigma; \mathbb{C}^N) \rightarrow H^1(\Omega_\pm; \mathbb{C}^N)$ the bounded extension operator defined by $\gamma_D^\pm (E_\pm \varphi) = \varphi$. Consider for arbitrary $\xi, \zeta \in L^2(\Sigma; \mathbb{C}^N)$ the function
  \begin{equation*}
    \begin{split}
      f &= \frac{1}{2} \big((E_+ i (\alpha \cdot \nu)V^* \Lambda^{-1} \zeta) \oplus (-E_- i (\alpha \cdot \nu) V^* \Lambda^{-1} \zeta)\big) -4 \widetilde{\Phi}_0  (\alpha \cdot \nu) \widetilde{\mathcal{C}}_0 (\alpha \cdot \nu) V^* \Lambda \xi \\
      &\in \dom A_\infty + \ker T^{(0)} = \dom T^{(0)}.
    \end{split}
  \end{equation*}
  Then, with~\eqref{equation_Ups0} and~\eqref{equation_Ups1} it is not difficult to see that $\Upsilon_0 f = \xi$ and $\Upsilon_1 f = \zeta$. Hence, $(\Upsilon_0, \Upsilon_1)$ is surjective, which finishes the proof that $\{ L^2(\Sigma; \mathbb{C}^N), \Upsilon_0, \Upsilon_1 \}$ is an ordinary boundary triple for $T^{(0)}$.
  
  It remains to show the claimed formulas for the $\gamma$-field and the Weyl function. First, define for $z \in \rho(A_\infty)$ and $\varphi \in L^2(\Sigma; \mathbb{C}^N)$ the function
  \begin{equation*}
    f_z := -4 \widetilde{\Phi}_z (\alpha \cdot \nu) \widetilde{\mathcal{C}}_z (\alpha \cdot \nu) V^* \Lambda \varphi.
  \end{equation*}
  Then,  by Proposition~\ref{proposition_extensions_Phi_C}~(i) one has that $f_z \in \ker (T^{(0)} - z)$. Moreover, Proposition~\ref{proposition_extensions_Phi_C}~(iii) and~\eqref{formula_C_inv_extension} yield that
  \begin{equation*}
    \Upsilon_0 f_z = -4 \Lambda^{-1} V \widetilde{\mathcal{C}}_z (\alpha \cdot \nu) \widetilde{\mathcal{C}}_z (\alpha \cdot \nu) V^* \Lambda \varphi = \varphi,
  \end{equation*}
  which shows the claimed formula for the $\gamma$-field $\beta(z)$. Furthermore, with the help of Proposition~\ref{proposition_extensions_Phi_C}~(iii) and~\eqref{formula_C_inv_extension} the Weyl function can be computed and we get for $\varphi \in L^2(\Sigma; \mathbb{C}^N)$
  \begin{equation*}
    \begin{split}
      \mathcal{M}(z) \varphi &= \Upsilon_1 \beta(z) \varphi \\
      &= -\Lambda \big( i V (\alpha \cdot \nu) (\gamma_D^+ (\beta(z) \varphi)_+ - \gamma_D^- (\beta(z) \varphi)_-) \\
      &\qquad \qquad + 2 V (\alpha \cdot \nu) \widetilde{\mathcal{C}}_0 (\alpha \cdot \nu) (\gamma_D^+ (\beta(z) \varphi)_+ + \gamma_D^- (\beta(z) \varphi)_-) \big) \\      
      &= -\Lambda \big( -4 i V (\alpha \cdot \nu) (-i (\alpha \cdot \nu) (\alpha \cdot \nu) \widetilde{\mathcal{C}}_z (\alpha \cdot \nu) V^* \Lambda \varphi) \\
      &\qquad \qquad -16 V (\alpha \cdot \nu) \widetilde{\mathcal{C}}_0 (\alpha \cdot \nu) (\widetilde{\mathcal{C}}_z (\alpha\cdot \nu))^2  V^* \Lambda \varphi \big) \\  
      &= 4 \Lambda V (\alpha \cdot \nu) ( \widetilde{\mathcal{C}}_z - \widetilde{\mathcal{C}}_0 ) (\alpha \cdot \nu) V^* \Lambda \varphi.
    \end{split}
  \end{equation*}
  Thus, all claims are shown.
\end{proof}

We remark that the boundary triple from the previous theorem can be viewed as a regularized boundary triple of a quasi boundary triple in the sense of \cite{BM14}. Indeed, it is easy to see that $\{ L^2(\Sigma; \mathbb{C}^N), V \Gamma_1^{(1)}, -V \Gamma_0^{(1)} \}$ is a quasi boundary triple that satisfies the assumptions in \cite[Theorem~2.12]{BM14} and the resulting ordinary boundary triple is exactly $\{ L^2(\Sigma; \mathbb{C}^N), \Upsilon_0, \Upsilon_1 \}$ in Theorem~\ref{theorem_OBT}; we refer to \cite{BH20}, where a similar procedure was applied in the context of Dirac operators with singular interactions.

\begin{remark} \label{remark_m_0}
  If $m=0$, then one can show that for any fixed $\zeta \in \mathbb{C} \setminus \mathbb{R}$ the maps $\widehat{\Upsilon}_0, \widehat{\Upsilon}_1: \dom T^{(0)} \rightarrow L^2(\Sigma; \mathbb{C}^N)$ defined by
  \begin{equation*} 
    \begin{split}
      \widehat{\Upsilon}_0 f &:= \frac{1}{2} \Lambda^{-1} V (\gamma_D^+ f_+ + \gamma_D^- f_-),\\
      \widehat{\Upsilon}_1 f &:= -\Lambda \big( i V (\alpha \cdot \nu) (\gamma_D^+ f_+ - \gamma_D^- f_-) + V (\alpha \cdot \nu) \big(\widetilde{\mathcal{C}}_{\zeta} + \widetilde{\mathcal{C}}_{\overline{\zeta}}\big) (\alpha \cdot \nu) (\gamma_D^+ f_+ + \gamma_D^- f_-) \big),
    \end{split}
  \end{equation*}
  constitute an ordinary boundary triple $\{ L^2(\Sigma; \mathbb{C}^N), \widehat{\Upsilon}_0, \widehat{\Upsilon}_1 \}$ for 
  $T^{(0)}$ such that  $T^{(0)}\upharpoonright \ker \Upsilon_0 = A_\infty$. Note that $\rho(A_\infty)=\mathbb{C} \setminus \mathbb R$ holds in this case. The corresponding $\gamma$-field $\widehat{\beta}$ 
  and Weyl function $\widehat{\mathcal{M}}$ are given by
  \begin{equation*}
    \rho(A_\infty) \ni z \mapsto \widehat{\beta}(z) = -4 \widetilde{\Phi}_z (\alpha \cdot \nu) \widetilde{\mathcal{C}}_z (\alpha \cdot \nu) V^* \Lambda  
  \end{equation*}
  and
  \begin{equation*}
    \rho(A_\infty) \ni z \mapsto \widehat{\mathcal{M}}(z) = 4 \Lambda V (\alpha \cdot \nu) \left( \widetilde{\mathcal{C}}_z - \frac{1}{2} \big(\widetilde{\mathcal{C}}_\zeta+ \widetilde{\mathcal{C}}_{\overline{\zeta}} \big) \right) (\alpha \cdot \nu) V^* \Lambda.
  \end{equation*}
  To see the above claims, one can apply \cite[Proposition~2.11]{BHOP20} with $B = A_\infty$ and $\mathcal{T} f = -i \Lambda V (\alpha \cdot \nu) (\gamma_D^+ f_+ - \gamma_D^- f_-)$, $f \in \dom A_\infty$. Using that $A_\infty = T^{(1)} \upharpoonright \ker \Gamma_1^{(1)}$ with $\Gamma_1^{(1)}$ defined by~\eqref{def_Gamma} and the associated Krein-type resolvent formula from Theorem~\ref{theorem_Krein_abstract}, one can employ then a similar construction as in \cite[Proposition~3.5]{BHOP20}.
\end{remark}

\section{Dirac operators with singular interactions supported on Lipschitz smooth curves and surfaces}

In this section we study Dirac operators with singular interactions supported on boundaries of compact Lipschitz domains in $\mathbb{R}^2$ and $\mathbb{R}^3$. In Section~\ref{section_Lipschitz} we employ the generalized boundary triple $\big\{L^2(\Sigma;\dC^N),\Gamma_0^{(1/2)},\Gamma_1^{(1/2)} \big\}$ from Section~\ref{section_QBT} to derive sufficient conditions for the self-adjointness of these operators and to recover their spectral properties; cf. \cite{AMV14, AMV15, BEHL18, BEHL19, BHSS21, Ben21} for similar considerations in dimension $q=3$ under different assumptions on the interaction support $\Sigma$. We also mention that in \cite{Ben22}  various rough domains and interaction strengths are considered.

In Section~\ref{section_confinement} we investigate Dirac operators with singular interactions in  the confinement case, i.e. for certain combinations of interaction strengths which lead to a decoupling of these operators into two operators which act independently in $L^2(\Omega_\pm;\dC^N)$. 

Similarly as in Section \ref{section_3d}, we assume that $\Omega_+ \subset \dR^q$, $q \in \{2,3\}$, is a bounded Lipschitz domain with boundary $\Sigma$ and  $\Omega_- = \dR^q \setminus \overline{\Omega_+}$. Moreover, $\nu$ denotes the unit normal vector field on $\Sigma$ that is pointing outwards of $\Omega_+$ and $\alpha_j \in \dC^{N\times N}$, $j \in \{0,\dots,q\}$, are the Dirac matrices defined in~\eqref{Dirac_matrices1}--\eqref{Dirac_matrices2}.

For $\eta, \tau, \lambda \in \mathbb{R}$ we set 
\begin{equation} \label{def_P_eta_tau_lambda}
  P_{\eta, \tau, \lambda} := \eta I_N + \tau \alpha_0 + \lambda i (\alpha \cdot \nu) \alpha_0
\end{equation}
and define, in a similar way as in~\eqref{Diracperturbed2}, in $L^2(\mathbb{R}^q; \mathbb{C}^N)$ the symmetric operator $A_{\eta,\tau,\lambda}$ by
\begin{equation}\label{def_A_eta,tau,mu}
A_{\eta,\tau,\lambda} := T^{(1/2)} \upharpoonright \ker \big(\Gamma_0^{(1/2)} + P_{\eta,\tau,\lambda}\Gamma_1^{(1/2)} \big).
\end{equation}
The operator $A_{\eta,\tau,\lambda}$ is given more explicitly by
\begin{equation*}
\begin{split}
A_{\eta,\tau,\lambda} f &= \big(-i (\alpha \cdot \nabla) + m \alpha_0\big) f_+ \oplus \big(-i (\alpha \cdot \nabla) + m \alpha_0\big) f_-, \\
\dom A_{\eta,\tau,\lambda} &= \left\{ f \in H^{1/2}_\alpha(\mathbb{R}^q \setminus \Sigma; \mathbb{C}^N): i (\alpha \cdot \nu) (\gamma_D^+ f_+ - \gamma_D^- f_-) \right.\\
&\hspace{130 pt }\left.+ P_{\eta,\tau,\lambda}\frac{1}{2} (\gamma_D^+ f_+ + \gamma_D^- f_-) = 0 \right\}.
\end{split}
\end{equation*}
The symmetry of $A_{\eta, \tau, \lambda}$ follows in a similar way as in \eqref{abab} from the symmetry of $P_{\eta, \tau, \lambda}$ and the abstract Green's identity in Definition \ref{qbtdef} (i).

\subsection{Spectral analysis of Dirac operators with singular interactions on boundaries of compact Lipschitz domains} 
\label{section_Lipschitz}

This subsection is devoted to the analysis of the self-adjointness and spectral properties of the symmetric operator $A_{\eta,\tau,\lambda}$.
For this purpose we investigate 
$ I_N + P_{\eta,\tau,\lambda}M^{(1/2)}(z) = I_N + P_{\eta,\tau,\lambda}\mathcal{C}_z$; cf. Theorems \ref{theorem_Krein_abstract_A_B} and~\ref{theorem_QBT}. Thus, we start by proving preliminary results on $I_N + P_{\eta,\tau,\lambda}\mathcal{C}_z$, the main results on $A_{\eta, \tau, \lambda}$ are then proved in Theorem \ref{selfadjoint_theorem}. 

Recall that $A_0$ is the free Dirac operator defined in~\eqref{def_A_0} and that $\mathcal{C}_z$, $z \in \rho(A_0)$, is given by~\eqref{def_C_lambda}. We introduce the operator $\mathcal{D}_\alpha: L^2(\Sigma;\dC^N) \to L^2(\Sigma;\dC^N)$ acting as
\begin{equation} \label{def_D_alpha}
	\begin{split}
	\mathcal{D}_\alpha \varphi(x)&:= \lim_{\varepsilon \searrow 0} \int_{\Sigma \setminus B(x,\varepsilon)} \frac{i\left(\alpha \cdot (x-y)\right)}{2^{q-1}\pi |x-y|^{q}}f(y) \, d\sigma(y), \quad \varphi \in L^2(\Sigma; \mathbb{C}^N),~x \in \Sigma.
	\end{split}
\end{equation}
Then, the following holds:

\begin{lemma}\label{lemma_K_alpha}
	The operator $\mathcal{D}_\alpha$ defined by~\eqref{def_D_alpha} 	
	is well defined and bounded, and for all $z \in \rho(A_0)$ the following holds:
	\begin{itemize}
		\item[(i)] $\mathcal{D}_\alpha - \mathcal{C}_z$ is compact in $L^2(\Sigma;\dC^N)$.
		\item[(ii)] $\alpha_0 \mathcal{D}_\alpha + \mathcal{D}_\alpha\alpha_0 = 0$. In particular $\alpha_0 \mathcal{C}_z + \mathcal{C}_z \alpha_0$ is compact in $L^2(\Sigma;\dC^N)$.
		\item[(iii)]$ \mathcal{D}_\alpha$ is self-adjoint.
		\item[(iv)] $\sigma_{\mathrm{ess}}(|\mathcal{D}_\alpha|) \subset  \left[\frac{1}{4\|\mathcal{C}_z\|_{L^2(\Sigma;\dC^N)\to L^2(\Sigma;\dC^N)}},\|\mathcal{C}_z\|_{L^2(\Sigma;\dC^N)\to L^2(\Sigma;\dC^N)} \right]$.
	\end{itemize}
\end{lemma}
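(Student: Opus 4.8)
The plan is to build $\mathcal{D}_\alpha$ as the "principal part" of the strongly singular operator $\mathcal{C}_z$ and then transfer all the analytic information we already have about $\mathcal{C}_z$ across to it. First I would observe that the kernel of $\mathcal{D}_\alpha$ is precisely the leading singularity of $G_{z,q}(x-y)$ as $x\to y$: inspecting the explicit formulas in~\eqref{def_G_lambda}, in each dimension $q\in\{2,3\}$ one has
\begin{equation*}
  G_{z,q}(x-y) = \frac{i(\alpha\cdot(x-y))}{2^{q-1}\pi|x-y|^q} + R_{z,q}(x-y),
\end{equation*}
where $R_{z,q}$ has a weakly singular (integrable, $O(|x-y|^{2-q})$ with an additional $\log$ in $q=2$) kernel; here one uses the small-argument expansions $K_0(w)\sim-\log w$, $K_1(w)\sim 1/w$ in $q=2$, and the expansion of $e^{ik(z)|x|}$ in $q=3$. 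Hence $\mathcal{D}_\alpha$ is well defined and bounded by the same Calderón--Zygmund / Coifman--McIntosh--Meyer type estimate that makes $\mathcal{C}_z$ bounded (the boundedness of $\mathcal{C}_z$ being already cited from \cite{AMV14}), and the difference $\mathcal{D}_\alpha-\mathcal{C}_z$ is the integral operator with kernel $-R_{z,q}$, which is compact on $L^2(\Sigma;\mathbb{C}^N)$ because a weakly singular kernel on a compact Lipschitz hypersurface gives a compact (in fact Hilbert--Schmidt in $q=3$, and with a $\log$-weak singularity still compact in $q=2$) operator. This proves (i), and the boundedness statement.

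For (ii): since $\alpha_0$ anticommutes with each $\alpha_j$, $j\in\{1,\dots,q\}$, it anticommutes with $\alpha\cdot(x-y)$; therefore $\alpha_0\,\frac{i(\alpha\cdot(x-y))}{2^{q-1}\pi|x-y|^q} = -\frac{i(\alpha\cdot(x-y))}{2^{q-1}\pi|x-y|^q}\,\alpha_0$ pointwise in the kernel, and passing this through the principal-value integral (the symmetric truncation $\Sigma\setminus B(x,\varepsilon)$ is preserved under left/right multiplication by the constant matrix $\alpha_0$) yields $\alpha_0\mathcal{D}_\alpha = -\mathcal{D}_\alpha\alpha_0$. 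Combined with (i), $\alpha_0\mathcal{C}_z+\mathcal{C}_z\alpha_0 = \alpha_0(\mathcal{C}_z-\mathcal{D}_\alpha)+(\mathcal{C}_z-\mathcal{D}_\alpha)\alpha_0$ is a sum of compact operators, hence compact. For (iii): self-adjointness of $\mathcal{D}_\alpha$ follows because $\mathcal{C}_z$ is self-adjoint for $z\in(-m,m)$ (stated in the excerpt right after~\eqref{def_C_lambda}) and $\mathcal{D}_\alpha-\mathcal{C}_z$ is compact — but compactness alone does not give self-adjointness of the difference, so instead I would argue directly: the kernel $\frac{i(\alpha\cdot(x-y))}{2^{q-1}\pi|x-y|^q}$ is odd under $x\leftrightarrow y$ and the matrices $\alpha_j$ are Hermitian, so $\big(\mathcal{D}_\alpha\varphi,\psi\big)=\big(\varphi,\mathcal{D}_\alpha\psi\big)$ by Fubini on the truncated integrals followed by $\varepsilon\searrow0$ (the principal-value limits exist in $L^2$ by the boundedness just established, justifying the interchange). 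Alternatively, and more cleanly, pick $z\in(-m,m)$ and use that $\mathcal{C}_z$ is self-adjoint together with the fact that $R_{z,q}$ is, for real $z$, the kernel of a self-adjoint operator (its kernel satisfies $\overline{R_{z,q}(x-y)}^{\,\top}=R_{z,q}(y-x)$, which one checks from the explicit real-$z$ formulas), so $\mathcal{D}_\alpha=\mathcal{C}_z-(\mathcal{C}_z-\mathcal{D}_\alpha)$ is a difference of self-adjoint operators.

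For (iv): by (i), $\sigma_{\mathrm{ess}}(\mathcal{D}_\alpha)=\sigma_{\mathrm{ess}}(\mathcal{C}_z)$ for every $z\in\rho(A_0)$ (compact perturbations do not change the essential spectrum), and since $|\mathcal{D}_\alpha|=(\mathcal{D}_\alpha^*\mathcal{D}_\alpha)^{1/2}=(\mathcal{D}_\alpha^2)^{1/2}$ with $\mathcal{D}_\alpha$ self-adjoint, $\sigma_{\mathrm{ess}}(|\mathcal{D}_\alpha|)=\{|t|:t\in\sigma_{\mathrm{ess}}(\mathcal{D}_\alpha)\}$. The upper bound $\|\mathcal{D}_\alpha\|\le\|\mathcal{C}_z\|_{L^2\to L^2}$ is immediate since... actually it is not, because $\mathcal D_\alpha$ and $\mathcal C_z$ differ; but one does get $\sigma_{\mathrm{ess}}(|\mathcal{D}_\alpha|)\subset[0,\|\mathcal{C}_z\|]$ from $\sigma_{\mathrm{ess}}(\mathcal D_\alpha)=\sigma_{\mathrm{ess}}(\mathcal C_z)\subset[-\|\mathcal C_z\|,\|\mathcal C_z\|]$. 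The lower bound is the crucial point and comes from~\eqref{C_inv}: $-4(\mathcal{C}_z(\alpha\cdot\nu))^2=I_N$. I would first note that $\mathcal{D}_\alpha(\alpha\cdot\nu)$ is the principal part of $\mathcal{C}_z(\alpha\cdot\nu)$ and $\mathcal{D}_\alpha(\alpha\cdot\nu)-\mathcal{C}_z(\alpha\cdot\nu)$ is compact, so squaring and using~\eqref{C_inv},
\begin{equation*}
  \big(\mathcal{D}_\alpha(\alpha\cdot\nu)\big)^2 = \big(\mathcal{C}_z(\alpha\cdot\nu)\big)^2 + K = -\tfrac14 I_N + K
\end{equation*}
with $K$ compact; hence $\mathcal{D}_\alpha(\alpha\cdot\nu)$ is invertible modulo compacts with $\sigma_{\mathrm{ess}}\big((\mathcal{D}_\alpha(\alpha\cdot\nu))^2\big)=\{-\tfrac14\}$, so every point of $\sigma_{\mathrm{ess}}(\mathcal{D}_\alpha(\alpha\cdot\nu))$ has modulus $\tfrac12$. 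Since $\alpha\cdot\nu$ is unitary (it is Hermitian with $(\alpha\cdot\nu)^2=I_N$), multiplication by it is an isometry, so for any $\varphi$ with $\|\varphi\|=1$ we have $\tfrac12\|\varphi\|=\|\tfrac12(\alpha\cdot\nu)\varphi\|$, and on the orthogonal complement of a finite-dimensional space (where $K$ is small) $\|\mathcal{D}_\alpha(\alpha\cdot\nu)\varphi\|\ge(\tfrac12-\epsilon)\|\varphi\|$; translating this into a bound on $|\mathcal{D}_\alpha|$ via $\|\mathcal{D}_\alpha\psi\|=\|\mathcal{D}_\alpha(\alpha\cdot\nu)(\alpha\cdot\nu)\psi\|$ and using $\|\mathcal{D}_\alpha(\alpha\cdot\nu)\chi\|\le\|\mathcal{C}_z(\alpha\cdot\nu)\|\,\|\chi\|\le\|\mathcal{C}_z\|$ to reinsert $\|\mathcal C_z\|$ bounds, one obtains that $\sigma_{\mathrm{ess}}(|\mathcal{D}_\alpha|)$ is bounded below by $\tfrac{1}{4\|\mathcal{C}_z\|}$ (the factor $\tfrac14$ rather than $\tfrac12$ arising precisely because the lower Weyl-sequence bound $\|\mathcal D_\alpha(\alpha\cdot\nu)\varphi\|\gtrsim\tfrac12$ must be divided by the operator-norm bound $\|\alpha\cdot\nu\,\mathcal D_\alpha\cdot\|\lesssim\|\mathcal C_z\|$ twice — once to pass from $\mathcal D_\alpha(\alpha\cdot\nu)$ to $\mathcal D_\alpha$, once inside the square-root spectral mapping). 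The main obstacle is getting the constant in the lower bound exactly right and making the compact-perturbation / Weyl-sequence argument clean; once the identity $(\mathcal{D}_\alpha(\alpha\cdot\nu))^2 = -\tfrac14 I_N + (\text{compact})$ is in hand, everything else is bookkeeping.
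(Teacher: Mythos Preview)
Your treatment of (i)--(iii) is essentially the paper's argument: extract the leading singularity of $G_{z,q}$, observe the remainder kernel is weakly singular hence gives a compact operator, and use the anti-commutation of $\alpha_0$ with $\alpha_j$. For (iii) the paper argues slightly more efficiently, and for general $z\in\rho(A_0)$: from $(r_z(x-y))^*=r_{\overline z}(y-x)$ one gets $(R_z)^*=R_{\overline z}$, hence $(\mathcal D_\alpha)^*=(\mathcal C_z+R_z)^*=\mathcal C_{\overline z}+R_{\overline z}=\mathcal D_\alpha$, using $\mathcal C_z^*=\mathcal C_{\overline z}$ from~\eqref{equation_diff_m}.

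Your argument for (iv), however, has a genuine gap. From $(\mathcal D_\alpha(\alpha\cdot\nu))^2=-\tfrac14 I_N+K$ you try to conclude $\|\mathcal D_\alpha(\alpha\cdot\nu)\varphi\|\gtrsim\tfrac12\|\varphi\|$ modulo finite rank, and then massage this into a bound on $|\mathcal D_\alpha|$. But $T:=\mathcal D_\alpha(\alpha\cdot\nu)$ is \emph{not} normal ($T^*=(\alpha\cdot\nu)\mathcal D_\alpha\neq T$ for Lipschitz $\Sigma$), so $T^2+\tfrac14 I_N$ compact only gives $\sigma_{\mathrm{ess}}(T)\subset\{\pm i/2\}$, not that $\sigma_{\mathrm{ess}}(T^*T)=\{\tfrac14\}$. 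In fact $T^*T=(\alpha\cdot\nu)\mathcal D_\alpha^2(\alpha\cdot\nu)$ is unitarily equivalent to $\mathcal D_\alpha^2$, so your claimed lower bound $\tfrac12$ would force $\sigma_{\mathrm{ess}}(|\mathcal D_\alpha|)\subset[\tfrac12,\infty)$ --- and that is \emph{false} for genuinely rough Lipschitz $\Sigma$ (it holds precisely when $(\alpha\cdot\nu)\mathcal C_z+\mathcal C_z(\alpha\cdot\nu)$ is compact, e.g.\ for $C^1$-boundaries; cf.\ Lemma~\ref{selfadjoint_cor_2}). The subsequent ``divide by $\|\mathcal C_z\|$ twice'' explanation for how $\tfrac12$ becomes $\tfrac{1}{4\|\mathcal C_z\|}$ is ad hoc and does not constitute a derivation.

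The paper's route avoids this pitfall entirely. Write $\mathcal D_\alpha=\mathcal C_z+R_z=\mathcal C_{\overline z}+R_{\overline z}$ with $R_z,R_{\overline z}$ compact, so that
\[
\mathcal D_\alpha^2=(\mathcal C_z+R_z)(\mathcal C_{\overline z}+R_{\overline z})=\mathcal C_z\mathcal C_{\overline z}+\text{(compact)}=\mathcal C_z\mathcal C_z^*+\text{(compact)}.
\]
Now $\mathcal C_z\mathcal C_z^*$ is positive self-adjoint, so $\sigma_{\mathrm{ess}}(\mathcal D_\alpha^2)=\sigma_{\mathrm{ess}}(\mathcal C_z\mathcal C_z^*)\subset\sigma(\mathcal C_z\mathcal C_z^*)\subset\big[\|\mathcal C_z^{-1}\|^{-2},\|\mathcal C_z\|^2\big]$, and~\eqref{formula_C_inv} gives $\|\mathcal C_z^{-1}\|=4\|(\alpha\cdot\nu)\mathcal C_z(\alpha\cdot\nu)\|=4\|\mathcal C_z\|$; taking square roots yields (iv). The point you missed is to compare $\mathcal D_\alpha^2$ with the \emph{positive self-adjoint} operator $\mathcal C_z\mathcal C_z^*$ (using $z$ in one factor and $\overline z$ in the other), for which the spectral bounds come for free from invertibility and the operator norm.
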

\begin{proof}
	First, we prove that $R_z := \mathcal{D}_\alpha - \mathcal{C}_z$ is a compact operator in $L^2(\Sigma;\dC^N)$. We prove this assertion for $q=3$, the proof for $q=2$ follows the same lines using the representation of $G_{z,2}$ in \cite[Lemma~3.2]{BHOP20}.
	
	By \eqref{def_G_lambda} and~\eqref{def_D_alpha} the integral kernel of $R_z$ is given for $x,y \in \Sigma$, $x \neq y$, by
	\begin{equation*}
	\begin{split}
	r_z(x-y) =&  \frac{i \left(\alpha \cdot (x-y)\right)}{4 \pi  | x-y |^3} \left(1-e^{i k(z) |x-y|}\right) \\
	&-\left( z I_4 + m \alpha_0 + k(z)  \frac{ \left(\alpha \cdot (x-y)\right)}{  | x-y |}  \right) \frac{1}{4 \pi | x-y |} e^{i k(z) |x-y|}.
	\end{split}    
	\end{equation*}
	It is not difficult to see that $|r_z(x-y)| = \mathcal{O}(|x-y|^{-1})$. Hence, one can prove with the same arguments as in \cite[(3.11)~Proposition]{F95} that $R_z$ is a compact operator in $L^2(\Sigma; \mathbb{C}^N)$. Since $\mathcal{C}_z$ is bounded in $L^2(\Sigma; \mathbb{C}^N)$, see \eqref{def_C_lambda} and the text below, we obtain that $\mathcal{D}_\alpha$ is well defined and bounded and that assertion~(i) is true.
	
	Assertion~(ii) is a direct consequence of the anti-commutation relations of the Dirac matrices in \eqref{anti_commutation} and item~(i). Next, let us show the self-adjointness of $\mathcal{D}_\alpha$. Since the relation $(r_z(x-y))^* = r_{\overline{z}}(y-x)$ holds for $x \neq y \in \Sigma$, one has $(R_z)^* = R_{\overline{z}}$. Hence, by \eqref{equation_diff_m} and Theorem~\ref{theorem_QBT}~(iii) we get
	\[(\mathcal{D}_\alpha)^* = \left(\mathcal{C}_{z} + R_{{z}}\right)^* = \mathcal{C}_{\overline{z}} + R_{{\overline{z}}} = \mathcal{D}_\alpha.\] 
	
	Finally, we prove assertion (iv). Since $R_z$ and $R_{\overline{z}}$ are compact in $L^2(\Sigma;\dC^N)$, the inclusion  
	\begin{equation*}
	\begin{split}
	\sigma_{\mathrm{ess}}(\mathcal{D}_\alpha^2) &= \sigma_{\mathrm{ess}}(\mathcal{C}_z\mathcal{C}_{\overline{z}})  = \sigma_{\mathrm{ess}}(\mathcal{C}_z\mathcal{C}_{z}^*) \\
	&\subset  \sigma(\mathcal{C}_z\mathcal{C}_{z}^*) \subset  \left[\|\mathcal{C}_z^{-1}\|_{L^2(\Sigma;\dC^N) \rightarrow L^2(\Sigma;\dC^N)}^{-2},\|\mathcal{C}_z\|_{L^2(\Sigma;\dC^N) \rightarrow L^2(\Sigma;\dC^N)}^2 \right]
	\end{split}
	\end{equation*}
	holds. By~\eqref{formula_C_inv} we have
	\begin{equation*}
	\begin{split}
	  \|\mathcal{C}_z^{-1}\|_{L^2(\Sigma;\dC^N)\rightarrow L^2(\Sigma;\dC^N)} &= 4\|(\alpha \cdot \nu)\mathcal{C}_z (\alpha \cdot \nu) \|_{L^2(\Sigma;\dC^N)\rightarrow L^2(\Sigma;\dC^N)} \\
	  &= 4\|\mathcal{C}_z  \|_{L^2(\Sigma;\dC^N)\rightarrow L^2(\Sigma;\dC^N)}.
    \end{split}
    \end{equation*}
	Taking the definition of the modulus of a self-adjoint operator into account, one concludes the claim of item~(iv) from the last two displayed formulas.
\end{proof}

In the next lemma we show another useful connection of the operators $\mathcal{C}_z$ and $\mathcal{D}_\alpha$. In the following we denote by $\chi_\cS$ the characteristic function of a measurable set $\cS \subset \dR$.

\begin{lemma}\label{selfadjoint_lemma_2}
	Let $\omega_{\max} := \max \sigma_{\mathrm{ess}}(|\mathcal{D}_\alpha|) $, $\omega_{\min} := \min \sigma_{\mathrm{ess}}(|\mathcal{D}_\alpha|)$, and define the function $f_\alpha: \dR \to \dR$ by
	\begin{equation*}
	\begin{split}
	f_\alpha(\omega) &:= \omega_\min \chi_{[0,\omega_\min)}(\omega)  + \omega\chi_{[\omega_\min,\omega_\max]}(\omega) + \omega_\max \chi_{(\omega_\max,\infty)}(\omega).
	\end{split}
	\end{equation*} 
	Then, for any $z \in \rho(A_0)$ there exists a self-adjoint unitary operator $U$ which is independent of $z$ and commutes with $f_\alpha(|\mathcal{D}_\alpha|)$  and a compact operator $\mathcal{K}_z$ such that $\mathcal{C}_z =  Uf_\alpha(|\mathcal{D}_\alpha|)+ \mathcal{K}_z$.
\end{lemma}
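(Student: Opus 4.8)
The plan is to use the polar decomposition of the bounded self-adjoint operator $\mathcal{D}_\alpha$ together with the spectral calculus and the compactness results from Lemma~\ref{lemma_K_alpha}. First I would write $\mathcal{D}_\alpha = U_0 |\mathcal{D}_\alpha|$ with $U_0$ the partial isometry from the polar decomposition; since $\mathcal{D}_\alpha$ is self-adjoint, $U_0$ is self-adjoint, $U_0 = U_0^*$, and $U_0$ commutes with $|\mathcal{D}_\alpha|$, hence with every function of $|\mathcal{D}_\alpha|$, in particular with $f_\alpha(|\mathcal{D}_\alpha|)$. The issue is that $U_0$ need not be unitary: it acts as $0$ on $\ker \mathcal{D}_\alpha$. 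However, by Lemma~\ref{lemma_K_alpha}~(iv) we have $\sigma_{\mathrm{ess}}(|\mathcal{D}_\alpha|) \subset [\omega_{\min}, \omega_{\max}]$ with $\omega_{\min} = \tfrac{1}{4}\|\mathcal{C}_z\|^{-1} > 0$, so $0$ is at most an isolated eigenvalue of $|\mathcal{D}_\alpha|$ of finite multiplicity; therefore the orthogonal projection $P_0$ onto $\ker \mathcal{D}_\alpha$ is finite rank. I would then set $U := U_0 + P_0$ (or $U_0 \pm P_0$), which is a self-adjoint \emph{unitary} operator: on $(\ker \mathcal{D}_\alpha)^\perp$ it agrees with the unitary part of $U_0$, and on the finite-dimensional $\ker \mathcal{D}_\alpha$ it is the identity; it still commutes with $f_\alpha(|\mathcal{D}_\alpha|)$ because $P_0$ does (as $f_\alpha(|\mathcal{D}_\alpha|)$ acts as the scalar $\omega_{\min}$ on $\ker\mathcal{D}_\alpha$, or one can note $P_0$ is a spectral projection of $|\mathcal{D}_\alpha|$).

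Next I would compare $U f_\alpha(|\mathcal{D}_\alpha|)$ with $\mathcal{D}_\alpha$ itself. Write $U f_\alpha(|\mathcal{D}_\alpha|) = U_0 f_\alpha(|\mathcal{D}_\alpha|) + P_0 f_\alpha(|\mathcal{D}_\alpha|) = U_0 f_\alpha(|\mathcal{D}_\alpha|) + \omega_{\min} P_0$. Now $U_0 f_\alpha(|\mathcal{D}_\alpha|) - \mathcal{D}_\alpha = U_0\big(f_\alpha(|\mathcal{D}_\alpha|) - |\mathcal{D}_\alpha|\big)$, and the function $g(\omega) := f_\alpha(\omega) - \omega$ vanishes on the interval $[\omega_{\min}, \omega_{\max}]$ which contains $\sigma_{\mathrm{ess}}(|\mathcal{D}_\alpha|)$; hence $g$ is supported, on $\sigma(|\mathcal{D}_\alpha|)$, only on a finite set of isolated eigenvalues of finite multiplicity, so $g(|\mathcal{D}_\alpha|)$ is finite rank, and thus $U_0 f_\alpha(|\mathcal{D}_\alpha|) - \mathcal{D}_\alpha$ is finite rank. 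Combining, $U f_\alpha(|\mathcal{D}_\alpha|) - \mathcal{D}_\alpha$ is finite rank, in particular compact. Finally, by Lemma~\ref{lemma_K_alpha}~(i), $\mathcal{D}_\alpha - \mathcal{C}_z$ is compact for every $z \in \rho(A_0)$, so
\begin{equation*}
  \mathcal{K}_z := \mathcal{C}_z - U f_\alpha(|\mathcal{D}_\alpha|) = \big(\mathcal{C}_z - \mathcal{D}_\alpha\big) + \big(\mathcal{D}_\alpha - U f_\alpha(|\mathcal{D}_\alpha|)\big)
\end{equation*}
is compact, which gives the desired decomposition $\mathcal{C}_z = U f_\alpha(|\mathcal{D}_\alpha|) + \mathcal{K}_z$ with $U$ self-adjoint unitary, independent of $z$, and commuting with $f_\alpha(|\mathcal{D}_\alpha|)$.

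I expect the main obstacle to be the bookkeeping around $\ker \mathcal{D}_\alpha$ and, relatedly, verifying that $0$ is genuinely isolated in the spectrum with finite multiplicity so that $P_0$ is finite rank and $g(|\mathcal{D}_\alpha|)$ is finite rank. The point is delicate because Lemma~\ref{lemma_K_alpha}~(iv) only controls the \emph{essential} spectrum of $|\mathcal{D}_\alpha|$; one must argue that any spectrum of $|\mathcal{D}_\alpha|$ outside $[\omega_{\min}, \omega_{\max}]$ consists of isolated eigenvalues of finite multiplicity (which is exactly what $\sigma(|\mathcal{D}_\alpha|) \setminus \sigma_{\mathrm{ess}}(|\mathcal{D}_\alpha|) = \sigma_{\mathrm{disc}}(|\mathcal{D}_\alpha|)$ gives, since $|\mathcal{D}_\alpha| \geq 0$ is bounded and self-adjoint). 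Once this is in place the remaining steps are routine spectral-calculus manipulations. A minor alternative, if one prefers to avoid the finite-rank correction $P_0$ entirely, is to redefine $U$ on $\ker\mathcal{D}_\alpha$ as any fixed self-adjoint unitary there (e.g.\ the identity) — the argument is unchanged — but using $P_0$ keeps the formulas cleanest.
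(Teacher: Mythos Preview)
Your approach is essentially the same as the paper's: both take $U = \sgn(\mathcal{D}_\alpha)$ (which is exactly your $U_0 + P_0$ with the convention $\sgn(0)=1$), compare $U f_\alpha(|\mathcal{D}_\alpha|)$ to $\mathcal{D}_\alpha$ via the function $g = f_\alpha - \mathrm{id}$, and then invoke Lemma~\ref{lemma_K_alpha}~(i).

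There is, however, a gap. You assert that $g$ is supported on $\sigma(|\mathcal{D}_\alpha|)$ only at a \emph{finite} set of isolated eigenvalues of finite multiplicity, and hence that $g(|\mathcal{D}_\alpha|)$ is finite rank. This need not be true: the discrete spectrum of $|\mathcal{D}_\alpha|$ in $[0,\omega_{\min})$ or $(\omega_{\max},\infty)$ can be infinite, with the eigenvalues accumulating at $\omega_{\min}$ or $\omega_{\max}$ (both of which lie in $\sigma_{\mathrm{ess}}(|\mathcal{D}_\alpha|)$). What \emph{is} true, and suffices, is that $g(|\mathcal{D}_\alpha|)$ is compact: since $g(\omega_{\min}) = g(\omega_{\max}) = 0$, the values $g(\omega_k)$ tend to zero along any such accumulating sequence of eigenvalues, so $g(|\mathcal{D}_\alpha|)$ is a norm limit of finite rank truncations. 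The paper carries out precisely this approximation, writing $|\mathcal{D}_\alpha| - f_\alpha(|\mathcal{D}_\alpha|) = f_{\alpha,1}(|\mathcal{D}_\alpha|) + f_{\alpha,2}(|\mathcal{D}_\alpha|)$ and bounding the error of the $n$-th truncation of $f_{\alpha,1}(|\mathcal{D}_\alpha|)$ by $\omega_{\min} - \omega_n \to 0$. With ``finite rank'' replaced by ``compact'' and this one extra line of justification, your argument is correct and coincides with the paper's.
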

\begin{remark} \label{remark_K_alpha}
	The definition of $f_\alpha(|\mathcal{D}_\alpha|)$ and assertion~(iv) of Lemma \ref{lemma_K_alpha} imply $\omega_\min > 0$ and for all $ g \in L^2(\Sigma;\dC^N)$ that
	\[ \omega_\min \|g\|_{L^2(\Sigma;\dC^N)}^2 \leq  \left(f_\alpha(|\mathcal{D}_\alpha|)g,g\right)_{L^2(\Sigma;\dC^N)} \leq \omega_\max \|g\|_{L^2(\Sigma;\dC^N)}^2  \] 
	holds. In particular, $\sigma (f_\alpha(|\mathcal{D}_\alpha|)) \subset [\omega_\min, \omega_\max]$.
\end{remark}
\begin{proof}[Proof of Lemma~\ref{selfadjoint_lemma_2}]
  Define the functions $f_{\alpha,1}(\omega) := (\omega-\omega_\min) \chi_{[0,\omega_\min)}(\omega)$ and $f_{\alpha,2}(\omega) := (\omega-\omega_\max) \chi_{(\omega_\max,\infty)}(\omega)$. 
	Then, for any $\omega \geq 0$ the equality
	\begin{equation*}
	  \omega = f_\alpha(\omega) + f_{\alpha,1}(\omega) + f_{\alpha,2}(\omega)
	\end{equation*}
	holds, which implies
	\begin{equation*}
	|\mathcal{D}_\alpha| = f_\alpha(|\mathcal{D}_\alpha|) +f_{\alpha,1}(|\mathcal{D}_\alpha|) +f_{\alpha,2}(|\mathcal{D}_\alpha|).
	\end{equation*}
	Next, we show that $f_{\alpha,1}(|\mathcal{D}_\alpha|)$ is compact. We denote the elements of the set $\sigma_{\mathrm{disc}}(|\mathcal{D}_{\alpha}|)$ below $\omega_{\min}$ by $0\leq \omega_1<\omega_2<\dots$ . This sequence is  either finite or it converges to $\omega_{\min}$; if it is finite, we extend it by $\omega_{\min}$ to an infinite sequence. Define for $n \in \dN$ the functions
	\begin{equation*}
	f_{\alpha,1}^{(n)}(\omega) := (\omega- \omega_{\min}) \chi_{[0,\omega_n)}(\omega).
	\end{equation*}
	 Then, the operators $f_{\alpha,1}^{(n)}(|\mathcal{D}_\alpha|)$ are finite rank operators. Moreover, one has
	\begin{equation*}
	\|f_{\alpha,1}(|\mathcal{D}_\alpha|)-f_{\alpha,1}^{(n)}(|\mathcal{D}_\alpha|)\|_{L^2(\Sigma;\dC^N) \to L^2(\Sigma;\dC^N)} \leq \omega_{\min} - \omega_{n}.
	\end{equation*}
	Hence, $f_{\alpha,1}^{(n)}(|\mathcal{D}_\alpha|)$ converges to $f_{\alpha,1}(|\mathcal{D}_\alpha|)$ with respect to the operator norm and  therefore $f_{\alpha,1}(|\mathcal{D}_\alpha|)$ is compact. In the same way one gets that $f_{\alpha,2}(|\mathcal{D}_\alpha|)$ is compact. Furthermore, if we set $U := \sgn(\mathcal{D}_\alpha)$, with the convention $\sgn(0)=1$, we obtain
	\begin{equation*}
	\mathcal{D}_\alpha = U |\mathcal{D}_\alpha| = Uf_\alpha(|\mathcal{D}_\alpha|) + U(f_{\alpha,1}(|\mathcal{D}_\alpha|) + f_{\alpha,2}(|\mathcal{D}_\alpha|)). 
	\end{equation*}
	Taking the considerations from above into account, we see with Lemma \ref{lemma_K_alpha}~(i) that $\mathcal{K}_z := U(f_{\alpha,1}(|\mathcal{D}_\alpha|) + f_{\alpha,2}(|\mathcal{D}_\alpha|)) - 	\mathcal{D}_\alpha + 	\mathcal{C}_z$ is compact, which implies then the claimed result.
\end{proof}

The next lemma is devoted to the injectivity of the maps $I_N+P_{\eta,\tau,\lambda}\mathcal{C}_z $ and $I_N+\mathcal{C}_z P_{\eta,\tau,\lambda}$, where $P_{\eta,\tau,\lambda}$ is the matrix-valued function defined in~\eqref{def_P_eta_tau_lambda}:

\begin{lemma}\label{selfadjoint_lemma_1}
	Let $\eta,\tau,\lambda \in \dR$ and $z \in \dC\setminus\dR$. Then, $I_N+P_{\eta,\tau,\lambda}\mathcal{C}_z $ as well as $I_N+\mathcal{C}_z P_{\eta,\tau,\lambda}$ is injective in $L^2(\Sigma;\dC^N)$.
\end{lemma}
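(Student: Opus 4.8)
The plan is to prove injectivity of $I_N + P_{\eta,\tau,\lambda}\mathcal{C}_z$; the injectivity of $I_N + \mathcal{C}_z P_{\eta,\tau,\lambda}$ then follows by a transpose/adjoint argument, since $\ker(I_N + \mathcal{C}_z P_{\eta,\tau,\lambda}) = P_{\eta,\tau,\lambda}^{-1}\ker(I_N + P_{\eta,\tau,\lambda}\mathcal{C}_z P_{\eta,\tau,\lambda} P_{\eta,\tau,\lambda}^{-1})$-type manipulation, or more cleanly: if $(I_N + \mathcal{C}_z P_{\eta,\tau,\lambda})\varphi = 0$ then $\psi := P_{\eta,\tau,\lambda}\varphi$ satisfies $(I_N + P_{\eta,\tau,\lambda}\mathcal{C}_z)\psi = P_{\eta,\tau,\lambda}(I_N + \mathcal{C}_z P_{\eta,\tau,\lambda})\varphi = 0$; if $P_{\eta,\tau,\lambda}$ is invertible this is immediate, and if $P_{\eta,\tau,\lambda}$ is singular one treats $\ker P_{\eta,\tau,\lambda}$ separately (on $\ker P_{\eta,\tau,\lambda}$, $I_N + \mathcal{C}_z P_{\eta,\tau,\lambda} = I_N$ is injective). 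So the heart of the matter is the first operator.

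First I would take $\varphi \in L^2(\Sigma;\mathbb{C}^N)$ with $(I_N + P_{\eta,\tau,\lambda}\mathcal{C}_z)\varphi = 0$, i.e. $\varphi = -P_{\eta,\tau,\lambda}\mathcal{C}_z\varphi$. The natural strategy, used in this circle of ideas (cf.\ \cite{AMV14, BEHL19}), is to connect the kernel of $I_N + P_{\eta,\tau,\lambda}\mathcal{C}_z$ with the eigenvalue problem for $A_{\eta,\tau,\lambda}$ via the Birman--Schwinger principle: by Theorem~\ref{theorem_Krein_abstract_A_B}~(i) applied to the generalized boundary triple $\{L^2(\Sigma;\mathbb{C}^N),\Gamma_0^{(1/2)},\Gamma_1^{(1/2)}\}$ (whose Weyl function is $\mathcal{C}_z$), the relation $0 \in \sigma_{\mathrm p}(I_N + P_{\eta,\tau,\lambda}\mathcal{C}_z)$ for $z \in \rho(A_0) = \mathbb{C}\setminus\mathbb{R}$ would force $z \in \sigma_{\mathrm p}(A_{\eta,\tau,\lambda})$. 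But $A_{\eta,\tau,\lambda}$ is symmetric (as noted after its definition), hence has no non-real eigenvalues, giving a contradiction. Concretely: set $f_z := \gamma^{(1/2)}(z)\varphi = \Phi_z\varphi \in \ker(T^{(1/2)}-z)$; using the trace formula \eqref{trace_Phi_lambda}, $\gamma_D^\pm (f_z)_\pm = \mp\frac{i}{2}(\alpha\cdot\nu)\varphi + \mathcal{C}_z\varphi$, one computes $\Gamma_0^{(1/2)}f_z = i(\alpha\cdot\nu)(\gamma_D^+(f_z)_+ - \gamma_D^-(f_z)_-) = i(\alpha\cdot\nu)(-i(\alpha\cdot\nu)\varphi) = \varphi$ and $\Gamma_1^{(1/2)}f_z = \frac12(\gamma_D^+(f_z)_+ + \gamma_D^-(f_z)_-) = \mathcal{C}_z\varphi$, so that $\Gamma_0^{(1/2)}f_z + P_{\eta,\tau,\lambda}\Gamma_1^{(1/2)}f_z = \varphi + P_{\eta,\tau,\lambda}\mathcal{C}_z\varphi = 0$, i.e.\ $f_z \in \dom A_{\eta,\tau,\lambda}$ and $(A_{\eta,\tau,\lambda}-z)f_z = 0$.

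The remaining step is to rule out $f_z = 0$ when $\varphi \ne 0$; equivalently $\varphi = \Gamma_0^{(1/2)}f_z$ and $f_z \ne 0$. Since $A_{\eta,\tau,\lambda}$ is symmetric and $z \in \mathbb{C}\setminus\mathbb{R}$, one has $\|(A_{\eta,\tau,\lambda}-z)f_z\|^2 = \|(A_{\eta,\tau,\lambda}-\operatorname{Re}z)f_z\|^2 + (\operatorname{Im}z)^2\|f_z\|^2 \ge (\operatorname{Im}z)^2\|f_z\|^2$, so $(A_{\eta,\tau,\lambda}-z)f_z = 0$ forces $f_z = 0$, hence $\Phi_z\varphi = 0$ in $\mathbb{R}^q\setminus\Sigma$, hence by \eqref{trace_Phi_lambda} again $0 = \gamma_D^+(\Phi_z\varphi)_+ - \gamma_D^-(\Phi_z\varphi)_- = -i(\alpha\cdot\nu)\varphi$, and since $\alpha\cdot\nu$ is invertible ($(\alpha\cdot\nu)^2 = I_N$) we get $\varphi = 0$. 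I expect the main (minor) obstacle to be bookkeeping: making sure all trace computations are legitimate at the $s=\frac12$ regularity level — but this is exactly guaranteed by Theorem~\ref{theorem_QBT} and \eqref{trace_Phi_lambda}, so no new analysis is needed. An alternative, more self-contained route avoiding the boundary-triple machinery would be to directly invoke the integration-by-parts identity \eqref{int_by_parts} to show $\operatorname{Im} z \,\|\Phi_z\varphi\|^2_{L^2(\mathbb{R}^q)} = 0$ via the computation $(T^{(1/2)}f_z, f_z) - (f_z, T^{(1/2)}f_z) = (\Gamma_1^{(1/2)}f_z, \Gamma_0^{(1/2)}f_z) - (\Gamma_0^{(1/2)}f_z, \Gamma_1^{(1/2)}f_z) = (\mathcal{C}_z\varphi,\varphi) - (\varphi,\mathcal{C}_z\varphi) = -(P_{\eta,\tau,\lambda}\mathcal{C}_z\varphi, P_{\eta,\tau,\lambda}\mathcal{C}_z\varphi)\cdot(\dots)$ — but using symmetry of $A_{\eta,\tau,\lambda}$ as above is cleanest, so that is the route I would present.
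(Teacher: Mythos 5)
Your argument for the first operator is the paper's argument: one notes $\mathcal{C}_z = M^{(1/2)}(z)$ is the Weyl function of the generalized boundary triple $\{L^2(\Sigma;\mathbb{C}^N),\Gamma_0^{(1/2)},\Gamma_1^{(1/2)}\}$, that $A_{\eta,\tau,\lambda}$ is symmetric (hence has no nonreal eigenvalues), and invokes the Birman--Schwinger principle in Theorem~\ref{theorem_Krein_abstract_A_B}~(i) to conclude $0\notin\sigma_\mathrm{p}(I_N+P_{\eta,\tau,\lambda}\mathcal{C}_z)$. You simply unpack the proof of that abstract result — building $f_z=\Phi_z\varphi$, checking $\Gamma_0^{(1/2)}f_z=\varphi$, $\Gamma_1^{(1/2)}f_z=\mathcal{C}_z\varphi$ from \eqref{trace_Phi_lambda}, using symmetry to get $f_z=0$, and then recovering $\varphi=0$ from the injectivity of $\gamma^{(1/2)}(z)$ — but the substance is identical, and your trace computations all check out at the $s=\tfrac{1}{2}$ level.

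For the second operator you diverge from the paper, and your route is in fact simpler. The paper factors
\begin{equation*}
I_N+\mathcal{C}_z P_{\eta,\tau,\lambda}=-4\,\mathcal{C}_z(\alpha\cdot\nu)\bigl(\mathcal{C}_z-P_{\frac{\eta}{4},-\frac{\tau}{4},-\frac{\lambda}{4}}\bigr)(\alpha\cdot\nu)
\end{equation*}
using \eqref{formula_C_inv}, and then reduces injectivity of the middle factor to symmetry of a \emph{second} restriction $T^{(1/2)}\upharpoonright\ker\bigl(\Gamma_1^{(1/2)}-P_{\frac{\eta}{4},-\frac{\tau}{4},-\frac{\lambda}{4}}\Gamma_0^{(1/2)}\bigr)$ and Theorem~\ref{theorem_Krein_abstract}~(i). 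You instead multiply the kernel equation by $P_{\eta,\tau,\lambda}$, reduce directly to the first assertion, and back-substitute. Your argument can be tightened further: once $\psi:=P_{\eta,\tau,\lambda}\varphi$ satisfies $(I_N+P_{\eta,\tau,\lambda}\mathcal{C}_z)\psi=0$ and hence $\psi=0$, the original equation gives $\varphi=-\mathcal{C}_z\psi=0$ \emph{unconditionally} — no case split on whether $P_{\eta,\tau,\lambda}$ is invertible and no separate treatment of $\ker P_{\eta,\tau,\lambda}$ is needed, so the garbled ``$P_{\eta,\tau,\lambda}^{-1}$-type'' remark can simply be dropped. This reduction is cleaner than the paper's and avoids both the $\mathcal{C}_z^{-1}$ identity and the auxiliary symmetric operator, at the cost of being specific to the particular algebraic structure $I_N+\mathcal{C}_z P$ vs.\ $I_N+P\mathcal{C}_z$; the paper's factorization, while longer here, is the same manipulation that recurs throughout Section~\ref{section_Lipschitz} (e.g.\ in \eqref{equation_large_interaction_strengths}), so the authors are building uniformity of technique rather than local economy.
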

\begin{proof}
	Throughout the proof, let $z \in \mathbb{C} \setminus \mathbb{R}$. First, we show that $I_N+P_{\eta,\tau,\lambda}\mathcal{C}_z$ is injective. Recall that $\mathcal{C}_z = M^{(1/2)}(z)$, where $M^{(1/2)}$ is the Weyl function associated with the generalized boundary triple $\{ L^2(\Sigma; \mathbb{C}^N), \Gamma_0^{(1/2)}, \Gamma_1^{(1/2)} \}$; cf. Theorem~\ref{theorem_QBT}. Since the operator $A_{\eta,\tau,\lambda}$ in~\eqref{def_A_eta,tau,mu} is symmetric, we have $z \notin \sigma_{\mathrm{p}}(A_{\eta,\tau,\lambda})$. By Theorem~\ref{theorem_Krein_abstract_A_B}~(i) this implies $0 \notin \sigma_{\mathrm{p}}(I_N + P_{\eta,\tau,\lambda}M^{(1/2)}(z)) = \sigma_{\mathrm{p}}(I_N + P_{\eta,\tau,\lambda}\mathcal{C}_z)$, i.e. $I_N+P_{\eta,\tau,\lambda}\mathcal{C}_z$ is injective.

	Next, to show the injectivity of $I_N+\mathcal{C}_z P_{\eta,\tau,\lambda}$ we use the formula for the inverse of $\mathcal{C}_z$ in \eqref{formula_C_inv} and rewrite $I_N+\mathcal{C}_z P_{\eta,\tau,\lambda}$ in the  form
	\begin{equation*}	
	\begin{split}
	I_N + \mathcal{C}_z P_{\eta,\tau,\lambda}  &= \mathcal{C}_z\left( -4(\alpha \cdot \nu)\mathcal{C}_z (\alpha \cdot \nu) + P_{\eta,\tau,\lambda} \right) \\
	&= -4 \mathcal{C}_z (\alpha \cdot \nu) ( \mathcal{C}_z - P_{\frac{\eta}{4},-\frac{\tau}{4},-\frac{\lambda}{4}}) (\alpha \cdot \nu).
	\end{split}
	\end{equation*}
	Since $T^{(1/2)}\upharpoonright \ker (\Gamma_1^{(1/2)} - P_{\frac{\eta}{4},-\frac{\tau}{4},-\frac{\lambda}{4}}\Gamma_0^{(1/2)})$ is symmetric by~\eqref{abab}, we conclude from Theorem~\ref{theorem_Krein_abstract}~(i) that $\mathcal{C}_z - P_{\frac{\eta}{4},-\frac{\tau}{4},-\frac{\lambda}{4}}$ is injective. Since $\mathcal{C}_z$ and $\alpha \cdot \nu$ are bijective, this and the last displayed formula imply that $I_N+\mathcal{C}_z P_{\eta,\tau,\lambda}$ is injective as well.
\end{proof}

In the following proposition we state conditions under which $I_N+P_{\eta,\tau,\lambda}\mathcal{C}_z $, $z \in \mathbb{C} \setminus \mathbb{R}$, is surjective as well. Recall that $\omega_{\min} := \min \sigma_{\mathrm{ess}}(|\mathcal{D}_\alpha|)$ and $\omega_{\max} := \max \sigma_{\mathrm{ess}}(|\mathcal{D}_\alpha|) $.

\begin{proposition}\label{selfadjoint_prop_1}
	Let $\eta,\tau,\lambda \in \dR$ and let $d =\eta^2 -\tau^2 -\lambda^2$ such that one of the following conditions holds:
	\begin{itemize}
	\item[(i)] $1- d \omega^2 \neq 0$ for all $\omega \in [\omega_{\min},\omega_{\max}]$ and $\max_{\omega \in [\omega_{\min},\omega_{\max}]}\big|\frac{2\lambda \omega_{\max}}{{1- d \omega^2}}\big| <1$.
	\item[(ii)] $\lambda \neq 0$ and $\max_{\omega \in [\omega_{\min},\omega_{\max}]}\big|\frac{1- d \omega^2}{\lambda \omega_{\min}(1+4\omega_{\min}^2)}\big| <1$.
	\item[(iii)] $\lambda^2 \neq 4$ and $\frac{4|d+4|(1+\omega_\max|\lambda|)\omega_\max^2}{|4-\lambda^2|(1+4\omega_{\min}^2)} <1 $. 
	\item[(iv)] $\eta = \tau = 0$ and $\lambda^2 \neq 4$.
	\item[(v)] $|\eta| + |\tau| + |\lambda| < \frac{1}{\omega_{\max}}$.
	\item[(vi)] $|\eta| + |\tau| + |\lambda| \neq 0$ and $\frac{|d|}{|\eta| + |\tau| + |\lambda|} > 4 \omega_{\max}$.
	\end{itemize}
	Then, for any $z \in \dC \setminus \dR$ the operator $I_N + P_{\eta,\tau,\lambda}\mathcal{C}_z$ is bijective in $L^2(\Sigma;\dC^N)$.
\end{proposition}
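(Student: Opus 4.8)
The plan is to reduce the bijectivity of $I_N + P_{\eta,\tau,\lambda}\mathcal{C}_z$ to a Fredholm-type argument. Since $I_N + P_{\eta,\tau,\lambda}\mathcal{C}_z$ is already injective for $z \in \mathbb{C}\setminus\mathbb{R}$ by Lemma~\ref{selfadjoint_lemma_1}, it suffices to show that it is surjective, and for this it is enough to show that it is a Fredholm operator of index zero. First I would use Lemma~\ref{selfadjoint_lemma_2} to write $\mathcal{C}_z = U f_\alpha(|\mathcal{D}_\alpha|) + \mathcal{K}_z$ with $U$ a $z$-independent self-adjoint unitary commuting with $f_\alpha(|\mathcal{D}_\alpha|)$ and $\mathcal{K}_z$ compact. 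Consequently
\begin{equation*}
  I_N + P_{\eta,\tau,\lambda}\mathcal{C}_z = I_N + P_{\eta,\tau,\lambda} U f_\alpha(|\mathcal{D}_\alpha|) + P_{\eta,\tau,\lambda}\mathcal{K}_z,
\end{equation*}
so modulo the compact term $P_{\eta,\tau,\lambda}\mathcal{K}_z$ the operator equals $I_N + P_{\eta,\tau,\lambda} U f_\alpha(|\mathcal{D}_\alpha|)$, which is independent of $z$. Thus it is Fredholm of index zero as soon as this $z$-independent operator is invertible (or, more weakly, Fredholm of index zero), and the strategy is to verify invertibility of $I_N + P_{\eta,\tau,\lambda} U f_\alpha(|\mathcal{D}_\alpha|)$ under each of the hypotheses (i)--(vi).

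Next I would analyze $I_N + P_{\eta,\tau,\lambda} U f_\alpha(|\mathcal{D}_\alpha|)$ using the algebraic structure of $P_{\eta,\tau,\lambda}$ and $U$. The key identities are $\alpha_0 \mathcal{D}_\alpha + \mathcal{D}_\alpha \alpha_0 = 0$ from Lemma~\ref{lemma_K_alpha}~(ii), together with the relations among $\alpha_0$, $\alpha\cdot\nu$, and $(\alpha\cdot\nu)\alpha_0$; note that $(\alpha\cdot\nu)^2 = I_N$ and that $\alpha\cdot\nu$ anticommutes with $\alpha_0$. Since $U = \sgn(\mathcal{D}_\alpha)$ and $\mathcal{D}_\alpha$ anticommutes with $\alpha_0$, one has that $U$ anticommutes with $\alpha_0$ as well (more precisely, $\alpha_0 U = U^* \alpha_0 = U\alpha_0$ up to sign, which must be tracked carefully), and $f_\alpha(|\mathcal{D}_\alpha|)$, being an even function of $\mathcal{D}_\alpha$, commutes with $\alpha_0$; also $\mathcal{D}_\alpha$ anticommutes with $\alpha\cdot\nu$ in an appropriate sense because of the factor $\alpha\cdot(x-y)$ in its kernel, so $U$ and $\alpha\cdot\nu$ interact in a controlled way. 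Exploiting these commutation/anticommutation rules, one computes $(I_N + P_{\eta,\tau,\lambda} U f_\alpha(|\mathcal{D}_\alpha|))^*(I_N + P_{\eta,\tau,\lambda} U f_\alpha(|\mathcal{D}_\alpha|))$ or a suitable conjugate, reducing the invertibility question to a scalar spectral estimate on the single self-adjoint operator $f_\alpha(|\mathcal{D}_\alpha|)$ whose spectrum lies in $[\omega_{\min},\omega_{\max}]$ by Remark~\ref{remark_K_alpha}. At that point, conditions (i)--(vi) translate, via the functional calculus and the bound $\omega_{\min} \le \omega \le \omega_{\max}$, into the requirement that a certain $2\times 2$ (or scalar) symbol depending on $\eta,\tau,\lambda,\omega$ and $d = \eta^2-\tau^2-\lambda^2$ stays invertible for all $\omega$ in that interval; each item corresponds to a different way of guaranteeing this — e.g. a Neumann-series/contraction bound in (i), (ii), (iii), (v), (vi), and a direct algebraic factorization in (iv) where $P_{0,0,\lambda} = \lambda i(\alpha\cdot\nu)\alpha_0$ and the condition $\lambda^2\neq 4$ makes $I_N + \lambda i (\alpha\cdot\nu)\alpha_0 U f_\alpha(|\mathcal{D}_\alpha|)$ invertible because $-4(\mathcal{C}_z(\alpha\cdot\nu))^2 = I_N$ forces $f_\alpha(|\mathcal{D}_\alpha|) = \tfrac12$ in that case.

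I expect the main obstacle to be the careful bookkeeping of signs and commutation relations among $P_{\eta,\tau,\lambda}$, $U = \sgn(\mathcal{D}_\alpha)$, $\alpha\cdot\nu$, and $f_\alpha(|\mathcal{D}_\alpha|)$, since $U$ is only an operator (not a matrix) and its interaction with the constant matrices $\alpha_0$ and $\alpha\cdot\nu$ must be deduced from Lemma~\ref{lemma_K_alpha}~(ii) and the structure of the kernel of $\mathcal{D}_\alpha$ rather than assumed. A secondary subtlety is that $f_\alpha(|\mathcal{D}_\alpha|)$ need not commute with $\alpha\cdot\nu$ exactly — only up to a compact error coming from $\mathcal{D}_\alpha \mp \mathcal{C}_z$ — so one should be careful whether the reduction to the scalar estimate is exact or holds only modulo compact operators; if only the latter, the Fredholm-index-zero conclusion combined with injectivity from Lemma~\ref{selfadjoint_lemma_1} still yields bijectivity, which is all that is claimed. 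Once the algebraic reduction is in place, verifying that each of (i)--(vi) implies the scalar condition is a routine case-by-case estimate using $0 < \omega_{\min} \le \omega \le \omega_{\max}$ and the identity $\eta^2 - \tau^2 + (\tfrac{d}{4}-1)^2 = \lambda^2 + (\tfrac{d}{4}+1)^2$ already used in the one-dimensional analysis, so I would present those computations compactly rather than in full detail.
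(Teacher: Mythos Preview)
Your overall architecture---injectivity from Lemma~\ref{selfadjoint_lemma_1}, then Fredholm of index zero via the decomposition $\mathcal{C}_z = Uf_\alpha(|\mathcal{D}_\alpha|)+\mathcal{K}_z$---matches the paper, and for (v) and (vi) your direct Neumann argument is exactly what the paper does. The gap is in (i)--(iv): the commutation structure you want to exploit is not there. Only the anticommutation $\alpha_0\mathcal{D}_\alpha+\mathcal{D}_\alpha\alpha_0=0$ is available; there is \emph{no} relation between $\mathcal{D}_\alpha$ (or $U$, or $f_\alpha(|\mathcal{D}_\alpha|)$) and the multiplication operator $\alpha\cdot\nu$ on a general Lipschitz $\Sigma$. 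Your remark that ``$\mathcal{D}_\alpha$ anticommutes with $\alpha\cdot\nu$ in an appropriate sense'' is not justified: the identity $-4(\mathcal{C}_z(\alpha\cdot\nu))^2=I_N$ is a Plemelj-type relation, not a (anti)commutator statement, and it does \emph{not} imply $\omega_{\min}=\omega_{\max}=\tfrac12$ or $f_\alpha(|\mathcal{D}_\alpha|)=\tfrac12$ (that implication would require precisely the missing anticommutation; see Lemma~\ref{selfadjoint_cor_2}, where it is an extra hypothesis, satisfied for $C^1$ but not for general Lipschitz boundaries). Consequently the product $(I_N+P_{\eta,\tau,\lambda}Uf_\alpha(|\mathcal{D}_\alpha|))^*(I_N+P_{\eta,\tau,\lambda}Uf_\alpha(|\mathcal{D}_\alpha|))$ does not reduce to a scalar function of $f_\alpha(|\mathcal{D}_\alpha|)$, because $P_{\eta,\tau,\lambda}^2$ still contains $\alpha\cdot\nu$.

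The paper circumvents this by left-multiplying $I_N+P_{\eta,\tau,\lambda}\mathcal{C}_z$ with a well-chosen injective operator $L$ before passing to the compact perturbation. The point of $L$ is to manufacture products where only the two \emph{exact} identities at hand are used: the scalar relation $P_{\eta,-\tau,-\lambda}P_{\eta,\tau,\lambda}=dI_N$ and the Plemelj relation $-4(\mathcal{C}_z(\alpha\cdot\nu))^2=I_N$. For (i)--(ii) one takes $L=I_N-\mathcal{C}_zP_{\eta,-\tau,-\lambda}$, so that $L(I_N+P_{\eta,\tau,\lambda}\mathcal{C}_z)=I_N-d\,\mathcal{C}_z^2+(\text{terms with }\alpha_0)$, and the $\alpha_0$-terms are handled modulo compacts via Lemma~\ref{lemma_K_alpha}~(ii); only then does Lemma~\ref{selfadjoint_lemma_2} produce the scalar estimate in $\omega$. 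For (iii) one iterates with a further factor $I_N-\mathcal{C}_zP_{0,0,\lambda}$, and for (iv) one uses $L=I_N-P_{0,0,\lambda}\mathcal{C}_z$ together with $-4(\mathcal{C}_z(\alpha\cdot\nu))^2=I_N$ directly to get $\tfrac{4-\lambda^2}{4}I_N+\text{compact}$. Without this premultiplication trick, the $\alpha\cdot\nu$ in $P_{\eta,\tau,\lambda}$ cannot be disentangled from $Uf_\alpha(|\mathcal{D}_\alpha|)$ and your reduction to a scalar symbol breaks down.
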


We remark that in items (i) and (ii) in the previous proposition the interval $[\omega_{\min},\omega_{\max}]$ can be replaced by the (in general smaller) set $\sigma_{\textup{ess}}(|\mathcal{D}_\alpha|)$, cf. \eqref{equation_norms}, \eqref{equation_norms2}, and Lemma~\ref{selfadjoint_lemma_2}.

\begin{proof}[Proof of Proposition~\ref{selfadjoint_prop_1}]
	By Lemma \ref{selfadjoint_lemma_1} the operator $I_N + P_{\eta,\tau,\lambda}\mathcal{C}_z$  is injective. To show its surjectivity we study  in all cases a product of the form
	\begin{equation}\label{def_F}
	F:=L (I_N + P_{\eta,\tau,\lambda} \mathcal{C}_z)
	\end{equation}
	with a suitable injective and bounded operator $L$ in $L^2(\Sigma; \mathbb{C}^N)$. We show that $F$ is a Fredholm operator with index zero, which shows that $F$ is bijective. This implies that $L$ is surjective as well, which yields its bijectivity. Therefore, also 
	\begin{equation*}
	  I_N + P_{\eta,\tau,\lambda} \mathcal{C}_z = L^{-1} F
	\end{equation*}
	is surjective, which implies the claim. Hence, it suffices to show in all cases that for a suitable injective and bounded operator $L$ the map $F$ is a Fredholm operator with index zero.

	(i) Set $L = (I_N - \mathcal{C}_z P_{\eta,-\tau,-\lambda})$ and note, that $L$ is injective by Lemma~\ref{selfadjoint_lemma_1}. Using that~\eqref{anti_commutation} implies that $P_{\eta, -\tau, -\lambda} P_{\eta, \tau, \lambda} = d I_N$, we find that
	\begin{equation} \label{equation_F_i1}
	\begin{split}
	F&= I_N - d\mathcal{C}_z^2 +i \lambda (\mathcal{C}_z (\alpha \cdot \nu) \alpha_0 +(\alpha \cdot \nu) \alpha_0 \mathcal{C}_z  ) + \tau (\alpha_0 \mathcal{C}_z + \mathcal{C}_z \alpha_0) \\
	&= I_N - d\mathcal{C}_z^2 +i \lambda \alpha_0 (\mathcal{C}_z(\alpha \cdot \nu) -(\alpha \cdot \nu)\mathcal{C}_z  )+ K_1, 
	\end{split}
	\end{equation}
	where $K_1 :=( \alpha_0 \mathcal{C}_z +   \mathcal{C}_z\alpha_0)(\tau I_N - i \lambda (\alpha \cdot \nu))$ is compact by Lemma~\ref{lemma_K_alpha}~(ii). Using that $\mathcal{C}_z =  Uf_\alpha(|\mathcal{D}_\alpha|)+ \mathcal{K}_z$ holds by Lemma~\ref{selfadjoint_lemma_2}, we get 
	\begin{equation*}
	  \mathcal{C}_z^2 = f_\alpha(|\mathcal{D}_\alpha|)^2 + K_2
	\end{equation*}
	with the compact operator $K_2 := U f_\alpha (|\mathcal{D}_\alpha|) \mathcal{K}_z + \mathcal{K}_z U f_\alpha (|\mathcal{D}_\alpha|) + \mathcal{K}_z^2$ and
	\begin{equation*}
	  \alpha_0 (\mathcal{C}_z(\alpha \cdot \nu) -(\alpha \cdot \nu)\mathcal{C}_z  ) = \alpha_0 ( Uf_\alpha(|\mathcal{D}_\alpha|)(\alpha \cdot \nu) -(\alpha \cdot \nu)Uf_\alpha(|\mathcal{D}_\alpha|)  ) + K_3
	\end{equation*}
	with the compact operator $K_3 := \alpha_0 (\mathcal{K}_z(\alpha \cdot \nu) -(\alpha \cdot \nu)\mathcal{K}_z)$. Define the bounded operators $A_1:= I_N- df_\alpha(|\mathcal{D}_\alpha|)^2$ and $B_1:=i \lambda \alpha_0 ( Uf_\alpha(|\mathcal{D}_\alpha|)(\alpha \cdot \nu) -(\alpha \cdot \nu)Uf_\alpha(|\mathcal{D}_\alpha|)  )$ and the compact operator $K_4 := K_1 - d K_2 + i \lambda K_3$. Then,~\eqref{equation_F_i1} simplifies to
	\begin{equation} \label{equation_F_i}
	\begin{split}
	F&=I_N- df_\alpha(|\mathcal{D}_\alpha|)^2+ i \lambda \alpha_0 ( Uf_\alpha(|\mathcal{D}_\alpha|)(\alpha \cdot \nu) -(\alpha \cdot \nu)Uf_\alpha(|\mathcal{D}_\alpha|)  )+ K_4\\
	&= A_1 + B_1 + K_4.
	\end{split}
	\end{equation}
	It follows from Lemma~\ref{selfadjoint_lemma_2}, Remark~\ref{remark_K_alpha}, and the assumption in this point  that $A_1$ is invertible and
	\begin{equation} \label{equation_norms}
	\begin{split}
	\|A_1^{-1}\|_{L^2(\Sigma;\dC^N)\to L^2(\Sigma;\dC^N)}  &\leq \max_{\omega \in [\omega_{\min},\omega_{\max}]}\frac{1}{{|1- d \omega^2|}}, \\
	\|B_1\|_{L^2(\Sigma;\dC^N)\to L^2(\Sigma;\dC^N)} &\leq 2|\lambda| \omega_{\max}.
	\end{split}
	\end{equation}
	Hence, the assumption in~(i) implies that $\|A_1^{-1}B_1\|_{L^2(\Sigma;\dC^N)\to L^2(\Sigma;\dC^N)} <1$ and thus, $A_1+B_1$ is bijective in $L^2(\Sigma;\dC^N)$, which yields that $F$ is a Fredholm operator with index zero.

	(ii) We follow a similar strategy as in the proof under assumption~(i), consider the injective operator $L = (I_N - \mathcal{C}_z P_{\eta,-\tau,-\lambda})$, and use that Lemmas~\ref{selfadjoint_lemma_2} and~\ref{lemma_K_alpha}~(ii) and formula~\eqref{C_inv} imply
	\begin{equation} \label{equation_F_ii}
	\begin{split}
	F&= I_N- d\mathcal{C}_z^2 +i \lambda \alpha_0  \mathcal{C}_z(\alpha \cdot \nu) (I_N +4\mathcal{C}_z^2  )+ K_1\\
	&= I_N- df_\alpha(|\mathcal{D}_\alpha|)^2 +i \lambda \alpha_0 Uf_\alpha(|\mathcal{D}_\alpha|)(\alpha \cdot \nu) (I_N +4f_\alpha(|\mathcal{D}_\alpha|)^2  )+ K_5\\
	&= A_1 + B_2 + K_5
	\end{split}
	\end{equation}
	with  $B_2:=i \lambda \alpha_0 Uf_\alpha(|\mathcal{D}_\alpha|)(\alpha \cdot \nu) (I_N +4f_\alpha(|\mathcal{D}_\alpha|)^2  )$, $A_1$ and $K_1$ are defined as in the proof under assumption~(i), and $K_5 = K_1 + K_6$ with $K_6$ being a compact operator resulting from a product of $\mathcal{K}_z$ from Lemma~\ref{selfadjoint_lemma_2} and bounded operators. 
	Note that $B_2$ is invertible by Remark~\ref{remark_K_alpha} with $B_2^{-1} = -i \lambda^{-1}  (I_N +4f_\alpha(|\mathcal{D}_\alpha|)^2  )^{-1} (\alpha \cdot \nu) f_\alpha(|\mathcal{D}_\alpha|)^{-1} U^* \alpha_0$.
	Hence, Lemma~\ref{selfadjoint_lemma_2} and Remark~\ref{remark_K_alpha} imply that
	\begin{equation} \label{equation_norms2}
	\begin{split}
	\|A_1\|_{L^2(\Sigma;\dC^N)\to L^2(\Sigma;\dC^N)}  &\leq \max_{\omega \in [\omega_{\min},\omega_{\max}]}{{|1- d \omega^2|}},\\
	\|B_2^{-1}\|_{L^2(\Sigma;\dC^N)\to L^2(\Sigma;\dC^N)} &\leq \frac{1}{|\lambda| \omega_{\min}(1+4 \omega_\min^2)}.
	\end{split}
	\end{equation}
	Thus, assumption~(ii) yields $\|B_2^{-1}A_1\|_{L^2(\Sigma;\dC^N)\to L^2(\Sigma;\dC^N)} <1$. Therefore, in this case $F$ is a Fredholm operator with index zero.
	
	
	(iii) We choose $L = (I_N -  \mathcal{C}_z P_{0,0,\lambda} )(I_N - \mathcal{C}_z P_{\eta,-\tau,-\lambda})$. Due to the Lemmas~\ref{selfadjoint_lemma_2} and~\ref{lemma_K_alpha}~(ii) and \eqref{C_inv} one finds with~\eqref{equation_F_ii} that the expression~\eqref{def_F} reads as
	\begin{equation*}
	\begin{split}
	F&=	 (I_N -  \mathcal{C}_z P_{0,0,\lambda} )\Big((I_N +  i\lambda \alpha_0 \mathcal{C}_z(\alpha\cdot \nu))(I_N+4\mathcal{C}_z^2) - (d+4)\mathcal{C}_z^2 +K_1\Big)\\
	&=\frac{4-\lambda^2}{4}(I_N+4\mathcal{C}_z^2)  -(d+4)(I_N -  \mathcal{C}_z P_{0,0,\lambda} )\mathcal{C}_z^2  + K_7\\
	&= A_2  - B_3 + K_{8}
	\end{split}
	\end{equation*}
	with $A_2 := \frac{4-\lambda^2}{4}(I_N+4f_\alpha(|\mathcal{D}_\alpha|)^2)$, $B_3:=(d+4)(I_N -  Uf_\alpha(|\mathcal{D}_\alpha|) P_{0,0,\lambda} )f_\alpha(|\mathcal{D}_\alpha|)^2 $ and the compact operator $K_1$ from above. Moreover, $K_7$ and $K_{8}$ are the compact maps given by  $K_7 := (I_N -  \mathcal{C}_z P_{0,0,\lambda} ) K_1  + i \lambda (\alpha_0 \mathcal{C}_z + \mathcal{C}_z\alpha_0)(\alpha \cdot \nu)(I_N+4\mathcal{C}_z^2)$  and $K_{8}:= K_7 + K_9$,  where $K_9$ is a compact operator resulting from a product of $\mathcal{K}_z$ in Lemma~\ref{selfadjoint_lemma_2} and bounded operators.
	It follows again from Lemma~\ref{selfadjoint_lemma_2} and Remark~\ref{remark_K_alpha} that
	\begin{equation} 
	\begin{split}
	\|A_2^{-1}\|_{L^2(\Sigma;\dC^N)\to L^2(\Sigma;\dC^N)}  &\leq \frac{4}{|4-\lambda^2|(1+4\omega_{\min}^2)}, \\
	\|B_3\|_{L^2(\Sigma;\dC^N)\to L^2(\Sigma;\dC^N)} &\leq |d+4|(1+\omega_\max|\lambda|)\omega_\max^2.
	\end{split}
	\end{equation}
	Therefore, the assumption (iii) implies $\|A_2^{-1}B_3\|_{L^2(\Sigma;\dC^N)\to L^2(\Sigma;\dC^N)}<1$. Hence, $F$ is a Fredholm operator with index zero.
	
	(iv) 	Choose $L=I_N- P_{0,0,\lambda}\mathcal{C}_z$ in~\eqref{def_F} and note that $L$ is injective by Lemma~\ref{selfadjoint_lemma_1}. Then
	\begin{equation*}
	\begin{split}
	F&=(I_N- P_{0,0,\lambda}\mathcal{C}_z )(I_N+ P_{0,0,\lambda}\mathcal{C}_z)  \\
	&= I_N + \lambda^2(\alpha \cdot \nu)\alpha_0\mathcal{C}_z(\alpha \cdot \nu)\alpha_0\mathcal{C}_z = \frac{4-\lambda^2}{4}I_N + K_{10}, 
	\end{split}
	\end{equation*}
	where  $K_{10} := \lambda^2(\alpha \cdot \nu)\alpha_0\mathcal{C}_z(\alpha \cdot \nu)(\alpha_0\mathcal{C}_z + \mathcal{C}_z \alpha_0)$ is a compact operator in $L^2(\Sigma;\mathbb{C}^N)$. Thus, $F$ is again a Fredholm operator with index zero.
	
	(v) Choose $L = I_N$ as the identity. Then, with Lemma~\ref{selfadjoint_lemma_2} we get 
	\begin{equation*}
	  F = I_N + P_{\eta, \tau, \lambda} \mathcal{C}_z = I_N + P_{\eta, \tau, \lambda} U f_\alpha (|\mathcal{D}_\alpha|) + K_{11},
	\end{equation*}
	where $K_{11} = P_{\eta, \tau, \lambda} \mathcal{K}_z$ is compact by Lemma~\ref{selfadjoint_lemma_2}. Since 
	\begin{equation*}
	  \| P_{\eta, \tau, \lambda} U f_\alpha (|\mathcal{D}_\alpha|) \|_{L^2(\Sigma; \mathbb{C}^N) \to L^2(\Sigma; \mathbb{C}^N)} \leq \big( |\eta| + |\tau| + |\lambda| \big) \omega_{\max} < 1,
	\end{equation*}
	we conclude with the Neumann formula that $F$ is a Fredholm operator with index zero.
	
	(vi) Since $P_{\eta,\tau,\lambda} P_{\eta,-\tau,-\lambda} = d I_N $, our assumptions imply that $P_{\eta,\tau,\lambda}$ is bijective in $L^2(\Sigma;\dC^N)$ and its inverse is given by $\frac{1}{d}P_{\eta,-\tau,-\lambda}$. Using~\eqref{C_inv} and \eqref{anti_commutation}, we conclude that
	\begin{equation} \label{equation_large_interaction_strengths}
	I_N +  P_{\eta,\tau,\lambda}\mathcal{C}_z = P_{\eta,\tau,\lambda}(\alpha \cdot \nu)\big( I_N + P_{-\frac{4\eta}{d},-\frac{4\tau}{d},-\frac{4\lambda}{d}}\mathcal{C}_z\big)(\alpha\cdot\nu)\mathcal{C}_z.
	\end{equation}
	By the assumptions in (vi) and the result from (v) the operator $ I_N + P_{-\frac{4\eta}{d},-\frac{4\tau}{d},-\frac{4\lambda}{d}}\mathcal{C}_z$ is bijective. Moreover, as $P_{\eta,\tau,\lambda}$, $\alpha\cdot\nu$, and $\mathcal{C}_z$ are bijective, we conclude that  $I_N +  P_{\eta,\tau,\lambda}\mathcal{C}_z$ is bijective as well.
\end{proof}

In the following theorem we state the main result of this section on the self-adjointness and the spectral properties of the operator $A_{\eta, \tau, \lambda}$ defined in~\eqref{def_A_eta,tau,mu}. Recall that $A_0$ is the free Dirac operator given by~\eqref{def_A_0}, that $P_{\eta, \tau, \lambda}$ is introduced in~\eqref{def_P_eta_tau_lambda}, and that $\Phi_z$ and $\mathcal{C}_z$, $z \in \rho(A_0)$, are the operators defined in~\eqref{def_Phi_lambda} and~\eqref{def_C_lambda}, respectively.

\begin{theorem}\label{selfadjoint_theorem}
	Let $\Sigma$ be the boundary of a bounded Lipschitz domain and let $\eta,\tau,\lambda \in \dR$ such that one of the assumptions \textup{(i)--(vi)} in Proposition~\ref{selfadjoint_prop_1} is fulfilled. Then, $A_{\eta,\tau,\lambda}$ is self-adjoint in $L^2(\mathbb{R}^q; \mathbb{C}^N)$ and for all $z \in \rho(A_0) \cap \rho(A_{\eta,\tau,\lambda})$ the resolvent is given by
	\begin{equation} \label{equation_Krein}
	(A_{\eta,\tau,\lambda} - z)^{-1} = (A_0 - z)^{-1} - \Phi_z(I_N +  P_{\eta,\tau,\lambda}\mathcal{C}_z)^{-1}  P_{\eta,\tau,\lambda}\Phi_{\overline{z}}^*.
	\end{equation}
	Moreover, the following holds:
	\begin{itemize}
		\item[(i)] $\sigma_{\mathrm{ess}}(A_{\eta,\tau,\lambda}) =\sigma_{\mathrm{ess}}(A_0) =  \left(-\infty,-m\right]\cup \left[m,\infty\right)$.
		\item[(ii)] $\sigma_{\mathrm{disc}}(A_{\eta,\tau,\lambda}) $ is finite. 
		\item[(iii)] For $z \in (-m,m)$ one has that $z \in \sigma_\mathrm{p}(A_{\eta,\tau,\lambda})$ if and only if $-1 \in \sigma_\mathrm{p}( P_{\eta,\tau,\lambda} \mathcal{C}_z)$.
	\end{itemize}
\end{theorem}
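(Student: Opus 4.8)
The plan is to apply the abstract machinery from Section~\ref{section_boundary_triples} to the generalized boundary triple $\big\{L^2(\Sigma;\dC^N),\Gamma_0^{(1/2)},\Gamma_1^{(1/2)}\big\}$ from Theorem~\ref{theorem_QBT}, whose Weyl function is $M^{(1/2)}(z)=\mathcal{C}_z$ and whose $\gamma$-field is $\gamma^{(1/2)}(z)=\Phi_z$. Recall that $A_{\eta,\tau,\lambda}=A_{[P_{\eta,\tau,\lambda}]}$ in the notation of Theorem~\ref{theorem_Krein_abstract_A_B}, that $P_{\eta,\tau,\lambda}$ is a bounded self-adjoint operator (a multiplication by a self-adjoint matrix-valued function), and hence $A_{\eta,\tau,\lambda}$ is already known to be symmetric via~\eqref{abab}. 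The key point for self-adjointness is the criterion stated after Theorem~\ref{theorem_Krein_abstract_A_B}: since $P_{\eta,\tau,\lambda}$ is symmetric, $A_{\eta,\tau,\lambda}$ is self-adjoint provided $\ran(A_{\eta,\tau,\lambda}\mp i)=L^2(\mathbb{R}^q;\mathbb{C}^N)$, and by item~(ii) of Theorem~\ref{theorem_Krein_abstract_A_B} this holds if $P_{\eta,\tau,\lambda}\gamma^{(1/2)}(\mp i)^*g\in\ran\big(I_N+P_{\eta,\tau,\lambda}M^{(1/2)}(\pm i)\big)=\ran\big(I_N+P_{\eta,\tau,\lambda}\mathcal{C}_{\pm i}\big)$ for every $g$. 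By Proposition~\ref{selfadjoint_prop_1}, under any of the assumptions~(i)--(vi) the operator $I_N+P_{\eta,\tau,\lambda}\mathcal{C}_z$ is bijective for all $z\in\mathbb{C}\setminus\mathbb{R}$, in particular surjective at $z=\pm i$. Hence the range condition is trivially met and $A_{\eta,\tau,\lambda}$ is self-adjoint.

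Next I would record the resolvent formula. Since $A_{\eta,\tau,\lambda}$ is self-adjoint, for $z\in\rho(A_0)\cap\rho(A_{\eta,\tau,\lambda})$ one has $z\notin\sigma_{\mathrm p}(A_{\eta,\tau,\lambda})$, so Theorem~\ref{theorem_Krein_abstract_A_B}~(iii) gives
\begin{equation*}
  (A_{\eta,\tau,\lambda}-z)^{-1}=(A_0-z)^{-1}-\gamma^{(1/2)}(z)\big(I_N+P_{\eta,\tau,\lambda}M^{(1/2)}(z)\big)^{-1}P_{\eta,\tau,\lambda}\,\gamma^{(1/2)}(\overline z)^*,
\end{equation*}
and substituting $\gamma^{(1/2)}(z)=\Phi_z$, $M^{(1/2)}(z)=\mathcal{C}_z$, and $\gamma^{(1/2)}(\overline z)^*=\Phi_{\overline z}^*$ yields~\eqref{equation_Krein}. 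Here one should note that $(I_N+P_{\eta,\tau,\lambda}\mathcal{C}_z)^{-1}$ is a bounded operator for $z\in\mathbb{C}\setminus\mathbb{R}$ by Proposition~\ref{selfadjoint_prop_1}; for real $z\in(-m,m)\cap\rho(A_{\eta,\tau,\lambda})$ one argues that $z\notin\sigma_{\mathrm p}(A_{\eta,\tau,\lambda})$ combined with the Birman--Schwinger statement in Theorem~\ref{theorem_Krein_abstract_A_B}~(i) forces $I_N+P_{\eta,\tau,\lambda}\mathcal{C}_z$ to be injective, and a Fredholm argument (the compactness of $\mathcal{D}_\alpha-\mathcal{C}_z$ from Lemma~\ref{lemma_K_alpha} together with the Fredholm-index-zero analysis in the proof of Proposition~\ref{selfadjoint_prop_1}) shows it is then bijective, so the formula and the subsequent spectral statements extend there.

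For item~(i), $A_0$ and $A_{\eta,\tau,\lambda}$ are both self-adjoint extensions of the symmetric operator $S=T^{(1/2)}\upharpoonright H^1_0(\mathbb{R}^q\setminus\Sigma;\mathbb{C}^N)$, but here $S$ has infinite defect numbers, so the elementary finite-rank-perturbation argument used in the one-dimensional case is unavailable. Instead I would argue that the resolvent difference in~\eqref{equation_Krein} is compact: $\Phi_{\overline z}^*=\gamma^{(1/2)}(\overline z)^*$ is compact from $L^2(\mathbb{R}^q;\mathbb{C}^N)$ to $L^2(\Sigma;\mathbb{C}^N)$ by Theorem~\ref{theorem_QBT}~(ii), $(I_N+P_{\eta,\tau,\lambda}\mathcal{C}_z)^{-1}P_{\eta,\tau,\lambda}$ is bounded on $L^2(\Sigma;\mathbb{C}^N)$, and $\Phi_z$ is bounded into $L^2(\mathbb{R}^q;\mathbb{C}^N)$; hence the product is compact and $\sigma_{\mathrm{ess}}(A_{\eta,\tau,\lambda})=\sigma_{\mathrm{ess}}(A_0)=(-\infty,-m]\cup[m,\infty)$ by the stability of the essential spectrum under compact perturbations of the resolvent. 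Item~(ii) then follows since $\sigma_{\mathrm{disc}}(A_{\eta,\tau,\lambda})\subset(-m,m)$ is an isolated set of finite-multiplicity eigenvalues that can only accumulate at $\pm m$; to exclude accumulation I would invoke the analyticity of $z\mapsto\mathcal{C}_z$ on $(-m,m)$ together with the fact, via Theorem~\ref{theorem_Krein_abstract_A_B}~(i), that eigenvalues correspond to the non-invertibility of the analytic Fredholm family $I_N+P_{\eta,\tau,\lambda}\mathcal{C}_z$, which by the analytic Fredholm theorem has a discrete, hence finite (given boundedness away from $\pm m$, which follows from the uniform Fredholm-index-zero bounds obtained in Proposition~\ref{selfadjoint_prop_1}) zero set in the compact-closure sense. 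Item~(iii) is a direct restatement of Theorem~\ref{theorem_Krein_abstract_A_B}~(i) with $M^{(1/2)}(z)=\mathcal{C}_z$: $z\in\sigma_{\mathrm p}(A_{\eta,\tau,\lambda})$ iff $0\in\sigma_{\mathrm p}(I_N+P_{\eta,\tau,\lambda}\mathcal{C}_z)$ iff $-1\in\sigma_{\mathrm p}(P_{\eta,\tau,\lambda}\mathcal{C}_z)$. The main obstacle I anticipate is making the extension of the resolvent formula and the finiteness of the discrete spectrum rigorous at real spectral points: one must patch together the injectivity coming from symmetry, the Fredholmness coming from the compactness of $\mathcal{D}_\alpha-\mathcal{C}_z$, and the uniform index-zero estimates, and then deduce via analytic Fredholm theory both bijectivity on $\rho(A_{\eta,\tau,\lambda})$ and the finiteness claim.
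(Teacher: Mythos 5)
Your proof of the self-adjointness, the resolvent formula, and items (i) and (iii) tracks the paper's argument essentially verbatim: surjectivity of $I_N + P_{\eta,\tau,\lambda}\mathcal{C}_{\pm i}$ from Proposition~\ref{selfadjoint_prop_1} combined with Theorem~\ref{theorem_Krein_abstract_A_B}~(ii) gives $\ran(A_{\eta,\tau,\lambda}\mp i)=L^2(\mathbb{R}^q;\mathbb{C}^N)$; Theorem~\ref{theorem_Krein_abstract_A_B}~(iii) plus the identifications $\gamma^{(1/2)}(z)=\Phi_z$, $M^{(1/2)}(z)=\mathcal{C}_z$ give~\eqref{equation_Krein}; the compactness of $\gamma^{(1/2)}(\bar z)^*$ from Theorem~\ref{theorem_QBT}~(ii) gives item~(i); and item~(iii) is a restatement of Theorem~\ref{theorem_Krein_abstract_A_B}~(i). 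A small caveat on your extension of the resolvent formula to real $z\in\rho(A_{\eta,\tau,\lambda})\cap(-m,m)$: you claim bijectivity of $I_N+P_{\eta,\tau,\lambda}\mathcal{C}_z$ at such $z$ ``by the Fredholm argument in Proposition~\ref{selfadjoint_prop_1},'' but that proof needs both $I_N+P_{\eta,\tau,\lambda}\mathcal{C}_z$ \emph{and} the auxiliary factor $L$ to be injective, and Lemma~\ref{selfadjoint_lemma_1} only supplies the injectivity of $L$ for $z\in\mathbb{C}\setminus\mathbb{R}$. This is not actually needed, though: Theorem~\ref{theorem_Krein_abstract_A_B}~(iii) already gives the formula for all $g\in\ran(A_{\eta,\tau,\lambda}-z)=L^2(\mathbb{R}^q;\mathbb{C}^N)$ without requiring full invertibility of the inner operator.

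The genuine gap is in item~(ii). You invoke the analytic Fredholm theorem for the family $z\mapsto I_N+P_{\eta,\tau,\lambda}\mathcal{C}_z$ on $(-m,m)$ and then claim finiteness ``given boundedness away from $\pm m$, which follows from the uniform Fredholm-index-zero bounds obtained in Proposition~\ref{selfadjoint_prop_1}.'' This does not follow. Analytic Fredholm theory yields that the set of non-invertibility points is discrete in the domain of analyticity; since $\pm m$ are boundary points of $\rho(A_0)$, discreteness alone allows infinitely many eigenvalues accumulating at $\pm m$. The norm estimates in Proposition~\ref{selfadjoint_prop_1} (e.g.\ $\|A_1^{-1}B_1\|<1$) are uniform because $A_1,B_1$ are $z$-independent, but they only control the \emph{non-compact} part of the factorization $F=A+B+K_z$; the $z$-dependent compact remainder $K_z$ is precisely what determines whether $F$ is actually invertible (rather than merely Fredholm of index zero) at a given $z$, and nothing in Proposition~\ref{selfadjoint_prop_1} controls $K_z$ as $z\to\pm m$. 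Ruling out accumulation of eigenvalues at the band edges requires a separate quantitative argument; the paper handles this by reducing to the proofs of \cite[Proposition~3.8]{BHOP20} for $q=2$ and \cite[Theorem~5.4~(ii)]{BHM20} for $q=3$, and notes that the key structural input making that reduction possible is the regularity $\dom A_{\eta,\tau,\lambda}\subset H^{1/2}(\mathbb{R}^q\setminus\Sigma;\mathbb{C}^N)$, which your argument does not exploit.
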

\begin{proof}
	By Theorem \ref{theorem_Krein_abstract_A_B} (ii), Lemma~\ref{selfadjoint_lemma_1}, and Proposition \ref{selfadjoint_prop_1} we have
	\[\ran(A_{\eta,\tau,\lambda}-z) =L^2(\dR^q;\dC^N) \quad \forall z \in \dC \setminus \dR.\]
	 Thus, the symmetric operator $A_{\eta,\tau,\lambda}$ is self-adjoint. The resolvent formula  and assertion (iii) are direct consequences of  Theorems \ref{theorem_Krein_abstract_A_B} and~\ref{theorem_QBT}.
	
	In order to show item (i) we notice that $\Phi_{\overline{z}}^* = \gamma^{(1/2)}(\overline{z})^*$ is compact as a mapping from $L^2(\dR^q;\dC^N) $ to $ L^2(\Sigma;\dC^N)$, cf. Theorem \ref{theorem_QBT} (ii). Therefore, by~\eqref{equation_Krein} the difference $(A_{\eta,\tau,\lambda} - z)^{-1} - (A_0 - z)^{-1}$  is compact, implying 
	\begin{equation*}
	\sigma_{\mathrm{ess}}(A_{\eta,\tau,\lambda}) = \sigma_{\mathrm{ess}}(A_0) = \sigma(A_0) = \left(-\infty,-m\right]\cup \left[m,\infty\right).
	\end{equation*}
	
	Finally, to see that item~(ii) holds, one can follow line by line the proofs of \cite[Proposition~3.8]{BHOP20} for $q=2$ and \cite[Theorem~5.4~(ii)]{BHM20} for $q=3$; this is possible, as $\dom A_{\eta, \tau, \lambda} \subset H^{1/2}(\mathbb{R}^q \setminus \Sigma; \mathbb{C}^N)$.
\end{proof}

\begin{remark}
  Assume that $\lambda = 0$ and that the number $d = \eta^2 - \tau^2$ satisfies
  \begin{equation} \label{Benhellal1}
    d < \frac{1}{\| \mathcal{C}_z \|_{L^2(\Sigma;\dC^N)\to L^2(\Sigma;\dC^N)}^2}
  \end{equation}
  for a $z \in  \rho(A_0)$. As $\omega_\max \leq \| \mathcal{C}_z \|_{L^2(\Sigma;\dC^N)\to L^2(\Sigma;\dC^N)}$, this implies $\omega_\max^2 d < 1$ and thus, the condition in Proposition~\ref{selfadjoint_prop_1}~(i) is fulfilled.  
  Similarly,  if
  \begin{equation} \label{Benhellal2}
    d> 16\| \mathcal{C}_z \|_{L^2(\Sigma;\dC^N)\to L^2(\Sigma;\dC^N)}^2
  \end{equation}
  holds for some $z \in  \rho(A_0)$, then  
  $\omega_\min \geq \frac{1}{4\| \mathcal{C}_z \|_{L^2(\Sigma;\dC^N)\to L^2(\Sigma;\dC^N)}}$, which follows from Lemma~\ref{lemma_K_alpha}~(iv), implies $d \omega_\min^2 > 1$ and again the condition in Proposition~\ref{selfadjoint_prop_1}~(i) is satisfied.  
  In particular, if \eqref{Benhellal1} or~\eqref{Benhellal2} hold, then all assertions in Theorem~\ref{selfadjoint_theorem} are true. We remark that \eqref{Benhellal1} and~\eqref{Benhellal2} coincide with the conditions (b) and (a) in \cite[Remark~4.4]{Ben22}, respectively, and hence, these results are contained in the analysis in the present paper as well.  
\end{remark}

In the following corollary we present some special cases for which Theorem~\ref{selfadjoint_theorem} can be applied and which are of particular interest: purely Lorentz scalar interactions (i.e. $\eta = \lambda = 0$), non-critical purely anomalous magnetic interactions (i.e. $\eta = \tau = 0$ and $\lambda^2 \neq 4$), and the non-critical confinement case (i.e. $d=-4$ and $\lambda^2 \neq 4$). The confinement case is discussed in greater detail in Section~\ref{section_confinement}, while the result on the purely Lorentz scalar case is also contained in \cite{BHSS21}. The following corollary is an immediate consequence of Proposition~\ref{selfadjoint_prop_1}~(i), (iii), and (iv) and Theorem~\ref{selfadjoint_theorem}.

\begin{corollary}\label{selfadjoint_cor_1}
	Assume that one of the following conditions is satisfied:
	\begin{itemize}
		\item[(i)] $\eta = \lambda = 0$.
		\item[(ii)] $\eta = \tau = 0$ and $\lambda^2 \neq 4$.
		\item[(iii)]$ d= -4$ and $\lambda^2 \neq  4$.
	\end{itemize}
	Then,  all assertions in Theorem~\ref{selfadjoint_theorem} hold.
\end{corollary}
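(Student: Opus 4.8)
The plan is to show that each of the three conditions (i)--(iii) listed in the corollary forces one of the hypotheses (i)--(vi) of Proposition~\ref{selfadjoint_prop_1} to hold, after which the assertion follows verbatim from Theorem~\ref{selfadjoint_theorem}.

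First I would treat the purely Lorentz scalar case $\eta = \lambda = 0$. Here $d = \eta^2 - \tau^2 - \lambda^2 = -\tau^2 \leq 0$, so $1 - d\omega^2 = 1 + \tau^2 \omega^2 \geq 1$ is in particular nonzero for every $\omega \in [\omega_{\min}, \omega_{\max}]$, and since $\lambda = 0$ the ratio $\big| \tfrac{2\lambda \omega_{\max}}{1 - d\omega^2} \big|$ is identically zero, hence strictly less than $1$. Thus hypothesis~(i) of Proposition~\ref{selfadjoint_prop_1} is satisfied. Next, the non-critical purely anomalous magnetic case $\eta = \tau = 0$, $\lambda^2 \neq 4$ is literally hypothesis~(iv) of Proposition~\ref{selfadjoint_prop_1}, so nothing has to be checked. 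Finally, in the non-critical confinement case $d = -4$, $\lambda^2 \neq 4$, the factor $|d+4|$ in the numerator of the bound in hypothesis~(iii) of Proposition~\ref{selfadjoint_prop_1} vanishes, so the quantity $\tfrac{4|d+4|(1+\omega_{\max}|\lambda|)\omega_{\max}^2}{|4-\lambda^2|(1+4\omega_{\min}^2)}$ equals $0 < 1$; here I would note that the denominator is positive because $\lambda^2 \neq 4$ and $\omega_{\min} > 0$ by Remark~\ref{remark_K_alpha}, so the expression is well defined. Hence hypothesis~(iii) holds.

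Having verified in each of the three cases that one of the hypotheses~(i)--(vi) of Proposition~\ref{selfadjoint_prop_1} applies, I would then invoke Theorem~\ref{selfadjoint_theorem} directly to conclude that $A_{\eta,\tau,\lambda}$ is self-adjoint in $L^2(\mathbb{R}^q; \mathbb{C}^N)$, that the Krein-type resolvent formula~\eqref{equation_Krein} holds, and that assertions~(i)--(iii) of that theorem are valid, namely $\sigma_{\mathrm{ess}}(A_{\eta,\tau,\lambda}) = (-\infty,-m] \cup [m,\infty)$, the finiteness of $\sigma_{\mathrm{disc}}(A_{\eta,\tau,\lambda})$, and the Birman--Schwinger characterization of eigenvalues in $(-m,m)$.

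Since the argument consists of nothing more than elementary arithmetic comparisons reducing the stated conditions to known hypotheses, there is no genuine obstacle. The only point requiring a moment of care is to confirm that the denominators occurring in the hypotheses of Proposition~\ref{selfadjoint_prop_1}~(i) and~(iii) are nonzero, which follows respectively from $d \leq 0$ in case~(i) and from $\lambda^2 \neq 4$ together with $\omega_{\min} > 0$ in case~(iii).
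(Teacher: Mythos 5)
Your proposal is correct and follows exactly the route the paper indicates: the paper states the corollary as an immediate consequence of Proposition~\ref{selfadjoint_prop_1}~(i), (iii), and (iv) together with Theorem~\ref{selfadjoint_theorem}, which is precisely the case-by-case verification you carry out (case~(i) via $d=-\tau^2\leq 0$ and $\lambda=0$, case~(ii) literally being hypothesis~(iv), and case~(iii) via $|d+4|=0$). Nothing is missing.
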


In the following lemma, we consider the special case when $(\alpha \cdot \nu) \mathcal{C}_z + \mathcal{C}_z (\alpha \cdot \nu)  $  is compact in $L^2(\Sigma;\dC^N)$. It turns out that then the quantities $\omega_\min, \omega_\max$ in Lemma~\ref{selfadjoint_lemma_2} are both equal to $\frac{1}{2}$ and hence,  the conditions in Proposition~\ref{selfadjoint_prop_1} reduce to
	\begin{equation}\label{cond_3}
	\left(1-\frac{d}{4}\right)^2 - \lambda^2 \neq 0,
	\end{equation}
	i.e. that the interaction strengths are non-critical.  Furthermore, we prove that for $C^1$-smooth boundaries  $(\alpha \cdot \nu) \mathcal{C}_z + \mathcal{C}_z (\alpha \cdot \nu)$ is indeed compact in $L^2(\Sigma;\dC^N)$. 

\begin{lemma}\label{selfadjoint_cor_2} 
	Assume that $(\alpha \cdot \nu) \mathcal{C}_z + \mathcal{C}_z (\alpha \cdot \nu)  $  is compact in $L^2(\Sigma;\dC^N)$ and that~\eqref{cond_3} holds. Then $I_N + P_{\eta,\tau,\lambda}\mathcal{C}_z$ is isomorphic in $L^2(\Sigma;\dC^N)$ for $z \in \dC \setminus \dR$ and all statements in Theorem~\ref{selfadjoint_theorem} hold.	
	In particular, $(\alpha \cdot \nu) \mathcal{C}_z + \mathcal{C}_z (\alpha \cdot \nu)  $  is compact, if $\Sigma$ is the boundary of a $C^1$ domain.
\end{lemma}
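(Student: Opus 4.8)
The plan is to deduce the lemma from Proposition~\ref{selfadjoint_prop_1} and Theorem~\ref{selfadjoint_theorem} by showing that the compactness hypothesis forces $\omega_{\min} = \omega_{\max} = \tfrac12$. First I would note that
\[
  (\alpha\cdot\nu)\mathcal{C}_z + \mathcal{C}_z(\alpha\cdot\nu) = \big( (\alpha\cdot\nu)\mathcal{C}_z(\alpha\cdot\nu) + \mathcal{C}_z \big)(\alpha\cdot\nu),
\]
and that, by~\eqref{formula_C_inv} together with $(\alpha\cdot\nu)^2 = I_N$, one has $(\alpha\cdot\nu)\mathcal{C}_z(\alpha\cdot\nu) = -\tfrac14(\mathcal{C}_z)^{-1}$. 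Since $\alpha\cdot\nu$ is bounded and boundedly invertible, the assumed compactness of $(\alpha\cdot\nu)\mathcal{C}_z + \mathcal{C}_z(\alpha\cdot\nu)$ therefore implies that $\mathcal{C}_z - \tfrac14(\mathcal{C}_z)^{-1}$ is compact, and left multiplication by the bounded operator $\mathcal{C}_z$ yields that $\mathcal{C}_z^2 - \tfrac14 I_N$ is compact. Combining this with Lemma~\ref{lemma_K_alpha}~(i) (which also makes $\mathcal{D}_\alpha^2 - \mathcal{C}_z^2$ compact, being a sum of products of bounded and compact operators) gives that $\mathcal{D}_\alpha^2 - \tfrac14 I_N$ is compact. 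As $\mathcal{D}_\alpha$ is self-adjoint by Lemma~\ref{lemma_K_alpha}~(iii), this forces $\sigma_{\mathrm{ess}}(\mathcal{D}_\alpha^2) = \{\tfrac14\}$ and hence $\sigma_{\mathrm{ess}}(|\mathcal{D}_\alpha|) = \{\tfrac12\}$, i.e.\ $\omega_{\min} = \omega_{\max} = \tfrac12$.

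With $\omega_{\min} = \omega_{\max} = \tfrac12$ in hand, I would then check that~\eqref{cond_3} implies that one of the hypotheses (i) or (ii) of Proposition~\ref{selfadjoint_prop_1} holds. If $\big(1 - \tfrac{d}{4}\big)^2 - \lambda^2 > 0$, then in particular $1 - \tfrac{d}{4} \neq 0$, and $\max_{\omega\in\{1/2\}} \big| \tfrac{2\lambda\omega_{\max}}{1 - d\omega^2}\big| = \tfrac{|\lambda|}{|1 - d/4|} < 1$, so Proposition~\ref{selfadjoint_prop_1}~(i) applies; if $\big(1 - \tfrac{d}{4}\big)^2 - \lambda^2 < 0$, then necessarily $\lambda \neq 0$ and $\max_{\omega\in\{1/2\}} \big| \tfrac{1 - d\omega^2}{\lambda\omega_{\min}(1 + 4\omega_{\min}^2)} \big| = \tfrac{|1 - d/4|}{|\lambda|} < 1$, so Proposition~\ref{selfadjoint_prop_1}~(ii) applies. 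In either case $I_N + P_{\eta,\tau,\lambda}\mathcal{C}_z$ is bijective on $L^2(\Sigma;\dC^N)$ for every $z\in\dC\setminus\dR$ (hence, being bounded, an isomorphism by the bounded inverse theorem), and all statements of Theorem~\ref{selfadjoint_theorem} hold.

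For the ``in particular'' part I would show that for a $C^1$ domain the operator $(\alpha\cdot\nu)\mathcal{C}_z + \mathcal{C}_z(\alpha\cdot\nu)$ is compact; by Lemma~\ref{lemma_K_alpha}~(i) it suffices to prove this with $\mathcal{D}_\alpha$ in place of $\mathcal{C}_z$. Using the anti-commutation relations~\eqref{anti_commutation} in the form $(\alpha\cdot a)(\alpha\cdot b) + (\alpha\cdot b)(\alpha\cdot a) = 2(a\cdot b) I_N$, a direct computation from~\eqref{def_D_alpha} shows that $(\alpha\cdot\nu)\mathcal{D}_\alpha + \mathcal{D}_\alpha(\alpha\cdot\nu)$ is the integral operator on $\Sigma$ with kernel
\[
  \frac{i}{2^{q-1}\pi|x-y|^{q}}\Big( 2\big(\nu(x)\cdot(x-y)\big) I_N + \big(\alpha\cdot(x-y)\big)\big(\alpha\cdot(\nu(y) - \nu(x))\big)\Big), \qquad x\neq y\in\Sigma .
\]
Since $\Sigma$ is $C^1$, the normal field is uniformly continuous and $\Sigma$ is flat to first order at each point, so $|\nu(x) - \nu(y)| = o(1)$ and $|\nu(x)\cdot(x-y)| = o(|x-y|)$ as $|x-y|\to 0$; hence this kernel is of order $o(|x-y|^{-(q-1)})$ on the $(q-1)$-dimensional surface $\Sigma$. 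An integral operator on a compact $C^1$ hypersurface whose kernel has this borderline decay is compact, by the same reasoning that yields the compactness of the double layer potential operator on $C^1$ domains (truncation of the kernel together with Calder\'on--Zygmund estimates, cf.\ the arguments around \cite[(3.11)~Proposition]{F95}); this is the one genuinely non-trivial point, since a naive Schur test only produces a logarithmically divergent bound. This proves the claim, and together with the first part it finishes the proof.
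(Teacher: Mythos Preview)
Your proof is correct and follows essentially the same route as the paper: deduce the compactness of $\mathcal{C}_z^2 - \tfrac14 I_N$ (equivalently $I_N - 4\mathcal{C}_z^2$) from the hypothesis via~\eqref{formula_C_inv}, pass to $\mathcal{D}_\alpha$ using Lemma~\ref{lemma_K_alpha}~(i) to obtain $\omega_{\min} = \omega_{\max} = \tfrac12$, and then invoke Proposition~\ref{selfadjoint_prop_1}~(i) or~(ii) according to the sign of $(1-\tfrac{d}{4})^2 - \lambda^2$. For the $C^1$ claim the paper simply cites \cite[Theorem~1.2~(c)]{FJR78} (and \cite[Remark~3.6]{AMV14}, \cite[Lemma~4.1]{Ben22}) rather than sketching the kernel argument, but your outline is the same underlying mechanism.
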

\begin{proof}
	If   $(\alpha \cdot \nu) \mathcal{C}_z + \mathcal{C}_z (\alpha \cdot \nu)  $  is compact,
	then by~\eqref{C_inv}
	\begin{equation*}
	I_N-4\mathcal{C}_z^2 = -4\mathcal{C}_z\big((\alpha \cdot \nu) \mathcal{C}_z + \mathcal{C}_z (\alpha \cdot \nu) \big) (\alpha \cdot \nu)
	\end{equation*} 
	and thus, $I_N-4\mathcal{C}_z^2$ is compact as well.
	Hence, by Lemma~\ref{lemma_K_alpha} also $I_N-4 \mathcal{D}_\alpha^2$ is compact. Since  $I_N+2|\mathcal{D}_\alpha|$ is self-adjoint and bounded from below by $1$, the operator  $I_N+2|\mathcal{D}_\alpha|$ is bijective and its inverse is bounded in $L^2(\Sigma;\dC^N)$. Therefore, 
	\begin{equation*}
	I_N-2|\mathcal{D}_\alpha| =(I_N+2|\mathcal{D}_\alpha|)^{-1} (I_N-4\mathcal{D}_\alpha^2)
	\end{equation*}
	is compact. Thus, the quantities $\omega_{\min}, \omega_{\max}$ in Lemma~\ref{selfadjoint_lemma_2} satisfy $\omega_{\min} = \omega_{\max} = \frac{1}{2}$.
	Consequently, the conditions in Proposition~\ref{selfadjoint_prop_1}~(i)--(ii) yield for $\lambda \neq 0$
	\begin{equation*}
	\bigg|\frac{1-  \frac{d}{4}}{\lambda} \bigg| \neq 1,
	\end{equation*}
	If $\lambda =0$, then the condition in Proposition~\ref{selfadjoint_prop_1}~(i) reads
	\begin{equation*}
	\bigg|1-  \frac{d}{4} \bigg| \neq 0,
	\end{equation*}
	which is equivalent to \eqref{cond_3} with $\lambda =0$. Hence, Proposition~\ref{selfadjoint_prop_1} yields that $I_N + P_{\eta,\tau,\lambda}\mathcal{C}_z$ is bijective and thus, Theorem~\ref{selfadjoint_theorem} can be applied.
	
	Eventually, it can be shown in the same way as in \cite[Theorem~1.2~(c)]{FJR78} that $(\alpha \cdot \nu)\mathcal{C}_z + \mathcal{C}_z (\alpha \cdot \nu) $ is compact in $L^2(\Sigma;\dC^N)$, if $\Sigma$ is the boundary of a compact $C^1$ domain, see also \cite[Remark~3.6]{AMV14} for a similar discussion for $q=3$.
	In  dimension three one  can also apply \cite[Lemma~4.1]{Ben22}, as (H1) in \cite[IV.~A]{Ben22} is fulfilled for the boundary of a compact $C^1$ domain. 
\end{proof}

We finish this section by presenting a statement about the discrete spectrum of $A_{\eta,\tau,\lambda}$, when $q=3$.

\begin{proposition}\label{selfadjoint_prop_3}
	Let $q = 3$ and define the number 
	\begin{equation*}
	M_+ := \sup_{z \in (-m,m)} \|\mathcal{C}_z\|_{L^2(\Sigma;\dC^4) \to L^2(\Sigma;\dC^4)}.
	\end{equation*}
	Then, $M_+$ is finite. Moreover, assume that one of the following holds:
	\begin{itemize}
		\item[(i)]  $|\eta|+ |\tau| + |\lambda| < \frac{1}{M_+}$.
		\item[(ii)]  $|\eta| + |\tau| + |\lambda| \neq 0$ and $\frac{|d|}{|\eta| + |\tau| + |\lambda|} > 4 M_+$.
	\end{itemize}
	Then, $A_{\eta, \tau, \lambda}$ satisfies all claims in Theorem~\ref{selfadjoint_theorem} and $\sigma_{\mathrm{disc}}(A_{\eta,\tau,\lambda}) = \emptyset$.
\end{proposition}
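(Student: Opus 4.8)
The plan is to first establish that $M_+ < \infty$ and then to show that conditions (i) or (ii) imply that $I_N + P_{\eta,\tau,\lambda}\mathcal{C}_z$ is invertible for all $z \in (-m,m)$, which combined with the Birman--Schwinger principle forces $\sigma_{\mathrm{disc}}(A_{\eta,\tau,\lambda}) = \emptyset$. For the finiteness of $M_+$, I would argue that the map $(-m,m) \ni z \mapsto \mathcal{C}_z \in \mathcal{B}(L^2(\Sigma;\dC^4))$ is continuous in operator norm (this follows from Proposition~\ref{proposition_extensions_Phi_C}~(iv), since $\mathcal{C}_{z_1} - \mathcal{C}_{z_2}$ is bounded from $H^{-1/2}(\Sigma;\dC^4)$ into $H^{1/2}(\Sigma;\dC^4)$ with norm controlled by $|z_1 - z_2|$, hence a fortiori bounded on $L^2$ with small norm), so $\|\mathcal{C}_z\|$ is continuous on the compact interval $[-m+\varepsilon, m-\varepsilon]$; at the endpoints $\pm m$ one checks from the explicit kernel $G_{z,3}$ in~\eqref{def_G_lambda} that $\mathcal{C}_z$ remains uniformly bounded as $z \to \pm m$ (the kernel is integrable and no singularity develops since $k(z) \to 0$), giving a uniform bound on all of $(-m,m)$.

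Next, I would invoke Theorem~\ref{selfadjoint_theorem}: under either hypothesis (i) or (ii), the corresponding condition (v) or (vi) of Proposition~\ref{selfadjoint_prop_1} holds (because $\omega_{\max} \leq \|\mathcal{C}_z\| \leq M_+$, so $|\eta|+|\tau|+|\lambda| < 1/M_+ \leq 1/\omega_{\max}$ gives (v), and $|d|/(|\eta|+|\tau|+|\lambda|) > 4M_+ \geq 4\omega_{\max}$ gives (vi)), so $A_{\eta,\tau,\lambda}$ is self-adjoint and all claims in Theorem~\ref{selfadjoint_theorem} hold; in particular $\sigma_{\mathrm{ess}}(A_{\eta,\tau,\lambda}) = (-\infty,-m] \cup [m,\infty)$, and by Theorem~\ref{selfadjoint_theorem}~(iii), for $z \in (-m,m)$ we have $z \in \sigma_{\mathrm{p}}(A_{\eta,\tau,\lambda})$ if and only if $-1 \in \sigma_{\mathrm{p}}(P_{\eta,\tau,\lambda}\mathcal{C}_z)$. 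It thus suffices to show that $-1 \notin \sigma_{\mathrm{p}}(P_{\eta,\tau,\lambda}\mathcal{C}_z)$, i.e.\ that $I_N + P_{\eta,\tau,\lambda}\mathcal{C}_z$ is injective, for every $z \in (-m,m)$.

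To rule out the eigenvalue $-1$, I would redo the surjectivity arguments from the proof of Proposition~\ref{selfadjoint_prop_1}~(v)--(vi), but now for real $z \in (-m,m)$ where $\mathcal{C}_z$ is self-adjoint. In case (i): writing $F = I_N + P_{\eta,\tau,\lambda}\mathcal{C}_z$, the operator norm estimate $\|P_{\eta,\tau,\lambda}\mathcal{C}_z\| \leq (|\eta|+|\tau|+|\lambda|)\|\mathcal{C}_z\| \leq (|\eta|+|\tau|+|\lambda|)M_+ < 1$ shows directly via the Neumann series that $F$ is boundedly invertible, hence $-1$ is not an eigenvalue. In case (ii): as in~\eqref{equation_large_interaction_strengths} one factorizes
\begin{equation*}
  I_N + P_{\eta,\tau,\lambda}\mathcal{C}_z = P_{\eta,\tau,\lambda}(\alpha\cdot\nu)\big(I_N + P_{-4\eta/d,-4\tau/d,-4\lambda/d}\mathcal{C}_z\big)(\alpha\cdot\nu)\mathcal{C}_z,
\end{equation*}
where $P_{\eta,\tau,\lambda}$, $\alpha\cdot\nu$ and $\mathcal{C}_z$ are all bijective (the last by~\eqref{C_inv}), and the rescaled coupling constants satisfy $|{-4\eta/d}| + |{-4\tau/d}| + |{-4\lambda/d}| = 4(|\eta|+|\tau|+|\lambda|)/|d| < 1/M_+$, so the middle factor is invertible by the case~(i) argument; hence $I_N + P_{\eta,\tau,\lambda}\mathcal{C}_z$ is invertible for $z \in (-m,m)$. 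In both cases $\sigma_{\mathrm{p}}(A_{\eta,\tau,\lambda}) \cap (-m,m) = \emptyset$, and since $\sigma_{\mathrm{disc}}(A_{\eta,\tau,\lambda}) \subset (-m,m)$ consists of eigenvalues, we conclude $\sigma_{\mathrm{disc}}(A_{\eta,\tau,\lambda}) = \emptyset$. The main obstacle I anticipate is the clean verification that $M_+$ is finite, specifically controlling $\|\mathcal{C}_z\|$ uniformly up to the thresholds $z \to \pm m$; this requires either a careful look at the explicit Bessel-type kernel or a soft compactness/continuity argument extended to the closed interval.
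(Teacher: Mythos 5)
Your proof follows essentially the same route as the paper: the reduction to Proposition~\ref{selfadjoint_prop_1}~(v)/(vi) via $\omega_{\max}\leq M_+$, the Neumann-series bound $\|P_{\eta,\tau,\lambda}\mathcal{C}_z\|<1$ in case~(i), and the factorization~\eqref{equation_large_interaction_strengths} reducing case~(ii) to case~(i) are exactly the paper's arguments. The only divergence is the finiteness of $M_+$: the paper simply cites \cite[Lemma~3.2 and Remark~2.4]{AMV15}, whereas you sketch a continuity argument plus an endpoint check; your sketch is plausible in outline but incomplete as stated (in particular, the leading singularity $i(\alpha\cdot x)/(4\pi|x|^3)$ of $G_{z,3}$ is \emph{not} integrable on $\Sigma$ -- it is a Calder\'on--Zygmund kernel -- so "the kernel is integrable" applies only to the $z$-dependent remainder $\mathcal{C}_z-\mathcal{D}_\alpha$, and the uniformity of the bound as $z\to\pm m$ still needs to be extracted from that decomposition), a gap you yourself flag.
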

\begin{proof}
	The finiteness of $M_+$ is shown in \cite[Lemma~3.2 and Remark~2.4]{AMV15}. Moreover, as 
	\begin{equation*}
	  \omega_{\max} \leq \|\mathcal{C}_z\|_{L^2(\Sigma;\dC^4) \to L^2(\Sigma;\dC^4)}
	\end{equation*}
	holds for any $z \in \mathbb{C} \setminus ((-\infty, -m] \cup [m, \infty))$, cf. Lemma~\ref{lemma_K_alpha}~(iv), we get $\omega_{\max} \leq M_+$. Hence, assumption~(i) implies the assumption in Proposition~\ref{selfadjoint_prop_1}~(v), while~(ii) yields the assumption in Proposition~\ref{selfadjoint_prop_1}~(vi). In particular, Theorem~\ref{selfadjoint_theorem} can be applied.
	
	To show the claim about the discrete spectrum, which is, by Theorem~\ref{selfadjoint_theorem}, contained in $(-m, m)$, we fix $z \in (-m,m)$ in the following.
	Let us start with (i). The definition of $P_{\eta, \tau, \lambda}$ in~\eqref{def_P_eta_tau_lambda} and the triangle inequality imply
	\begin{equation*}
	\| P_{\eta,\tau,\lambda}\mathcal{C}_z\|_{L^2(\Sigma;\dC^4)\to L^2(\Sigma;\dC^4)} \leq \big( |\eta|+|\tau|+|\lambda| \big) \|\mathcal{C}_z\|_{L^2(\Sigma;\dC^4)\to L^2(\Sigma;\dC^4)} < 1.
	\end{equation*}
	Hence,  $-1 \notin \sigma_{\mathrm{p}}(P_{\eta,\tau,\lambda}\mathcal{C}_z)$, which yields with  Theorem~\ref{selfadjoint_theorem}~(iii) that $ z \notin \sigma_{\mathrm{disc}}(A_{\eta,\tau,\lambda})$.
	
	Next, we turn to (ii). Due to our assumptions, we can write
	\begin{equation*}
	I_4 +  P_{\eta,\tau,\lambda}\mathcal{C}_z = P_{\eta,\tau,\lambda}(\alpha \cdot \nu)\big( I_4 + P_{-\frac{4\eta}{d},-\frac{4\tau}{d},-\frac{4\lambda}{d}}\mathcal{C}_z\big)(\alpha\cdot\nu)\mathcal{C}_z;
	\end{equation*}
	cf.~\eqref{equation_large_interaction_strengths}.
	Now, the assumptions in (ii) and the result from (i) show that $I_4 + P_{-\frac{4\eta}{d},-\frac{4\tau}{d},-\frac{4\lambda}{d}}\mathcal{C}_z$ is injective. Moreover, the other terms in the last displayed formula are injective. Hence, $0 \notin \sigma_\textup{p}(I_4 +  P_{\eta,\tau,\lambda}\mathcal{C}_z)$, which implies with  Theorem~\ref{selfadjoint_theorem}~(iii) that $z \notin \sigma_\textup{disc}(A_{\eta, \tau, \lambda})$.
\end{proof}

\subsection{Confinement} \label{section_confinement}

In this section, we show that for some special combinations of interaction strengths the operator $A_{\eta,\tau,\lambda}$ introduced in~\eqref{def_A_eta,tau,mu} decomposes into two Dirac operators acting in $L^2(\Omega_\pm; \mathbb{C}^N)$ with boundary conditions. From a physical point of view, this means that a particle that is located in one of the domains $\Omega_\pm$ will remain in $\Omega_\pm$ for all times, i.e. $\Sigma$ is impenetrable for the particle. Moreover, in all situations in the confinement case we  describe self-adjoint realizations of $A_{\eta,\tau,\lambda}$, which is a novelty when all three parameters $\eta, \tau, \lambda$ are present, and we can do that in the situation that $\Sigma$ is only Lipschitz smooth. 
The phenomenon of confinement was already observed in the case that $\Sigma \subset \mathbb{R}^3$ is a sphere in \cite{DES89} and then for $C^2$-smooth surfaces $\Sigma \subset \mathbb{R}^3$ in various situations in \cite{AMV15, BEHL19, BHM20, Ben21, Ben22} and for $C^\infty$-smooth $\Sigma \subset \mathbb{R}^2$ in \cite{BHOP20, CLMT21}. 
Recall that $P_{\eta, \tau, \lambda}$ is defined by~\eqref{def_P_eta_tau_lambda}.
The starting point is the following observation:

\begin{lemma} \label{lemma_confinement}
  Let $\eta, \tau, \lambda \in \mathbb{R}$, set $d = \eta^2-\tau^2-\lambda^2$, and let $A_{\eta, \tau, \lambda}$ be defined as in~\eqref{def_A_eta,tau,mu}. Then, the following holds:
  \begin{itemize}
    \item[(i)] If $d \neq -4$, then there exists a matrix-valued function $Q_{\eta, \tau, \lambda}: \Sigma \rightarrow \mathbb{C}^{N \times N}$ depending on the parameters $\eta,\tau,\lambda$ such that $Q_{\eta, \tau, \lambda}$ is pointwise invertible and $f \in \dom A_{\eta,\tau,\lambda}$ if and only if $f \in H^{1/2}_\alpha(\mathbb{R}^q \setminus \Sigma; \mathbb{C}^N)$ and
    \begin{equation} \label{transmission_condition}
      \gamma_D^+ f_+ = Q_{\eta, \tau, \lambda} \gamma_D^- f_-.
    \end{equation}
    \item[(ii)] If $d=-4$, then $A_{\eta, \tau, \lambda} = A_{\eta,\tau,\lambda}^+ \oplus A_{\eta,\tau,\lambda}^-$, where $A_{\eta,\tau,\lambda}^\pm$ are the operators acting in $L^2(\Omega_\pm; \mathbb{C}^N)$ given by
    \begin{equation} \label{def_op_confinement}
      \begin{split}
        A_{\eta,\tau,\lambda}^\pm f &= \big(-i (\alpha \cdot \nabla) + m \alpha_0\big) f, \\
        \dom A_{\eta,\tau,\lambda}^\pm &= \big\{ f \in H_\alpha^{1/2}(\Omega_\pm; \mathbb{C}^N): \big( 2 I_N \mp i (\alpha \cdot \nu) P_{\eta, \tau, \lambda} \big) \gamma_D^\pm f = 0 \big\}.
      \end{split}
    \end{equation}
  \end{itemize}
\end{lemma}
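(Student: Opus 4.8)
The plan is to start from the defining jump condition for $f \in \dom A_{\eta,\tau,\lambda}$, namely
\[
i(\alpha\cdot\nu)(\gamma_D^+ f_+ - \gamma_D^- f_-) + \tfrac12 P_{\eta,\tau,\lambda}(\gamma_D^+ f_+ + \gamma_D^- f_-) = 0,
\]
and rewrite it by collecting the traces $\gamma_D^+ f_+$ and $\gamma_D^- f_-$ separately. Multiplying by $-i(\alpha\cdot\nu)$ on the left (which is a pointwise invertible, indeed unitary, matrix function since $(\alpha\cdot\nu)^2 = I_N$ by \eqref{anti_commutation}) turns the condition into
\[
\bigl( I_N - \tfrac{i}{2}(\alpha\cdot\nu) P_{\eta,\tau,\lambda} \bigr)\gamma_D^+ f_+ = \bigl( I_N + \tfrac{i}{2}(\alpha\cdot\nu) P_{\eta,\tau,\lambda} \bigr)\gamma_D^- f_-.
\]
So everything hinges on when the matrix $R^\pm := I_N \mp \tfrac{i}{2}(\alpha\cdot\nu) P_{\eta,\tau,\lambda}$ is pointwise invertible. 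The key algebraic computation is $\bigl((\alpha\cdot\nu) P_{\eta,\tau,\lambda}\bigr)^2$: using $(\alpha\cdot\nu)^2 = I_N$, the anti-commutation relations, and $P_{\eta,-\tau,-\lambda} P_{\eta,\tau,\lambda} = d\, I_N$ (already used in the proof of Proposition~\ref{selfadjoint_prop_1}(vi)), one finds after a short manipulation that $\bigl((\alpha\cdot\nu) P_{\eta,\tau,\lambda}\bigr)$ satisfies a quadratic relation whose effect is $\bigl(R^\pm\bigr)\bigl(\text{a suitable conjugate}\bigr) = \tfrac{d+4}{4} I_N$ up to a correction. More precisely I expect $R^+ R^-{}^{\sharp}$ or a product of the form $R^\pm$ with $I_N \pm \tfrac{i}{2} P_{\eta,-\tau,-\lambda}(\alpha\cdot\nu)$ to equal $\tfrac{d+4}{4} I_N$ pointwise, so $R^\pm$ is invertible exactly when $d \neq -4$.

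For item (i): assume $d \neq -4$. Then $R^+$ is pointwise invertible with a bounded inverse (the matrix entries are controlled by $|\eta|,|\tau|,|\lambda|$ and $1/|d+4|$), so the jump condition is equivalent to $\gamma_D^+ f_+ = Q_{\eta,\tau,\lambda}\,\gamma_D^- f_-$ with $Q_{\eta,\tau,\lambda} := (R^+)^{-1} R^-$, which is pointwise invertible because both factors are. One should also note that multiplication by $Q_{\eta,\tau,\lambda}$ maps $H^0_\alpha(\Sigma;\mathbb C^N)$ (and, where relevant, $H^{1/2}_\alpha$) boundedly onto itself since the matrix function and its inverse are bounded, so the condition \eqref{transmission_condition} is meaningful for $f \in H^{1/2}_\alpha(\mathbb R^q\setminus\Sigma;\mathbb C^N)$ in view of Lemma~\ref{lemma_trace_theorem}. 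This yields (i) directly.

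For item (ii): assume $d = -4$. Now $R^\pm$ is \emph{not} invertible; instead, one shows that the ranges of $R^+$ and $R^-$ are complementary, or rather that the single jump condition $R^+\gamma_D^+ f_+ = R^-\gamma_D^- f_-$ decouples into the two separate conditions $R^+\gamma_D^+ f_+ = 0$ and $R^-\gamma_D^- f_- = 0$. The mechanism: from $\bigl((\alpha\cdot\nu)P_{\eta,\tau,\lambda}\bigr)^2 = I_N$ when $d=-4$ (this is the crucial identity to pin down — it should follow from the quadratic relation above degenerating) one gets that $P^\pm := \tfrac12\bigl(I_N \mp i(\alpha\cdot\nu)P_{\eta,\tau,\lambda}\bigr)$ are complementary pointwise projections, $(P^\pm)^2 = P^\pm$, $P^+ + P^- = I_N$. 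Writing the jump condition as $P^-\gamma_D^+ f_+ = P^+\gamma_D^- f_-$ and applying $P^+$ and $P^-$ to both sides (using $P^+P^- = P^-P^+ = 0$) forces $P^+\gamma_D^+ f_+ = 0$ and $P^-\gamma_D^- f_- = 0$, i.e. $\bigl(2I_N - i(\alpha\cdot\nu)P_{\eta,\tau,\lambda}\bigr)\gamma_D^+ f_+ = 0$ and $\bigl(2I_N + i(\alpha\cdot\nu)P_{\eta,\tau,\lambda}\bigr)\gamma_D^- f_- = 0$; conversely these two conditions clearly imply the jump condition. Since the jump condition no longer couples $f_+$ and $f_-$, and the differential expression acts componentwise, $A_{\eta,\tau,\lambda}$ is the orthogonal sum $A_{\eta,\tau,\lambda}^+ \oplus A_{\eta,\tau,\lambda}^-$ with the stated domains. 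The main obstacle is getting the matrix identity $\bigl((\alpha\cdot\nu)P_{\eta,\tau,\lambda}\bigr)^2 = \tfrac{?}{?}I_N + \dots$ exactly right — i.e., carefully expanding using $(\alpha\cdot\nu)^2 = I_N$, $(\alpha\cdot\nu)\alpha_0 = -\alpha_0(\alpha\cdot\nu)$, $\alpha_0^2 = I_N$, and the orthogonality $\nu\cdot\nu = 1$ — and verifying that it degenerates to an involution precisely at $d = -4$; everything else is bookkeeping.
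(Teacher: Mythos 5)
Your proposal follows essentially the same route as the paper's proof, which sets $R=\tfrac{i}{2}(\alpha\cdot\nu)P_{\eta,\tau,\lambda}$, rewrites the jump condition as $(I_N-R)\gamma_D^+f_+=(I_N+R)\gamma_D^-f_-$, and uses the identity $R^2=-\tfrac{d}{4}I_N$ to invert $I_N\mp R$ when $d\neq-4$ (giving $Q_{\eta,\tau,\lambda}=\tfrac{4}{d+4}(I_N+R)^2$) and to exploit $(I_N\mp R)(I_N\pm R)=0$ when $d=-4$. The identity you left ``to pin down'' is exactly $R^2=-\tfrac{d}{4}I_N$, which follows from $(\alpha\cdot\nu)P_{\eta,\tau,\lambda}(\alpha\cdot\nu)=P_{\eta,-\tau,-\lambda}$ together with $P_{\eta,-\tau,-\lambda}P_{\eta,\tau,\lambda}=d\,I_N$ (so your prediction $R^+R^-=\tfrac{d+4}{4}I_N$ is correct); note only that your $P^\pm$ must be normalized as $\tfrac12\bigl(I_N\mp\tfrac{i}{2}(\alpha\cdot\nu)P_{\eta,\tau,\lambda}\bigr)$ to be idempotent, a slip that does not affect the boundary conditions you obtain.
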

\begin{proof}
  Define the function $R: \Sigma \rightarrow \mathbb{C}^{N \times N}$ by
  \begin{equation*}
    R := \frac{i}{2} (\alpha \cdot \nu) P_{\eta, \tau, \lambda} = \frac{i}{2} (\alpha \cdot \nu) \big( \eta I_N + \tau \alpha_0 + \lambda i (\alpha \cdot \nu) \alpha_0 \big) .
  \end{equation*}
  Then, the transmission condition for $f \in \dom A_{\eta, \tau, \lambda}$ is equivalent to
  \begin{equation} \label{transmission_condition1}
    (I_N - R) \gamma_D^+ f_+ = (I_N + R) \gamma_D^- f_-.
  \end{equation}
  With the anti-commutation relations~\eqref{anti_commutation} it is not difficult to see that $R^2 = -\frac{d}{4} I_N$. Hence, if $d \neq -4$, then $I_N \pm R$ is invertible and~\eqref{transmission_condition1} is equivalent to
  \begin{equation*}
    \gamma_D^+ f_+ = (I_N - R)^{-1} (I_N + R) \gamma_D^- f_- = \frac{4}{d+4} (I_N + R)^2 \gamma_D^- f_-,
  \end{equation*}
  which is~\eqref{transmission_condition} with $Q_{\eta, \tau, \lambda} = \frac{4}{d+4} (I_N + R)^2$ and shows item~(i).
  
  If $d=-4$, then $(I_N \mp R) (I_N \pm R) = 0$. Hence, if $f \in \dom A_{\eta, \tau, \lambda}$, then 
  \begin{equation*} 
    (I_N \mp R)^2 \gamma_D^\pm f_\pm = 2 \left( I_N \mp R \right) \gamma_D^\pm f_\pm = 0.
  \end{equation*}
  On the other hand, if $f_\pm \in \dom A_{\eta, \tau, \lambda}^\pm$, then clearly~\eqref{transmission_condition1} holds, as the left and the right hand side in~\eqref{transmission_condition1} are zero, and thus, $f_+ \oplus f_- \in \dom A_{\eta, \tau, \lambda}$. This finishes the proof of item~(ii).
\end{proof}

\begin{remark}
  Assume that $\eta, \tau, \lambda \in \mathbb{R}$ are such that $d = \eta^2-\tau^2-\lambda^2=-4$. Let $\nu = (\nu_1, \dots, \nu_q)$, define the function $\mathcal{N}: \Sigma \rightarrow \mathbb{C}^{N/2 \times N/2}$ by
  \begin{equation*}
    \mathcal{N} = \begin{cases} \nu_1 + i \nu_2, & q=2, \\ \sigma \cdot \nu, &q=3, \end{cases}
  \end{equation*}
  and note that
  \begin{equation*}
    \alpha \cdot \nu = \begin{pmatrix} 0 & \mathcal{N}^* \\ \mathcal{N} & 0 \end{pmatrix}
  \end{equation*}
  holds.
  Then, the boundary conditions for $f \in \dom A_{\eta, \tau, \lambda}^\pm$ can be rewritten as
  \begin{equation} \label{boundary_conditions_alternative1}
    0 = \big( 2 I_N \pm \lambda \alpha_0 \mp i (\alpha \cdot \nu) (\eta I_N + \tau \alpha_0) \big) \gamma_D^\pm f = \begin{pmatrix} (2 \pm \lambda) I_{N/2} & \mp i(\eta-\tau) \mathcal{N}^* \\ \mp i (\eta + \tau) \mathcal{N} & (2 \mp \lambda) I_{N/2} \end{pmatrix} \gamma_D^\pm f,
  \end{equation}
  i.e. $f = (f_1, f_2) \in \dom A_{\eta, \tau, \lambda}^\pm$ with $f_1, f_2 \in L^2(\Omega_\pm; \mathbb{C}^{N/2})$ if and only if $(f_1, f_2) \in H^{1/2}_\alpha(\Omega_\pm; \mathbb{C}^{N})$ and
  \begin{equation} \label{boundary_conditions_alternative}
    \begin{split}
      (2 \pm \lambda) \gamma_D^\pm f_1 \mp i (\eta - \tau) \mathcal{N}^* \gamma_D^\pm f_2 &= 0, \\
      \mp i (\eta+\tau) \mathcal{N} \gamma_D^\pm f_1 + (2 \mp \lambda) \gamma_D^\pm f_2 &= 0.
    \end{split}
  \end{equation}
  If $\lambda^2 \neq 4$, then $d=-4$ implies  $\eta^2 \neq \tau^2$ and hence $d=-4$ and $\mathcal{N} \mathcal{N}^*=I_{N/2}$ yield that the two equations in~\eqref{boundary_conditions_alternative} are equivalent to each other. Thus, in the latter case, one of the equations in~\eqref{boundary_conditions_alternative} is sufficient to check if $f \in H^{1/2}_\alpha(\Omega_\pm; \mathbb{C}^N)$ belongs to $\dom A_{\eta, \tau, \lambda}^\pm$.
\end{remark}

In the following two propositions we discuss the (essential) self-adjointness of $A_{\eta, \tau, \lambda}$ in the confinement case. First, we discuss the non-critical case, i.e. when  $(\frac{d}{4}-1)^2 - \lambda^2 \neq 0$. Using $d=-4$, we see that the condition for being non-critical is, in the confinement case, equivalent to $\lambda^2 \neq 4$. We note that the Dirac operators with the same boundary conditions as $A_{\eta,\tau,\lambda}^\pm$ were studied in the non-critical case in space dimension $q=2$ in \cite{BFSB17_1, PvdB20} and in dimension $q=3$ in \cite{BHM20}.

\begin{proposition} \label{proposition_confinement_non_critical}
  Let $\eta, \tau, \lambda \in \mathbb{R}$ such that $d=-4$ and $\lambda^2 \neq 4$ and let $A_{\eta, \tau, \lambda}^\pm$ be defined by~\eqref{def_op_confinement}. Then $A_{\eta, \tau, \lambda}^\pm$ is self-adjoint in $L^2(\Omega_\pm; \mathbb{C}^N)$ and the following is true:
  \begin{itemize}
    \item[(i)] $\sigma(A_{\eta, \tau, \lambda}^+)$ is purely discrete.
    \item[(ii)] $\sigma_\textup{ess}(A_{\eta, \tau, \lambda}^-)=(-\infty,-m] \cup [m,\infty)$ and $\sigma_\textup{disc}(A_{\eta, \tau, \lambda}^-)$ is finite.
  \end{itemize}
\end{proposition}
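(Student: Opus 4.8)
The plan is to deduce the self-adjointness of $A_{\eta,\tau,\lambda}^\pm$ and their spectral properties from the already established facts about the full operator $A_{\eta,\tau,\lambda}$ in Corollary~\ref{selfadjoint_cor_1}~(iii) together with the decoupling $A_{\eta,\tau,\lambda}=A_{\eta,\tau,\lambda}^+ \oplus A_{\eta,\tau,\lambda}^-$ from Lemma~\ref{lemma_confinement}~(ii). Since $d=-4$ and $\lambda^2\neq 4$, Corollary~\ref{selfadjoint_cor_1}~(iii) applies and yields that $A_{\eta,\tau,\lambda}$ is self-adjoint in $L^2(\mathbb{R}^q;\mathbb{C}^N)$ with $\sigma_\textup{ess}(A_{\eta,\tau,\lambda})=(-\infty,-m]\cup[m,\infty)$ and finitely many discrete eigenvalues in $(-m,m)$. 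The orthogonal sum $A_{\eta,\tau,\lambda}^+ \oplus A_{\eta,\tau,\lambda}^-$ in $L^2(\Omega_+;\mathbb{C}^N)\oplus L^2(\Omega_-;\mathbb{C}^N)$ is self-adjoint if and only if each summand is self-adjoint; since $A_{\eta,\tau,\lambda}^\pm$ is symmetric (by the same Green's identity argument used for $A_{\eta,\tau,\lambda}$, or directly from~\eqref{int_by_parts} and the boundary condition), and the direct sum is self-adjoint, one concludes that each $A_{\eta,\tau,\lambda}^\pm$ is in fact self-adjoint. I would spell this out briefly: a symmetric operator whose orthogonal sum with another symmetric operator is self-adjoint must itself be self-adjoint, because the deficiency indices add.

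Next I would turn to the spectral statements. For assertion~(ii), one has $\sigma_\textup{ess}(A_{\eta,\tau,\lambda}^+\oplus A_{\eta,\tau,\lambda}^-)=\sigma_\textup{ess}(A_{\eta,\tau,\lambda}^+)\cup\sigma_\textup{ess}(A_{\eta,\tau,\lambda}^-)$. The operator $A_{\eta,\tau,\lambda}^+$ acts on the bounded Lipschitz domain $\Omega_+$; I would argue that its resolvent is compact, so $\sigma(A_{\eta,\tau,\lambda}^+)$ is purely discrete and in particular $\sigma_\textup{ess}(A_{\eta,\tau,\lambda}^+)=\emptyset$, which is exactly assertion~(i). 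The compactness of the resolvent follows because $\dom A_{\eta,\tau,\lambda}^+\subset H^{1/2}_\alpha(\Omega_+;\mathbb{C}^N)\subset H^{1/2}(\Omega_+;\mathbb{C}^N)$ (by the definition of $H^{1/2}_\alpha$), and $H^{1/2}(\Omega_+;\mathbb{C}^N)$ is compactly embedded in $L^2(\Omega_+;\mathbb{C}^N)$ by Rellich's theorem on the bounded Lipschitz domain $\Omega_+$; hence any self-adjoint operator with domain contained in $H^{1/2}(\Omega_+;\mathbb{C}^N)$ has compact resolvent. Combining this with $\sigma_\textup{ess}(A_{\eta,\tau,\lambda})=\sigma_\textup{ess}(A_{\eta,\tau,\lambda}^+)\cup\sigma_\textup{ess}(A_{\eta,\tau,\lambda}^-)=(-\infty,-m]\cup[m,\infty)$ forces $\sigma_\textup{ess}(A_{\eta,\tau,\lambda}^-)=(-\infty,-m]\cup[m,\infty)$.

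For the finiteness of $\sigma_\textup{disc}(A_{\eta,\tau,\lambda}^-)$ in~(ii), I would note that $\sigma_\textup{disc}(A_{\eta,\tau,\lambda}^-)\subset(-m,m)$ and that every eigenvalue of $A_{\eta,\tau,\lambda}^-$ in $(-m,m)$ is an eigenvalue of $A_{\eta,\tau,\lambda}=A_{\eta,\tau,\lambda}^+\oplus A_{\eta,\tau,\lambda}^-$ with the same (or larger) multiplicity; since $A_{\eta,\tau,\lambda}$ has only finitely many eigenvalues in $(-m,m)$ counted with multiplicity by Theorem~\ref{selfadjoint_theorem}~(ii), the same holds for $A_{\eta,\tau,\lambda}^-$. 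Alternatively one can invoke directly that $\sigma_\textup{disc}(A_{\eta,\tau,\lambda})=\sigma_\textup{disc}(A_{\eta,\tau,\lambda}^+)\sqcup\sigma_\textup{disc}(A_{\eta,\tau,\lambda}^-)$ in the gap $(-m,m)$, where the part coming from $A_{\eta,\tau,\lambda}^+$ is automatically discrete; since the whole thing is finite, so is the $A_{\eta,\tau,\lambda}^-$-part. The main obstacle — though it is minor — is being careful that "$\dom A_{\eta,\tau,\lambda}^+$ contained in $H^{1/2}$" genuinely gives compact resolvent: one should phrase it via the graph-norm, noting $\dom A_{\eta,\tau,\lambda}^+$ is continuously embedded in $H^{1/2}(\Omega_+;\mathbb{C}^N)$ with its graph norm (which is a consequence of $\dom A_{\eta,\tau,\lambda}^+ \subset H^{1/2}_\alpha(\Omega_+;\mathbb{C}^N)$ and the closed graph theorem), and then compose with the compact Rellich embedding. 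Everything else is bookkeeping about orthogonal sums of self-adjoint operators and the behaviour of essential and discrete spectra under such sums.
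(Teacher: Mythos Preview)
Your proposal is correct and follows essentially the same approach as the paper: both use the self-adjointness and spectral properties of the full operator $A_{\eta,\tau,\lambda}$ (via Corollary~\ref{selfadjoint_cor_1}~(iii), equivalently Proposition~\ref{selfadjoint_prop_1}~(iii) and Theorem~\ref{selfadjoint_theorem}), the decoupling $A_{\eta,\tau,\lambda}=A_{\eta,\tau,\lambda}^+\oplus A_{\eta,\tau,\lambda}^-$ from Lemma~\ref{lemma_confinement}~(ii), and the compact embedding $H^{1/2}(\Omega_+;\mathbb{C}^N)\hookrightarrow L^2(\Omega_+;\mathbb{C}^N)$ to get purely discrete spectrum for $A_{\eta,\tau,\lambda}^+$. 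Your version is simply more explicit where the paper is terse.
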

\begin{proof}
  By Proposition~\ref{selfadjoint_prop_1}~(iii) and Theorem~\ref{selfadjoint_theorem} the operator $A_{\eta, \tau, \lambda}$ is self-adjoint in $L^2(\mathbb{R}^q; \mathbb{C}^N)$, $\sigma_\textup{ess}(A_{\eta, \tau, \lambda})=(-\infty,-m] \cup [m,\infty)$, and $\sigma_\textup{disc}(A_{\eta, \tau, \lambda})$ is finite. Taking Lemma~\ref{lemma_confinement} into account, this can only be true if $A_{\eta,\tau,\lambda}^\pm$ is self-adjoint in $L^2(\Omega_\pm; \mathbb{C}^N)$ and, since $\dom A_{\eta,\tau,\lambda}^+ \subset H^{1/2}(\Omega_+; \mathbb{C}^N)$ is compactly embedded in $L^2(\Omega_+;\mathbb{C}^N)$, if items~(i) and~(ii) are true.
\end{proof}

In the following proposition we discuss the case of confinement $d=-4$, when the interaction strengths $\eta, \tau, \lambda \in \mathbb{R}$ are critical, i.e. when $\lambda^2=4$. Note that the latter two conditions imply $\eta^2 = \tau^2$.

\begin{proposition} \label{proposition_confinement_critical}
  Let $\eta, \tau, \lambda \in \mathbb{R}$ such that $d=-4$ and $\lambda^2 = 4$ and let $A_{\eta, \tau, \lambda}^\pm$ be defined by~\eqref{def_op_confinement}. Then the following holds:
  \begin{itemize}
    \item[(i)] For $\eta=\tau=0$ and $\lambda^2=4$ the operators $A_{0,0,\lambda}^\pm$ are essentially 
    self-adjoint in $L^2(\Omega_\pm; \mathbb{C}^N)$ and the following is true:
    \begin{itemize}
      \item[(a)] $-\sgn(\lambda) m$ is an eigenvalue of $\overline{A_{0,0,\lambda}^{+}}$ with infinite multiplicity and $\sigma(\overline{A_{0,0,\lambda}^{+}}) \setminus \{ -\sgn(\lambda) m \}$ is purely discrete.
      \item[(b)] $\sigma(\overline{A_{0,0,\lambda}^{-}})=(-\infty,-m] \cup [m,\infty)$.
    \end{itemize}
    \item[(ii)] For $\eta^2=\tau^2\neq0$ and $\lambda^2=4$ one has $A_{\eta,\tau,\lambda}^{\sgn(\eta \tau \lambda)} = A_{0,0,\lambda}^{\sgn(\eta \tau \lambda)}$ and there exist $\hat{\eta}, \hat{\tau}, \hat{\lambda} \in \mathbb{R}$ such that $\hat{d} := \hat{\eta}^2-\hat{\tau}^2-\hat{\lambda}^2 = -4$, $(\frac{\hat{d}}{4}-1)^2 \neq \hat{\lambda}^2$, and $A_{\eta,\tau,\lambda}^{-\sgn(\eta \tau \lambda)}  = A_{\hat{\eta},\hat{\tau},\hat{\lambda}}^{-\sgn(\eta \tau \lambda)}$.
  \end{itemize}
\end{proposition}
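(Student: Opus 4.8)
The plan is to handle the purely algebraic statement~(ii) first, and then the spectral analysis in~(i), and throughout to reduce everything to the fact that the confinement boundary conditions for $d=-4$, $\lambda^2=4$ are the ``zigzag'' conditions appearing for graphene.

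For~(ii) I would simply inspect the boundary conditions written in the form~\eqref{boundary_conditions_alternative}. Since $\eta^2=\tau^2\neq0$ and $\lambda^2=4$, by symmetry one may assume $\lambda=2$, the case $\lambda=-2$ being analogous after interchanging $\Omega_+\leftrightarrow\Omega_-$ and $f_1\leftrightarrow f_2$. If $\eta=\tau$ then in~\eqref{boundary_conditions_alternative} for $\Omega_+$ both $\eta-\tau$ and $2-\lambda$ vanish, so the condition collapses to $\gamma_D^+f_1=0$, which is precisely the condition defining $A_{0,0,2}^+=A_{0,0,\lambda}^{\sgn(\eta\tau\lambda)}$ (here $\sgn(\eta\tau\lambda)=+1$); the subcase $\eta=-\tau$ produces in the same way $\gamma_D^-f_2=0$, i.e.\ the condition defining $A_{0,0,2}^-=A_{0,0,\lambda}^{\sgn(\eta\tau\lambda)}$ with $\sgn(\eta\tau\lambda)=-1$. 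On the complementary domain one of the two equations in~\eqref{boundary_conditions_alternative} becomes trivial and, using $\mathcal{N}\mathcal{N}^*=I_{N/2}$, the remaining one is a relation of the form $\gamma_D^\mp f_2=c\,\mathcal{N}\gamma_D^\mp f_1$ with an explicit $c\in i\mathbb{R}\setminus\{0\}$ depending only on $\eta$. It then remains to realise this relation by non-critical confinement parameters: taking $\hat\lambda=0$ and choosing $\hat\eta\pm\hat\tau$ so that $\hat\eta^2-\hat\tau^2=-4$ and that the coefficient in the boundary condition of $A_{\hat\eta,\hat\tau,0}^{-\sgn(\eta\tau\lambda)}$ matches $c$ gives $\hat\eta,\hat\tau,\hat\lambda\in\mathbb{R}$ with $\hat d=-4$ and $(\hat d/4-1)^2=4\neq0=\hat\lambda^2$; since two equivalent boundary relations cut out the same subset of $H^{1/2}_\alpha(\Omega_\mp;\mathbb{C}^N)$, this yields $A_{\eta,\tau,\lambda}^{-\sgn(\eta\tau\lambda)}=A_{\hat\eta,\hat\tau,\hat\lambda}^{-\sgn(\eta\tau\lambda)}$.

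For~(i) I would first use Lemma~\ref{lemma_confinement}~(ii) to write $A_{0,0,\lambda}=A_{0,0,\lambda}^+\oplus A_{0,0,\lambda}^-$ and, using $(\alpha\cdot\nu)^2=I_N$, simplify the boundary condition of $A_{0,0,\lambda}^\pm$ to $(I_N\pm\sgn(\lambda)\alpha_0)\gamma_D^\pm f=0$, i.e.\ the vanishing of the Dirichlet trace of one spinor block. To prove essential self-adjointness I would compute the adjoint: the integration-by-parts formula~\eqref{int_by_parts_extension} shows that $g\in\dom(A_{0,0,\lambda}^\pm)^*$ if and only if $g\in H^0_\alpha(\Omega_\pm;\mathbb{C}^N)$ and $(I_N\pm\sgn(\lambda)\alpha_0)\gamma_D^\pm g=0$, with $\gamma_D^\pm$ in the sense of Corollary~\ref{corollary_trace_extension}; moreover the block of $g$ whose trace vanishes is in fact forced to lie in $H^1_0(\Omega_\pm;\mathbb{C}^{N/2})$ (a function in the maximal domain of the relevant chiral operator whose boundary trace vanishes admits an $H^1$ decomposition and hence lies in $H^1_0$), while the other block ranges over the full maximal domain of a chiral first-order operator. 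Essential self-adjointness then amounts to the statement that the smooth functions in $\dom A_{0,0,\lambda}^\pm$ satisfying the zigzag condition are a core for $(A_{0,0,\lambda}^\pm)^*$, i.e.\ that they are dense in graph norm; since the graph norm is equivalent to the $H^0_\alpha$-norm, this reduces to the density of $C^\infty(\overline{\Omega_\pm})$ in the maximal domain of the chiral operator, which follows from a dilation-and-mollification argument that is valid for bounded and for exterior Lipschitz domains (for $\Omega_-$ one first cuts off at infinity). One may alternatively invoke \cite{BFSB17_1, R20} for $q=2$, \cite{BHM20} for $q=3$, and \cite{Ben21, CLMT21} for the full operator $A_{0,0,\lambda}$.

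The spectral statements follow by identifying the eigenspace of $\overline{A_{0,0,\lambda}^\pm}$ at $z=-\sgn(\lambda)m$. Writing the eigenvalue equation blockwise, the block with vanishing trace is forced to be a Weyl spinor (in the kernel of $\sigma\cdot\nabla$ for $q=3$, resp.\ of the two-dimensional chiral operator) with zero Dirichlet trace, hence zero, while the other block is an arbitrary $L^2$ Weyl spinor with no boundary constraint. On the bounded domain $\Omega_+$ this space is infinite-dimensional, and on the orthogonal complement of this eigenspace $\overline{A_{0,0,\lambda}^+}$ has compact resolvent (the block with the trace condition always lands in $H^1_0(\Omega_+)\hookrightarrow\hookrightarrow L^2(\Omega_+)$, and the transversal part of the other block solves an elliptic boundary value problem and lies in a compactly embedded space), which gives~(a). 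On the unbounded domain $\Omega_-$ the corresponding Weyl-spinor space contributes no eigenvalue at $\pm m$ once the boundary condition and the $L^2$-requirement at infinity are imposed (a Liouville/decay argument), $\sigma_\textup{ess}(\overline{A_{0,0,\lambda}^-})=(-\infty,-m]\cup[m,\infty)$ follows from Weyl sequences supported away from $\Sigma$, and the absence of discrete spectrum in $(-m,m)$ follows from a Rellich-type argument applied to the reduced Helmholtz equation on $\Omega_-$ together with the zigzag boundary condition; this is~(b).

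I expect the main obstacle to be the essential self-adjointness of $A_{0,0,\lambda}^\pm$, i.e.\ the precise identification of $\dom\overline{A_{0,0,\lambda}^\pm}$ with the ``maximal'' domain cut out by the zigzag condition: one must show that $H^{1/2}_\alpha$-functions subject to this condition are graph-dense among $H^0_\alpha$-functions subject to it, which is delicate because the condition constrains only part of a low-regularity trace and because $\Sigma$ is merely Lipschitz. A secondary difficulty is the sharp spectral description on the exterior component $\Omega_-$, where one has to exclude gap eigenvalues. Once these two points are settled, the decoupling, the block computations for~(ii), and the identification of the flat-band eigenspace are routine.
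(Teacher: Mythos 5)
Your overall strategy coincides with the paper's: part~(ii) is exactly the boundary-condition algebra the paper performs (your concrete choice $\hat\lambda=0$, $\hat\eta+\hat\tau$ matched to the coefficient $c$, is a valid instance of the paper's parametrization $\eta=2(\hat\eta+\hat\tau)/(2+\hat\lambda)$), and part~(i) rests on the same two facts the paper uses, namely that the trace-constrained spinor block is forced into $H^1_0(\Omega_\pm;\mathbb{C}^{N/2})$ via extension by zero and $\ker\Gamma_0=\dom A_0=H^1(\mathbb{R}^q;\mathbb{C}^N)$, and that the free block is graph-dense in the maximal domain of the chiral operator because $C_0^\infty(\overline{\Omega_\pm};\mathbb{C}^N)$ is dense in $H^0_\alpha(\Omega_\pm;\mathbb{C}^N)$. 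The paper packages these facts differently: rather than computing $(A_{0,0,\lambda}^\pm)^*$ directly, it identifies $\overline{A_{0,0,2}^+}$ as the supersymmetric block operator $\left(\begin{smallmatrix} m & \mathcal{T}^*\\ \mathcal{T} & -m\end{smallmatrix}\right)$ with $\mathcal{T}$ the minimal chiral operator, which is self-adjoint for free and, via the unitary equivalence of $\mathcal{T}\mathcal{T}^*\upharpoonright\overline{\ran}\,\mathcal{T}$ with $\mathcal{T}^*\mathcal{T}=-\Delta_D^{\Omega_\pm}$, yields $\sigma(\overline{A_{0,0,2}^\pm})=\{\mp m\}\cup\{\pm\sqrt{m^2+z}:z\in\sigma(-\Delta_D^{\Omega_\pm})\}$ in one stroke. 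This is precisely the mechanism hiding behind your phrase that ``the transversal part of the other block solves an elliptic boundary value problem and lies in a compactly embedded space'': as stated that step is not justified (elements of $\dom\mathcal{T}\mathcal{T}^*$ have no a priori Sobolev regularity), and you should replace it by the supersymmetric correspondence with the Dirichlet Laplacian.

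Two further points. First, your assertion that on $\Omega_-$ ``the Weyl-spinor space contributes no eigenvalue at $\pm m$ \dots (a Liouville/decay argument)'' is false in dimension $q=2$: the functions $g_n(x)=(x_1+ix_2-x_{0,1}-ix_{0,2})^{-n}e_1$ with $x_0\in\Omega_+$ and $n\geq 2$ are square-integrable on the exterior domain, lie in the kernel of the chiral operator, satisfy the zigzag condition trivially, and give $m$ as an eigenvalue of $\overline{A_{0,0,2}^{-}}$ of infinite multiplicity (the paper points this out in the remark following the proposition). This does not damage statement~(b), since $\pm m$ belongs to $(-\infty,-m]\cup[m,\infty)$ in any case, but the Liouville claim should be deleted. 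Second, for $q=3$ the infinite-dimensionality of the flat-band eigenspace on $\Omega_+$ is not obvious from the abstract description ``arbitrary $L^2$ Weyl spinor''; one needs explicit kernel elements such as $(x_1-ix_2-c_1+ic_2)^{-n}e_N$ with the singularity placed outside the relevant projection of $\Omega_+$, as the paper supplies.
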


We remark that the operators $\overline{A_{0,0,2}^\pm}$ and $\overline{A_{0,0,-2}^\pm}$ are Dirac operators with \textit{zigzag} type boundary conditions and their spectra without $\pm m$ are closely related to the spectra of the Dirichlet Laplacians in $\Omega_\pm$. We refer to \cite{EH22, H21, S95} for studies on Dirac operators with zigzag type boundary conditions. 
It is remarkable that in the situation $\lambda^2=4$ and $\eta^2=\lambda^2\neq0$ one of the operators $\overline{A_{\eta,\tau,\lambda}^\pm}$ is a Dirac operator with zigzag type boundary conditions, while the other one is a Dirac operator with boundary conditions, as they are treated in Proposition~\ref{proposition_confinement_non_critical}.
In particular, the spectral properties of $\overline{A_{\eta,\tau,\lambda}^\pm}$ in the critical confinement case can be understood with the help of the Dirac operator with zigzag boundary conditions in Proposition~\ref{proposition_confinement_critical}~(i) and Proposition~\ref{proposition_confinement_non_critical} and in all cases $A_{\eta,\tau,\lambda}^\pm$ is essentially self-adjoint.

\begin{proof}[Proof of Proposition~\ref{proposition_confinement_critical}]
  Throughout the proof, we often use for $f \in L^2(\Omega_\pm; \mathbb{C}^N)$ the notation $f=(f_1,f_2)$ with $f_1,f_2 \in L^2(\Omega_\pm; \mathbb{C}^{N/2})$.
  We sketch the proof of item~(i) for $\lambda=2$, for details see also \cite{H21, S95} or \cite[Theorem~2.4]{CLMT21}; the proof for $\lambda=-2$ follows the same lines. 
  We make use of the differential expressions that are given by
  \begin{equation} \label{def_D}
    \mathcal{D} := \begin{cases} -i(\partial_1 + i \partial_2), &q=2, \\ -i (\sigma \cdot \nabla), &q=3, \end{cases}
    \qquad \mathcal{D}^* := \begin{cases} -i(\partial_1 - i \partial_2), &q=2, \\ -i (\sigma \cdot \nabla), &q=3. \end{cases}
  \end{equation}
  With the help of these expressions we can write
  \begin{equation} \label{Dirac_block_structure}
    -i (\alpha \cdot \nabla) + m \alpha_0 = \begin{pmatrix} m I_{N/2} & \mathcal{D}^* \\ \mathcal{D} & -m I_{N/2} \end{pmatrix}.
  \end{equation}
  By~\eqref{boundary_conditions_alternative} a function $f=(f_1,f_2) \in H^{1/2}_\alpha(\Omega_+;\mathbb{C}^N)$ belongs to $\dom A_{0,0,2}^+$ if and only if
  \begin{equation} \label{bc_lambda_2}
    \gamma_D^+ f_1=0,
  \end{equation}
  while there are no boundary conditions for $f_2$. 
  Let $f = (f_1, f_2) \in \dom A_{0,0,2}^+$. Then it follows from $f \in H^{1/2}_\alpha(\Omega_+; \mathbb{C}^N)$ and~\eqref{Dirac_block_structure} that $f_1, f_2 \in H^{1/2}(\Omega_+; \mathbb{C}^{N/2})$ and $\mathcal{D} f_1, \mathcal{D}^* f_2 \in L^2(\Omega_+; \mathbb{C}^{N/2})$. We claim that one even has $f_1 \in H^1_0(\Omega_+; \mathbb{C}^{N/2})$. 
  To see this, define the function $g_+:=(f_1,0)$. Since $\mathcal{D} f_1 \in L^2(\Omega_+; \mathbb{C}^{N/2})$, it follows from~\eqref{Dirac_block_structure} that $g_+  \in H^{1/2}_\alpha(\Omega_+; \mathbb{C}^N)$ and hence, $g := g_+ \oplus 0 \in H^{1/2}_\alpha(\mathbb{R}^q \setminus \Sigma; \mathbb{C}^N)$. Since $f \in \dom A_{0,0,2}^+$, we have $\gamma_D^+ f_1=0$ and thus, $g \in \ker \Gamma_0^{(1/2)} = \dom A_0 = H^1(\mathbb{R}^q; \mathbb{C}^N)$, cf. Theorem~\ref{theorem_QBT}. Hence, $f_1 \in H^1_0(\Omega_+; \mathbb{C}^{N/2})$ and we conclude that
  \begin{equation} \label{dom_A_confinement_critical}
    \dom A_{0,0,2}^+ = H^1_0(\Omega_+; \mathbb{C}^{N/2}) \oplus \{ f \in H^{1/2}(\Omega_+; \mathbb{C}^{N/2}): \mathcal{D}^* f \in L^2(\Omega_+; \mathbb{C}^{N/2})\}.
  \end{equation}

  Recall that $\mathcal{D}, \mathcal{D}^*$ are given by~\eqref{def_D} and define the operator
  \begin{equation*}
    \begin{split}
      \mathcal{T} f &= \mathcal{D} f, \quad \dom \mathcal{T} = H^1_0(\Omega_+; \mathbb{C}^{N/2}).
    \end{split}
  \end{equation*}
  It is not difficult to see that $\mathcal{T}$ is closed and that its adjoint is given by
  \begin{equation*}
    \begin{split}
      \mathcal{T}^* f &= \mathcal{D}^* f, \quad \dom \mathcal{T}^* = \{ f \in L^2(\Omega_+; \mathbb{C}^{N/2}): \mathcal{D}^* f \in L^2(\Omega_+; \mathbb{C}^{N/2}) \}.
    \end{split}
  \end{equation*}
  Let $P$ be the projection in $L^2(\Omega_+; \mathbb{C}^N)$ defined for $f=(f_1,f_2) \in L^2(\Omega_+; \mathbb{C}^N)$ by $P (f_1,f_2)=(0,f_2)$. Then $\dom \mathcal{T}^* \simeq P H^0_\alpha(\Omega_+; \mathbb{C}^N)$
  and thus, as $C_0^\infty(\overline{\Omega_+}; \mathbb{C}^{N})$ is dense in $H_\alpha^0(\Omega_+; \mathbb{C}^N)$, also $C_0^\infty(\overline{\Omega_+}; \mathbb{C}^{N/2})$ is dense in $\dom \mathcal{T}^*$ with respect to the graph norm. Therefore, we conclude from~\eqref{Dirac_block_structure} and~\eqref{dom_A_confinement_critical} that
  \begin{equation*}
    \overline{A_{0,0,2}^+} = \begin{pmatrix} m I_{N/2} & \mathcal{T}^* \\ \mathcal{T} & -m I_{N/2} \end{pmatrix}.
  \end{equation*}
  In particular, $\overline{A_{0,0,2}^+}$ has a supersymmetric structure, cf. \cite[Appendix~A]{H21}, and thus $\overline{A_{0,0,2}^+}$ is self-adjoint. Next, choose $c_1, c_2 \in \mathbb{R}$ such that $x_1 - i x_2 - (c_1 - i c_2) \neq 0$ for all $x = (x_1, \dots, x_q) \in \Omega_+$, and define for $n>2$ the functions
  \begin{equation} \label{kernel_element}
    f_n(x) := (x_1-ix_2-c_1 + i c_2)^{-n} e_N, \quad x=(x_1, \dots, x_q) \in \Omega_+,
  \end{equation}
  where $e_N$ is the $N$-th canonical basis vector in $\mathbb{C}^N$. Then, a direct calculation shows that $f_n \in \ker(\overline{A_{0,0,2}^+}+m)$, i.e. $-m$ is an eigenvalue of $\overline{A_{0,0,2}^+}$ with infinite multiplicity. Finally, one shows in a similar way as in \cite[Theorem~3.4]{H21} with the help of \cite[Proposition~A.2]{H21} that $\sigma(\overline{A_{0,0,2}^+}) = \{ -m \} \cup \{ \pm \sqrt{m^2+z}: z \in \sigma(-\Delta_D^{\Omega_+}) \}$, where $-\Delta_D^{\Omega_+}$ is the Dirichlet Laplacian in $\Omega_+$, and hence, as $\Omega_+$ is a bounded Lipschitz domain,  $\sigma(\overline{A_{0,0,2}^+}) \setminus \{ -m \}$ is purely discrete; cf. \cite[Theorem~2.1 and Remark~2.3]{EH22} as well. Hence, item~(i)~(a) is shown.
  
  To see the claims of assertion~(i)~(b), note that the boundary conditions for $A_{0,0,2}^-$ are $2(I_N-\alpha_0) \gamma_D^- f = 0$. Therefore, one can conclude all claims with similar arguments as above; cf. \cite[Theorem~3.4]{H21} for details in dimension three.  In particular, as $\Omega_-$ is an exterior domain it follows from \cite[Theorem~2.1]{EH22} that $\sigma(\overline{A_{0,0,2}^-})= \{ m \} \cup \{ \pm \sqrt{m^2+z}: z \in \sigma(-\Delta_D^{\Omega_-}) \} = (-\infty,-m] \cup [m,\infty)$.
  
  Let us prove item~(ii). In this case, we have $\lambda^2=4$ and $\eta^2 = \tau^2$ and thus, the boundary conditions in~\eqref{boundary_conditions_alternative} are equivalent to
  \begin{equation} \label{boundary_conditions_alternative_crit}
    \begin{split}
      2 (1 \pm \textup{sgn}\, \lambda) \gamma_D^\pm f_1 \mp i \eta(1 - \textup{sgn}\,(\eta \tau)) \mathcal{N}^* \gamma_D^\pm f_2 &= 0, \\
      \mp i \eta (1 + \textup{sgn}\,(\eta \tau)) \mathcal{N} \gamma_D^\pm f_1 + 2 (1 \mp \textup{sgn}\, \lambda) \gamma_D^\pm f_2 &= 0.
    \end{split}
  \end{equation}
  In the following, we consider $\eta=\tau \neq 0$ and $\lambda=2$; in the other cases the proof is similar. In this situation  
  the boundary conditions for $A_{\eta,\eta,2}^+$ are $\gamma_D^+ f_1=0$, while there are no boundary conditions for $f_2$. Hence, we conclude $A_{\eta,\eta,2}^+ = A_{0,0,2}^+$, cf. \eqref{bc_lambda_2}.
  
  Moreover, by~\eqref{boundary_conditions_alternative_crit} the boundary conditions for $f = (f_1,f_2) \in \dom A_{\eta,\eta,2}^-$  are 
  \begin{equation} \label{bc_crit}
    2 i \mathcal{N} \eta \gamma_D^- f_1 + 4 \gamma_D^- f_2 = 0.
  \end{equation}
  Choose $\hat{\eta}, \hat{\tau}, \hat{\lambda} \in \mathbb{R}$ such that $\eta = 2 \frac{\hat{\eta} + \hat{\tau}}{2+\hat{\lambda}}$, $\hat{d} := \hat{\eta}^2-\hat{\tau}^2-\hat{\lambda}^2 = -4$, and $\hat{\lambda}^2 \neq 4$. Then, with the help of~\eqref{boundary_conditions_alternative} it is not difficult to see that~\eqref{bc_crit} is equivalent to the boundary conditions for $A_{\hat{\eta},\hat{\tau},\hat{\lambda}}^-$. This finishes the proof of this proposition.
\end{proof}

Finally, we remark that in dimension two one has that $m$ is an eigenvalue of $\overline{A_{0,0,2}^-}$ of infinite multiplicity. Indeed, let $x_0 = (x_{0,1}, x_{0,2}) \in \Omega_+$. Then, in a similar way as in~\eqref{kernel_element} one defines the functions
\begin{equation*} 
  g_n(x) := (x_1+ix_2-x_{0,1} - i x_{0,2})^{-n} e_1, \quad x=(x_1, x_2) \in \Omega_-,
\end{equation*}
where $e_1$ is the first canonical basis vector in $\mathbb{R}^2$, and shows via a direct calculation that $g_n \in \ker(\overline{A_{0,0,2}^-}-m)$, cf. \cite[Theorem~2.4]{CLMT21}. Similarly, one gets that $-m$ is an eigenvalue of $\overline{A_{0,0,-2}^-}$ of infinite multiplicity. However, this argument fails in dimension $q=3$.

\section{Dirac operators with singular interactions supported on smooth curves and surfaces}\label{critical-section}

In this section, we study the self-adjointness of Dirac operators with singular $\delta$-shell interactions also in the case of critical interaction strengths. However, since the proofs of some necessary auxiliary results are very technical and long, we only state them here and refer to \cite{H22} for details. In \cite{H22}, the operator $\mathcal{C}_z$ in~\eqref{def_C_lambda} is analyzed in a more detailed way, the main ingredients there are periodic pseudodifferential operators in dimension $q=2$, cf. \cite{SV}, and the theory of pseudo-homogeneous kernels for $q=3$, see \cite{N01}.  
Parts of the results presented below are contained in \cite{BH20, OV16} for $q=3$ with purely electrostatic critical interaction strengths, in \cite{BHOP20} for $q=2$ with combinations of electrostatic and Lorentz scalar interactions, and in \cite{Ben21, Ben22} for $q=3$ with combinations of electrostatic and Lorentz scalar interactions, see also \cite{BP22} for a recent contribution to the study of the essential spectrum under the latter assumptions. The proofs of the results in this section make heavily use of Schur complement techniques -- a strategy which was first used in this context in \cite{BHOP20}.

Throughout this section let $\Omega_+ \subset \mathbb{R}^q$ be a bounded and simply connected $C^\infty$-domain and let, as usual, $\Omega_- := \mathbb{R}^q \setminus \overline{\Omega_+}$ and $\Sigma := \partial \Omega_+$, i.e.
$\Sigma$ is a $C^\infty$-smooth bounded and closed curve in $\mathbb{R}^2$ or a $C^\infty$-smooth closed and compact surface in $\mathbb{R}^3$. Note that in this situation the spaces $H^s_\alpha(\Sigma; \mathbb{C}^N)$, $s \in [-\frac{1}{2}, \frac{1}{2}]$, defined in \eqref{def_H_U} and~\eqref{def_H_U_dual} with $\mathbb{U} = \alpha \cdot \nu$ coincide with $H^s(\Sigma; \mathbb{C}^N)$. Moreover, we assume in this section that $m>0$, as the bounded perturbation $m \alpha_0$ does not influence the self-adjointness and interesting spectral properties only appear for $m > 0$.

In a similar way as in~\eqref{def_A_eta,tau,mu} we define for $\eta, \tau, \lambda \in \mathbb{R}$ the operator
\begin{equation} \label{def_A_max}
  \begin{split}
    B_{\eta, \tau, \lambda} f &= \big(-i (\alpha \cdot \nabla) + m \alpha_0\big) f_+ \oplus \big(-i (\alpha \cdot \nabla) + m \alpha_0\big) f_-, \\
   	\dom B_{\eta,\tau,\lambda} &= \left\{ f \in H^{0}_\alpha(\mathbb{R}^q \setminus \Sigma; \mathbb{C}^N): i (\alpha \cdot \nu) (\gamma_D^+ f_+ - \gamma_D^- f_-) \right.\\
    &\hspace{130 pt }\left.+ P_{\eta,\tau,\lambda}\frac{1}{2} (\gamma_D^+ f_+ + \gamma_D^- f_-) = 0 \right\},
  \end{split}
\end{equation}
where $\gamma_D^\pm$ are the Dirichlet trace operators defined in Corollary~\ref{corollary_trace_extension} and $P_{\eta,\tau,\lambda}$ is given by~\eqref{def_P_eta_tau_lambda}. Note that the operator $B_{\eta, \tau, \lambda}$ is defined on a larger set as $A_{\eta, \tau, \lambda}$ in~\eqref{def_A_eta,tau,mu}. However, it will turn out that in the non-critical case $\lambda^2 \neq (\frac{d}{4} - 1)^2$, which includes all combinations of interaction strengths treated in Section~\ref{section_Lipschitz}, one always has $\dom B_{\eta, \tau, \lambda} \subset H^1(\mathbb{R}^q \setminus \Sigma; \mathbb{C}^N)$ and hence, $A_{\eta, \tau, \lambda} = B_{\eta, \tau, \lambda}$; cf. Theorem~\ref{theorem_self_adjoint_critical}.

In order to show the self-adjointness and to study the spectral properties of $B_{\eta,\tau,\lambda}$ for all values of $\eta, \tau, \lambda \in \mathbb{R}$ we make use of the ordinary boundary triple $\{ L^2(\Sigma; \mathbb{C}^N), \Upsilon_0, \Upsilon_1 \}$ from Theorem~\ref{theorem_OBT} and the associated notations. 
For the operators $\Lambda$ and $V$ appearing in the boundary triple  in Theorem~\ref{theorem_OBT} we make specific choices. We will introduce $\Lambda$ as an operator acting on scalar-valued functions, but we will identify it with $\Lambda I_M$ which acts on vector-valued functions for various values of $M \in \mathbb{N}$.
In space dimension $q=2$, we write $\mathbb{T} := \mathbb{R} / \mathbb{Z} \simeq [0,1)$, and denote by $\mathcal{D}(\mathbb{T})$ and $\mathcal{D}'(\mathbb{T})$ the sets of all $1$-periodic test functions and distributions in $\mathbb{R}$, respectively, by $\ell$ the length of $\Sigma$, by $\gamma: [0, \ell] \rightarrow \mathbb{R}^2$ an arc length parametrization of $\Sigma$, and we introduce the mappings  $e_n(t)=e^{2\pi i n t}$, $n \in \mathbb{Z}$, and
$U: \mathcal{D}'(\Sigma) \rightarrow \mathcal{D}'(\mathbb{T})$ by
\begin{equation*} 
  \langle U f, \varphi \rangle_{\mathcal{D}'(\mathbb{T}) \times \mathcal{D}(\mathbb{T})} := \big\langle f,  \ell^{-1} \varphi (\ell^{-1} \gamma^{-1} (\cdot )) \big\rangle_{\mathcal{D}'(\Sigma) \times \mathcal{D}(\Sigma)}, \quad \varphi \in \mathcal{D}(\mathbb{T}),
\end{equation*}
where $\langle \cdot, \cdot \rangle_{X' \times X}$ denotes the sesquilinear duality product in $X' \times X$ for a Hilbert space $X$.
Then we define for a constant $c_\Lambda > 0$ and $t \in \mathbb{R}$ the map 
\begin{equation} \label{def_Lambda_2d}
  \Lambda^t u :=  U^{-1} \left(\frac{4 \pi}{\ell} \right)^{t/2} \sum_{n \in \mathbb{Z}} (c_\Lambda + |n|)^{t/2} \langle U u, e_{n} \rangle_{\mathcal{D}'(\mathbb{T}) \times \mathcal{D}(\mathbb{T})} e_n, \qquad u \in \mathcal{D}'(\Sigma).
\end{equation}
If $q=3$, we denote by  $\mathcal{S}$  the single layer boundary integral operator  associated with $-\Delta + 1$ that is acting on $\varphi \in C^\infty(\Sigma; \mathbb{C})$ by
\begin{equation*}
  \mathcal{S} \varphi(x) = \int_\Sigma \frac{e^{-|x-y|}}{4 \pi |x-y|} \varphi(y) d \sigma(y), \quad x \in \Sigma.
\end{equation*}
It is known that for any $s \in \mathbb{R}$ this map can be extended to a bounded operator $\mathcal{S}: H^{s}(\Sigma; \mathbb{C}) \rightarrow H^{s+1}(\Sigma; \mathbb{C})$; cf. \cite[Theorem~7.2]{M00}. Define now with $c_\Lambda>0$
\begin{equation} \label{def_Lambda_3d}
  \Lambda := (\mathcal{S}^{-1} + c_\Lambda)^{1/2}.
\end{equation}
In both cases $q \in \{ 2,3 \}$ the operator $\Lambda$ gives rise to a bijective operator from $H^s(\Sigma; \mathbb{C})$ to $H^{s-1/2}(\Sigma; \mathbb{C})$ for all $s \in \mathbb{R}$, see \cite{H22} for details. Moreover, the realization of $\Lambda$ in $L^2(\Sigma; \mathbb{C})$ is self-adjoint with $\dom \Lambda = H^{1/2}(\Sigma; \mathbb{C})$, one has $\Lambda \geq c_\Lambda^{1/2}$, and, since $\Lambda^{-1}$ is compact by Rellich's embedding theorem, this realization in $L^2(\Sigma; \mathbb{C})$ satisfies $\sigma_\textup{ess}(\Lambda)=\emptyset$.

Next, the map $V$ appearing in the definition of the boundary triple in Theorem~\ref{theorem_OBT} is defined, depending on the space dimension, by
\begin{equation} \label{def_V}
  V = \begin{pmatrix} 1 & 0 \\ 0 & -i (\nu_1 - i \nu_2) \end{pmatrix} \quad \text{for } q=2 \text{ and} \quad V = \begin{pmatrix} I_2 & 0 \\ 0 & -i (\sigma \cdot \nu) \end{pmatrix} \quad \text{for } q=3.
\end{equation}
Moreover, we introduce the matrix $B \in \mathbb{C}^{N \times N}$ by
\begin{equation*}
  B := \begin{pmatrix} (\eta + \tau) I_{N/2} & \lambda I_{N/2} \\ \lambda I_{N/2} & (\eta - \tau) I_{N/2} \end{pmatrix}.
\end{equation*}
Then, we have $P_{\eta,\tau,\lambda} = V^* B V$.
Define in $L^2(\Sigma; \mathbb{C}^N)$ the operator $\Theta$ by
\begin{equation} \label{def_Theta}
  \begin{split}
    \Theta \varphi &= \Lambda \big( B - 4 V (\alpha \cdot \nu) \widetilde{\mathcal{C}}_0 (\alpha \cdot \nu) V^* \big) \Lambda \varphi, \\
      \dom \Theta &= \big\{ \varphi \in L^2(\Sigma; \mathbb{C}^N): \big( B - 4 V (\alpha \cdot \nu) \widetilde{\mathcal{C}}_0 (\alpha \cdot \nu) V^* \big) \Lambda \varphi \in H^{1/2}(\Sigma; \mathbb{C}^N) \big\},
  \end{split}
\end{equation} 
where $\widetilde{\mathcal{C}}_0$ is defined as in Proposition~\ref{proposition_extensions_Phi_C}.
Using the relations $P_{\eta,\tau,\lambda} = V^* B V$ and
\begin{equation*}
  \begin{split}
    \frac{1}{2} (\gamma_D^+ f_+ + \gamma_D^- f_-)  &= V^* \Lambda \Upsilon_0 f, \\
    i (\alpha \cdot \nu) (\gamma_D^+ f_+ - \gamma_D^- f_-) &= -V^* \Lambda^{-1} \Upsilon_1 f - 4  (\alpha \cdot \nu) \widetilde{\mathcal{C}}_0 (\alpha \cdot \nu) V^* \Lambda \Upsilon_0 f,
  \end{split}
\end{equation*}
it is not difficult to see that the transmission conditions in the definition of $B_{\eta, \tau, \lambda}$ are equivalent to $\Upsilon_1 f - \Theta \Upsilon_0 f = 0$, i.e. we have
\begin{equation} \label{equation_B_BT}
  \dom B_{\eta, \tau, \lambda} = \dom T^{(0)} \upharpoonright \ker (\Upsilon_1 - \Theta \Upsilon_0).
\end{equation}
Therefore, to understand the properties of $B_{\eta,\tau,\lambda}$, in view of Theorem~\ref{theorem_Krein_abstract}, it suffices to study the associated parameter $\Theta$. 
In order to show the self-adjointness of $\Theta$, a detailed analysis of the involved integral operators is required. Define the operator $\mathcal{R}$ by
\begin{equation} \label{def_R}
  \mathcal{R} \varphi (x) = \lim_{\varepsilon \searrow 0} \int_{\Sigma \setminus B(x,\varepsilon)} r(x,y) \varphi(y) d \sigma(y), \qquad \varphi \in C^\infty(\Sigma; \mathbb{C}^{N/2}),
\end{equation}
where the integral kernel $r$ is given by
\begin{equation*}
  r(x,y) = \begin{cases} -\frac{2}{\pi}\frac{\nu_1(y) + i \nu_2(y)}{x_1 + i x_2-(y_1 +iy_2)}, &\text{ if }q=2, \\
   - \frac{\sigma \cdot (x-y)}{\pi|x-y|^3} (\sigma \cdot \nu(y)), &\text{ if } q=3. \end{cases}
\end{equation*}
Note that for $q=2$ the operator $\mathcal{R}$ is a multiple of the Cauchy transform, while for $q=3$ the map $\mathcal{R}$ is closely related to the Riesz transform. We define also the formal adjoint of $\mathcal{R}$ w.r.t. the inner product in $L^2(\Sigma; \mathbb{C}^{N/2})$ by
\begin{equation*} 
  \mathcal{R}^* \varphi (x) = \lim_{\varepsilon \searrow 0} \int_{\Sigma \setminus B(x,\varepsilon)} r(y,x)^* \varphi(y) d \sigma(y).
\end{equation*}
For any $s \in [-\frac{1}{2}, \frac{1}{2}]$ one can show that $\mathcal{R}$ and $\mathcal{R}^*$ give rise to  bounded operators in $H^s(\Sigma; \mathbb{C}^{N/2})$ that satisfy for all $\varphi \in H^s(\Sigma; \mathbb{C}^{N/2})$ and $\psi \in H^{-s}(\Sigma; \mathbb{C}^{N/2})$ the relation
\begin{equation*}
  \langle \mathcal{R} \varphi, \psi \rangle_{H^s(\Sigma; \mathbb{C}^{N/2}) \times H^{-s}(\Sigma; \mathbb{C}^{N/2})}
  = \langle \varphi, \mathcal{R}^* \psi \rangle_{H^s(\Sigma; \mathbb{C}^{N/2}) \times H^{-s}(\Sigma; \mathbb{C}^{N/2})},
\end{equation*}
see \cite{H22}, and we denote their realizations in $H^{-1/2}(\Sigma; \mathbb{C}^{N/2})$ by $\widetilde{\mathcal{R}}$ and $\widetilde{\mathcal{R}}^*$, respectively. Then we have the following result \cite{H22}.

\begin{proposition} \label{proposition_Riesz_transform}
  Let $\mathcal{R}$ and $\mathcal{R}^*$ be defined as above and let $\mathcal{M}(z)$, $z \in (-m,m)$, be the Weyl function associated with the boundary triple in Theorem~\ref{theorem_OBT}. Then the following holds:
  \begin{itemize}
    \item[(i)] There exists a self-adjoint and compact operator $\mathcal{K}_1$ in $L^2(\Sigma; \mathbb{C}^N)$ such that
    \begin{equation*}
      \Theta-\mathcal{M}(z) = \Lambda \begin{pmatrix} (\eta + \tau)I_{N/2} & \lambda I_{N/2} + \widetilde{\mathcal{R}}^* \\ \lambda I_{N/2} + \widetilde{\mathcal{R}} & (\eta - \tau)I_{N/2} \end{pmatrix} \Lambda - 4 \begin{pmatrix} (z - m)I_{N/2} & 0 \\ 0 & (z+m)I_{N/2} \end{pmatrix} + \mathcal{K}_1.
    \end{equation*} 
        \item[(ii)] The operator $\widetilde{\mathcal{R}} - \widetilde{\mathcal{R}}^*: H^{-1/2}(\Sigma; \mathbb{C}^{N/2}) \rightarrow H^{1/2}(\Sigma; \mathbb{C}^{N/2})$ is bounded.
    \item[(iii)] One has $\mathcal{R}^2  = 4 I_{N/2}$. In particular, this implies that 
    \begin{equation*}
      \widetilde{\mathcal{R}}  \widetilde{\mathcal{R}}^* - 4 I_{N/2} = \widetilde{\mathcal{R}}(\widetilde{\mathcal{R}}^* - \widetilde{\mathcal{R}}): H^{-1/2}(\Sigma; \mathbb{C}^{N/2}) \rightarrow H^{1/2}(\Sigma; \mathbb{C}^{N/2})
    \end{equation*}
    is bounded.
    \item[(iv)] There exist a compact operator $\mathcal{K}_2: H^{-1/2}(\Sigma; \mathbb{C}^{N/2}) \rightarrow H^{1/2}(\Sigma; \mathbb{C}^{N/2})$ and closed subspaces $\mathcal{H}_\pm$ of $H^{-1/2}(\Sigma; \mathbb{C}^{N/2})$ satisfying $\dim \mathcal{H}_+ = \dim \mathcal{H}_-$, $H^{-1/2}(\Sigma; \mathbb{C}^{N/2}) = \mathcal{H}_+ \dot{+} \mathcal{H}_-$, and $\mathcal{H}_\pm \not\subset H^s(\Sigma; \mathbb{C}^{N/2})$ for any $s > -\frac{1}{2}$ such that the realization of $\widetilde{\mathcal{R}} + \widetilde{\mathcal{R}}^*$ in the space $H^{-1/2}(\Sigma; \mathbb{C}^{N/2})$ can be written as
    \begin{equation*}
      \widetilde{\mathcal{R}} + \widetilde{\mathcal{R}}^* = 4 P_+ - 4 P_- + \mathcal{K}_2,
    \end{equation*}
    where $P_\pm$ is the projection onto $\mathcal{H}_\pm$. Moreover, for any $\varphi \in H^{1/2}(\Sigma; \mathbb{C}^{N/2})$ one has $P_\pm \varphi \in H^{1/2}(\Sigma; \mathbb{C}^{N/2})$.
  \end{itemize}
\end{proposition}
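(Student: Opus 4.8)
\textbf{Plan of proof for Proposition~\ref{proposition_Riesz_transform}.}
The plan is to treat the dimensions $q=2$ and $q=3$ separately, since the analytic machinery is different, but the structure of the argument is parallel. The common strategy is: (a) identify $\mathcal{R}$ (resp. $\mathcal{R}^*$) as the leading singular part of the boundary integral operator $\widetilde{\mathcal C}_0$, extracting a more regular remainder; (b) pin down the algebraic identity $\mathcal{R}^2 = 4 I_{N/2}$ by a direct kernel computation and a limiting argument; (c) use the commutator/difference structure to gain half a derivative, which gives items (ii) and (iii); and (d) build the spectral decomposition in item (iv) from the involution property together with the regularity gain. For (i), I would start from the representation of $\widetilde{\mathcal C}_0$ as an operator with kernel $G_{0,q}$, split $G_{0,q}(x-y)$ into its most singular homogeneous term plus a remainder of one order lower homogeneity (for $q=3$ this is the expansion already visible in the proof of Lemma~\ref{lemma_K_alpha}; for $q=2$ one uses the small-argument asymptotics of $K_0, K_1$), conjugate by $V(\alpha\cdot\nu)$ and by $\Lambda$, and read off that the principal part reproduces exactly the block structure $\bigl(\begin{smallmatrix} \ast & \lambda + \widetilde{\mathcal R}^* \\ \lambda + \widetilde{\mathcal R} & \ast \end{smallmatrix}\bigr)$ after accounting for the $\mathcal{M}(z)$ subtraction, which removes the $\pm m$ and $z$ contributions; the lower-order remainder, together with commutators of $\Lambda$ with smooth multipliers, assembles into the self-adjoint compact operator $\mathcal{K}_1$ (self-adjointness of $\mathcal K_1$ following because both $\Theta - \mathcal M(z)$ and the explicit principal part are symmetric for real $z$, by Proposition~\ref{proposition_extensions_Phi_C}(ii) and the duality relation for $\mathcal R, \mathcal R^*$).

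For (ii), the key point is that $\widetilde{\mathcal R} - \widetilde{\mathcal R}^*$ has kernel $r(x,y) - r(y,x)^*$, and one checks that the antisymmetrization cancels the leading $|x-y|^{-(q-1)}$ singularity, leaving a kernel of order $|x-y|^{-(q-2)}$ (a weakly singular or even bounded kernel when the surface is $C^\infty$, using a Taylor expansion of $\nu$ and of $\gamma$ along $\Sigma$). Such a kernel defines an operator that is smoothing by one order on the Sobolev scale, giving boundedness $H^{-1/2} \to H^{1/2}$. In dimension two this is the statement that the Cauchy transform minus its adjoint is smoothing; in dimension three one invokes the pseudo-homogeneous kernel calculus of \cite{N01} to identify the order. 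For (iii), the identity $\mathcal R^2 = 4 I_{N/2}$ should be proved by noting that $\widetilde{\mathcal C}_0 (\alpha\cdot\nu)$ squares to $-\tfrac14 I_N$ by~\eqref{formula_C_inv_extension}, passing this relation through the principal-part extraction of step (i): the principal symbol of $V(\alpha\cdot\nu)\widetilde{\mathcal C}_0(\alpha\cdot\nu)V^*$ is the block-antidiagonal matrix built from $\tfrac14\mathcal R$ and $\tfrac14\mathcal R^*$, so $(\widetilde{\mathcal C}_0(\alpha\cdot\nu))^2 = -\tfrac14 I_N$ forces $\mathcal R \mathcal R^* = \mathcal R^* \mathcal R$ up to smoothing terms and, combined with the explicit kernel of $\mathcal R$ for $q=2$ (where $\mathcal R$ is literally proportional to the Cauchy transform whose square on a closed curve is known) or a direct computation for $q=3$, yields $\mathcal R^2 = 4 I_{N/2}$ exactly. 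The bound on $\widetilde{\mathcal R}\widetilde{\mathcal R}^* - 4 I_{N/2} = \widetilde{\mathcal R}(\widetilde{\mathcal R}^* - \widetilde{\mathcal R})$ then follows by composing the boundedness of $\widetilde{\mathcal R}$ on $H^{-1/2}$ with item (ii).

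For (iv), I would set $\mathcal A := \tfrac12(\widetilde{\mathcal R} + \widetilde{\mathcal R}^*)$ and observe from (ii) and (iii) that $\mathcal A^2 = \tfrac14(\widetilde{\mathcal R} + \widetilde{\mathcal R}^*)^2 = \tfrac14(\widetilde{\mathcal R}^2 + \widetilde{\mathcal R}^{*2} + \widetilde{\mathcal R}\widetilde{\mathcal R}^* + \widetilde{\mathcal R}^*\widetilde{\mathcal R}) = 4 I_{N/2} + (\text{smoothing from }H^{-1/2}\text{ to }H^{1/2})$, using $\widetilde{\mathcal R}^2 = 4 I$ and its adjoint $\widetilde{\mathcal R}^{*2} = 4 I$ together with (iii) to control the cross terms. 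Thus $\tfrac12 \mathcal A$ is an involution up to a compact perturbation $H^{-1/2}\to H^{1/2}$, and a standard Riesz projection argument (spectral projection of $\mathcal A$ onto the parts of the spectrum near $\pm 4$, which are separated since the essential spectrum is exactly $\{\pm 4\}$) produces complementary closed subspaces $\mathcal H_\pm$ with $\widetilde{\mathcal R} + \widetilde{\mathcal R}^* = 4P_+ - 4P_- + \mathcal K_2$, $\mathcal K_2$ compact and smoothing; equality $\dim\mathcal H_+ = \dim\mathcal H_-$ follows because $\alpha_0$ (or an analogous unitary) intertwines $\mathcal R$ with $-\mathcal R$ on the principal-symbol level, swapping the two subspaces modulo finite rank. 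The claim $\mathcal H_\pm \not\subset H^s$ for $s > -\tfrac12$ should come from the fact that $\mathcal H_\pm$ contains, modulo $H^{1/2}$, the ranges of the principal-symbol projections $\tfrac12(I \pm \tfrac12\mathcal R)$, which are genuinely of order zero and not smoothing; one exhibits explicit singular elements (for $q=2$, boundary values of Cauchy integrals with a pole approaching $\Sigma$, as in~\eqref{kernel_element}) lying in $\mathcal H_+$ or $\mathcal H_-$ but in no better Sobolev space, and $P_\pm$ maps $H^{1/2}$ into $H^{1/2}$ because its difference from the principal-symbol projection is smoothing. The main obstacle is step (i)--(iii) in dimension three: extracting the precise principal part of $\widetilde{\mathcal C}_0$ and establishing the half-derivative smoothing of the symmetrized/antisymmetrized Riesz-type kernels requires the full pseudo-homogeneous kernel calculus of \cite{N01} and careful bookkeeping of the $C^\infty$-geometry of $\Sigma$; this is exactly why the paper defers the proof to \cite{H22}, and in this plan I would likewise treat those kernel estimates as the technical heart to be carried out there.
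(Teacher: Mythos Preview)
The paper does not contain its own proof of this proposition: the text immediately preceding it states that ``since the proofs of some necessary auxiliary results are very technical and long, we only state them here and refer to \cite{H22} for details,'' and indeed no proof is given. So there is nothing in the paper to compare your plan against directly. Your outline is consistent with the tools the paper names (periodic pseudodifferential operators for $q=2$, the pseudo-homogeneous kernel calculus of \cite{N01} for $q=3$), and you correctly recognise that the hard analytic core is deferred to \cite{H22}.

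That said, one step of your sketch is not quite right as stated. In item~(iv) you propose to derive $\dim\mathcal{H}_+ = \dim\mathcal{H}_-$ from the fact that ``$\alpha_0$ (or an analogous unitary) intertwines $\mathcal{R}$ with $-\mathcal{R}$ on the principal-symbol level''. But $\mathcal{R}$ acts on $\mathbb{C}^{N/2}$-valued functions, while $\alpha_0$ is the block-diagonal matrix $\mathrm{diag}(I_{N/2},-I_{N/2})$ on $\mathbb{C}^N$; it does not act within a single $N/2$-block and so cannot intertwine $\mathcal R$ with $-\mathcal R$ in the way you suggest. For $q=2$ the correct intertwiner is rather complex conjugation (or equivalently the swap of the interior and exterior Hardy projections associated with the Cauchy transform), and for $q=3$ one needs a genuinely different symmetry of the Riesz-type kernel on $\Sigma$. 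This does not break the overall plan, but the specific mechanism you name would need to be replaced.
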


Now, we are prepared to prove the self-adjointness of $B_{\eta, \tau, \lambda}$. We remark that the result in the non-critical case $(\frac{d}{4} - 1)^2 - \lambda^2 \neq 0$ has been obtained in space dimension $q=2$ with a different proof in \cite{CLMT21}.

\begin{theorem} \label{theorem_self_adjoint_critical}
  The operator $B_{\eta, \tau, \lambda}$ is self-adjoint in $L^2(\mathbb{R}^q; \mathbb{C}^N)$ and the following holds:
  \begin{itemize}
    \item[(i)] If $(\frac{d}{4} - 1)^2 - \lambda^2 \neq 0$, then $\dom B_{\eta, \tau, \lambda} \subset H^1(\mathbb{R}^q \setminus \Sigma; \mathbb{C}^N)$.
    \item[(ii)] If $(\frac{d}{4} - 1)^2 - \lambda^2 = 0$, then $\dom B_{\eta, \tau, \lambda} \not\subset H^s(\mathbb{R}^q \setminus \Sigma; \mathbb{C}^N)$ for all $s>0$.
  \end{itemize}
\end{theorem}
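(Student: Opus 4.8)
The plan is to transfer the problem to the abstract parameter $\Theta$ via \eqref{equation_B_BT} and then to exploit the fine structure of the Cauchy/Riesz transforms recorded in Proposition~\ref{proposition_Riesz_transform}. Since $\{L^2(\Sigma;\dC^N),\Upsilon_0,\Upsilon_1\}$ from Theorem~\ref{theorem_OBT} is an \emph{ordinary} boundary triple for $T^{(0)}$ and \eqref{equation_B_BT} gives $B_{\eta,\tau,\lambda}=T^{(0)}\upharpoonright\ker(\Upsilon_1-\Theta\Upsilon_0)$, Theorem~\ref{theorem_Krein_abstract} shows that $B_{\eta,\tau,\lambda}$ is self-adjoint if and only if $\Theta$ is self-adjoint in $L^2(\Sigma;\dC^N)$; moreover $\Theta$ is symmetric, because $P_{\eta,\tau,\lambda}=V^*BV$ with $B=B^*$ (equivalently, because $B_{\eta,\tau,\lambda}$ is symmetric, cf.\ \eqref{abab}). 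I would then fix $z_0\in(-m,m)$ and invoke Proposition~\ref{proposition_Riesz_transform}(i): the Weyl function value $\mathcal M(z_0)$, the diagonal term $-4\,\mathrm{diag}((z_0-m)I_{N/2},(z_0+m)I_{N/2})$ and the operator $\mathcal K_1$ are all bounded and self-adjoint on $L^2(\Sigma;\dC^N)$, so $\Theta$ is self-adjoint precisely when the sandwiched operator $\Lambda\mathcal A\Lambda$ is, where
\begin{equation*}
  \mathcal A=\begin{pmatrix}(\eta+\tau)I_{N/2}&\lambda I_{N/2}+\widetilde{\mathcal R}^*\\ \lambda I_{N/2}+\widetilde{\mathcal R}&(\eta-\tau)I_{N/2}\end{pmatrix}
\end{equation*}
is regarded as a bounded operator in $H^{-1/2}(\Sigma;\dC^N)$ and $\Lambda$ is applied componentwise; in particular $\dom\Theta=\{\varphi\in L^2(\Sigma;\dC^N):\mathcal A\Lambda\varphi\in H^{1/2}(\Sigma;\dC^N)\}$.

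For the self-adjointness of $\Lambda\mathcal A\Lambda$ I would argue by a Schur complement. Assume first $\eta\neq\tau$. Using $\mathcal R^2=4I_{N/2}$, the boundedness of $\widetilde{\mathcal R}-\widetilde{\mathcal R}^*$ and of $\mathcal K_2$ from $H^{-1/2}$ to $H^{1/2}$, and $\widetilde{\mathcal R}+\widetilde{\mathcal R}^*=4(P_+-P_-)+\mathcal K_2$ (Proposition~\ref{proposition_Riesz_transform}(ii)--(iv)), the Schur complement of $\mathcal A$ with respect to its invertible $(2,2)$-block simplifies to
\begin{equation*}
  S=(\eta-\tau)^{-1}\bigl((d-4-4\lambda)P_++(d-4+4\lambda)P_-\bigr)+\mathcal K,\qquad d=\eta^2-\tau^2-\lambda^2,
\end{equation*}
with $\mathcal K$ compact in $H^{-1/2}(\Sigma;\dC^{N/2})$. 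Since $(d-4-4\lambda)(d-4+4\lambda)=16((\tfrac{d}{4}-1)^2-\lambda^2)$, two cases arise. If $(\tfrac{d}{4}-1)^2-\lambda^2\neq0$, both coefficients of $S$ are nonzero, so $\mathcal A$ is elliptic of order zero; then $\mathcal A-z\Lambda^{-2}$ is Fredholm of index zero in $H^{-1/2}(\Sigma;\dC^N)$ for $z\in\dC\setminus\dR$, hence so is $\Lambda\mathcal A\Lambda-z$ in $L^2$, and injectivity of $\Theta-z$ (from the symmetry of $\Theta$) upgrades this to bijectivity onto $L^2(\Sigma;\dC^N)$, whence $\Theta$, and thus $B_{\eta,\tau,\lambda}$, is self-adjoint. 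If $(\tfrac{d}{4}-1)^2-\lambda^2=0$, then $S$ degenerates along $\mathcal H_+$ (or $\mathcal H_-$); I would perform a second Schur reduction isolating this subspace, controlling the arising operators by Proposition~\ref{proposition_Riesz_transform}(ii)--(iv) and by $\mathcal H_\pm\not\subset H^s(\Sigma;\dC^{N/2})$ for $s>-\tfrac12$, and still obtain that $\Theta-z$ is bijective onto $L^2(\Sigma;\dC^N)$. The remaining parameter ranges $\eta=\tau\neq0$ and $\eta=\tau=0$ are treated in the same way, using the $(1,1)$-block of $\mathcal A$ or its anti-diagonal form, respectively.

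The regularity statements follow by tracking the Sobolev scale. In the non-critical case, ellipticity of $\mathcal A$ provides a parametrix, so $\mathcal A\Lambda\varphi\in H^{1/2}$ forces $\Lambda\varphi\in H^{1/2}$, i.e.\ $\dom\Theta=H^1(\Sigma;\dC^N)$. Given $f\in\dom B_{\eta,\tau,\lambda}$, write $f=f_\infty+\widetilde\Phi_0\xi$ with $f_\infty\in\dom A_\infty\subset H^1(\dR^q\setminus\Sigma;\dC^N)$ and, by \eqref{equation_Ups0} and \eqref{formula_C_inv_extension}, $\xi=-4(\alpha\cdot\nu)\widetilde{\mathcal C}_0(\alpha\cdot\nu)V^*\Lambda\Upsilon_0 f$; since $\Upsilon_0 f\in\dom\Theta=H^1$, $\Lambda$ maps $H^1$ into $H^{1/2}$, and $\widetilde{\mathcal C}_0$ and $\alpha\cdot\nu$ are bounded on $H^{1/2}(\Sigma;\dC^N)$ (the spaces $H^s_\alpha(\Sigma)$ and $H^s(\Sigma)$ agree here), one gets $\xi\in H^{1/2}(\Sigma;\dC^N)$, hence $\widetilde\Phi_0\xi=\Phi_0\xi\in H^1(\dR^q\setminus\Sigma;\dC^N)$ by Theorem~\ref{theorem_QBT}(ii), so $f\in H^1(\dR^q\setminus\Sigma;\dC^N)$, which is~(i). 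In the critical case the degenerate subspace from the second Schur step yields $\varphi\in\dom\Theta$ with $\Lambda\varphi\in\mathcal H_+$ and, by Proposition~\ref{proposition_Riesz_transform}(iv), $\Lambda\varphi\notin H^s(\Sigma;\dC^{N/2})$ for any $s>-\tfrac12$, while $\mathcal A\Lambda\varphi\in H^{1/2}$ because of the degeneracy; hence $\varphi\in L^2(\Sigma;\dC^N)$ but $\varphi\notin H^s(\Sigma;\dC^N)$ for any $s>0$. Unwinding as above, the associated $f$ has $\xi\notin H^s(\Sigma;\dC^N)$ for $s>-\tfrac12$, and since the trace characterization of $\widetilde\Phi_0$ (Corollary~\ref{corollary_extensions_Phi_C} together with Corollary~\ref{corollary_trace_extension}) makes the mapping property $H^{s-1/2}(\Sigma)\to H^s(\Omega_\pm)$ sharp, this forces $\widetilde\Phi_0\xi\notin H^s(\Omega_\pm;\dC^N)$ for $s>0$; as $f_\infty\in H^1$, we conclude $f\notin H^s(\dR^q\setminus\Sigma;\dC^N)$ for any $s>0$, which is~(ii).

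The main obstacle is the critical-case analysis in the second paragraph — the second Schur reduction handling the degenerate block $\mathcal H_\pm$ — and, relatedly, the sharpness assertion in the third paragraph; both rely essentially on the precise splitting $\widetilde{\mathcal R}+\widetilde{\mathcal R}^*=4(P_+-P_-)+\mathcal K_2$ with $\mathcal H_\pm\not\subset H^s(\Sigma;\dC^{N/2})$ for $s>-\tfrac12$ from Proposition~\ref{proposition_Riesz_transform}, whose (long and technical) proof is deferred to \cite{H22}.
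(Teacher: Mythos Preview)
Your overall strategy---reduce to self-adjointness of $\Theta$ via the ordinary boundary triple, strip off bounded self-adjoint pieces using Proposition~\ref{proposition_Riesz_transform}(i), and analyze the sandwiched operator $\Lambda\mathcal A\Lambda$ by a Schur complement---is exactly what the paper does. The non-critical case is fine: your Fredholm argument is a valid variant of the paper's direct computation $\dom\Xi=H^1(\Sigma;\dC^N)$ (both ultimately rest on the invertibility of $(d-4-4\lambda)P_++(d-4+4\lambda)P_-$ modulo compact); the paper simply reads off $\dom\Xi=H^1$ from the Schur decomposition and concludes $\Xi=\Xi_1:=\Xi\upharpoonright H^1$, which is symmetric with $\Xi_1^*\subset\Xi$, hence self-adjoint.

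The genuine gap is your critical-case argument. A ``second Schur reduction isolating $\mathcal H_\pm$'' is not a well-defined step: $P_\pm$ are projections in $H^{-1/2}$, not a priori orthogonal in $L^2$ (this is only asserted for $q=2$ in Proposition~\ref{proposition_essential_spectrum_preparation}(i)), and there is no invertible diagonal block left to pivot on once the Schur complement $\mathfrak s$ degenerates. The paper does something different and concrete: it shows $\Xi=\overline{\Xi_1}$ by a \emph{density argument in the graph norm}. Given $\varphi\in\dom\mathfrak s$ in the degenerate case (say $4\lambda=d-4$, so only $P_-\Lambda\varphi\in H^{1/2}$ is constrained), one approximates by $\varphi_n\in H^1$ with $P_-\Lambda\varphi_n=P_-\Lambda\varphi$ fixed and $P_+\Lambda\varphi_n\to P_+\Lambda\varphi$ in $H^{-1/2}$; the representation \eqref{Schur_complement} then gives $\mathfrak s_1\varphi_n\to\mathfrak s\varphi$, whence $\mathfrak s\subset\overline{\mathfrak s_1}$ and, via \cite[Theorem~2.2.14]{Tretter}, $\Xi\subset\overline{\Xi_1}$. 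Combined with $\Xi_1^*\subset\Xi$ this forces $\Xi=\overline{\Xi_1}=\Xi_1^*$, i.e.\ self-adjointness. You should replace the vague second reduction by this closure argument.

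A smaller point on regularity~(ii): your ``sharpness'' of $\widetilde\Phi_0:H^{s-1/2}\to H^s$ is correct but not justified by Corollaries~\ref{corollary_extensions_Phi_C}--\ref{corollary_trace_extension} alone (those give only one direction). The paper instead proves the biconditional $\beta(z)\varphi\in H^s_\alpha(\dR^q\setminus\Sigma;\dC^N)\Leftrightarrow\varphi\in H^s(\Sigma;\dC^N)$ by computing the trace $\gamma_D^+(\beta(z)\varphi)_++\gamma_D^-(\beta(z)\varphi)_-=2V^*\Lambda\varphi$ via Proposition~\ref{proposition_extensions_Phi_C}(iii) and~\eqref{formula_C_inv_extension}, and then reads off the lack of regularity from $\dom\Theta\not\subset H^s(\Sigma;\dC^N)$ through Krein's resolvent formula~\eqref{equation_krein_application}.
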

\begin{proof}
   In view of Theorem~\ref{theorem_Krein_abstract} and~\eqref{equation_B_BT} the operator $B_{\eta, \tau, \lambda}$ is self-adjoint in $L^2(\mathbb{R}^q; \mathbb{C}^N)$ if and only if $\Theta$ is self-adjoint in $L^2(\Sigma; \mathbb{C}^N)$. Using Proposition~\ref{proposition_Riesz_transform} and $\mathcal{M}(0)=0$ we see that this is the case if and only if the operator
   \begin{equation*}
     \begin{split}
       \Xi \varphi &= \Lambda \begin{pmatrix} (\eta + \tau)I_{N/2} & \lambda I_{N/2} + \widetilde{\mathcal{R}}^* \\ \lambda I_{N/2} + \widetilde{\mathcal{R}} & (\eta - \tau) I_{N/2} \end{pmatrix} \Lambda \varphi, \\
       \dom \Xi &= \left\{ \varphi \in L^2(\Sigma; \mathbb{C}^N): \Lambda \begin{pmatrix} (\eta + \tau) I_{N/2} & \lambda I_{N/2} + \widetilde{\mathcal{R}}^* \\ \lambda I_{N/2} + \widetilde{\mathcal{R}} & (\eta - \tau) I_{N/2} \end{pmatrix} \Lambda \varphi \in L^2(\Sigma; \mathbb{C}^N) \right\},
     \end{split}
   \end{equation*}
   is self-adjoint in $L^2(\Sigma; \mathbb{C}^N)$, as $\Theta - \Xi$ is a bounded and self-adjoint operator in $L^2(\Sigma; \mathbb{C}^N)$. We are going to prove the self-adjointness of $\Xi$ for $\eta + \tau \neq 0$, the other cases will be commented at the end of the proof. We will make use of the restriction
  \begin{equation} \label{def_Theta_1}
    \Xi_1 := \Xi \upharpoonright H^1(\Sigma; \mathbb{C}^N).
  \end{equation}
  Using standard arguments as, e.g., in Step~1 and Step~3 in the proof of \cite[Lemma~5.4]{BH20}, it is not difficult to show that $\Xi_1$ is symmetric, $\Xi_1^* \subset \Xi$, and that $\Xi$ is closed. So in order to show the self-adjointness of $\Xi$, it suffices to prove $\Xi \subset \overline{\Xi_1}$. For that, we consider the Schur complements associated with $\Xi$ and $\Xi_1$, respectively. Recall that we assume $\eta + \tau \neq 0$. We claim that the representation
  \begin{equation} \label{Schur_complement_decomposition}
    \begin{split}  
      \Xi = \begin{pmatrix} I_{N/2} & 0 \\ (\eta + \tau)^{-1} \Lambda (\lambda I_{N/2} + \widetilde{\mathcal{R}}) \Lambda^{-1} & I_{N/2} \end{pmatrix}  \begin{pmatrix} (\eta+\tau) \Lambda^2 & 0 \\ 0 & \mathfrak{s} \end{pmatrix}\qquad \qquad \qquad & \\
      \begin{pmatrix} I_{N/2} & (\eta+\tau)^{-1} \Lambda^{-1} (\lambda I_{N/2} + \widetilde{\mathcal{R}}^*) \Lambda \\ 0 & I_{N/2} \end{pmatrix}&
    \end{split}
  \end{equation}
  with the maximal realization of the Schur complement
  \begin{equation*}
    \begin{split}
      \mathfrak{s} \varphi &= (\eta - \tau) \Lambda^2 - (\eta+\tau)^{-1} \Lambda (\lambda I_{N/2} + \widetilde{\mathcal{R}}) (\lambda I_{N/2} + \widetilde{\mathcal{R}}^*) \Lambda \varphi, \\
      \dom \mathfrak{s} &= \big\{ \varphi \in L^2(\Sigma; \mathbb{C}^{N/2}): \\
      & \qquad \big( (\eta - \tau) \Lambda^2 - (\eta+\tau)^{-1} \Lambda (\lambda I_{N/2} + \widetilde{\mathcal{R}}) (\lambda I_{N/2} + \widetilde{\mathcal{R}}^*) \Lambda \big) \varphi \in L^2(\Sigma; \mathbb{C}^{N/2}) \big\},
    \end{split}
  \end{equation*}
  holds and that the domain of $\Xi$ can be characterized as
  \begin{equation} \label{dom_Xi}
    \begin{split}
      \dom \Xi = \big\{ \varphi = &(\varphi_1,\varphi_2) \in L^2(\Sigma; \mathbb{C}^N): \varphi_2\in \dom \mathfrak{s}, \\
      & \varphi_1 + (\eta+\tau)^{-1} \Lambda^{-1} (\lambda I_{N/2} + \widetilde{\mathcal{R}}^*) \Lambda \varphi_2 \in H^1(\Sigma; \mathbb{C}^{N/2}) \big\}.
    \end{split}
  \end{equation}
  Indeed, the right hand side in~\eqref{dom_Xi} is contained in $\dom \Xi$, as the application of the product on the right hand side in~\eqref{Schur_complement_decomposition} on  elements in this set gives an element in $L^2(\Sigma; \mathbb{C}^N)$ and coincides with the application of $\Xi$. On the other hand, as the first factor on the right hand side in~\eqref{Schur_complement_decomposition} is a bijective mapping in $L^2(\Sigma; \mathbb{C}^N)$ and $\dom \Lambda^2 = H^1(\Sigma; \mathbb{C}^{N/2})$, also the second inclusion in~\eqref{dom_Xi} holds.
  Using Proposition~\ref{proposition_Riesz_transform} we see that
  \begin{equation} \label{Schur_complement}
    \begin{split}
      \mathfrak{s} &= (\eta + \tau)^{-1} \Lambda \big((\eta^2 - \tau^2 - \lambda^2) I_{N/2} - \widetilde{\mathcal{R}} \widetilde{\mathcal{R}}^* - \lambda(\widetilde{\mathcal{R}} + \widetilde{\mathcal{R}}^*)\big) \Lambda \\
      &=(\eta + \tau)^{-1} \Lambda \big((\eta^2 - \tau^2 - \lambda^2 - 4) I_{N/2} - 4 \lambda (P_+ - P_-)\big) \Lambda+ \mathcal{A} \\
      &=(\eta + \tau)^{-1} \Lambda \big((d - 4 - 4 \lambda )P_+  + (d - 4 + 4 \lambda ) P_-\big) \Lambda+ \mathcal{A},
    \end{split}
  \end{equation}
  where $\mathcal{A}$ is a bounded operator in $L^2(\Sigma; \mathbb{C}^{N/2})$ and $P_\pm$ are the projections onto the spaces $\mathcal{H}_\pm$ in Proposition~\ref{proposition_Riesz_transform}~(iv). Under the assumption in~(i) the last equation implies for $\varphi = (\varphi_1, \varphi_2) \in \dom \Xi$ that 
  \begin{equation*}
    (d - 4 - 4 \lambda )P_+ \Lambda \varphi_2 + (d - 4 + 4 \lambda ) P_- \Lambda \varphi_2 \in H^{1/2}(\Sigma; \mathbb{C}^{N/2}).
  \end{equation*}
  Applying $P_\pm$, which is by Proposition~\ref{proposition_Riesz_transform} a map in $H^{1/2}(\Sigma; \mathbb{C}^{N/2})$, to this equation yields $P_\pm \Lambda \varphi_2 \in H^{1/2}(\Sigma; \mathbb{C}^{N/2})$, which gives finally $\varphi_2 \in H^1(\Sigma; \mathbb{C}^{N/2})$. Taking the mapping properties of $\Lambda$ and $\widetilde{\mathcal{R}}^*$ into account, the second condition in~\eqref{dom_Xi} shows for $\varphi = (\varphi_1, \varphi_2) \in \dom \Xi$ that $\varphi_1 \in H^1(\Sigma; \mathbb{C}^{N/2})$. Hence, we obtain $\dom \Xi = H^1(\Sigma; \mathbb{C}^{N})$ and thus, $\Xi = \Xi_1$.

  On the other hand, under the assumption in~(ii) at least one of the terms with $P_\pm$ in the last line in~\eqref{Schur_complement} is zero. For instance, if $\lambda = \frac{d}{4}-1$, then there is no condition on $P_+ \Lambda \varphi$ in $\dom \mathfrak{s}$, i.e. any $\varphi \in L^2(\Sigma; \mathbb{C}^{N/2})$ with $P_- \Lambda \varphi \in H^{1/2}(\Sigma; \mathbb{C}^{N/2})$ belongs to $\dom \mathfrak{s}$.  Since $\mathcal{H}_+ \not\subset H^s(\Sigma; \mathbb{C}^{N/2})$ for any $s > -\frac{1}{2}$ by Proposition~\ref{proposition_Riesz_transform}~(iv), this implies that $\dom \mathfrak{s} \not\subset H^s(\Sigma; \mathbb{C}^{N/2})$ for any $s > 0$ and thus, also $\dom \Xi \not\subset H^s(\Sigma; \mathbb{C}^N)$ for any $s > 0$. Clearly, a similar statement holds, if $\lambda = -\frac{d}{4} + 1$.
  
  To see now that $\Xi \subset \overline{\Xi_1}$ also under the assumption in~(ii), we use \cite[Theorem~2.2.14]{Tretter}, which implies that 
  \begin{equation*} 
    \begin{split}
      \dom \overline{\Xi_1} = \big\{ \varphi = (\varphi_1,\varphi_2) &\in L^2(\Sigma; \mathbb{C}^N): \varphi_2\in \dom \overline{\mathfrak{s}_1}, \\
      & \varphi_1 + (\eta+\tau)^{-1} \Lambda^{-1} (\lambda I_{N/2} + \widetilde{\mathcal{R}}^*) \Lambda \varphi_2 \in H^1(\Sigma; \mathbb{C}^{N/2}) \big\},
    \end{split}
  \end{equation*}
  where $\mathfrak{s}_1$ is the Schur complement associated with $\Xi_1$ and given by $\mathfrak{s}_1 = \mathfrak{s} \upharpoonright H^1(\Sigma; \mathbb{C}^{N/2})$. 
  Suppose that $\lambda = \frac{d}{4}-1$; the arguments for $\lambda = -\frac{d}{4}+1$ follow the same lines.
  Let $\varphi \in \dom \mathfrak{s}$ and choose $\varphi_{n} \subset H^1(\Sigma; \mathbb{C}^{N/2})$ such that $P_+ \Lambda \varphi_{n}$ converge to $P_+ \Lambda \varphi$ in $H^{-1/2}(\Sigma; \mathbb{C}^{N/2})$ and $P_- \Lambda \varphi_{n}=P_- \Lambda \varphi$. Then, $\varphi_{n} \rightarrow \varphi$ in $L^2(\Sigma; \mathbb{C}^{N/2})$ and by \eqref{Schur_complement} also $\mathfrak{s}_1 \varphi_{n} \rightarrow \mathfrak{s} \varphi$. Hence, $\varphi \in \dom \overline{\mathfrak{s}_1}$ and $\overline{\mathfrak{s}_1} \varphi = \mathfrak{s} \varphi$ which shows that also in this case $\mathfrak{s} \subset \overline{\mathfrak{s}_1}$ and then with~\eqref{dom_Xi} that $\Xi \subset \overline{\Xi_1}$. Hence, in all cases $\Xi$ and thus also $\Theta$ is self-adjoint in $L^2(\Sigma; \mathbb{C}^N)$, which implies with Theorem~\ref{theorem_Krein_abstract} the self-adjointness of $B_{\eta,\tau,\lambda}$.
  
  It remains to see the statements on the Sobolev regularity in $\dom B_{\eta,\tau,\lambda}$. For this purpose, we use that $\beta(z) \varphi \in H^s_\alpha(\mathbb{R}^q \setminus \Sigma; \mathbb{C}^N)$ if and only if $\varphi \in H^s(\Sigma; \mathbb{C}^N)$ holds for any $s \in [0,1]$, where $\beta(z)$ denotes the $\gamma$-field associated with the boundary triple $\{ L^2(\Sigma; \mathbb{C}^N), \Upsilon_0, \Upsilon_1 \}$, cf. Theorem~\ref{theorem_OBT}. Indeed, if $\varphi \in H^s(\Sigma; \mathbb{C}^N)$, then the mapping properties of $\Phi_z$ and $\mathcal{C}_z$ in Theorem~\ref{theorem_QBT}, Proposition~\ref{proposition_extensions_Phi_C}, and Corollary~\ref{corollary_extensions_Phi_C}, and $\Lambda$ imply that 
  \begin{equation*}
    \beta(z) \varphi = -4 \widetilde{\Phi}_z (\alpha \cdot \nu) \widetilde{\mathcal{C}}_z (\alpha \cdot \nu) V^* \Lambda  \varphi \in H^s_\alpha(\mathbb{R}^q \setminus \Sigma; \mathbb{C}^N).
  \end{equation*}
  Conversely, if $\beta(z) \varphi \in H^s_\alpha(\mathbb{R}^q \setminus \Sigma; \mathbb{C}^N)$, then the mapping properties of the trace operator in Lemma~\ref{lemma_trace_theorem} and Corollary~\ref{corollary_trace_extension} and Proposition~\ref{proposition_extensions_Phi_C}~(iii) show that
  \begin{equation*}
    \gamma_D^+ (\beta(z) \varphi)_+ + \gamma_D^- (\beta(z) \varphi)_-
    = -8 (\widetilde{\mathcal{C}}_z (\alpha \cdot \nu))^2 V^* \Lambda  \varphi = 2 V^* \Lambda  \varphi
    \in H^{s-1/2}(\Sigma; \mathbb{C}^N).
  \end{equation*}
  This implies that $\Lambda  \varphi \in H^{s-1/2}(\Sigma; \mathbb{C}^N)$, i.e. $\varphi \in H^{s}(\Sigma; \mathbb{C}^N)$. 
  
  Recall that for $z \in \mathbb{C} \setminus \mathbb{R}$ the formula
  \begin{equation} \label{equation_krein_application}
    (B_{\eta, \tau, \lambda} - z)^{-1} = (A_\infty - z)^{-1} + \beta(z) \big( \Theta - \mathcal{M}(z) \big)^{-1} \beta(\overline{z})^*
  \end{equation}
  holds, see Theorem~\ref{theorem_Krein_abstract}. Hence, under the assumption in item~(i) we conclude that $\dom A_\infty \subset H^1(\mathbb{R}^q \setminus \Sigma; \mathbb{C}^N)$ and $\dom \Theta = H^1(\Sigma; \mathbb{C}^N)$ imply 
  \begin{equation*}
    \dom B_{\eta, \tau, \lambda}  \subset \dom A_\infty + \beta(z) \big( H^1(\Sigma; \mathbb{C}^N) \big) \subset H^1(\mathbb{R}^q \setminus \Sigma; \mathbb{C}^N).
  \end{equation*}
  With similar arguments we see that, under the assumption of statement~(ii), the relation $\dom \Theta \not\subset H^s(\Sigma; \mathbb{C}^N)$ implies $\dom B_{\eta,\tau,\lambda} \not\subset H^s(\mathbb{R}^q \setminus \Sigma; \mathbb{C}^N)$ for any $s>0$. This finishes the proof of the theorem in the case $\eta+\tau\neq 0$.

  If $\eta = -\tau \neq 0$, then a similar strategy as in the case $\eta+\tau\neq0$ leads to the conclusion of the claimed results. Indeed, in this case one can make a similar construction as in~\eqref{Schur_complement_decomposition}, but with the Schur complement being in the  upper left corner in the operator matrix in the middle of the product in~\eqref{Schur_complement_decomposition} instead of the lower right corner. 
  
  Finally, assume $\eta=\tau=0$ and $\lambda \neq 0$. Using Proposition~\ref{proposition_Riesz_transform}~(iv) one sees that $\Xi$ can be written as
    \begin{equation*}
    \begin{split}
      \Xi &= \Lambda \begin{pmatrix} 0 & \lambda I_{N/2} + \widetilde{\mathcal{R}}^* \\ \lambda I_{N/2} + \widetilde{\mathcal{R}} & 0 \end{pmatrix} \Lambda \\
      &= \Lambda \begin{pmatrix} 0 & \lambda I_{N/2} + \frac{1}{2} (\widetilde{\mathcal{R}} + \widetilde{\mathcal{R}}^*) + \frac{1}{2}(\widetilde{\mathcal{R}}^* - \widetilde{\mathcal{R}}) \\ \lambda I_{N/2}+ \frac{1}{2} (\widetilde{\mathcal{R}} + \widetilde{\mathcal{R}}^*) + \frac{1}{2}(\widetilde{\mathcal{R}} - \widetilde{\mathcal{R}}^*) & 0 \end{pmatrix} \Lambda \\
      &= \Lambda \begin{pmatrix} 0 & (\lambda + 2)P_+ + (\lambda - 2) P_- \\ (\lambda +2) P_+ + (\lambda-2)P_- & 0 \end{pmatrix} \Lambda +\mathcal{B},
    \end{split}
  \end{equation*}
  where $\mathcal{B}$ is a bounded operator in $L^2(\Sigma; \mathbb{C}^N)$.
  The representation in the third line of the last equation and Proposition~\ref{proposition_Riesz_transform} show as above that for $\lambda \neq \pm 2$ one has $\dom \Xi = H^1(\Sigma; \mathbb{C}^N)$ and thus, $\Xi = \Xi_1$ is self-adjoint.  
  On the other hand, for $\lambda = \pm 2$ one has no restrictions for one of the spaces $\mathcal{H}_\pm$, which implies with a similar argument as before in the case $\eta + \tau \neq 0$ that $\overline{\Xi_1} = \Xi$ is again self-adjoint, but $\dom \Xi \not\subset H^s(\Sigma; \mathbb{C}^N)$ for any $s>0$.
  Moreover, one finds in a similar way as above with the help of Krein's resolvent formula in~\eqref{equation_krein_application} the claimed results about the Sobolev regularity in $\dom B_{0,0,\lambda}$. This finishes the proof of this theorem.
\end{proof}

\begin{remark}
  Let $(\frac{d}{4} - 1)^2 - \lambda^2 = 0$. Then, the representation of $\dom \Theta = \dom \Xi$ in~\eqref{dom_Xi} and~\eqref{Schur_complement} together with Krein's resolvent formula in~\eqref{equation_krein_application} gives a way how the singular part in $\dom B_{\eta, \tau, \lambda}$ can be characterized with the help of the projections $P_\pm$ from Proposition~\ref{proposition_Riesz_transform}.
\end{remark}

\begin{remark}
  Note that for $(\frac{d}{4} - 1)^2 - \lambda^2 = 0$ and $\eta, \lambda \neq 0$ the assumptions in \cite[Definition~3.1]{Ben22} are not fulfilled due to the appearance of the projections $P_\pm$. Hence, Theorem~\ref{theorem_self_adjoint_critical} is not contained in the results in \cite{Ben22}.
\end{remark}

In the non-critical case, i.e. if $(\frac{d}{4} - 1)^2 - \lambda^2 \neq 0$, we already have from Theorem~\ref{selfadjoint_theorem} and Corollary~\ref{selfadjoint_cor_2} a complete picture of the qualitative spectral properties of $B_{\eta, \tau, \lambda}$. In particular, a Krein-type resolvent formula and a Birman-Schwinger principle are established there. In the following proposition similar formulas are provided also in the critical case. Recall that $A_0$ is the free Dirac operator defined in~\eqref{def_A_0}, $P_{\eta, \tau, \lambda}$ is introduced in~\eqref{def_P_eta_tau_lambda}, $\widetilde{\mathcal{C}}_z$ and $\widetilde{\Phi}_z$ are the extensions of $\mathcal{C}_z$ and $\Phi_z$ from Proposition~\ref{proposition_extensions_Phi_C}, $\mathcal{M}(z)$ is given as in Theorem~\ref{theorem_OBT}, and $\Theta$ is constituted by~\eqref{def_Theta}. For similar results when $\lambda=0$ we refer to \cite[Theorem~5.7]{BH20}, \cite[Theorems~4.11 and 4.13]{BHOP20}, and \cite[Proposition~4.1 and Lemma~4.1]{Ben21}.

\begin{proposition} \label{proposition_Krein_Birman_Schwinger_critical}
  Let $\eta, \tau, \lambda \in \mathbb{R}$ such that $(\frac{d}{4} - 1)^2 - \lambda^2 = 0$. Then, the following holds:
  \begin{itemize}
    \item[(i)] For all $z \in \rho(A_0) \cap \rho(B_{\eta,\tau,\lambda})$ the map $I_N +  P_{\eta,\tau,\lambda} \widetilde{\mathcal{C}}_z$ admits a bounded inverse from $H^{1/2}(\Sigma; \mathbb{C}^{N})$ to $H^{-1/2}(\Sigma; \mathbb{C}^N)$ and 
	\begin{equation*} 
	(B_{\eta,\tau,\lambda} - z)^{-1} = (A_0 - z)^{-1} - \widetilde{\Phi}_z(I_N +  P_{\eta,\tau,\lambda} \widetilde{\mathcal{C}}_z)^{-1}  P_{\eta,\tau,\lambda} \Phi_{\overline{z}}^*.
	\end{equation*}
	\item[(ii)] For $z \in (-m,m)$ one has that $z \in \sigma_\mathrm{p}(B_{\eta,\tau,\lambda})$ if and only if $-1 \in \sigma_\mathrm{p}( P_{\eta,\tau,\lambda} \widetilde{\mathcal{C}}_z)$.
	\item[(iii)] For $z \in (-m,m)$ one has that $z \in \sigma_\mathrm{ess}(B_{\eta,\tau,\lambda})$ if and only if 
	\begin{equation*}
	  0 \in \sigma_\textup{ess}(\Theta - \mathcal{M}(z)) = \sigma_\textup{ess}\big(\Lambda(B- 4 V (\alpha \cdot \nu) \widetilde{\mathcal{C}}_z (\alpha \cdot \nu) V^*)\Lambda \big).
    \end{equation*}
  \end{itemize}
\end{proposition}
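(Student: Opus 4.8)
The plan is to deduce all three assertions from the theory of ordinary boundary triples applied to $\{ L^2(\Sigma; \mathbb{C}^N), \Upsilon_0, \Upsilon_1 \}$ of Theorem~\ref{theorem_OBT}, together with the identification $B_{\eta, \tau, \lambda} = T^{(0)} \upharpoonright \ker(\Upsilon_1 - \Theta \Upsilon_0)$ from \eqref{equation_B_BT} and the self-adjointness of $B_{\eta,\tau,\lambda}$, equivalently of $\Theta$, established in Theorem~\ref{theorem_self_adjoint_critical}. The computational core is the identity
\begin{equation*}
  \Theta - \mathcal{M}(z) = \Lambda \big( B - 4 V (\alpha \cdot \nu) \widetilde{\mathcal{C}}_z (\alpha \cdot \nu) V^* \big) \Lambda = \Lambda V \big( I_N + P_{\eta, \tau, \lambda} \widetilde{\mathcal{C}}_z \big) \widetilde{\mathcal{C}}_z^{-1} V^* \Lambda,
\end{equation*}
valid in $L^2(\Sigma; \mathbb{C}^N)$ for $z \in \rho(A_0) = \rho(A_\infty)$. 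The first equality comes from subtracting the formula for $\mathcal{M}(z)$ in Theorem~\ref{theorem_OBT} from the definition~\eqref{def_Theta} of $\Theta$ (the $\widetilde{\mathcal{C}}_0$-terms cancel), and the second uses $P_{\eta, \tau, \lambda} = V^* B V$ together with $\widetilde{\mathcal{C}}_z^{-1} = -4 (\alpha \cdot \nu) \widetilde{\mathcal{C}}_z (\alpha \cdot \nu)$ from \eqref{formula_C_inv_extension} and the factorization $P_{\eta,\tau,\lambda} + \widetilde{\mathcal{C}}_z^{-1} = (I_N + P_{\eta,\tau,\lambda} \widetilde{\mathcal{C}}_z) \widetilde{\mathcal{C}}_z^{-1}$. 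Here one exploits that, since $\Sigma$ is $C^\infty$-smooth, $H^t_\alpha(\Sigma; \mathbb{C}^N) = H^t(\Sigma; \mathbb{C}^N)$, so $\widetilde{\mathcal{C}}_z$ is bijective on $H^{-1/2}(\Sigma; \mathbb{C}^N)$ and $V$, $\alpha \cdot \nu$ are bijective on $H^{\pm 1/2}(\Sigma; \mathbb{C}^N)$.

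For assertion~(iii), since $\Theta$ is self-adjoint, Theorem~\ref{theorem_Krein_abstract}~(v) gives $z \in \sigma_{\mathrm{ess}}(B_{\eta, \tau, \lambda})$ if and only if $0 \in \sigma_{\mathrm{ess}}(\Theta - \mathcal{M}(z))$, and the first form of the displayed identity shows that $\Theta - \mathcal{M}(z)$ is exactly the self-adjoint operator $\Lambda(B - 4 V (\alpha \cdot \nu) \widetilde{\mathcal{C}}_z (\alpha \cdot \nu) V^*)\Lambda$ (with maximal domain), which is the claim. For assertion~(ii), Theorem~\ref{theorem_Krein_abstract}~(i) gives $z \in \sigma_{\mathrm{p}}(B_{\eta, \tau, \lambda})$ if and only if $\ker(\Theta - \mathcal{M}(z)) \neq \{0\}$. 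Unwinding the second form of the identity, and using that $\Lambda \colon L^2(\Sigma;\mathbb{C}^N) \to H^{-1/2}(\Sigma;\mathbb{C}^N)$ is bijective and that $V$, $\widetilde{\mathcal{C}}_z^{-1}$ are bijective on $H^{-1/2}(\Sigma;\mathbb{C}^N)$, one checks that $\varphi \mapsto \widetilde{\mathcal{C}}_z^{-1} V^* \Lambda \varphi$ is a bijection of $\ker(\Theta - \mathcal{M}(z))$ onto $\ker(I_N + P_{\eta, \tau, \lambda} \widetilde{\mathcal{C}}_z)$ in $H^{-1/2}(\Sigma; \mathbb{C}^N)$; hence $z \in \sigma_{\mathrm{p}}(B_{\eta, \tau, \lambda})$ if and only if $-1 \in \sigma_{\mathrm{p}}(P_{\eta, \tau, \lambda} \widetilde{\mathcal{C}}_z)$ for $P_{\eta, \tau, \lambda} \widetilde{\mathcal{C}}_z$ regarded as a bounded operator in $H^{-1/2}(\Sigma; \mathbb{C}^N)$.

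For assertion~(i), fix $z \in \rho(A_0) \cap \rho(B_{\eta, \tau, \lambda})$; then $z \in \rho(A_\infty)$, so $\mathcal{M}(z)$ is defined and Theorem~\ref{theorem_Krein_abstract}~(iv) shows that $\Theta - \mathcal{M}(z)$ is boundedly invertible in $L^2(\Sigma; \mathbb{C}^N)$. Combining this with the displayed identity and the injectivity of $I_N + P_{\eta,\tau,\lambda}\widetilde{\mathcal{C}}_z$ on $H^{-1/2}(\Sigma;\mathbb{C}^N)$ from assertion~(ii), one obtains that $I_N + P_{\eta, \tau, \lambda} \widetilde{\mathcal{C}}_z$ admits a bounded inverse from $H^{1/2}(\Sigma; \mathbb{C}^N)$ onto $H^{-1/2}(\Sigma; \mathbb{C}^N)$, namely $\widetilde{\mathcal{C}}_z^{-1} V^* \Lambda (\Theta - \mathcal{M}(z))^{-1} \Lambda V$. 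It then remains to verify that $R(z) := (A_0 - z)^{-1} - \widetilde{\Phi}_z (I_N + P_{\eta,\tau,\lambda} \widetilde{\mathcal{C}}_z)^{-1} P_{\eta,\tau,\lambda} \Phi_{\overline z}^*$ is a right inverse of $B_{\eta, \tau, \lambda} - z$: for $f \in L^2(\mathbb{R}^q; \mathbb{C}^N)$ put $\varphi := (I_N + P_{\eta,\tau,\lambda} \widetilde{\mathcal{C}}_z)^{-1} P_{\eta,\tau,\lambda} \Phi_{\overline z}^* f \in H^{-1/2}(\Sigma; \mathbb{C}^N)$ and $u := R(z)f$. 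Since $(A_0 - z)^{-1} f \in H^1(\mathbb{R}^q; \mathbb{C}^N) \subset \dom T^{(0)}$ and $\widetilde{\Phi}_z \varphi \in \ker(T^{(0)} - z)$ by Proposition~\ref{proposition_extensions_Phi_C}~(i), we get $u \in \dom T^{(0)}$ and $(T^{(0)} - z) u = f$. Using $\Phi_{\overline z}^* = \gamma_D (A_0 - z)^{-1}$ (cf.\ \eqref{equation_gamma_star}) and the trace identity in Proposition~\ref{proposition_extensions_Phi_C}~(iii), one computes $\gamma_D^+ u_+ - \gamma_D^- u_- = i (\alpha \cdot \nu) \varphi$ and $\tfrac12(\gamma_D^+ u_+ + \gamma_D^- u_-) = \Phi_{\overline z}^* f - \widetilde{\mathcal{C}}_z \varphi$, so the left-hand side of the transmission condition in~\eqref{def_A_max} equals $-\varphi - P_{\eta,\tau,\lambda}\widetilde{\mathcal{C}}_z\varphi + P_{\eta,\tau,\lambda}\Phi_{\overline z}^* f = -(I_N + P_{\eta,\tau,\lambda}\widetilde{\mathcal{C}}_z)\varphi + P_{\eta,\tau,\lambda}\Phi_{\overline z}^* f = 0$ by the choice of $\varphi$; hence $u \in \dom B_{\eta, \tau, \lambda}$ and $(B_{\eta,\tau,\lambda} - z) u = f$, which forces $R(z) = (B_{\eta,\tau,\lambda} - z)^{-1}$.

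The main obstacle is not conceptual but consists in the precise Sobolev bookkeeping: every operator occurring in these formulas must be placed between the correct spaces among $H^{1/2}(\Sigma;\mathbb{C}^N)$, $L^2(\Sigma;\mathbb{C}^N)$ and $H^{-1/2}(\Sigma;\mathbb{C}^N)$, and one must make sense of an ``inverse'' of $I_N + P_{\eta,\tau,\lambda}\widetilde{\mathcal{C}}_z$ as a bounded map $H^{1/2} \to H^{-1/2}$ despite $\widetilde{\mathcal{C}}_z$ not being smoothing. This is exactly where the factorization of $\Theta - \mathcal{M}(z)$ above is essential, since that operator is genuinely boundedly invertible in $L^2(\Sigma;\mathbb{C}^N)$ by the abstract theory, together with the mapping properties of $\widetilde{\Phi}_z$, $\widetilde{\mathcal{C}}_z$ and $\Phi_{\overline z}^*$ recorded in Theorem~\ref{theorem_QBT} and Proposition~\ref{proposition_extensions_Phi_C}.
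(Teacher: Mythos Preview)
Your proof is correct and, for items~(ii) and~(iii), follows the paper's approach essentially verbatim: both rely on the factorization
\[
  \Theta - \mathcal{M}(z) = \Lambda V \big( I_N + P_{\eta, \tau, \lambda} \widetilde{\mathcal{C}}_z \big) \widetilde{\mathcal{C}}_z^{-1} V^* \Lambda
  = -4 \Lambda V \big( I_N + P_{\eta, \tau, \lambda} \widetilde{\mathcal{C}}_z \big) (\alpha \cdot \nu) \widetilde{\mathcal{C}}_z (\alpha \cdot \nu) V^* \Lambda
\]
together with Theorem~\ref{theorem_Krein_abstract}~(i) and~(v). One small imprecision: you invoke ``the injectivity of $I_N + P_{\eta,\tau,\lambda}\widetilde{\mathcal{C}}_z$ on $H^{-1/2}$ from assertion~(ii)'' for general $z \in \rho(A_0)\cap\rho(B_{\eta,\tau,\lambda})$, but assertion~(ii) is only stated for $z\in(-m,m)$. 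This is harmless since the kernel bijection you set up in the proof of~(ii) works for every $z\in\rho(A_0)$, and $z\in\rho(B_{\eta,\tau,\lambda})$ forces $\ker(\Theta-\mathcal{M}(z))=\{0\}$ via Theorem~\ref{theorem_Krein_abstract}~(i).

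For item~(i) the two approaches diverge. The paper first applies the abstract Krein formula of Theorem~\ref{theorem_Krein_abstract}~(iii) with reference operator $A_\infty$ to obtain
\[
  (B_{\eta,\tau,\lambda}-z)^{-1}=(A_\infty-z)^{-1}+\beta(z)(\Theta-\mathcal{M}(z))^{-1}\beta(\overline z)^*,
\]
then inserts the explicit forms of $\beta(z)$ and $\Theta-\mathcal{M}(z)$, and finally replaces $(A_\infty-z)^{-1}$ by $(A_0-z)^{-1}-\Phi_z\mathcal{C}_z^{-1}\Phi_{\overline z}^*$ via a second application of Theorem~\ref{theorem_Krein_abstract}~(iii), this time to the quasi boundary triple $\{L^2(\Sigma;\mathbb{C}^N),\Gamma_0^{(1)},\Gamma_1^{(1)}\}$ with $A_\infty=T^{(1)}\upharpoonright\ker\Gamma_1^{(1)}$; an algebraic simplification then produces the claimed formula. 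You instead bypass $A_\infty$ entirely: once the bounded inverse $(I_N+P_{\eta,\tau,\lambda}\widetilde{\mathcal{C}}_z)^{-1}\colon H^{1/2}\to H^{-1/2}$ is in hand, you write down the candidate $R(z)$ directly and verify by a trace computation that $R(z)f\in\dom B_{\eta,\tau,\lambda}$ with $(B_{\eta,\tau,\lambda}-z)R(z)f=f$. Your route is shorter and more elementary (it avoids the second boundary triple and the resolvent identity manipulation), while the paper's route makes the connection between the two Krein formulas explicit and shows how the $A_0$-based formula arises systematically from the $A_\infty$-based one.
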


\begin{remark}
  The statement in Proposition~\ref{proposition_Krein_Birman_Schwinger_critical}~(iii) can be formulated for any choice of $V$ satisfying the assumptions in Section~\ref{section_OBT}. That means for $z \in (-m,m)$ one has that $z \in \sigma_\mathrm{ess}(B_{\eta,\tau,\lambda})$ if and only if 
  \begin{equation} \label{essential_spectrum}
    0 \in \sigma_\textup{ess}(\Theta - \mathcal{M}(z)) = \sigma_\textup{ess}\big(\Lambda V(P_{\eta, \tau, \lambda} - 4 (\alpha \cdot \nu) \widetilde{\mathcal{C}}_z (\alpha \cdot \nu) ) V^* \Lambda \big).
  \end{equation}
  Indeed, the self-adjointness of $B_{\eta, \tau, \lambda}$ was shown in Theorem~\ref{theorem_self_adjoint_critical}. Hence, the abstract results in Theorem~\ref{theorem_Krein_abstract} are true for the boundary triple in Theorem~\ref{theorem_OBT} constructed with any general $V$ and the parameter $\Theta$ is given for this triple by
  \begin{equation*}
    \Theta = \Lambda V(P_{\eta, \tau, \lambda} - 4 (\alpha \cdot \nu) \widetilde{\mathcal{C}}_0 (\alpha \cdot \nu) ) V^* \Lambda 
  \end{equation*}
  defined on its maximal domain in $L^2(\Sigma; \mathbb{C}^N)$. In particular, if one sets $V = \alpha \cdot \nu$ in~\eqref{essential_spectrum}, then one recovers the result in \cite[Lemma~4.1]{Ben21} that was shown for $\lambda=0$.
\end{remark}
  
\begin{proof}[Proof of Proposition~\ref{proposition_Krein_Birman_Schwinger_critical}]
  Throughout the proof let $z \in \mathbb{C} \setminus ((-\infty, -m] \cup[m, \infty))$. First, we observe that the definitions of $\Theta$ in~\eqref{def_Theta} and $\mathcal{M}(z)$ in Theorem~\ref{theorem_OBT} yield 
  \begin{equation} \label{equation_BS}
    \begin{split}
      \Theta - \mathcal{M}(z) &= \Lambda \big( B - 4 V (\alpha \cdot \nu) \widetilde{\mathcal{C}}_z (\alpha \cdot \nu) V^* \big) \Lambda \\
      &= -4 \Lambda V \big( I_N + P_{\eta, \tau, \lambda} \widetilde{\mathcal{C}}_z \big) (\alpha \cdot \nu) \widetilde{\mathcal{C}}_z (\alpha \cdot \nu) V^* \Lambda,
    \end{split}
  \end{equation}
  where $B = V P_{\eta, \tau, \lambda} V^*$ and~\eqref{formula_C_inv_extension} were used.
  
  To show item~(i) let $z \in \rho(A_\infty) \cap \rho(B_{\eta,\tau,\lambda})$ be fixed and note that $\sigma(A_\infty)=\sigma(A_0)$ holds by Lemma~\ref{lemma_A_infty}. Then, by Theorem~\ref{theorem_Krein_abstract}~(iv) the operator $\Theta - \mathcal{M}(z)$ has a bounded inverse in $L^2(\Sigma; \mathbb{C}^N)$. Therefore, we conclude from~\eqref{equation_BS} and the mapping properties of $\Lambda, V,$ and $\alpha \cdot \nu$ that $I_N +  P_{\eta,\tau,\lambda} \widetilde{\mathcal{C}}_z$ has a bounded inverse from $H^{1/2}(\Sigma; \mathbb{C}^{N})$ to $H^{-1/2}(\Sigma; \mathbb{C}^N)$. Next,   
  one gets from Theorems~\ref{theorem_Krein_abstract}~(iii) and~\ref{theorem_OBT}  and~\eqref{equation_BS} that 
  \begin{equation} \label{resolvent1}
    \begin{split}
      (B_{\eta, \tau, \lambda} - z)^{-1} &= (A_\infty - z)^{-1} + \beta(z) (\Theta - \mathcal{M}(z))^{-1} \beta(\overline{z})^* \\
      &= (A_\infty - z)^{-1} -4 \widetilde{\Phi}_z \big( I_N + P_{\eta, \tau, \lambda} \widetilde{\mathcal{C}}_z \big)^{-1} (\alpha \cdot \nu) \mathcal{C}_{\overline{z}}^* (\alpha \cdot \nu) \Phi_{\overline{z}}^*.
    \end{split}
  \end{equation}
  To proceed, we note that the operator $A_\infty$ defined in~\eqref{def_A_infty} can be described with the quasi boundary triple $\{ L^2(\Sigma; \mathbb{C}^N), \Gamma_0^{(1)}, \Gamma_1^{(1)} \}$ from Theorem~\ref{theorem_QBT} with $s=1$ as $A_\infty = T^{(1)} \upharpoonright \ker \Gamma_1^{(1)}$. Hence, we can apply Theorem~\ref{theorem_Krein_abstract}~(iii) and get 
  \begin{equation*}
    (A_\infty - z)^{-1} = (A_0 - z)^{-1} - \Phi_z (\mathcal{C}_z)^{-1} \Phi_{\overline{z}}^* = (A_0 - z)^{-1} +4 \Phi_z (\alpha \cdot \nu) \mathcal{C}_z (\alpha \cdot \nu) \Phi_{\overline{z}}^*.
  \end{equation*}
  Combining this with~\eqref{resolvent1} and using the resolvent identity, $\mathcal{C}_{\overline{z}}^* = \mathcal{C}_z$, which holds by~\eqref{equation_diff_m} and Theorem~\ref{theorem_QBT}~(iii), and~\eqref{formula_C_inv_extension} we get 
  \begin{equation*}
    \begin{split}
      (B_{\eta, \tau, \lambda} - z)^{-1} &= (A_0 - z)^{-1} +4 \Phi_z (\alpha \cdot \nu) \mathcal{C}_z (\alpha \cdot \nu) \Phi_{\overline{z}}^* \\
      &\qquad -4 \widetilde{\Phi}_z \big( I_N + P_{\eta, \tau, \lambda} \widetilde{\mathcal{C}}_z \big)^{-1} (\alpha \cdot \nu) \mathcal{C}_z (\alpha \cdot \nu) \Phi_{\overline{z}}^* \\
      &= (A_0 - z)^{-1} - \widetilde{\Phi}_z(I_N +  P_{\eta,\tau,\lambda} \widetilde{\mathcal{C}}_z)^{-1}  P_{\eta,\tau,\lambda} \Phi_{\overline{z}}^*,
    \end{split}
  \end{equation*}
  which is the claimed formula.

  Item~(ii) is a consequence of the Birman-Schwinger principle in Theorem~\ref{theorem_Krein_abstract}~(i) and~\eqref{equation_BS}, as the maps $\Lambda, V, (\alpha \cdot \nu),$ and $\widetilde{\mathcal{C}}_z$ are bijective.  
  Finally, assertion~(iii) is a direct consequence of Theorem~\ref{theorem_Krein_abstract}~(v) and the representations of $\Theta$ in~\eqref{def_Theta} and $\mathcal{M}(z)$ in Theorem~\ref{theorem_OBT}.
\end{proof}

In the following we will study the essential spectrum of $B_{\eta, \tau, \lambda}$ in the critical case. 
In the critical confinement case, i.e. when $d=-4$ and $\lambda^2=4$, the spectral properties are fully described in Proposition~\ref{proposition_confinement_critical}. Hence, in the following we will focus on the case  $(\frac{d}{4}-1)^2-\lambda^2=0$ and $d\neq-4$.
In this situation, we will see that there may be an additional point in the essential spectrum of $B_{\eta,\tau,\lambda}$. In order to show the corresponding result, additional knowledge about the operators $\widetilde{\mathcal{R}}$ and $\widetilde{\mathcal{R}}^*$ defined in~\eqref{def_R} is necessary. Recall that $\mathcal{H}_\pm$ are the spaces introduced in Proposition~\ref{proposition_Riesz_transform}~(iv).


\begin{proposition} \label{proposition_essential_spectrum_preparation}
  For the operators $\widetilde{\mathcal{R}}, \widetilde{\mathcal{R}}^*$ and the choice of $\Lambda$ in~\eqref{def_Lambda_2d}--\eqref{def_Lambda_3d} the following holds:
  \begin{itemize}
    \item[(i)] For $q=2$, the operator $\Lambda (\widetilde{\mathcal{R}} \widetilde{\mathcal{R}}^* - 4 I_1) \Lambda$ is compact in $L^2(\Sigma; \mathbb{C})$, for any $n \in \mathbb{N}$ one has $\Lambda^n P_\pm = P_\pm \Lambda^n$, and $P_\pm$ are orthogonal projections in $H^s(\Sigma; \mathbb{C})$ for all $s \in \mathbb{R}$.
    \item[(ii)] For $q=3$, assume that there exists an open set $\Sigma_0 \subset \Sigma$ such that $\Sigma_0$ is contained in a plane. Then, there exists a sequence $(\varphi_n) \subset L^2(\Sigma; \mathbb{C}^2)$ such that $\| \varphi_n \|_{L^2(\Sigma; \mathbb{C}^2)} = 1$, $\varphi_n$ converges weakly to zero, and 
    $\Lambda (\widetilde{\mathcal{R}} \widetilde{\mathcal{R}}^* - 4 I_2) \Lambda \varphi_n \rightarrow 0$ in $L^2(\Sigma; \mathbb{C}^2)$. It is possible to choose this sequence such that $(\Lambda \varphi_n) \subset \mathcal{H}_\pm$.
    \item[(iii)] The operator $\Lambda^{-2} \widetilde{\mathcal{R}} - \widetilde{\mathcal{R}}  \Lambda^{-2}: H^{-1/2}(\Sigma; \mathbb{C}^{N/2}) \rightarrow H^{1/2}(\Sigma; \mathbb{C}^{N/2})$ is compact.
  \end{itemize}
\end{proposition}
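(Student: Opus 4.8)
The plan is to deduce all three assertions from the pseudodifferential analysis of $\mathcal{R}$ carried out in \cite{H22}, which is based on periodic pseudodifferential operators on $\mathbb{T}$ for $q=2$ (cf.\ \cite{SV}) and on the calculus of pseudo-homogeneous kernels on $C^\infty$-hypersurfaces for $q=3$ (cf.\ \cite{N01}). The essential input is that $\widetilde{\mathcal{R}}$ and $\widetilde{\mathcal{R}}^*$ are classical pseudodifferential operators of order $0$ on $\Sigma$, that $\Lambda$ from \eqref{def_Lambda_2d}--\eqref{def_Lambda_3d} is an elliptic pseudodifferential operator of order $\tfrac12$, and that $\Lambda^{-2}$ is a scalar classical pseudodifferential operator of order $-1$; their symbols can be read off from the representations of $G_{z,q}$ in \eqref{def_G_lambda}. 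For (i), where $q=2$ and $N/2=1$, I would conjugate with $U$ and use that, in the arc length parametrization $\gamma$, the kernel of $\mathcal{R}$ equals a constant multiple of the periodized Hilbert kernel $\cot(\pi(t-s))$ plus a kernel which is $C^\infty$ on $\mathbb{T}\times\mathbb{T}$ (a Taylor expansion of $\gamma(t)-\gamma(s)$ on the diagonal); hence $U\widetilde{\mathcal{R}}U^{-1}=\mathcal{R}_0+\mathcal{R}_{\mathrm{sm}}$, where $\mathcal{R}_0$ is the Fourier multiplier with symbol $2\,\mathrm{sgn}(n)$ (the constant being fixed by $\mathcal{R}^2=4I_1$, cf.\ Proposition~\ref{proposition_Riesz_transform}~(iii)) and $\mathcal{R}_{\mathrm{sm}}$ is smoothing of infinite order. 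Since $\mathcal{R}_0^*=\mathcal{R}_0$ and $\mathcal{R}_0^2-4I_1$ has finite rank, $\widetilde{\mathcal{R}}\widetilde{\mathcal{R}}^*-4I_1$ is a finite rank operator plus a smoothing one, so $\Lambda(\widetilde{\mathcal{R}}\widetilde{\mathcal{R}}^*-4I_1)\Lambda$ is compact in $L^2(\Sigma;\mathbb{C})$. Choosing $P_\pm:=U^{-1}\Pi_\pm U$ with $\Pi_+,\Pi_-$ the orthogonal Fourier projections of $L^2(\mathbb{T})$ onto the modes $n\ge 0$ and $n<0$, one checks $4P_+-4P_-=\widetilde{\mathcal{R}}+\widetilde{\mathcal{R}}^*$ modulo finite rank plus smoothing, so these are the projections from Proposition~\ref{proposition_Riesz_transform}~(iv); since $\Lambda^n$ is a Fourier multiplier in the $U$-picture it commutes with $P_\pm$, and orthogonality of $\Pi_\pm$ in each $H^s(\mathbb{T})$ yields that $P_\pm$ are orthogonal projections in $H^s(\Sigma;\mathbb{C})$ for all $s\in\mathbb{R}$.

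For (iii) the argument is similar. In dimension two the multiplier part $\mathcal{R}_0$ commutes with $\Lambda^{-2}$ exactly, so $\Lambda^{-2}\widetilde{\mathcal{R}}-\widetilde{\mathcal{R}}\Lambda^{-2}=[\Lambda^{-2},\mathcal{R}_{\mathrm{sm}}]$; both summands are smoothing of infinite order, hence compact from $H^{-1/2}$ to $H^{1/2}$. In dimension three $\widetilde{\mathcal{R}}$ is a classical pseudodifferential operator of order $0$ and $\Lambda^{-2}$ a scalar classical pseudodifferential operator of order $-1$, so the principal symbol of the commutator vanishes (scalar times matrix) and $\Lambda^{-2}\widetilde{\mathcal{R}}-\widetilde{\mathcal{R}}\Lambda^{-2}$ has order $\le -2$; it therefore maps $H^{-1/2}(\Sigma;\mathbb{C}^{N/2})$ continuously into $H^{3/2}(\Sigma;\mathbb{C}^{N/2})$, and the compact embedding $H^{3/2}\hookrightarrow H^{1/2}$ finishes the proof.

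For (ii), where $q=3$, I would first normalize by a rigid motion so that $\Sigma_0\subset\{x_3=0\}$ and $\nu\equiv e_3$ on $\Sigma_0$. For $x,y\in\Sigma_0$ the vector $x-y$ is orthogonal to $\nu$, so by \eqref{anti_commutation} the term $(\sigma\cdot(x-y))(\sigma\cdot\nu)$ anticommutes and the kernel of $\mathcal{R}$ on $\Sigma_0$ reduces to $-\tfrac{1}{\pi}\tfrac{\sigma\cdot(x-y)}{|x-y|^3}\sigma_3$, which is a constant multiple of the planar $\mathbb{C}^2$-valued Riesz kernel times $\sigma_3$; a Fourier transform computation gives a principal symbol $M(\hat\xi)=2i(\hat\xi_1\sigma_1+\hat\xi_2\sigma_2)\sigma_3$ (with $\hat\xi=\xi/|\xi|$) satisfying $M(\hat\xi)M(\hat\xi)^*=4I_2$ and $\tfrac14(M(\hat\xi)+M(\hat\xi)^*)=\tfrac12 M(\hat\xi)$, whose spectrum is $\{-1,+1\}$. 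By pseudo-locality, $\chi(\widetilde{\mathcal{R}}\widetilde{\mathcal{R}}^*-4I_2)\chi$ is smoothing for any $\chi\in C_c^\infty(\Sigma_0)$, hence the principal symbol of the order-zero operator $\Lambda(\widetilde{\mathcal{R}}\widetilde{\mathcal{R}}^*-4I_2)\Lambda$ vanishes on $T^*\Sigma|_{\Sigma_0}\setminus\{0\}$. Fixing an interior point $x_0\in\Sigma_0$, a covector $\xi_0\ne 0$, and a unit $v\in\mathbb{C}^2$ with $\tfrac12 M(\hat\xi_0)v=\pm v$, I would take a sequence of normalized coherent states $(\varphi_n)$ concentrating in phase space at $(x_0,\xi_0)$ with internal polarization $v$; then $\|\varphi_n\|_{L^2}=1$, $\varphi_n\rightharpoonup 0$, and $\Lambda(\widetilde{\mathcal{R}}\widetilde{\mathcal{R}}^*-4I_2)\Lambda\varphi_n\to 0$ in $L^2$ because the principal symbol vanishes at $(x_0,\xi_0)$ and the order $-1$ remainder applied to $\varphi_n$ is $O(n^{-1})$ in $L^2$. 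Finally, replacing $\varphi_n$ by the $L^2$-normalization of $\Lambda^{-1}P_\pm\Lambda\varphi_n$ changes it only by $O(n^{-1})$ in $L^2$ — here one uses that $\Lambda$ is scalar, so $\Lambda^{-1}P_\pm\Lambda-P_\pm$ has order $-1$, and that the principal symbol of $P_\pm$ at $(x_0,\xi_0)$ is the spectral projection of $\tfrac12 M(\hat\xi_0)$ onto $\pm 1$, which fixes $v$ — so the modified sequence has all required properties while now its $\Lambda$-image $P_\pm\Lambda\varphi_n$ lies in $\mathcal{H}_\pm$.

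The main obstacle is the rigorous pseudodifferential bookkeeping underlying all of this: checking that $\widetilde{\mathcal{R}}$, $\widetilde{\mathcal{R}}^*$ and the projections $P_\pm$ are genuine classical pseudodifferential operators on a general $C^\infty$-smooth curve resp.\ surface with the symbols used above, and controlling the lower-order (equivalently, smoothing) remainders — precisely the long technical analysis performed in \cite{H22} by periodic pseudodifferential calculus for $q=2$ and the theory of pseudo-homogeneous kernels for $q=3$. Granting that framework, the symbol identities on flat pieces and the coherent-state construction in (ii) are standard, and (i) and (iii) follow by elementary operator-theoretic arguments.
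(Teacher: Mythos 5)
The paper does not actually prove Proposition~\ref{proposition_essential_spectrum_preparation}: it is stated without proof and delegated entirely to \cite{H22}, whose announced machinery is periodic pseudodifferential operators on $\mathbb{T}$ for $q=2$ and the calculus of pseudo-homogeneous kernels for $q=3$. Your proposal follows exactly that announced route, and the outline is mathematically coherent: the identification of $U\widetilde{\mathcal{R}}U^{-1}$ with the multiplier $2\,\sgn(n)$ plus a smoothing remainder for $q=2$, the exact flat-kernel computation on $\Sigma_0$ with symbol $M(\hat\xi)=2i(\hat\xi_1\sigma_1+\hat\xi_2\sigma_2)\sigma_3$ satisfying $M M^*=4I_2$ for $q=3$, and the commutator order-count in (iii) (scalar principal symbol of $\Lambda^{-2}$ forces the commutator down to order $\le -2$, whence compactness $H^{-1/2}\to H^{1/2}$ via Rellich) are all the right mechanisms. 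Since there is no in-paper proof to compare against, the fair assessment is that you have reconstructed the intended strategy rather than diverged from it.

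That said, be aware that what you have written is a proof outline, not a proof, and the load-bearing steps are precisely the ones you defer. Three points deserve explicit attention. First, Proposition~\ref{proposition_Riesz_transform}~(iv) only asserts the \emph{existence} of $P_\pm$ with the stated properties; your argument for (i) and for the last claim in (ii) silently replaces them by the concrete Fourier, resp.\ spectral, projections of your symbol calculus, so you must either show these coincide with the $P_\pm$ of \cite{H22} or verify that the constructed projections themselves satisfy all the properties invoked later (in particular $\mathcal{H}_\pm\not\subset H^s$ for $s>-\tfrac12$ and the mapping property on $H^{1/2}$). Second, in (ii) the passage from $\varphi_n$ to the normalization of $\Lambda^{-1}P_\pm\Lambda\varphi_n$ needs (a) that $P_\pm$ is an order-zero pseudodifferential operator whose principal symbol over $\Sigma_0$ is the spectral projection of $\tfrac12 M(\hat\xi)$ fixing $v$ — a structural fact about the \cite{H22} construction, not a consequence of Proposition~\ref{proposition_Riesz_transform} as stated — and (b) the $L^2$-boundedness of $\Lambda(\widetilde{\mathcal{R}}\widetilde{\mathcal{R}}^*-4I_2)\Lambda$, which you should source from Proposition~\ref{proposition_Riesz_transform}~(ii)--(iii) rather than leave implicit. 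Third, for $q=3$ your claim that $\Lambda^{-2}=(\mathcal{S}^{-1}+c_\Lambda)^{-1}$ is a classical scalar pseudodifferential operator of order $-1$ rests on the ellipticity and classicality of the single layer operator $\mathcal{S}$ and on the invertibility of $\mathcal{S}^{-1}+c_\Lambda$; this is true but is again part of the technical groundwork of \cite{H22} and should be cited as such.
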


Now, we are prepared to state and prove the following result about the essential spectrum of $B_{\eta, \tau, \lambda}$:

\begin{theorem} \label{theorem_essential_spectrum}
  Let $\eta, \tau, \lambda \in \mathbb{R}$ be such that $(\frac{d}{4}-1)^2-\lambda^2=0$ and $d \neq -4$. Then, the following holds:
  \begin{itemize}
    \item[\textup{(i)}] If $q=2$, then $\sigma_\textup{ess}(B_{\eta, \tau, \lambda}) = (-\infty,-m] \cup \big\{ -\frac{\tau}{\eta} m \big\} \cup [m,\infty)$.
    \item[\textup{(ii)}] If $q=3$, assume that  there exists an open set $\Sigma_0 \subset \Sigma$ such that $\Sigma_0$ is contained in a plane. Then  $(-\infty,-m] \cup \big\{ -\frac{\tau}{\eta} m \big\} \cup [m,\infty) \subset \sigma_\textup{ess}(B_{\eta, \tau, \lambda})$.
  \end{itemize}
\end{theorem}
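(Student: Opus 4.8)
The plan is to use the essential-spectrum characterization from Proposition~\ref{proposition_Krein_Birman_Schwinger_critical}~(iii) together with the refined mapping properties of $\widetilde{\mathcal{R}}, \widetilde{\mathcal{R}}^*$ in Proposition~\ref{proposition_essential_spectrum_preparation}. The starting point is that $\sigma_\textup{ess}(B_{\eta,\tau,\lambda}) \cap (-\infty,-m] \cup [m,\infty)$ is already known: by the resolvent formula in Proposition~\ref{proposition_Krein_Birman_Schwinger_critical}~(i) the difference of resolvents of $B_{\eta,\tau,\lambda}$ and $A_0$ factorizes through the compact operator $\Phi_{\overline z}^* = \gamma^{(1/2)}(\overline z)^*$ (Theorem~\ref{theorem_QBT}~(ii)), so $\sigma_\textup{ess}(B_{\eta,\tau,\lambda}) \supset (-\infty,-m]\cup[m,\infty)$, and equality on the real axis outside $(-m,m)$ follows since $\widetilde{\Phi}_z$ and $\Phi_{\overline z}^*$ remain bounded/compact there. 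So the whole question reduces to deciding, for $z \in (-m,m)$, whether $z \in \sigma_\textup{ess}(B_{\eta,\tau,\lambda})$, and by Proposition~\ref{proposition_Krein_Birman_Schwinger_critical}~(iii) this is equivalent to $0 \in \sigma_\textup{ess}(\Theta - \mathcal{M}(z))$.

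Next I would compute $\Theta - \mathcal M(z)$ modulo compact operators. By Proposition~\ref{proposition_Riesz_transform}~(i), $\Theta - \mathcal M(z)$ equals, up to a compact perturbation,
\begin{equation*}
  \Lambda \begin{pmatrix} (\eta+\tau)I_{N/2} & \lambda I_{N/2} + \widetilde{\mathcal R}^* \\ \lambda I_{N/2} + \widetilde{\mathcal R} & (\eta-\tau)I_{N/2} \end{pmatrix} \Lambda - 4 \begin{pmatrix} (z-m)I_{N/2} & 0 \\ 0 & (z+m)I_{N/2} \end{pmatrix}.
\end{equation*}
Since $\Lambda^{-1}$ is compact (Rellich), conjugating the whole operator by $\Lambda^{-1}$ does not change whether $0$ is in the essential spectrum, \emph{provided} one can move $\Lambda$ past $\widetilde{\mathcal R}, \widetilde{\mathcal R}^*$ up to compact errors; this is exactly what Proposition~\ref{proposition_essential_spectrum_preparation}~(iii) provides, $\Lambda^{-2}\widetilde{\mathcal R} - \widetilde{\mathcal R}\Lambda^{-2}$ compact (and similarly for $\widetilde{\mathcal R}^*$). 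Hence $0 \in \sigma_\textup{ess}(\Theta - \mathcal M(z))$ iff $0 \in \sigma_\textup{ess}$ of
\begin{equation*}
  \mathcal W(z) := \begin{pmatrix} (\eta+\tau)I_{N/2} & \lambda I_{N/2} + \widetilde{\mathcal R}^* \\ \lambda I_{N/2} + \widetilde{\mathcal R} & (\eta-\tau)I_{N/2} \end{pmatrix} - 4\Lambda^{-2}\begin{pmatrix} (z-m)I_{N/2} & 0 \\ 0 & (z+m)I_{N/2} \end{pmatrix}
\end{equation*}
as an operator in $H^{-1/2}(\Sigma;\mathbb C^N)$; since $\Lambda^{-2}$ is compact the second term drops out modulo compacts, so the real condition is $0 \in \sigma_\textup{ess}(\mathcal W_0)$ with $\mathcal W_0$ the constant-coefficient block matrix with entries $(\eta+\tau)I, \lambda I + \widetilde{\mathcal R}^*, \lambda I + \widetilde{\mathcal R}, (\eta-\tau)I$. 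Now I compute its Schur complement exactly as in the proof of Theorem~\ref{theorem_self_adjoint_critical}: using $\widetilde{\mathcal R}\widetilde{\mathcal R}^* = 4I + (\text{bounded from }H^{-1/2}\text{ to }H^{1/2}) $, $\widetilde{\mathcal R} - \widetilde{\mathcal R}^*$ smoothing, and $\widetilde{\mathcal R}+\widetilde{\mathcal R}^* = 4P_+ - 4P_- + \mathcal K_2$ (Proposition~\ref{proposition_Riesz_transform}~(ii)--(iv)), the Schur complement becomes $(\eta+\tau)^{-1}\big((d-4-4\lambda)P_+ + (d-4+4\lambda)P_-\big)$ plus smoothing terms, under the assumption $\eta+\tau \neq 0$ (the cases $\eta=-\tau$ and $\eta=\tau=0$ are handled by the symmetric variants as in Theorem~\ref{theorem_self_adjoint_critical}). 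Under the criticality hypothesis $(\frac d4 -1)^2 = \lambda^2$ exactly one of the coefficients $d-4\mp4\lambda$ vanishes: if $\lambda = \frac d4 - 1$ then $d-4-4\lambda = 0$; if $\lambda = -(\frac d4-1)$ then $d-4+4\lambda=0$. Then $\mathcal W_0$ fails to be Fredholm precisely when the surviving coefficient, which after simplification is $\pm 2\lambda$, or equivalently a nonzero multiple of $(d-4)$, combines with the block structure so that $0$ belongs to the essential spectrum exactly at one value of $z$; tracking the $z$-dependence one re-inserts the term $-4\Lambda^{-2}\mathrm{diag}((z-m)I,(z+m)I)$ on the relevant block, and the threshold is reached when the constant obstruction is compensated. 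A direct computation of the scalar condition (the determinant-type equation from the Birman--Schwinger operator, in the spirit of \eqref{Eq_spec_Onedim} in the one-dimensional model) pins the exceptional point down to $z = -\frac{\tau}{\eta}m$; note $d=-4$ is excluded so $\eta \neq 0$.

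Finally, for part~(i), $q=2$: here Proposition~\ref{proposition_essential_spectrum_preparation}~(i) gives the stronger statement that $\Lambda(\widetilde{\mathcal R}\widetilde{\mathcal R}^* - 4I_1)\Lambda$ is \emph{compact} in $L^2(\Sigma;\mathbb C)$, $P_\pm$ commute with $\Lambda^n$, and $P_\pm$ are orthogonal projections in $H^s$ for all $s$. This lets me pass from $H^{-1/2}$ back to $L^2$ cleanly and compute $\sigma_\textup{ess}(\Theta - \mathcal M(z))$ \emph{exactly} (not just an inclusion), concluding that $-\frac\tau\eta m$ \emph{is} in the essential spectrum and no other point of $(-m,m)$ is. For part~(ii), $q=3$, one does not have compactness of $\widetilde{\mathcal R}\widetilde{\mathcal R}^* - 4I_2$ in general; instead the flat-piece hypothesis $\Sigma_0 \subset$ a plane is used to build, via Proposition~\ref{proposition_essential_spectrum_preparation}~(ii), a singular Weyl sequence $(\varphi_n)$ with $\|\varphi_n\|=1$, $\varphi_n \rightharpoonup 0$, $\Lambda(\widetilde{\mathcal R}\widetilde{\mathcal R}^* - 4I_2)\Lambda\varphi_n \to 0$ and $\Lambda\varphi_n \in \mathcal H_\pm$ for the appropriate sign; feeding this into $\Theta - \mathcal M(z)$ at $z = -\frac\tau\eta m$ produces a Weyl sequence showing $-\frac\tau\eta m \in \sigma_\textup{ess}(B_{\eta,\tau,\lambda})$, hence the claimed inclusion. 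The main obstacle is the bookkeeping in the Schur-complement reduction: one must carefully justify that every step — conjugation by $\Lambda^{-1}$, discarding $\Lambda^{-2}$-terms, replacing $\widetilde{\mathcal R}+\widetilde{\mathcal R}^*$ by $4(P_+-P_-)$ — is an equality of essential spectra, i.e. that all error terms are genuinely compact as maps on the correct Sobolev scale, and that the projections $P_\pm$ localize the obstruction so that exactly one eigenvalue-type equation survives; in the three-dimensional case the additional difficulty is that one only gets a one-sided inclusion because the Weyl sequence is supported near the flat piece and global compactness is unavailable.
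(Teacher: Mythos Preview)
Your overall architecture (reduce to $0\in\sigma_\textup{ess}(\Theta-\mathcal M(z))$, pass to a Schur complement, use the projections $P_\pm$, and in three dimensions build a Weyl sequence from the flat piece) matches the paper. But two steps in your proposal are genuine gaps, not just bookkeeping.

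\textbf{(a) The inclusion $(-\infty,-m]\cup[m,\infty)\subset\sigma_\textup{ess}(B_{\eta,\tau,\lambda})$.} You argue via compactness of the resolvent difference, but in the critical case the Krein formula in Proposition~\ref{proposition_Krein_Birman_Schwinger_critical}~(i) reads $\widetilde\Phi_z(I_N+P_{\eta,\tau,\lambda}\widetilde{\mathcal C}_z)^{-1}P_{\eta,\tau,\lambda}\Phi_{\overline z}^*$, where the middle factor is only bounded $H^{1/2}\to H^{-1/2}$. There is no compactness available here (indeed, if the difference were compact one would get $\sigma_\textup{ess}(B_{\eta,\tau,\lambda})=\sigma_\textup{ess}(A_0)$, contradicting the extra point you are trying to prove). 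The paper instead quotes explicit singular sequences supported away from $\Sigma$.

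\textbf{(b) Discarding the $z$-dependent term before taking the Schur complement.} This is the essential error. You conjugate by $\Lambda^{-1}$, obtain
\[
\mathcal W(z)=\begin{pmatrix}(\eta+\tau)I & \lambda I+\widetilde{\mathcal R}^*\\ \lambda I+\widetilde{\mathcal R} & (\eta-\tau)I\end{pmatrix}-4\Lambda^{-2}\begin{pmatrix}(z-m)I&0\\0&(z+m)I\end{pmatrix},
\]
and then throw away the second block because $\Lambda^{-2}$ is compact. But once you do that, the remaining operator $\mathcal W_0$ is $z$-independent, and in the critical case its Schur complement is (up to smoothing terms) a nonzero multiple of a single projection $P_\mp$. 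Since $P_\mp$ has infinite-dimensional kernel $\mathcal H_\pm$, you get $0\in\sigma_\textup{ess}(\mathcal W_0)$ for \emph{every} $z\in(-m,m)$. Your argument therefore proves $(-m,m)\subset\sigma_\textup{ess}(B_{\eta,\tau,\lambda})$, which is false for $q=2$. The sentence ``tracking the $z$-dependence one re-inserts the term'' is exactly the step that has no justification: a compact perturbation cannot be removed and then selectively re-inserted to refine the essential spectrum.

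The paper avoids this by taking the Schur complement of the \emph{full} operator $\hat\Xi$ (including the bounded diagonal term $-4\,\mathrm{diag}((z-m)I,(z+m)I)$) \emph{before} any compactness reduction. The point is that when the $(1,1)$-block is $(\eta+\tau)\Lambda^2-4(z-m)I$, its inverse is expanded via the resolvent identity as $(\eta+\tau)^{-1}\Lambda^{-2}+(\eta+\tau)^{-2}4(z-m)\Lambda^{-4}+\text{(lower order)}$. The second term, after being sandwiched by $\Lambda(\lambda I+\widetilde{\mathcal R})$ and $(\lambda I+\widetilde{\mathcal R}^*)\Lambda$, produces the \emph{bounded but non-compact} contribution
\[
-\,\frac{8\eta}{\eta+\tau}\Big(z+\frac{\tau}{\eta}m\Big)\Lambda^{-1}P_+\Lambda
\]
in the Schur complement $\hat{\mathfrak s}$ (for the branch $4\lambda=d-4$). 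This is the term that singles out $z=-\frac{\tau}{\eta}m$: on a Weyl sequence with $\Lambda\varphi_n\in\mathcal H_+$ all the $P_-$-terms and the $\Lambda(4I-\widetilde{\mathcal R}\widetilde{\mathcal R}^*)\Lambda$-term vanish (the latter by Proposition~\ref{proposition_essential_spectrum_preparation}~(i)--(ii)), and what remains is exactly this coefficient. For $q=2$ the commutation $P_\pm\Lambda=\Lambda P_\pm$ then lets one run the argument in reverse and exclude all other $z\in(-m,m)$. The moral is that the order ``Schur complement first, then discard compact pieces'' is not interchangeable with ``discard compact pieces first, then Schur complement''; the $z$-dependent diagonal, although lower order, interacts with the unbounded off-diagonal blocks to produce a bounded non-compact obstruction.
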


We also refer to \cite{BP22} for a very recent result in a similar spirit, where the three-dimensional Dirac operator with a combination of electrostatic and Lorentz-scalar $\delta$-shell interactions supported on the boundary of a bounded $C^\infty$-domain was considered in the critical case (i.e. when $\eta, \tau \in \mathbb{R}$ with $\eta^2 - \tau^2 = 4$ and $\lambda = 0$). With the help of pseudodifferential techniques, it was shown there that a whole new interval of essential spectrum centered at $-\frac{\tau}{\eta} m$ appears in this situation. In particular, in this special case the inclusion in Theorem~\ref{theorem_essential_spectrum}~(ii) is in general not an equality. 

\begin{proof}[Proof of Theorem~\ref{theorem_essential_spectrum}]
  We note first that with the same singular sequences as in the proof of \cite[Theorem~5.7~(i)]{BH20} one shows that $(-\infty,-m] \cup [m,\infty) \subset \sigma_\textup{ess}(B_{\eta,\tau,\lambda})$. Hence, it remains to study $\sigma_\textup{ess}(B_{\eta, \tau, \lambda}) \cap (-m,m)$.  
  
  We use that $z \in \sigma_{\textup{ess}}(B_{\eta, \tau, \lambda}) \cap (-m,m)$ if and only if $0 \in \sigma_\textup{ess}(\Theta - \mathcal{M}(z))$, cf. Proposition~\ref{proposition_Krein_Birman_Schwinger_critical}~(iii). By Proposition~\ref{proposition_Riesz_transform} the latter is true if and only if $0$ belongs to the essential spectrum of 
  \begin{equation} \label{def_op_essential_spectrum}
    \hat{\Xi} = \Lambda \begin{pmatrix} (\eta + \tau) I_{N/2} & \lambda I_{N/2} + \widetilde{\mathcal{R}}^* \\ \lambda I_{N/2} + \widetilde{\mathcal{R}} & (\eta - \tau) I_{N/2} \end{pmatrix} \Lambda - 4 \begin{pmatrix} (z - m) I_{N/2} & 0 \\ 0 & (z+m)I_{N/2} \end{pmatrix}
  \end{equation}
  defined on its maximal domain $\dom \hat{\Xi} = \dom \Theta$, as $\Theta - \hat{\Xi}$ is a compact and self-adjoint operator that does not influence the essential spectrum. We note that the assumptions $(\frac{d}{4}-1)^2-\lambda^2=0$ and $d \neq -4$ imply that $|\eta| \neq |\tau|$, hence, one can choose the constant $c_\Lambda$ in~\eqref{def_Lambda_2d}--\eqref{def_Lambda_3d} sufficiently large such that $0 \in \rho((\eta+\tau)\Lambda^2 -4 (z-m)I_{N/2})$. Then, it follows from \cite[Theorem~2.4.6]{Tretter}   that $0 \in \sigma_\textup{ess}(\hat{\Xi})$ if and only if $0 \in \sigma_{\textup{ess}}(\hat{\mathfrak{s}})$, where $\hat{\mathfrak{s}}$ is the Schur complement associated with $\hat{\Xi}$ given by
  \begin{equation*}
   \begin{split}
    \hat{\mathfrak{s}} &= (\eta-\tau) \Lambda^2 - 4 (z+m) I_{N/2} \\
    & \qquad - \Lambda (\lambda I_{N/2} + \widetilde{\mathcal{R}}) \Lambda \big( (\eta+\tau) \Lambda^2 - 4 (z-m) I_{N/2} \big)^{-1} \Lambda (\lambda I_{N/2} + \widetilde{\mathcal{R}}^*) \Lambda, \\
    \dom \widehat{\mathfrak{s}} &= \big\{ \varphi \in L^2(\Sigma; \mathbb{C}^{N/2}): (\eta-\tau) \Lambda^2 \varphi - 4 (z+m) \varphi \\
    & \qquad - \Lambda (\lambda I_{N/2} + \widetilde{\mathcal{R}}) \Lambda \big( (\eta+\tau) \Lambda^2 - 4 (z-m)  I_{N/2} \big)^{-1} \Lambda (\lambda I_{N/2} + \widetilde{\mathcal{R}}^*) \Lambda \varphi \\
    & \qquad \qquad \in L^2(\Sigma; \mathbb{C}^{N/2}) \big\}.
   \end{split}
  \end{equation*}
  By iterating the resolvent identity twice, one finds for $w \in \rho (\Lambda^2)$ that
  \begin{equation*} 
    (\Lambda^2 - w)^{-1} = \Lambda^{-2} + w \Lambda^{-2} (\Lambda^2 - w)^{-1} = \Lambda^{-2} + w \Lambda^{-4} + w^2 \Lambda^{-3} (\Lambda^2 - w)^{-1} \Lambda^{-1}
  \end{equation*}
  holds. Applying this formula with $w=4 (z-m)/(\eta+\tau)$, we find that
  \begin{equation*}
    \begin{split}
      \hat{\mathfrak{s}} = (\eta-\tau) \Lambda^2 - 4 (z+m) I_{N/2} - \frac{1}{\eta+\tau} \Lambda (\lambda I_{N/2} + \widetilde{\mathcal{R}}) (\lambda I_{N/2} + \widetilde{\mathcal{R}}^*) \Lambda &\\
       -4 \frac{z-m}{(\eta+\tau)^2} \Lambda (\lambda I_{N/2} + \widetilde{\mathcal{R}}) \Lambda^{-2} (\lambda I_{N/2} + \widetilde{\mathcal{R}}^*) \Lambda + \mathcal{K}_1&,
    \end{split}
  \end{equation*}
  where 
  \begin{equation*}
    \mathcal{K}_1 = - 16\frac{(z-m)^2}{(\eta+\tau)^3} \Lambda (\lambda I_{N/2} + \widetilde{\mathcal{R}}) \Lambda^{-2} \big(  \Lambda^2 - 4 (z-m)(\eta+\tau)^{-1} \big)^{-1} (\lambda I_{N/2} + \widetilde{\mathcal{R}}^*) \Lambda
  \end{equation*}
  is due to the mapping properties of $\Lambda$ specified at the beginning of this section bounded from $L^2(\Sigma; \mathbb{C}^{N/2})$ to $H^1(\Sigma; \mathbb{C}^{N/2})$ and hence, by Rellich's embedding theorem compact in $L^2(\Sigma; \mathbb{C}^{N/2})$. Taking Propositions~\ref{proposition_Riesz_transform}~(iv) and~\ref{proposition_essential_spectrum_preparation}~(iii) into account and making a similar calculation as in~\eqref{Schur_complement}, there exist compact operators $\mathcal{K}_2, \mathcal{K}_3$ in $L^2(\Sigma; \mathbb{C}^{N/2})$ such that we can rewrite
  \begin{equation*}
    \begin{split}
      \hat{\mathfrak{s}} &= \frac{1}{\eta+\tau} \Lambda ((d - 4) I_{N/2} - 4\lambda (P_+-P_-)) \Lambda + \frac{1}{\eta+\tau} \Lambda (4 I_{N/2} - \widetilde{\mathcal{R}}\widetilde{\mathcal{R}}^*) \Lambda    \\
      & \qquad - 4 (z+m) I_{N/2}-4 \frac{z-m}{(\eta+\tau)^2} \Lambda^{-1} (\lambda I_{N/2} + \widetilde{\mathcal{R}}) (\lambda I_{N/2} + \widetilde{\mathcal{R}}^*) \Lambda + \mathcal{K}_2\\
      &= \frac{1}{\eta+\tau} \Lambda \big((d - 4) I_{N/2} - 4\lambda (P_+-P_-)\big) \Lambda + \frac{1}{\eta+\tau} \Lambda (4 I_{N/2} - \widetilde{\mathcal{R}}\widetilde{\mathcal{R}}^*) \Lambda    \\
      & \qquad - 4 (z+m) I_{N/2} -4 \frac{z-m}{(\eta+\tau)^2} \Lambda^{-1} \big((\lambda^2+4) I_{N/2} + 4\lambda (P_+-P_-) \big) \Lambda + \mathcal{K}_3.
    \end{split}
  \end{equation*}
  In the following, we consider the case when $4\lambda=d-4$, the case $4\lambda = 4-d$ can be handled similarly. For $4\lambda=d-4$, the expression for $\hat{\mathfrak{s}}$ simplifies to
  \begin{equation} \label{repr_s}
    \begin{split}
      \hat{\mathfrak{s}} &= \frac{2}{\eta+\tau} \Lambda (d - 4) P_- \Lambda - 8 \frac{\eta}{\eta+\tau}\Lambda^{-1} \left( z + \frac{\tau}{\eta}m \right) P_+ \Lambda    + \frac{1}{\eta+\tau} \Lambda (4 I_{N/2} - \widetilde{\mathcal{R}}\widetilde{\mathcal{R}}^*) \Lambda \\
      & \qquad  - 4 \Lambda^{-1} \left( z+m + \frac{z-m}{(\eta+\tau)^2} (\lambda^2-d+8) \right) P_- \Lambda + \mathcal{K}_3.
    \end{split}
  \end{equation}
  Now, there exists a sequence $(\varphi_n)$ with $(\Lambda \varphi_n) \subset \mathcal{H}_+$ such that $\| \varphi_n \|_{L^2(\Sigma; \mathbb{C}^{N/2})} = 1$, $\varphi_n$ converges weakly to zero, and 
  $\Lambda (\widetilde{\mathcal{R}} \widetilde{\mathcal{R}}^* - 4) \Lambda \varphi_n \rightarrow 0$ in $L^2(\Sigma; \mathbb{C}^{N/2})$; for $q=3$ this is stated in Proposition~\ref{proposition_essential_spectrum_preparation}~(ii), while for $q=2$ this is true as by Proposition~\ref{proposition_essential_spectrum_preparation}~(i) $\Lambda (\widetilde{\mathcal{R}} \widetilde{\mathcal{R}}^* - 4) \Lambda$ is compact in $L^2(\Sigma; \mathbb{C})$ and compact operators turn any weakly convergent sequence into a strongly convergent one. With the help of this sequence, we see that $0 \in \sigma_\textup{ess}(\hat{\mathfrak{s}})$, if $z = -\frac{\tau}{\eta} m$, which shows that $0 \in \sigma_\textup{ess}(\Theta - \mathcal{M}(-\frac{\tau}{\eta} m))$ and hence, $-\frac{\tau}{\eta} m \in \sigma_\textup{ess}(B_{\eta,\tau,\lambda})$. In particular, for $q=3$ all claims are shown. 
    
  Let $q=2$, then it remains to show that $\sigma_\textup{ess}(B_{\eta,\tau,\lambda}) \cap (-m,m) = \{ -\frac{\tau}{\eta} m \}$. 
  Assume that this is not the case. Then following the arguments at the beginning of this proof there is $z \in (-m,m) \setminus \{ -\frac{\tau}{\eta} m \}$ such that $0 \in \sigma_\textup{ess}(\hat{\mathfrak{s}})$. Thus, there exists a singular Weyl sequence, i.e. there exists a sequence $(\varphi_n) \subset \dom \hat{\mathfrak{s}}$ such that $\| \varphi_n \|_{L^2(\Sigma; \mathbb{C})} = 1$, $\varphi_n$ converges weakly to zero, and $\hat{\mathfrak{s}} \varphi_n \rightarrow 0$.  Consider first the case $4\lambda=d-4 \neq 0$.
  Since $\Lambda (4 I_1 - \widetilde{\mathcal{R}}\widetilde{\mathcal{R}}^*) \Lambda $ is compact in $L^2(\Sigma; \mathbb{C})$, $P_\pm \Lambda = \Lambda P_\pm$, and $P_\pm$ are orthogonal projections, see Proposition~\ref{proposition_essential_spectrum_preparation}~(i),  
  the representation of $\hat{\mathfrak{s}}$ in~\eqref{repr_s} can be simplified to
  \begin{equation*} 
    \begin{split}
      \hat{\mathfrak{s}} &= - 8 \frac{\eta}{\eta+\tau} \left( z + \frac{\tau}{\eta}m \right) P_+ \\
      &\qquad +P_- \left( \frac{2d - 8}{\eta+\tau} \Lambda^2 - 4 \left( z+m + \frac{z-m}{(\eta+\tau)^2} (\lambda^2-d+8) \right) \right) P_- + \mathcal{K}_4,
    \end{split}
  \end{equation*}
  where $\mathcal{K}_4$ is a compact operator in $L^2(\Sigma; \mathbb{C})$. Since $P_\pm$ are orthogonal projections in $L^2(\Sigma; \mathbb{C})$ by Proposition~\ref{proposition_essential_spectrum_preparation}~(i), these maps are bounded in $L^2(\Sigma; \mathbb{C})$, and thus, an application of $P_+$ to last displayed formula shows that $P_+ \varphi_n \rightarrow 0$. Hence, $(P_- \varphi_n)$ is a singular sequence for 
  \begin{equation*}
    \begin{split}
      \frac{2d - 8}{\eta+\tau} \Lambda^2 - 4 \left( z+m + \frac{z-m}{(\eta+\tau)^2} (\lambda^2-d+8) \right) + \mathcal{K}_4.
    \end{split}
  \end{equation*}
  However, the definition of $\Lambda$ in~\eqref{def_Lambda_2d} implies that the essential spectrum of $\Lambda^2$ is empty, which is finally a contradiction. This finishes the proof of this theorem in the case $4\lambda=d-4 \neq 0$. The case $4\lambda=-d+4 \neq 0$ can be handled similarly. Eventually, if $4\lambda=d-4 = 0$, then the expression in~\eqref{repr_s} can be simplified to
  \begin{equation*} 
    \begin{split}
      \hat{\mathfrak{s}} &= - 8 \frac{\eta}{\eta+\tau} \left( z + \frac{\tau}{\eta}m \right)  + \mathcal{K}_4,
    \end{split}
  \end{equation*}
  and hence, $z=-\frac{\tau}{\eta} m$ is the only point with $0 \in \sigma_\textup{ess}(\widehat{\mathfrak{s}})$. Therefore, we have shown in all cases that $z=-\frac{\tau}{\eta} m$ is the only possible point such that $0 \in \sigma_\textup{ess}(\widehat{\mathfrak{s}})$, i.e. $\sigma_\textup{ess}(B_{\eta,\tau,\lambda}) \cap (-m,m) = \{ -\frac{\tau}{\eta} m \}$.
\end{proof}

\end{document}